\numberwithin{equation}{section}
\newtheorem{Theorem}{Theorem}[section]
\newtheorem{Corollary}[Theorem]{Corollary}
\newtheorem{Lemma}[Theorem]{Lemma}
\newtheorem{Proposition}[Theorem]{Proposition}
\newtheorem{Conjecture}[Theorem]{Conjecture}
 { \theoremstyle{definition}
\newtheorem{Definition}[Theorem]{Definition}
\newtheorem{Example}[Theorem]{Example}
\newtheorem{Remark}[Theorem]{Remark} }
\DeclareMathOperator{\id}{id}
\DeclareMathOperator{\diag}{diag}
\DeclareMathOperator{\image}{Im}
\DeclareMathOperator{\tr}{tr}
\newcommand*{\C}{\mathbb{C}}
\newcommand*{\Q}{\mathbb{Q}}
\newcommand*{\R}{\mathbb{R}}
\newcommand*{\Z}{\mathbb{Z}}
\newcommand*{\maxzero}[1]{[#1]_{+}}
\newcommand*{\level}{\ell}
\newcommand*{\dcn}{ h^{\vee}}
\newcommand*{\rf}{\mathcal{F}}
\newcommand*{\univsf}{\mathcal{F}_{\mathrm{sf}}}
\newcommand*{\innerproduct}[2]{( #1 \,|\, #2 )}
\newcommand{\empvertex}{\tikz \draw (0,0) circle [radius=2pt];}
\newcommand{\fillvertex}{\tikz \fill (0,0) circle [radius=2pt];}
\newcommand{\tetvertex}{
 \begin{tikzpicture}
 \draw (0,0.4) rectangle (0.2,0.6);
 \end{tikzpicture}
}
\newcommand{\edgevertex}{
 \begin{tikzpicture}
 \draw [fill] (0,0.5) circle [radius=0.1] ;
 \end{tikzpicture}
}
\newcommand*{\network}{\mathcal{N}}
\newcommand{\defnp}{
 \begin{tikzpicture}
 \draw (0,0.8) node{$(i,s)$};
 \draw [fill] (0,0.5) circle [radius=0.1] ;
 \draw [dashed] (0.2, 0.5) -- (1.2,0.5);
 \draw (1.4,0.8) node{$t$};
 \draw (1.3,0.4) rectangle (1.5,0.6);
 \end{tikzpicture}
}
\newcommand{\defn}{
 \begin{tikzpicture}
 \draw (0,0.8) node{$(i,s)$};
 \draw [fill] (0,0.5) circle [radius=0.1] ;
 \draw (0.7,0.7) node{$a$};
 \draw [<-] (0.2, 0.5) -- (1.2,0.5);
 \draw (1.4,0.8) node{$t$};
 \draw (1.3,0.4) rectangle (1.5,0.6);
 \end{tikzpicture}
}
\newcommand{\defnpp}{
 \begin{tikzpicture}
 \draw (0,0.8) node{$(i,s)$};
 \draw [fill] (0,0.5) circle [radius=0.1] ;
 \draw (0.7,0.7) node{$a$};
 \draw [->] (0.2, 0.5) -- (1.2,0.5);
 \draw (1.4,0.8) node{$t$};
 \draw (1.3,0.4) rectangle (1.5,0.6);
 \end{tikzpicture}
}
\newcommand{\defNp}{
 \begin{tikzpicture}
 \draw (0,0.8) node{$e$};
 \draw [fill] (0,0.5) circle [radius=0.1] ;
 \draw [dashed] (0.2, 0.5) -- (1.2,0.5);
 \draw (1.4,0.8) node{$t$};
 \draw (1.3,0.4) rectangle (1.5,0.6);
 \end{tikzpicture}
}
\newcommand{\defN}{
 \begin{tikzpicture}
 \draw (0,0.8) node{$e$};
 \draw [fill] (0,0.5) circle [radius=0.1] ;
 \draw [<-] (0.2, 0.5) -- (1.2,0.5);
 \draw (1.4,0.8) node{$t$};
 \draw (1.3,0.4) rectangle (1.5,0.6);
 \end{tikzpicture}
}
\newcommand{\defNpp}{
 \begin{tikzpicture}
 \draw (0,0.8) node{$e$};
 \draw [fill] (0,0.5) circle [radius=0.1] ;
 \draw [->] (0.2, 0.5) -- (1.2,0.5);
 \draw (1.4,0.8) node{$t$};
 \draw (1.3,0.4) rectangle (1.5,0.6);
 \end{tikzpicture}
}
\newcommand*{\al}[1][m]{\alpha_{#1}^{(a)} (x)}
\newcommand*{\be}[1][m]{\beta_{#1}^{(a)} (x)}
\newcommand*{\PP}[1][m]{P_{#1}^{(a)} (x)}
\newcommand*{\sinn}[1][m]{( x^m - x^{-m} )}
\begin{document}

\allowdisplaybreaks

\newcommand{\arXivNumber}{1812.05863}

\renewcommand{\thefootnote}{}

\renewcommand{\PaperNumber}{028}

\FirstPageHeading

\ShortArticleName{Exponents Associated with $Y$-Systems and their Relationship with $q$-Series}

\ArticleName{Exponents Associated with $\boldsymbol{Y}$-Systems\\ and their Relationship with $\boldsymbol{q}$-Series\footnote{This paper is a~contribution to the Special Issue on Cluster Algebras. The full collection is available at \href{https://www.emis.de/journals/SIGMA/cluster-algebras.html}{https://www.emis.de/journals/SIGMA/cluster-algebras.html}}}

\Author{Yuma MIZUNO}

\AuthorNameForHeading{Y.~Mizuno}

\Address{Department of Mathematical and Computing Science, Tokyo Institute of Technology,\\ 2-12-1 Ookayama, Meguro-ku, Tokyo 152-8550, Japan}
\Email{\href{mailto:mizuno.y.aj@m.titech.ac.jp}{mizuno.y.aj@m.titech.ac.jp}}

\ArticleDates{Received September 27, 2019, in final form April 02, 2020; Published online April 18, 2020}

\Abstract{Let $X_r$ be a finite type Dynkin diagram, and $\ell$ be a positive integer greater than or equal to two. The $Y$-system of type $X_r$ with level $\ell$ is a system of algebraic relations, whose solutions have been proved to have periodicity. For any pair $(X_r, \ell)$, we define an integer sequence called exponents using formulation of the $Y$-system by cluster algebras. We give a conjectural formula expressing the exponents by the root system of type $X_r$, and prove this conjecture for $(A_1,\ell)$ and $(A_r, 2)$ cases. We point out that a specialization of this conjecture gives a relationship between the exponents and the asymptotic dimension of an integrable highest weight module of an affine Lie algebra. We also give a point of view from $q$-series identities for this relationship.}

\Keywords{cluster algebras; $Y$-systems; root systems; $q$-series}

\Classification{13F60; 17B22; 81R10}

\renewcommand{\thefootnote}{\arabic{footnote}}
\setcounter{footnote}{0}

\tableofcontents
\section{Introduction}
In the study of thermodynamic Bethe ansatz, Zamolodchikov introduced a system of algebraic relations called $Y$-system for any finite type simply laced Dynkin diagram and conjectured that solutions of the $Y$-system have periodicity. This periodicity conjecture was proved by Fomin and Zelevinsky~\cite{FZ_Ysystem}. Their proof was implicitly based on the theory of cluster algebras that they introduced in~\cite{FZ1}.
In fact, in~\cite{FZ4} they refined the result on the periodicity using the concept in cluster algebras called $Y$-seed mutations.

More generally, for any finite type Dynkin diagram $X_r$ ($=A_r$, $B_r$, $C_r$, $D_r$, $E_{6,7,8}$, $F_4$ or $G_2$) and integer $\level \geq 2$ called level,
the $Y$-system associated with the pair $(X_r, \level)$ is defined in~\cite{KunibaNakanishi92}.
When $X$ is $ADE$ and the level is $2$, this is the $Y$-system introduced by Zamolodchikov.
The periodicities for these $Y$-systems were also proved in~\cite{IIKKNa,IIKKNb, Keller}.
Their proof used the quiver representation theory, in particular the algebraic objects called the cluster categories.

Although these periodicities are themselves very interesting,
it is also interesting that they relate to conformal field theories through the Rogers dilogarithm function.
The sum of special values of the Rogers dilogarithm function associated with the $Y$-system gives the central charge of a conformal field theory~\cite{IIKKNa,IIKKNb, Nak}.
This is called the dilogarithm identities in conformal field theories.
Note that the central charge also appears as the exponential growth of a character in the representation theory of affine Lie algebras~\cite{KacPet}.
For the proof of these identities, the periodicity of the $Y$-system mentioned earlier plays an essential role.

The purpose of this paper is to introduce a sequence of integers associated with the $Y$-system that we call \emph{exponents} and to reveal interesting features of it.
In particular, we give a conjectural formula on the exponents that gives a new connection between the theory of cluster algebras and the representational theory of affine Lie algebras, and give proofs for some cases.

In order to define the exponents, we first review roughly formulation of the $Y$-systems by cluster algebras.
First, we define a quiver $Q(X_r, \level)$ for a pair $(X_r, \level)$.
Then, we can obtain the $Y$-system as algebraic relations between rational functions obtained by repeating mutations, which are certain operations defined on quivers.
In particular, we can paraphrase the periodicity of the $Y$-system associated with a pair $(X_r, \level)$ to the periodicity of the cluster transformation $\mu$, which is a rational function obtained from a mutation sequence on $Q(X_r,\level)$.
Let $\mathbf{I}$ be the set of vertices in $Q(X_r, \level)$.
Then the cluster transformation $\mu$ is actually an $\mathbf{I}$-tuple of single rational functions in the variables $y:= (y_i \,|\, i \in \mathbf{I})$,
that is, $\mu \in \Q (y_i \,|\, i \in \mathbf{I})^{\mathbf{I}}$.
Keller~\cite{Keller} (for $X=ADE$) and Inoue et al.~\cite{IIKKNa,IIKKNb} (for $X=BCFG$) proved the periodicity of the $Y$-system by showing that $\mu$ has the following periodicity:
\begin{gather*}
 \underbrace{\mu \circ \mu \circ \dots \circ \mu}_{t(\level + \dcn)} (y) = y ,
\end{gather*}
where $t$ is defined by
\begin{gather*}
 t =
 \begin{cases}
 1 & \text{if $X_r = A_r,D_r,E_6 ,E_7$ or $E_8$},\\
 2 & \text{if $X_r = B_r,C_r$ or $F_4$}, \\
 3 & \text{if $X_r =G_2$},
 \end{cases}
\end{gather*}
and $\dcn$ is the dual Coxeter number of $X_r$.

Using this result, we define the exponents as follows. First, we see that there is a unique $\eta \in (\R_{>0})^{\mathbf{I}} $ that satisfies $\mu (\eta) =\eta$.
Then, let $J(y)$ be the Jacobian matrix of the rational function~$\mu$:
\begin{gather*}
J(y) = \left( \frac{\partial \mu_{i}}{\partial y_j} (y) \right)_{i,j \in \mathbf{I} }.
\end{gather*}
The periodicity of $\mu$ implies that the $t\big(\level + \dcn\big)$-th power of $J(\eta)$ is the identity matrix.
Thus all the eigenvalues of $J(\eta)$ are $t\big(\level + \dcn\big)$-th root of unities.
These eigenvalues can be written as
\begin{gather*}
{\rm e}^{ \frac{2 \pi{\rm i} m_1}{t (\level + \dcn)} } , {\rm e}^{ \frac{2 \pi{\rm i} m_2}{t (\level + \dcn)} } , \dots ,{\rm e}^{ \frac{2 \pi{\rm i} m_{\lvert \mathbf{I} \rvert}}{t (\level + \dcn)} } ,
\end{gather*}
using a sequence of integers $0 \leq m_1 \leq m_2 \leq \dots \leq m_{\lvert \mathbf{I} \rvert} < t\big(\level + \dcn\big)$. We say that this sequence $m_1 , \dots , m_{\lvert \mathbf{I} \rvert}$ is the exponents of $Q(X_r , \level)$.

We give a conjectural formula on the exponents in terms of root systems.
Let $\Delta$ be the root system of type~$X_r$ with an inner product $\innerproduct{\cdot}{\cdot}$ normalized as $\innerproduct{\alpha}{\alpha} =2$ for long roots $\alpha$.
Let $\alpha_1 , \dots , \alpha_r$ be simple roots of $\Delta$.
For any $a=1, \dots, r$, we define an integer $t_a$ by $t_a = 2/ \innerproduct{\alpha_a}{\alpha_a}$.
Let $\Delta^{{\rm long}}$ and $\Delta^{{\rm long}}$ be the set of the long roots and the short roots, respectively.
Let $\rho$ be the half of the sum of the positive roots.
Using these materials, we define two polynomials $N_{X_r,\level}(x)$ and $D_{X_r, \level}(x)$ by
\begin{gather*}
N_{X_r, \level}(x) = \prod_{a=1}^{r} \frac{ x^{t(\level + \dcn)} -1 }{ x^{t/t_a} -1 }, \qquad
D_{X_r, \level}(x)=D_{X_r, \level}^{{\rm long}} (x) D_{X_r, \level}^{{\rm short}} (x),
\end{gather*}
where the polynomials $D_{X_r, \level}^{{\rm long}}(x)$ and $D_{X_r, \level}^{{\rm short}}(x)$ are defined by
\begin{gather*}
D_{X_r, \level}^{{\rm long}}(x)=\prod_{\alpha \in \Delta^{{\rm long}} } { \big( x^t - {\rm e}^{ \frac{2 \pi{\rm i} \innerproduct{\rho}{\alpha}}{\level + \dcn} } \big) } ,
\qquad
D_{X_r, \level}^{{\rm short}} (x) = \prod_{\alpha \in \Delta^{{\rm short}}} { \big( x - {\rm e}^{\frac{2 \pi{\rm i} \innerproduct{\rho}{\alpha}}{\level + \dcn} } \big) }.
\end{gather*}

The following formula is the main conjecture of this paper.
It says that the exponents are obtained from the ratio of $N_{X_r, \level}(x)$ and $D_{X_r, \level}(x)$.
\begin{Conjecture}\label{intro conj: char poly}
 The following identity holds for the characteristic polynomial of $J (\eta)$:
 \begin{align*}
 \det (x I - J (\eta)) = \frac{N_{X_r, \level}(x)}{D_{X_r, \level}(x)}.
 \end{align*}
\end{Conjecture}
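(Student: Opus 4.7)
The plan is to compute $J(\eta)$ explicitly and match its characteristic polynomial with $N_{X_r,\level}(x)/D_{X_r,\level}(x)$ by identifying a root-theoretic eigenbasis.

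First, I would pin down the fixed point $\eta$ in closed form. Because $\mu$ is periodic and preserves $(\R_{>0})^{\mathbf{I}}$, the fixed point is unique and must coincide with the constant solution of the $Y$-system associated with $(X_r, \level)$. Such constant solutions are classical and can be expressed through products of sines of angles $\pi \innerproduct{\rho}{\alpha}/(\level+\dcn)$ for $\alpha \in \Delta^+$, equivalently through Kac--Peterson $S$-matrix entries at level $\level$. Observing that these are the same angles which appear in $D_{X_r, \level}(x)$ is already suggestive and guides the rest of the approach.

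Second, I would compute the Jacobian by writing $\mu = \mu_{k_N}\circ\cdots\circ\mu_{k_1}$ as a composition of single-vertex $Y$-seed mutations and applying the chain rule,
\begin{gather*}
 J(\eta) = \prod_{i=N}^{1} J_{\mu_{k_i}}\big(\mu_{k_{i-1}}\circ\cdots\circ\mu_{k_1}(\eta)\big).
\end{gather*}
Each factor is an explicit rank-one modification of a permutation-type matrix whose entries are rational in the current $y$-values, and at the fixed point all of these entries become explicit trigonometric expressions; the forward orbit of $\eta$ under the partial composition cycles through a similarly explicit finite set.

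Third, I would block-diagonalize $J(\eta)$ using the natural symmetries of $Q(X_r,\level)$---the rotational symmetry descending from a Coxeter-type element of $X_r$, together with the shift symmetry in the level direction---so that blocks get indexed by orbits of root data. Within each block, I would verify that the contribution to the characteristic polynomial matches the corresponding factor of $N/D$; in particular the phases $\mathrm{e}^{2\pi \mathrm{i} \innerproduct{\rho}{\alpha}/(\level+\dcn)}$ appearing in $D_{X_r,\level}(x)$ should emerge from the very trigonometric identities used to describe $\eta$.

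The main obstacle is the identification of the block structure with root system data in full generality. In the proven cases $(A_1,\level)$ and $(A_r,2)$, the quiver $Q(X_r,\level)$ is essentially a single chain of type $A$, so the linear algebra collapses to a tractable family of trigonometric calculations; but for arbitrary level and non-simply-laced types, passing from the quiver combinatorics to Lie-theoretic data will probably require extra input, either by folding from a simply-laced ambient quiver, or via cluster categorification that realises $J(\eta)$ as (a root of) the Auslander--Reiten translation acting on a level-$\level$ enhancement of the root lattice. Overcoming this step is where the real work lies.
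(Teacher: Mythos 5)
The statement you are trying to prove is the paper's main \emph{conjecture}; the paper itself establishes it only for $(A_1,\level)$ and $(A_r,2)$ (Theorem~\ref{theorem: A1 level}), and your proposal does not close the gap between those cases and the general one. Your own final paragraph concedes this: the identification of a block structure of $J(\eta)$ with root-system data for arbitrary $(X_r,\level)$ is exactly the open content of the conjecture, and saying that the phases ${\rm e}^{2\pi{\rm i}\innerproduct{\rho}{\alpha}/(\level+\dcn)}$ ``should emerge'' from trigonometric identities is a restatement of the goal, not an argument. There is a second, more hidden gap in your first step: the closed form of $\eta$ in terms of sines of $\pi\innerproduct{\rho}{\alpha}/(\level+\dcn)$ (equivalently, $q$-dimensions of Kirillov--Reshetikhin modules) is itself only conjectural in general type --- the paper's Remark in the introduction notes it is proved for type $A$, $D$, $E_6$ and the classical types, but not for $E_7$, $E_8$, $F_4$, $G_2$. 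So even the starting point of your computation is not available in full generality, and the proposed ``block-diagonalization via a Coxeter-type symmetry'' is not something the quiver $Q(X_r,\level)$ visibly possesses for non-simply-laced types, where the mutation loop involves a nontrivial permutation $\nu$ and columns of different heights $t_a\level-1$.

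For the cases that can be proved, your outline is in the same spirit as the paper's argument but less precise about the mechanism. The paper does not multiply out the chain-rule factors of $J(\eta)$ directly; instead it conjugates $J_\gamma(\eta)$ by diagonal matrices to obtain $L(\eta)=L_-L_+$, where $L_\pm$ are tridiagonal-type matrices whose off-diagonal entries $z_m=1/(1+\tilde\eta_m)$ are explicit trigonometric ratios, and then constructs eigenvectors of $L(\eta)^{\mathsf{T}}$ explicitly: a non-zero solution $(\phi_m)$ of the three-term recursion $\phi_{m-1}+\phi_{m+1}=z_m^{-1}(\lambda+\lambda^{-1})\phi_m$ with boundary conditions $\phi_0=\phi_\level=0$ yields an eigenvector with eigenvalue $\lambda^2$, and such solutions are exhibited as values $P_m^{(a)}(\zeta)$ of certain Chebyshev-like Laurent polynomial determinants for $\lambda=\zeta^a$, $a=2,\dots,\level$. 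The $(A_r,2)$ case is then deduced from $(A_1,r+1)$ by level--rank duality rather than by an independent computation. If you want to salvage your proposal, the honest conclusion is a proof of the two known cases along these lines, with the general case left exactly as open as the paper leaves it.
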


We prove the conjecture in the following cases. In these cases, we give explicit expressions for eigenvectors of $J(\eta)$ using known explicit expressions for $\eta$.
\begin{Theorem}\label{intro theorem: A1 level}
 Conjecture~{\rm \ref{intro conj: char poly}} is true in the following cases:
 \begin{enumerate}\itemsep=0pt
 \item[$1)$] $(A_1, \level)$ for all $\level \geq 2$,
 \item[$2)$] $(A_r , 2)$ for all $r\geq 1$.
 \end{enumerate}
\end{Theorem}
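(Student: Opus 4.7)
The plan is to prove both cases by explicit diagonalization of $J(\eta)$. For each listed pair $(X_r, \level)$ the strategy has four steps: (i) use the known closed form for the fixed point $\eta$, (ii) compute the Jacobian $J(\eta)$ entry by entry, (iii) produce an explicit family of eigenvectors, and (iv) match the resulting multiset of eigenvalues with the roots of $N_{X_r, \level}(x)/D_{X_r, \level}(x)$. This is the route already signposted in the statement of the theorem.

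For $(A_1, \level)$, the quiver $Q(A_1, \level)$ is the alternately-oriented doubling of type $A_{\level-1}$ used in the $Y$-system formulation, so $\mu$ factors as a product of two bipartite mutations and $J(\eta)$ decomposes accordingly. Since $A_1$ is simply-laced we have $t = 1$ and $\dcn = 2$; the numerator is $N_{A_1, \level}(x) = \big(x^{\level+2} - 1\big)/(x - 1)$ and the denominator is $D_{A_1, \level}(x) = \big(x - e^{2\pi \mathrm{i}/(\level+2)}\big)\big(x - e^{-2\pi \mathrm{i}/(\level+2)}\big)$. The fixed-point components admit well-known closed forms as ratios of $\sin\!\big(\pi s/(\level+2)\big)$-values, and the resulting $J(\eta)$ is tridiagonal-like in each bipartite block. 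I would diagonalize it using the discrete sine basis indexed by $k = 1, \dots, \level - 1$, obtaining eigenvalues that are $(\level+2)$-th roots of unity; comparing multiplicities then gives exactly the set of roots predicted by $N_{A_1, \level}(x)/D_{A_1, \level}(x)$.

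For $(A_r, 2)$, the structure is parallel: the quiver $Q(A_r, 2)$ is an alternately oriented type $A_r$ quiver, the fixed point $\eta$ has a similarly explicit form, and since all roots of $A_r$ are long we have $D_{A_r, 2}^{\mathrm{short}}(x) = 1$, so the denominator is $D_{A_r, 2}(x) = \prod_{\alpha \in \Delta}\big(x - e^{2\pi\mathrm{i}(\rho,\alpha)/(r+3)}\big)$. The appropriate Fourier-type eigenvectors now live on the $A_r$ index set, and the eigenvalues that arise are $(r+3)$-th roots of unity. I would either carry out this computation directly or, more efficiently, reduce to a generalization of the first case by the natural level-rank duality that identifies $Q(A_1,\level)$ with $Q(A_{\level-1},2)$ up to reorientation.

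The main obstacle is step (iv), the combinatorial matching between the spectrum of $J(\eta)$ produced by the sine diagonalization and the multiset of roots of $N_{X_r, \level}(x)/D_{X_r, \level}(x)$. For $A_1$ this is immediate since $\Delta = \{\pm\alpha_1\}$ with $(\rho, \pm\alpha_1) = \pm 1$; for $(A_r, 2)$ one must check that the multiset $\{(\rho, \alpha) \bmod (r+3) : \alpha \in \Delta\}$ matches the eigenvalue indexing produced by the Fourier basis, which is a root-system calculation based on the identity $(\rho, \alpha_i + \alpha_{i+1} + \dots + \alpha_j) = j - i + 1$ for positive roots of $A_r$. Once this bookkeeping is in place, the rest is the routine verification that the characteristic polynomial of $J(\eta)$ equals the proposed rational function.
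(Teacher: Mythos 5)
Your overall architecture coincides with the paper's: for $(A_1,\level)$ one uses the known closed form of the fixed point, conjugates $J_\gamma(\eta)$ to a product $L_-L_+$ of two explicit sparse matrices, and exhibits explicit eigenvectors; for $(A_r,2)$ one invokes level--rank duality to reduce to $(A_1,r+1)$ (the paper shows $\mu_{\gamma(A_1,r+1)} = \iota\circ\mu_{\gamma(A_r,2)}\circ\iota$ with $\iota$ the coordinatewise inversion) together with the root-system count $\#\{\alpha\in\Delta_+ \,|\, \innerproduct{\rho}{\alpha}=a\}=r+1-a$, exactly the bookkeeping you describe. So the second case and the general plan are fine.

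The genuine gap is in your step (iii) for $(A_1,\level)$: the discrete sine basis does \emph{not} diagonalize $J(\eta)$, and this is precisely where the real work of the proof lies. The eigenvector equation for $L(\eta)^{\mathsf{T}}$ reduces to the three-term recurrence $\phi_{m-1}+\phi_{m+1}=\phi_m\, z_m^{-1}(\lambda+\lambda^{-1})$ with boundary conditions $\phi_0=\phi_\level=0$, where $z_m=\sin\frac{\pi m}{\level+2}\sin\frac{\pi(m+2)}{\level+2}\big/\sin^2\frac{\pi(m+1)}{\level+2}$ depends on $m$. A vector $\phi_m=\sin\frac{\pi a m}{\level+2}$ satisfies $\phi_{m-1}+\phi_{m+1}=2\cos\frac{\pi a}{\level+2}\,\phi_m$ with a \emph{constant} coefficient, so it cannot solve the recurrence unless $z_m$ were constant, which it is not. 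The actual eigenvectors in the paper are the $2\times 2$ determinants $\phi_m^{(a)}=\alpha_0^{(a)}(\zeta)\beta_m^{(a)}(\zeta)-\alpha_m^{(a)}(\zeta)\beta_0^{(a)}(\zeta)$ built from $\alpha^{(a)}(x)=x^a+x^{-a}$ and $\beta^{(a)}(x)=(x^{a-1}-x^{-a+1})/(x-x^{-1})$, i.e., a specific mixture of cosine-type and (ratio-of-sines)-type sequences, and verifying that these satisfy the variable-coefficient recurrence, vanish at both endpoints, and are nonzero occupies Lemmas on $\al$, $\be$, $\PP$ and Theorem \ref{theorem:solution phi}. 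Without replacing your sine ansatz by something of this kind (or an equivalent device), step (iv) never gets off the ground, so as written the proposal does not close the $(A_1,\level)$ case.
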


This conjecture gives a relationship between cluster algebras and root systems.
Even more interestingly, it relates to the representation theory of affine Lie algebras.
Let us explain this.
Let $\mathfrak{g}$ be the finite dimensional simple Lie algebra of type $X_r$,
and $\hat{\mathfrak{g}}$ be the affine Lie algebra associated with $\mathfrak{g}$.
It is a central extension of the loop algebra of $\mathfrak{g}$ together with a derivation operator $d$.
Let $L(\Lambda)$ be the integrable highest weight $\hat{\mathfrak{g}}$-module with the highest weight $\Lambda$.
The (specialized) character $\chi_\Lambda (q)$ is a $q$-series such that its coefficients are the multiplicities of the eigenvalues of the derivation operator $d$ in $L(\Lambda)$.
If we set $q={\rm e}^{2 \pi{\rm i} \tau}$, the character $\chi_\Lambda(q)$ converges to a holomorphic function
on the upper half-plane $\mathbb{H} = \{ \tau \in \C \,|\, \image \tau >0 \}$.
Let $\tau \downarrow 0$ denote the limit in the positive imaginary axis.
Kac and Peterson~\cite{KacPet} found that $\chi_\Lambda (q)$ has the following asymptotics:
 \begin{gather*}
 \lim_{\tau\downarrow 0} \chi_{\Lambda} \big({\rm e}^{2 \pi{\rm i} \tau}\big) {\rm e}^{-\frac{\pi{\rm i} c(\level)}{12 \tau}} = a(\Lambda),
\end{gather*}
for some rational number $c$ and real number $a(\Lambda)$.
The real number $a(\Lambda)$ is called the asymptotic dimension of $L(\Lambda)$.
They also found the explicit formula of asymptotic dimensions.
Using their formula, we find that the right-hand side of our conjecture at $x=1$ and the asymptotic dimension
of $L(\level \Lambda_0)$ where $\Lambda_0$ is the $0$-th fundamental weight is related as follows:
\begin{gather*}
 \frac{N_{X_r, \level} (1)}{D_{X_r, \level}(1)} = \lvert P / Q \rvert^{-1} a(\level \Lambda_0)^{-2},
\end{gather*}
where $P$ and $Q$ are the weight lattice and the root lattice for $\mathfrak{g}$, respectively.
Therefore, if the conjecture is true, we can describe the asymptotic dimension using the exponents.

Another interesting feature about the exponents that we deal with in this paper is that the relationship between the exponents and partition $q$-series.
Partition $q$-series, which is defined by Kato and Terashima~\cite{KatoTerashima}, is a certain $q$-series associated with a mutation sequence.
Typically, it has the following form:
\begin{gather*}
 \mathcal{Z} (q) = \sum_{u \in (\Z_{\geq 0})^{T} }
 \frac{q^{ \frac{1}{2} u^{\mathsf{T}} K u }}{ (q)_{u_1} \cdots (q)_{u_T} },
\end{gather*}
where $T$ is a positive integer, $K$ is a $T \times T$ positive definite symmetric matrix with rational coefficients, and $(q)_n$ is defined by
\begin{gather*}
 (q)_{n} = \prod_{k=1}^{n} \big(1 - q^k\big).
\end{gather*}
This type of $q$-series has been studied in various contexts, such as characters of conformal field theories, Rogers-Ramanujan type identities, algebraic $K$-theory, and 3-dimensional topology~\cite{BartlettWarnaar,calegari2017bloch,FGK,GaroufalidisLe,georgiev1995combinatorial,HKOTT,KNS,Lee,NRT,Stoyanovskii,StoyanovskiiFeigin,Warnaar2007,WarnaarZudilin,Zagier}.

As explained earlier, we can obtain a mutation sequence for any pair $(X_r, \level)$ that describes the corresponding $Y$-system.
We can define the partition $q$-series $\mathcal{Z} (q)$ associated with this mutation sequence.
We show that the exponents and the asymptotics of this partition $q$-series are related as follows:
\begin{gather*}
 \lim_{\varepsilon \to 0}\mathcal{Z} \big({\rm e}^{-\varepsilon}\big) {\rm e}^{-\frac{a}{\varepsilon}} =
\sqrt{\frac{\det A_+ }{\det(I - J (\eta))}},
\end{gather*}
where $a$ is a positive real number and $A_+$ is a matrix determined by the mutation sequence that we call (the plus one of) the Neumann--Zagier matrix.
This result explains the conjectural relationship between the exponents and the asymptotic dimension mentioned earlier from a~viewpoint of $q$-series.
As observed in~\cite{KatoTerashima}, the partition $q$-series conjecturally coincides with a~finite sum of the string functions associated with the integrable highest $\hat{\mathfrak{g}}$-module $L(\level \Lambda_0)$ via the conjecture in~\cite{KNS}.
Since Kac and Peterson~\cite{KacPet} showed that the asymptotic dimension appears in the asymptotics of the string functions,
we can obtain the relationship between the exponents and the asymptotic dimension from the relationship between the partition $q$-series and the string functions.
We see that the resulting relationship is exactly our conjecture at $x = 1$.
This gives us a consistency between our conjecture on exponents and the known conjecture on $q$-series, and also gives an interesting connection between the theory of cluster algebras and the representation theory of affine Lie algebras.

\begin{Remark} The tuple $\eta$ of positive real numbers plays an important role in our definition of the exponents. We remark that there is a conjectural explicit formula of $\eta$ for general $(X_r,\level)$ described by the $q$-dimension of Kirillov-Reshetikhin modules~\cite[Conjecture~2]{Kuniba93} (see also~\cite[Conjecture~14.2 and Remark~14.3]{KNS}).
 A version of this conjecture is also found in~\cite{Kirillov87}.
 It is not difficult to verify this conjecture for type $A$.
 It was also proved for type $D$ by Lee~\cite{Lee13},
 for type~$E_6$ by Gleitz~\cite{Gleitz},
 and for all classical types by Lee~\cite{Lee17}.
\end{Remark}

This paper is organized as follows.
In Section~\ref{sec:quiver mutation and y-seed mutation}, we review quiver mutations and $Y$-seed mutations.
In Section~\ref{sec:exponents}, we define the exponents associated with the $Y$-system for any pair~$(X_r, \level)$, and give a conjecture that describe the exponents in terms of the root system of type~$X_r$.
In Section~\ref{sec:proofs for type A}, we give proofs of this conjecture for $(A_q, \level)$ and $(A_r, 2)$.
In Section~\ref{sec:partition q series}, we study partition $q$-series.
We calculate the asymptotics of partition $q$-series in Section~\ref{sec:asymptotics of Z},
and give an explicit formula for the partition $q$-series associated with any pair $(X_r, \level)$ in Section~\ref{sec:partition q of gamma Xr level}.
In Section~\ref{sec:relationships with affine Lie alg}, we discuss relationships between the topics discussed so far and the representation theory of affine Lie algebras.
We see that the asymptotics of the conjectural identity between the partition $q$-series and the string functions yields our conjecture at $x=1$.
We point out that this gives the relationship between the exponents and the asymptotic dimensions of integrable highest weight representations of affine Lie algebras.

\section[Quiver mutations and $Y$-seed mutations]{Quiver mutations and $\boldsymbol{Y}$-seed mutations}\label{sec:quiver mutation and y-seed mutation}

Here, we review quiver mutations and $Y$-seed mutations following~\cite{FZ4}.
\subsection{Quiver mutations}
Set $n$ to be a fixed positive integer.
A \emph{quiver} is a directed graph with vertices $\mathbf{I}:=\{1 ,\dots , n \}$ that may have multiple edges. In this paper, we assume that quivers do not have 1-loops and 2-cycles:
\begin{align*}
\begin{tikzpicture}
\node(a) at (0,0) [circle,draw,fill,scale=0.6]{};
\draw [arrows={-Stealth[scale=1.2]}] (a) edge [in=140,out=40,looseness=15] (a);
\node(b) at (2,0.2) [circle,draw,fill,scale=0.6]{};
\node(c) at (3,0.2) [circle,draw,fill,scale=0.6]{};
\draw[arrows={-Stealth[scale=1.2]}] (b) edge [bend left=30](c);
\draw[arrows={-Stealth[scale=1.2]}] (c) edge [bend left = 30](b);
\node (lu) at (-0.7,1.4-0.4) []{};
\node (rb) at (0.7,0-0.4) []{};
\node (lb) at (-0.7,-0.4) []{};
\node (ru) at (0.7,1.4-0.4) []{};
\draw[] (lu)--(rb);
\draw[] (lb)--(ru);
\node (lu2) at (-0.7+2.5,1.4-0.4) []{};
\node (rb2) at (0.7+2.5,0-0.4) []{};
\node (lb2) at (-0.7+2.5,-0.4) []{};
\node (ru2) at (0.7+2.5,1.4-0.4) []{};
\draw[] (lu2)--(rb2);
\draw[] (lb2)--(ru2);
\end{tikzpicture}.
\end{align*}
For example,
\[
\begin{tikzpicture}
[scale=1.1]
\node(k) at (0,0) [circle,scale=0.6,draw,fill]{};
\node(label) at (0.25,-0.25) []{$2$};
\node(a) at (-1,0) [circle,scale=0.6,draw,fill]{};
\node(label1) at (-1+0.25,-0.25) []{$1$};
\node(b) at (0.7,0.7) [circle,scale=0.6,draw,fill]{};
\node(label3) at (0.7+0.25,0.7-0.25) []{$3$};
\node(c) at (0,-1) [circle,scale=0.6,draw,fill]{};
\node(label4) at (0.25,-1-0.25) []{$4$};
\draw[arrows={-Stealth[scale=1.2] Stealth[scale=1.2]}] (k)--(c);
\foreach \from/\to in {k/b,a/k,b/a}
\draw[arrows={-Stealth[scale=1.2]}] (\from)--(\to);
\end{tikzpicture}
\]
is a quiver.

\begin{Definition}\label{def: quiver mutation}
 Let $Q$ be a quiver, and let $k$ be a vertex of $Q$.
 The \emph{quiver mutation} $\mu_k$ is a~transformation that transforms $Q$ into the quiver $\mu_k(Q)$ defined by the following three steps:
 \begin{enumerate}\itemsep=0pt
 \item For each length two path $i \to k \to j$, add a new arrow $i \to j$.
 \item Reverse all arrows incident to the vertex $k$.
 \item Remove all 2-cycles.
 \end{enumerate}
\end{Definition}

The transition
\[
\begin{tikzpicture}
[scale=1.0]
\node(k) at (0,0) [circle,scale=0.6,draw,fill]{};
\node(label) at (0.25,-0.25) []{$k$};
\node(a) at (-1,0) [circle,scale=0.6,draw,fill]{};
\node(b) at (0.7,0.7) [circle,scale=0.6,draw,fill]{};
\node(c) at (0,-1) [circle,scale=0.6,draw,fill]{};
\draw[arrows={-Stealth[scale=1.2] Stealth[scale=1.2]}] (k)--(c);
\foreach \from/\to in {k/b,a/k,b/a}
\draw[arrows={-Stealth[scale=1.2]}] (\from)--(\to);
\node(mutation) at (1.8,-0.2) [scale=1.2] {$\longrightarrow$};
\node(mutation) at (1.8,0.2) [] {$\mu_k$};
\node(k') at (4,0) [circle,scale=0.6,draw,fill]{};
\node(label') at (4.25,-0.25) []{$k$};
\node(a') at (3,0) [circle,scale=0.6,draw,fill]{};
\node(b') at (4.7,0.7) [circle,scale=0.6,draw,fill]{};
\node(c') at (4,-1) [circle,scale=0.6,draw,fill]{};
\draw[arrows={-Stealth[scale=1.2] Stealth[scale=1.2]}] (c')--(k');
\draw[arrows={-Stealth[scale=1.2] Stealth[scale=1.2]}] (a')--(c');
\draw[arrows={-Stealth[scale=1.2]}] (b')--(k');
\draw[arrows={-Stealth[scale=1.2]}] (k')--(a');
\end{tikzpicture}
\]
is an example of a quiver mutation, where we have omitted all labels other than the vertex $k$.

It is sometimes convenient to identify a quiver with a skew-symmetric matrix.
Let $Q$ be a~quiver.
For vertices $i$,~$j$, we denote the number of arrows from $i$ to $j$ by $Q_{ij}$.
Let $B$ be the $n \times n$ matrix whose $(i,j)$-entry is defined by
\begin{gather*}
 B_{ij} = Q_{ij} - Q_{ji}.
\end{gather*}
Then the matrix $B$ is skew-symmetric.
Conversely, given any skew-symmetric matrix $B$,
we can construct a quiver $Q$ by
\begin{gather*}
Q_{ij} = \maxzero{B_{ij}},
\end{gather*}
where $\maxzero{x} = \max(0,x)$, and this gives a bijection between the set of $n \times n$ skew-symmetric integer matrices and the set of quivers with vertices labeled with $1, \dots, n$.

In the language of skew-symmetric matrices, the quiver mutation $Q \mapsto \mu_k(Q)$ can be described as follows:
\begin{gather*}
\widetilde{B}_{ij} =
 \begin{cases}
-B_{ij} & \text{if $i=k$ or $j=k$},\\
B_{ij} + B_{ik}B_{kj} & \text{if $B_{ik}>0$ and $B_{kj}>0$}, \\
B_{ij} - B_{ik}B_{kj}& \text{if $B_{ik}<0$ and $B_{kj}<0$}, \\
B_{ij} & \text{otherwise},
\end{cases}
\end{gather*}
where $B$ and $\widetilde{B}$ are the skew-symmetric matrices corresponding to $Q$ and $\mu_k(Q)$, respectively.

Let $\nu$ be a permutation of $\{1 ,\dots , n \}$.
We define the action of $\nu$ on a quiver $Q$ by $\sigma(Q)_{ij} = Q_{\nu^{-1}(i)\nu^{-1}(j)}$.

Let $m=(m_1 ,\dots , m_T)$ be a sequence of vertices of a quiver $Q$, and $\nu$ be a permutation on the vertices of $Q$.
Consider the transitions of quivers
\begin{gather*}
Q(0) \xrightarrow{\mu_{m_1}} Q(1) \xrightarrow{\mu_{m_2}} \cdots
\xrightarrow{\mu_{m_T}} Q(T)
\xrightarrow{\nu} \nu(Q(T)).
\end{gather*}
where $Q(0):=Q$. We say that a triple $\gamma =(Q,m,\nu)$ is a \emph{mutation loop} if $Q(0)=\nu(Q(T))$

We define the following notion introduced in~\cite{Nakb}.
\begin{Definition}\label{def:regular}
 We say that a mutation loop $\gamma =(Q,m,\nu)$ is \emph{regular} if it satisfies the following conditions:
 \begin{enumerate}\itemsep=0pt
 \item The set $\{ \nu^n (m_t) \,|\, n\in \Z ,\, 1\leq t \leq T\}$ coincides with~$\mathbf{I}$.
 \item The vertices $m_1, \dots , m_T$ belong to distinct $\nu$-orbits in~$\mathbf{I}$.
 \end{enumerate}
\end{Definition}

\subsection[$Y$-seed mutations]{$\boldsymbol{Y}$-seed mutations}
Let $\rf$ be the field of rational functions in the variables $y_1 , \dots , y_n$ over $\Q$, and let
\begin{gather*}
\univsf = \left\{ \frac{f(y_1, \dots ,y_n)}{g(y_1 , \dots, y_n)} \in \mathcal{F} \,\bigg|\, \begin{array}{l} \text{$f$ and $g$ are non-zero polynomials in $\Q [y_1,\dots, y_n ] $}\\
\text{with non-negative coefficients}\end{array} \right\}
\end{gather*}
be the set of subtraction-free rational expressions in $y_1, \dots, y_n$ over $\Q$.
This is closed under the usual multiplication and addition, and is called \emph{universal semifield} in the variables $y_1 ,\dots , y_n$.

A \emph{$Y$-seed} is a pair $(Q,Y)$ where $Q$ is a quiver with vertices $\{ 1 , \dots , n \}$, and $Y = (Y_1 \dots, Y_n)$ is an $n$-tuple of elements of $\univsf$.
Given this, $Y$-seed mutations are defined as follows.

\begin{Definition} Let $(Q,Y)$ be a $Y$-seed, and let $k \in \{ 1, \dots , n \}$.
 The \emph{$Y$-seed mutation} $\mu_k$ is a~transformation that transforms $(Q,Y)$ into
 the $Y$-seed $\mu_k (Q,Y) = \big(\widetilde{Q} , \widetilde{Y} \big)$, where $\widetilde{Q} = \mu_k (Q)$ defined in Definition \ref{def: quiver mutation}, and $\widetilde{Y}$ is defined as follows:
 \begin{gather*}
 \widetilde{Y}_i =
 \begin{cases}
 Y_{k}^{-1} & \text{if $i=k$}, \\
 Y_{i} \big(Y_k^{-1}+1\big)^{-Q_{ki}} & \text{if $i \neq k$, $Q_{ki} \geq 0$}, \\
 Y_{i} (Y_k+1)^{Q_{ik}} & \text{if $i \neq k$, $Q_{ik} \geq 0$}.
 \end{cases}
 \end{gather*}
\end{Definition}

Let $\nu$ be a permutation of $\{1 ,\dots , n \}$.
We define an action of $\nu$ on a $Y$-seed by $\nu(Q,Y) = (\nu(Q),\nu(Y) )$ where $\nu(Y)_i = Y_{\nu^{-1}(i)}$.

Let $\gamma = (Q , m ,\nu)$ be a mutation loop,
and let $(Q(0) , Y(0))$ be the $Y$-seed defined by $Q(0) = Q$ and $Y(0)=(y_1 ,\dots, y_n)$.
Then the mutation loop $\gamma$ gives the following transitions of $Y$-seeds:
\begin{gather}\label{eq:Y transition}
(Q(0),Y(0)) \xrightarrow{\mu_{m_1}}
\cdots
\xrightarrow{\mu_{m_T}} (Q(T),Y(T))
\xrightarrow{\nu} (\nu(Q(T)), \nu (Y(T))).
\end{gather}
Although $Q(0)=\nu(Q(T))$ holds from the definition of a mutation loop, we have $Y(0) \neq \nu(Y(T))$ in general. We denote $\nu(Y(T))$ by $\mu_\gamma(y)$, and call it the \emph{cluster transformation} of $\gamma$. It is an element of $(\univsf)^n$.

\section[Exponents determined from a pair $(X_r, \ell)$]{Exponents determined from a pair $\boldsymbol{(X_r, \ell)}$}\label{sec:exponents}
\subsection{Root systems}\label{sec:root system}

In this section, we introduce notations of root systems.
Let $\Delta$ be a~root system of type $X_r$ on a~$\R$-vector space with an inner product normalized as $\innerproduct{\alpha}{\alpha}=2$ for long roots $\alpha$, where~$X_r$ is a~finite type Dynkin diagram in the Fig.~\ref{fig:finite dynkin}.
\begin{figure}[t]
 \centering
 \begin{tabular}{ll}
 \raisebox{4mm}{$A_r$} &
 \begin{tikzpicture}
 [scale=1.2,auto=left,black_vertex/.style={circle,draw,fill,scale=0.75}]
 \node (1) at (0,0) [black_vertex][label=below:$1$]{};
 \node (2) at (1,0) [black_vertex][label=below:$2$]{};
 \node (dots) at (2,0) []{$\cdots$};
 \node (r-1) at (3,0) [black_vertex] [label=below:$r-1$]{};
 \node (r) at (4,0) [black_vertex][label=below:$r$]{};
 \draw [] (1)--(2);
 \draw [] (2)--(dots);
 \draw [] (dots)--(r-1);
 \draw [] (r-1)--(r);
 \end{tikzpicture} \\
 \raisebox{4mm}{$B_r$} &
 \begin{tikzpicture}
 [scale=1.2,auto=left,black_vertex/.style={circle,draw,fill,scale=0.75}]
 \node (1) at (0,0) [black_vertex][label=below:$1$]{};
 \node (2) at (1,0) [black_vertex][label=below:$2$]{};
 \node (dots) at (2,0) []{$\cdots$};
 \node (r-1) at (3,0) [black_vertex][label=below:$r-1$]{};
 \node (r) at (4,0) [black_vertex][label=below:$r$]{};
 \draw [] (1)--(2);
 \draw [] (2)--(dots);
 \draw [] (dots)--(r-1);
 \draw [] (3,0.08)--(4,0.08);
 \draw [] (3,-0.08)--(4,-0.08);
 \draw
 (3.5,0) --++ (120:.3)
 (3.5,0) --++ (-120:.3);
 \end{tikzpicture} \\
 \raisebox{4mm}{$C_r$} &
 \begin{tikzpicture}
 [scale=1.2,auto=left,black_vertex/.style={circle,draw,fill,scale=0.75}]
 \node (1) at (0,0) [black_vertex][label=below:$1$]{};
 \node (2) at (1,0) [black_vertex][label=below:$2$]{};
 \node (dots) at (2,0) []{$\cdots$};
 \node (r-1) at (3,0) [black_vertex][label=below:$r-1$]{};
 \node (r) at (4,0) [black_vertex][label=below:$r$]{};
 \draw [] (1)--(2);
 \draw [] (2)--(dots);
 \draw [] (dots)--(r-1);
 \draw [] (3,0.08)--(4,0.08);
 \draw [] (3,-0.08)--(4,-0.08);
 \draw
 (3.5,0) --++ (60:.3)
 (3.5,0) --++ (-60:.3);
 \end{tikzpicture} \\
 \raisebox{4mm}{$D_r$} &
 \begin{tikzpicture}
 [scale=1.2,auto=left,black_vertex/.style={circle,draw,fill,scale=0.75}]
 \node (1) at (0,0) [black_vertex][label=below:$1$]{};
 \node (2) at (1,0) [black_vertex][label=below:$2$]{};
 \node (dots) at (2,0) []{$\cdots$};
 \node (r-2) at (3,0) [black_vertex] [label=below:$r-2$]{};
 \node (r-1) at (4,0) [black_vertex][label=below:$r-1$]{};
 \node (r) at (3,1) [black_vertex][label=right:$r$]{};
 \draw [] (1)--(2);
 \draw [] (2)--(dots);
 \draw [] (dots)--(r-2);
 \draw [] (r-2)--(r-1);
 \draw [] (r-2)--(r);
 \end{tikzpicture} \\
 \raisebox{4mm}{$E_6$} &
 \begin{tikzpicture}
 [scale=1.2,auto=left,black_vertex/.style={circle,draw,fill,scale=0.75}]
 \node (1) at (0,0) [black_vertex][label=below:$1$]{};
 \node (2) at (1,0) [black_vertex][label=below:$2$]{};
 \node (3) at (2,0) [black_vertex][label=below:$3$]{};
 \node (5) at (3,0) [black_vertex] [label=below:$5$]{};
 \node (6) at (4,0) [black_vertex][label=below:$6$]{};
 \node (4) at (2,1) [black_vertex][label=right:$4$]{};
 \draw [] (1)--(2);
 \draw [] (2)--(3);
 \draw [] (3)--(4);
 \draw [] (3)--(5);
 \draw [] (5)--(6);
 \end{tikzpicture} \\
 \raisebox{4mm}{$E_7$} &
 \begin{tikzpicture}
 [scale=1.2,auto=left,black_vertex/.style={circle,draw,fill,scale=0.75}]
 \node (1) at (0,0) [black_vertex][label=below:$1$]{};
 \node (2) at (1,0) [black_vertex][label=below:$2$]{};
 \node (3) at (2,0) [black_vertex][label=below:$3$]{};
 \node (4) at (3,0) [black_vertex] [label=below:$4$]{};
 \node (5) at (4,0) [black_vertex][label=below:$5$]{};
 \node (6) at (5,0) [black_vertex][label=below:$6$]{};
 \node (7) at (2,1) [black_vertex][label=right:$7$]{};
 \draw [] (1)--(2);
 \draw [] (2)--(3);
 \draw [] (3)--(4);
 \draw [] (4)--(5);
 \draw [] (5)--(6);
 \draw [] (3)--(7);
 \end{tikzpicture} \\
 \raisebox{4mm}{$E_8$} &
 \begin{tikzpicture}
 [scale=1.2,auto=left,black_vertex/.style={circle,draw,fill,scale=0.75}]
 \node (1) at (0,0) [black_vertex][label=below:$1$]{};
 \node (2) at (1,0) [black_vertex][label=below:$2$]{};
 \node (3) at (2,0) [black_vertex][label=below:$3$]{};
 \node (4) at (3,0) [black_vertex][label=below:$4$]{};
 \node (5) at (4,0) [black_vertex][label=below:$5$]{};
 \node (6) at (5,0) [black_vertex][label=below:$6$]{};
 \node (7) at (6,0) [black_vertex][label=below:$7$]{};
 \node (8) at (4,1) [black_vertex][label=right:$8$]{};
 \draw [] (1)--(2);
 \draw [] (2)--(3);
 \draw [] (3)--(4);
 \draw [] (4)--(5);
 \draw [] (5)--(6);
 \draw [] (6)--(7);
 \draw [] (5)--(8);
 \end{tikzpicture} \\
 \raisebox{4mm}{$F_4$} &
 \begin{tikzpicture}
 [scale=1.2,auto=left,black_vertex/.style={circle,draw,fill,scale=0.75}]
 \node (1) at (0,0) [black_vertex][label=below:$1$]{};
 \node (2) at (1,0) [black_vertex][label=below:$2$]{};
 \node (3) at (2,0) [black_vertex][label=below:$3$]{};
 \node (4) at (3,0) [black_vertex][label=below:$4$]{};
 \draw [] (1)--(2);
 \draw [] (1,0.08)--(2,0.08);
 \draw [] (1,-0.08)--(2,-0.08);
 \draw [] (3)--(4);
 \draw
 (1.5,0) --++ (120:.3)
 (1.5,0) --++ (-120:.3);
 \end{tikzpicture} \\
 \raisebox{4mm}{$G_2$} &
 \begin{tikzpicture}
 [scale=1.2,auto=left,black_vertex/.style={circle,draw,fill,scale=0.75}]
 \node (1) at (0,0) [black_vertex][label=below:$1$]{};
 \node (2) at (1,0) [black_vertex][label=below:$2$]{};
 \draw [] (1)--(2);
 \draw [] (0,0.08)--(1,0.08);
 \draw [] (0,-0.08)--(1,-0.08);
 \draw
 (0.5,0) --++ (120:.3)
 (0.5,0) --++ (-120:.3);
 \end{tikzpicture} \\
 \end{tabular}
 \caption{The list of finite type Dynkin diagrams.}
 \label{fig:finite dynkin}
\end{figure}
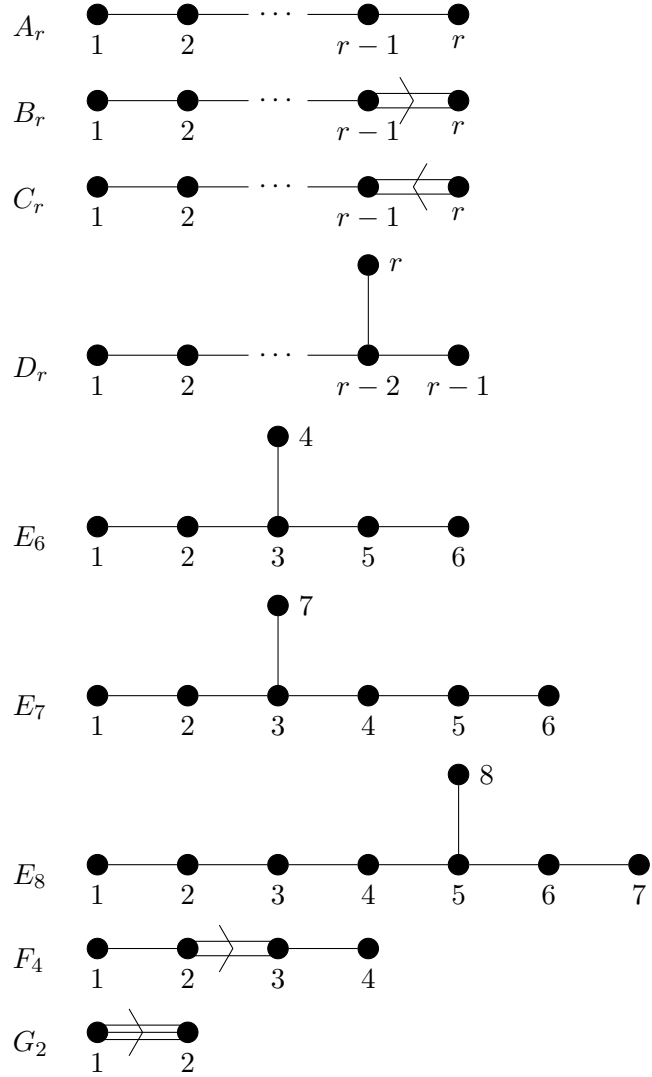
Let $\alpha_1 , \dots, \alpha_r$ be simple roots, where the numberings are consistent with the numberings of nodes in Fig.~\ref{fig:finite dynkin}.
Let $\Delta^{{\rm long}}$ and $\Delta^{{\rm short}}$ be the set of long roots and short roots, respectively. Let $\Delta_+$ be the set of positive roots, and $\rho$ be the half of the sum of the positive roots:
\begin{gather*}
 \rho = \frac{1}{2} \sum_{\alpha \in \Delta_+} \alpha.
\end{gather*}

We define an integer $t$ by
\begin{gather}\label{eq: t list}
t =
\begin{cases}
1 & \text{if $X_r = A_r,D_r,E_6 ,E_7$ or $E_8$},\\
2 & \text{if $X_r = B_r,C_r$ or $F_4$}, \\
3 & \text{if $X_r =G_2$}.
\end{cases}
\end{gather}
For $a=1, \dots, r$, we also define an integer $t_a \in \{ 1,2,3 \}$ by
\begin{gather*}
 t_a = \frac{2}{\innerproduct{\alpha_a}{\alpha_a}}.
\end{gather*}

\subsection[Quiver $Q(X_r,\ell)$]{Quiver $\boldsymbol{Q(X_r,\ell)}$}\label{section: quiver Xr l}
For any finite type Dynkin diagram $X_r$ and positive integer $\level$ such that $\level \geq 2$ (called a \emph{level}), Inoue, Iyama, Keller, Kuniba and Nakanishi~\cite{IIKKNa,IIKKNb} defined a quiver $Q=Q(X_r, \level)$ and a~mutation loop $\gamma = \gamma(X_r ,\level)$ on $Q(X_r, \level)$ that have the following periodicity:
\begin{gather*}
 \underbrace{\mu_\gamma \circ \mu_\gamma \circ \dots \circ \mu_\gamma}_{t(\level + \dcn)} (y) = y,
\end{gather*}
where $\dcn$ is the \emph{dual Coxeter number} of $X_r$.
The list of dual Coxeter numbers is given by
\begin{gather}\label{eq: dual coxeter}
\begin{array}{c|ccccccccc}
X_r & A_r & B_r & C_r & D_r & E_6 & E_7 & E_8 & F_4 & G_2 \\
\hline
h^{\vee} & r+1 & 2r-1 & r+1 & 2r-2 & 12 & 18 & 30 & 9 & 4
\end{array}.
\end{gather}

We review the definition of $Q(X_r , \level)$ in this section,
and the definition of $\gamma(X_r , \level)$ in the next section.
In the definition of $Q(X_r ,\level)$, we also give auxiliary labels on vertices as
\begin{alignat*}{3}
 &\text{$+$ or $-$} &&\qquad \text{if $X=A, D$ or $E$},& \\
 &\text{$+$ or $-$, and $\fillvertex$ or $\empvertex$} &&\qquad \text{if $X=B, C$ or $F$},& \\
 &\text{$+,-,\mathrm{I},\mathrm{II},\mathrm{III},\mathrm{IV},\mathrm{V}$ or $\mathrm{VI}$, and $\fillvertex$ or $\empvertex$} &&\qquad \text{if $X=G$}.&
\end{alignat*}
These labels will be used when defining a mutation sequence $\gamma(X_r ,\level)$.

\subsubsection*{Type $\boldsymbol{ADE}$}
First, we assume that $X_r$ is simply laced, that is, $X = A, D$ or $E$.
Let $X'_{r'}$ be another simply laced Dynkin diagram.
Let $C = (C_{ij})_{i,j=1,\dots, r}$ and $C' = (C_{i'j'})_{i',j'=1,\dots, r'}$ be Cartan matrices of types $X_r$ and $X'_{r'}$, respectively. Let $I= \{ 1 , \dots , r \}$ and $I' = \{ 1, \dots ,r' \}$ be the nodes in these Dynkin diagrams. We fix bipartite decompositions $I= I_+ \sqcup I_-$ and $I'=I'_+ \sqcup I'_-$.
Let $\mathbf{I}=I \times I'$. For an element $\mathbf{i} = (i,i') \in \mathbf{I}$, we write $\mathbf{i}\colon (++)$ if $\mathbf{i} \in I_+ \times I'_+$, etc. Let $B$ be the skew-symmetric $\mathbf{I} \times \mathbf{I}$ matrix defined by
\begin{gather*}
 B_{\mathbf{i}\mathbf{j}} =
 \begin{cases}
 -C_{ij} \delta_{i'j'}
 &\text{if $\mathbf{i}\colon (-+)$, $\mathbf{j}\colon (++)$ or $\mathbf{i}\colon (+-)$, $\mathbf{j}\colon (--)$,} \\
 C_{ij} \delta_{i'j'} &\text{if $\mathbf{i}\colon (++)$, $\mathbf{j}\colon (-+)$ or $\mathbf{i}\colon (--)$, $\mathbf{j}\colon (+-)$}, \\
 -\delta_{ij} C_{i'j'}
 &\text{if $\mathbf{i}\colon (++)$, $\mathbf{j}\colon (+-)$ or $\mathbf{i}\colon (--)$, $\mathbf{j}\colon (-+)$}, \\
 \delta_{ij} C_{i'j'} &\text{if $\mathbf{i}\colon (+-)$, $\mathbf{j}\colon (++)$ or $\mathbf{i}\colon (-+)$, $\mathbf{j}\colon (--)$}, \\
 0 &\text{otherwise}.
 \end{cases}
\end{gather*}
We define a quiver $Q(X_r, \level)$ as the quiver corresponding to the skew-symmetric matrix $B$ for $(X_r, X'_{r'})=(X_r, A_{\level-1})$. For each vertex $\mathbf{i}$, we label $\mathbf{i}$ as $+$ if $\mathbf{i}\colon (++)$ or $(--)$ and $-$ if $\mathbf{i}\colon (+-)$ or $(-+)$.

For example, the quiver $Q(A_4 ,4)$ is given by
\[
\begin{tikzpicture}
[scale=0.70,auto=left,black_vertex/.style={circle,draw,fill,scale=0.75},white_vertex/.style={circle,draw,scale=0.75}]
\draw [decorate,decoration={brace,amplitude=8pt}]
(1,2) -- (1,6) node [black,midway,xshift=-0.4cm] {$\level-1$};
\draw [decorate,decoration={brace,amplitude=8pt,mirror}]
(2,1.5) -- (8,1.5) node [black,midway,yshift=-0.9cm] {$r$};
\node (11) at (2,2) [black_vertex]{};
\node at (2.3,2.3) []{$-$};
\node (12) at (2,4) [black_vertex]{};
\node at (2.3,4.3) []{$+$};
\node (13) at (2,6) [black_vertex]{};
\node at (2.3,6.3) []{$-$};
\node (21) at (4,2) [black_vertex]{};
\node at (4.3,2.3) []{$+$};
\node (22) at (4,4) [black_vertex]{};
\node at (4.3,4.3) []{$-$};
\node (23) at (4,6) [black_vertex]{};
\node at (4.3,6.3) []{$+$};
\node (31) at (6,2) [black_vertex]{};
\node at (6.3,2.3) []{$-$};
\node (32) at (6,4) [black_vertex]{};
\node at (6.3,4.3) []{$+$};
\node (33) at (6,6) [black_vertex]{};
\node at (6.3,6.3) []{$-$};
\node (41) at (8,2) [black_vertex]{};
\node at (8.3,2.3) []{$+$};
\node (42) at (8,4) [black_vertex]{};
\node at (8.3,4.3) []{$-$};
\node (43) at (8,6) [black_vertex]{};
\node at (8.3,6.3) []{$+$};
\foreach \from/\to in {12/11,
 12/13,
 21/22,
 23/22,
 32/31,
 32/33,
 41/42,
 43/42,
 11/21,
 13/23,
 22/12,
 22/32,
 31/21,
 31/41,
 33/23,
 33/43,
 42/32}
\draw[arrows={-Stealth[scale=1.5]}] (\from)--(\to);
\end{tikzpicture},
\]
and the quiver $Q(D_5, 4)$ is given by
\[
 \begin{tikzpicture}
 [scale=0.70,auto=left,black_vertex/.style={circle,draw,fill,scale=0.75},white_vertex/.style={circle,draw,scale=0.75}]
 \draw [decorate,decoration={brace,amplitude=8pt}]
 (1,2) -- (1,6) node [black,midway,xshift=-0.4cm] {$\level-1$};
 \node (11) at (2,2) [black_vertex]{};
 \node at (2.3,2.3) []{$-$};
 \node (12) at (2,4) [black_vertex]{};
 \node at (2.3,4.3) []{$+$};
 \node (13) at (2,6) [black_vertex]{};
 \node at (2.3,6.3) []{$-$};
 \node (21) at (4,2) [black_vertex]{};
 \node at (4.3,2.3) []{$+$};
 \node (22) at (4,4) [black_vertex]{};
 \node at (4.3,4.3) []{$-$};
 \node (23) at (4,6) [black_vertex]{};
 \node at (4.3,6.3) []{$+$};
 \node (31) at (6,2) [black_vertex]{};
 \node at (6.3,2.3) []{$-$};
 \node (32) at (6,4) [black_vertex]{};
 \node at (6.3,4.3) []{$+$};
 \node (33) at (6,6) [black_vertex]{};
 \node at (6.3,6.3) []{$-$};
 \node (41) at (8,2) [black_vertex]{};
 \node at (8.3,2.3) []{$+$};
 \node (42) at (8,4) [black_vertex]{};
 \node at (8.3,4.3) []{$-$};
 \node (43) at (8,6) [black_vertex]{};
 \node at (8.3,6.3) []{$+$};
 \node (51) at (7.5,1) [black_vertex]{};
 \node at (7.8,1.3) []{$+$};
 \node (52) at (7.5,3) [black_vertex]{};
 \node at (7.8,3.3) []{$-$};
 \node (53) at (7.5,5) [black_vertex]{};
 \node at (7.8,5.3) []{$+$};
 \foreach \from/\to in {12/11,
 12/13,
 21/22,
 23/22,
 32/31,
 32/33,
 41/42,
 43/42,
 11/21,
 13/23,
 22/12,
 22/32,
 31/21,
 31/41,
 33/23,
 33/43,
 42/32}
 \draw[arrows={-Stealth[scale=1.5]}] (\from)--(\to);
 \foreach \from/\to in {31/51,52/32,33/53,51/52,53/52}
 \draw[arrows={-Stealth[scale=1.5]}] (\from)--(\to);
 \end{tikzpicture}.
\]

\subsubsection*{Type $\boldsymbol{BCF}$}
Next, we assume that $X=B ,C$ or $F$.

We define $Q(B_r, \level)$ as
\[
 \begin{tikzpicture}
 [scale=0.70,auto=left,black_vertex/.style={circle,draw,fill,scale=0.75},white_vertex/.style={circle,draw,scale=0.75}]
 \draw [decorate,decoration={brace,amplitude=8pt}]
 (0,2) -- (0,6) node [black,midway,xshift=-0.4cm] {$\level-1$};
 \draw [decorate,decoration={brace,amplitude=8pt,mirror}]
 (0.5,1.5) -- (6,1.5) node [black,midway,yshift=-0.9cm] {$r-1$};
 \draw [decorate,decoration={brace,amplitude=8pt,mirror}]
 (10,1.5) -- (15.5,1.5) node [black,midway,yshift=-0.9cm] {$r-1$};

 \node at (1.2,2) [scale=1.5] {$\cdots$};
 \node at (1.2,4) [scale=1.5] {$\cdots$};
 \node at (1.2,6) [scale=1.5] {$\cdots$};
 \node at (15,2) [scale=1.5] {$\cdots$};
 \node at (15,4) [scale=1.5] {$\cdots$};
 \node at (15,6) [scale=1.5] {$\cdots$};

 \node (11) at (2,2) [white_vertex]{};
 \node at (2.3,2.3) []{$-$};
 \node (12) at (2,4) [white_vertex]{};
 \node at (2.3,4.3) []{$+$};
 \node (13) at (2,6) [white_vertex]{};
 \node at (2.3,6.3) []{$-$};
 \node (21) at (4,2) [white_vertex]{};
 \node at (4.3,2.3) []{$+$};
 \node (22) at (4,4) [white_vertex]{};
 \node at (4.3,4.3) []{$-$};
 \node (23) at (4,6) [white_vertex]{};
 \node at (4.3,6.3) []{$+$};
 \node (31) at (6,2) [white_vertex]{};
 \node at (6.3,2.3) []{$-$};
 \node (32) at (6,4) [white_vertex]{};
 \node at (6.3,4.3) []{$+$};
 \node (33) at (6,6) [white_vertex]{};
 \node at (6.3,6.3) []{$-$};
 \node (41) at (8,1) [black_vertex]{};
 \node at (8.3,1.3) []{$+$};
 \node (42) at (8,2) [black_vertex]{};
 \node at (8.3,2.3) []{$-$};
 \node (43) at (8,3) [black_vertex]{};
 \node at (8.3,3.3) []{$+$};
 \node (44) at (8,4) [black_vertex]{};
 \node at (8.3,4.3) []{$-$};
 \node (45) at (8,5) [black_vertex]{};
 \node at (8.3,5.3) []{$+$};
 \node (46) at (8,6) [black_vertex]{};
 \node at (8.3,6.3) []{$-$};
 \node (47) at (8,7) [black_vertex]{};
 \node at (8.3,7.3) []{$+$};
 \node (51) at (10,2) [white_vertex]{};
 \node at (10.3,2.3) []{$+$};
 \node (52) at (10,4) [white_vertex]{};
 \node at (10.3,4.3) []{$-$};
 \node (53) at (10,6) [white_vertex]{};
 \node at (10.3,6.3) []{$+$};
 \node (61) at (12,2) [white_vertex]{};
 \node at (12.3,2.3) []{$-$};
 \node (62) at (12,4) [white_vertex]{};
 \node at (12.3,4.3) []{$+$};
 \node (63) at (12,6) [white_vertex]{};
 \node at (12.3,6.3) []{$-$};
 \node (71) at (14,2) [white_vertex]{};
 \node at (14.3,2.3) []{$+$};
 \node (72) at (14,4) [white_vertex]{};
 \node at (14.3,4.3) []{$-$};
 \node (73) at (14,6) [white_vertex]{};
 \node at (14.3,6.3) []{$+$};

 \foreach \from/\to in {12/11,
 12/13,
 21/22,
 23/22,
 32/31,
 32/33,
 41/42,
 41/44,
 43/42,
 43/44,
 43/46,
 45/42,
 45/44,
 45/46,
 47/44,
 47/46,
 51/52,
 53/52,
 62/61,
 62/63,
 71/72,
 73/72,
 11/21,
 13/23,
 22/12,
 22/32,
 31/21,
 31/41,
 31/43,
 33/23,
 33/45,
 33/47,
 42/51,
 44/32,
 46/53,
 52/43,
 52/45,
 52/62,
 61/51,
 61/71,
 63/53,
 63/73,
 72/62}
 \draw[arrows={-Stealth[scale=1.5]}] (\from)--(\to);
 \foreach \from/\to in {42/31,44/52,46/33}
 \draw[arrows={-Stealth[scale=1.5]}] (\from)--(\to);

 \end{tikzpicture}
\]
if $\level$ is even, and
\[
 \begin{tikzpicture}
 [scale=0.70,auto=left,black_vertex/.style={circle,draw,fill,scale=0.75},white_vertex/.style={circle,draw,scale=0.75}]
 \draw [decorate,decoration={brace,amplitude=8pt}]
 (0,2) -- (0,4) node [black,midway,xshift=-0.4cm] {$\level-1$};
 \draw [decorate,decoration={brace,amplitude=8pt,mirror}]
 (0.5,1.5) -- (6,1.5) node [black,midway,yshift=-0.9cm] {$r-1$};
 \draw [decorate,decoration={brace,amplitude=8pt,mirror}]
 (10,1.5) -- (15.5,1.5) node [black,midway,yshift=-0.9cm] {$r-1$};

 \node at (1.2,2) [scale=1.5] {$\cdots$};
 \node at (1.2,4) [scale=1.5] {$\cdots$};
 \node at (15,2) [scale=1.5] {$\cdots$};
 \node at (15,4) [scale=1.5] {$\cdots$};

 \node (11) at (2,2) [white_vertex]{};
 \node at (2.3,2.3) []{$-$};
 \node (12) at (2,4) [white_vertex]{};
 \node at (2.3,4.3) []{$+$};
 \node (21) at (4,2) [white_vertex]{};
 \node at (4.3,2.3) []{$+$};
 \node (22) at (4,4) [white_vertex]{};
 \node at (4.3,4.3) []{$-$};
 \node (31) at (6,2) [white_vertex]{};
 \node at (6.3,2.3) []{$-$};
 \node (32) at (6,4) [white_vertex]{};
 \node at (6.3,4.3) []{$+$};
 \node (41) at (8,1) [black_vertex]{};
 \node at (8.3,1.3) []{$+$};
 \node (42) at (8,2) [black_vertex]{};
 \node at (8.3,2.3) []{$-$};
 \node (43) at (8,3) [black_vertex]{};
 \node at (8.3,3.3) []{$+$};
 \node (44) at (8,4) [black_vertex]{};
 \node at (8.3,4.3) []{$-$};
 \node (45) at (8,5) [black_vertex]{};
 \node at (8.3,5.3) []{$+$};
 \node (51) at (10,2) [white_vertex]{};
 \node at (10.3,2.3) []{$+$};
 \node (52) at (10,4) [white_vertex]{};
 \node at (10.3,4.3) []{$-$};
 \node (61) at (12,2) [white_vertex]{};
 \node at (12.3,2.3) []{$-$};
 \node (62) at (12,4) [white_vertex]{};
 \node at (12.3,4.3) []{$+$};
 \node (71) at (14,2) [white_vertex]{};
 \node at (14.3,2.3) []{$+$};
 \node (72) at (14,4) [white_vertex]{};
 \node at (14.3,4.3) []{$-$};

 \foreach \from/\to in {12/11,
 21/22,
 32/31,
 41/42,
 41/44,
 43/42,
 43/44,
 45/42,
 45/44,
 51/52,
 62/61,
 71/72,
 11/21,
 22/12,
 22/32,
 31/21,
 31/41,
 31/43,
 42/51,
 44/32,
 52/43,
 52/45,
 52/62,
 61/51,
 61/71,
 72/62}
 \draw[arrows={-Stealth[scale=1.5]}] (\from)--(\to);
 \foreach \from/\to in {42/31,44/52}
 \draw[arrows={-Stealth[scale=1.5]}] (\from)--(\to);

 \end{tikzpicture}
\]
if $\level$ is odd.

We define $Q(C_r, \level)$ as
\[
 \begin{tikzpicture}
 [scale=0.70,auto=left,black_vertex/.style={circle,draw,fill,scale=0.75},white_vertex/.style={circle,draw,scale=0.75}]
 \draw [decorate,decoration={brace,amplitude=8pt}]
 (0,1) -- (0,7) node [black,midway,xshift=-0.4cm] {$2\level-1$};
 \draw [decorate,decoration={brace,amplitude=8pt,mirror}]
 (0.5,0.5) -- (8,0.5) node [black,midway,yshift=-0.9cm] {$r-1$};

 \node at (1.2,1) [scale=1.5] {$\cdots$};
 \node at (1.2,2) [scale=1.5] {$\cdots$};
 \node at (1.2,3) [scale=1.5] {$\cdots$};
 \node at (1.2,4) [scale=1.5] {$\cdots$};
 \node at (1.2,5) [scale=1.5] {$\cdots$};
 \node at (1.2,6) [scale=1.5] {$\cdots$};
 \node at (1.2,7) [scale=1.5] {$\cdots$};

 \node (11) at (2,1) [black_vertex]{};
 \node at (2.3,1.3) []{$-$};
 \node (12) at (2,2) [black_vertex]{};
 \node at (2.3,2.3) []{$+$};
 \node (13) at (2,3) [black_vertex]{};
 \node at (2.3,3.3) []{$-$};
 \node (14) at (2,4) [black_vertex]{};
 \node at (2.3,4.3) []{$+$};
 \node (15) at (2,5) [black_vertex]{};
 \node at (2.3,5.3) []{$-$};
 \node (16) at (2,6) [black_vertex]{};
 \node at (2.3,6.3) []{$+$};
 \node (17) at (2,7) [black_vertex]{};
 \node at (2.3,7.3) []{$-$};
 \node (21) at (4,1) [black_vertex]{};
 \node at (4.3,1.3) []{$+$};
 \node (22) at (4,2) [black_vertex]{};
 \node at (4.3,2.3) []{$-$};
 \node (23) at (4,3) [black_vertex]{};
 \node at (4.3,3.3) []{$+$};
 \node (24) at (4,4) [black_vertex]{};
 \node at (4.3,4.3) []{$-$};
 \node (25) at (4,5) [black_vertex]{};
 \node at (4.3,5.3) []{$+$};
 \node (26) at (4,6) [black_vertex]{};
 \node at (4.3,6.3) []{$-$};
 \node (27) at (4,7) [black_vertex]{};
 \node at (4.3,7.3) []{$+$};
 \node (31) at (6,1) [black_vertex]{};
 \node at (6.3,1.3) []{$-$};
 \node (32) at (6,2) [black_vertex]{};
 \node at (6.3,2.3) []{$+$};
 \node (33) at (6,3) [black_vertex]{};
 \node at (6.3,3.3) []{$-$};
 \node (34) at (6,4) [black_vertex]{};
 \node at (6.3,4.3) []{$+$};
 \node (35) at (6,5) [black_vertex]{};
 \node at (6.3,5.3) []{$-$};
 \node (36) at (6,6) [black_vertex]{};
 \node at (6.3,6.3) []{$+$};
 \node (37) at (6,7) [black_vertex]{};
 \node at (6.3,7.3) []{$-$};
 \node (41) at (8,1) [black_vertex]{};
 \node at (8.3,1.3) []{$+$};
 \node (42) at (8,2) [black_vertex]{};
 \node at (8.3,2.3) []{$-$};
 \node (43) at (8,3) [black_vertex]{};
 \node at (8.3,3.3) []{$+$};
 \node (44) at (8,4) [black_vertex]{};
 \node at (8.3,4.3) []{$-$};
 \node (45) at (8,5) [black_vertex]{};
 \node at (8.3,5.3) []{$+$};
 \node (46) at (8,6) [black_vertex]{};
 \node at (8.3,6.3) []{$-$};
 \node (47) at (8,7) [black_vertex]{};
 \node at (8.3,7.3) []{$+$};

 \draw [decorate,decoration={brace,amplitude=8pt}]
 (11.2,2) -- (11.2,6) node [black,midway,xshift=-0.4cm] {$\level-1$};

 \node (41p) at (14,1) [black_vertex]{};
 \node at (14.3,1.3) []{$+$};
 \node (42p) at (14,2) [black_vertex]{};
 \node at (14.3,2.3) []{$-$};
 \node (43p) at (14,3) [black_vertex]{};
 \node at (14.3,3.3) []{$+$};
 \node (44p) at (14,4) [black_vertex]{};
 \node at (14.3,4.3) []{$-$};
 \node (45p) at (14,5) [black_vertex]{};
 \node at (14.3,5.3) []{$+$};
 \node (46p) at (14,6) [black_vertex]{};
 \node at (14.3,6.3) []{$-$};
 \node (47p) at (14,7) [black_vertex]{};
 \node at (14.3,7.3) []{$+$};

 \node (52) at (12,2) [white_vertex]{};
 \node at (12.3,2.3) []{$-$};
 \node (54) at (12,4) [white_vertex]{};
 \node at (12.3,4.3) []{$+$};
 \node (56) at (12,6) [white_vertex]{};
 \node at (12.3,6.3) []{$-$};
 \node (52p) at (16,2) [white_vertex]{};
 \node at (16.3,2.3) []{$+$};
 \node (54p) at (16,4) [white_vertex]{};
 \node at (16.3,4.3) []{$-$};
 \node (56p) at (16,6) [white_vertex]{};
 \node at (16.3,6.3) []{$+$};

 \foreach \from/\to in {12/11,
 12/13,
 14/13,
 14/15,
 16/15,
 16/17,
 21/22,
 23/22,
 23/24,
 25/24,
 25/26,
 27/26,
 32/31,
 32/33,
 34/33,
 34/35,
 36/35,
 36/37,
 41/42,
 43/42,
 43/44,
 45/44,
 45/46,
 47/46,
 11/21,
 13/23,
 15/25,
 17/27,
 22/12,
 22/32,
 24/14,
 24/34,
 26/16,
 26/36,
 31/21,
 31/41,
 33/23,
 33/43,
 35/25,
 35/45,
 37/27,
 37/47,
 42/32,
 44/34,
 46/36}
 \draw[arrows={-Stealth[scale=1.5]}] (\from)--(\to);
 \foreach \from/\to in {41p/42p,43p/42p,43p/44p,45p/44p,45p/46p,47p/46p}
 \draw[arrows={-Stealth[scale=1.5]}] (\from)--(\to);
 \foreach \from/\to in {54/52,54/56,54p/52p,54p/56p}
 \draw[arrows={-Stealth[scale=1.5]}] (\from)--(\to);
 \foreach \from/\to in {42p/52,42p/52p,44p/54,44p/54p,46p/56,46p/56p}
 \draw[arrows={-Stealth[scale=1.5]}] (\from)--(\to);
 \foreach \from/\to in {52/41p,52/43p,54p/43p,54p/45p,56/45p,56/47p}
 \draw[arrows={-Stealth[scale=1.5]}] (\from)--(\to);
 \end{tikzpicture}
\]
if $\level$ is even, and
\[
 \begin{tikzpicture}
 [scale=0.70,auto=left,black_vertex/.style={circle,draw,fill,scale=0.75},white_vertex/.style={circle,draw,scale=0.75}]
 \draw [decorate,decoration={brace,amplitude=8pt}]
 (0,1) -- (0,5) node [black,midway,xshift=-0.4cm] {$2\level-1$};
 \draw [decorate,decoration={brace,amplitude=8pt,mirror}]
 (0.5,0.5) -- (8,0.5) node [black,midway,yshift=-0.9cm] {$r-1$};

 \node at (1.2,1) [scale=1.5] {$\cdots$};
 \node at (1.2,2) [scale=1.5] {$\cdots$};
 \node at (1.2,3) [scale=1.5] {$\cdots$};
 \node at (1.2,4) [scale=1.5] {$\cdots$};
 \node at (1.2,5) [scale=1.5] {$\cdots$};

 \node (11) at (2,1) [black_vertex]{};
 \node at (2.3,1.3) []{$-$};
 \node (12) at (2,2) [black_vertex]{};
 \node at (2.3,2.3) []{$+$};
 \node (13) at (2,3) [black_vertex]{};
 \node at (2.3,3.3) []{$-$};
 \node (14) at (2,4) [black_vertex]{};
 \node at (2.3,4.3) []{$+$};
 \node (15) at (2,5) [black_vertex]{};
 \node at (2.3,5.3) []{$-$};
 \node (21) at (4,1) [black_vertex]{};
 \node at (4.3,1.3) []{$+$};
 \node (22) at (4,2) [black_vertex]{};
 \node at (4.3,2.3) []{$-$};
 \node (23) at (4,3) [black_vertex]{};
 \node at (4.3,3.3) []{$+$};
 \node (24) at (4,4) [black_vertex]{};
 \node at (4.3,4.3) []{$-$};
 \node (25) at (4,5) [black_vertex]{};
 \node at (4.3,5.3) []{$+$};
 \node (31) at (6,1) [black_vertex]{};
 \node at (6.3,1.3) []{$-$};
 \node (32) at (6,2) [black_vertex]{};
 \node at (6.3,2.3) []{$+$};
 \node (33) at (6,3) [black_vertex]{};
 \node at (6.3,3.3) []{$-$};
 \node (34) at (6,4) [black_vertex]{};
 \node at (6.3,4.3) []{$+$};
 \node (35) at (6,5) [black_vertex]{};
 \node at (6.3,5.3) []{$-$};
 \node (41) at (8,1) [black_vertex]{};
 \node at (8.3,1.3) []{$+$};
 \node (42) at (8,2) [black_vertex]{};
 \node at (8.3,2.3) []{$-$};
 \node (43) at (8,3) [black_vertex]{};
 \node at (8.3,3.3) []{$+$};
 \node (44) at (8,4) [black_vertex]{};
 \node at (8.3,4.3) []{$-$};
 \node (45) at (8,5) [black_vertex]{};
 \node at (8.3,5.3) []{$+$};

 \draw [decorate,decoration={brace,amplitude=8pt}]
 (11.2,2) -- (11.2,4) node [black,midway,xshift=-0.4cm] {$\level-1$};

 \node (41p) at (14,1) [black_vertex]{};
 \node at (14.3,1.3) []{$+$};
 \node (42p) at (14,2) [black_vertex]{};
 \node at (14.3,2.3) []{$-$};
 \node (43p) at (14,3) [black_vertex]{};
 \node at (14.3,3.3) []{$+$};
 \node (44p) at (14,4) [black_vertex]{};
 \node at (14.3,4.3) []{$-$};
 \node (45p) at (14,5) [black_vertex]{};
 \node at (14.3,5.3) []{$+$};

 \node (52) at (12,2) [white_vertex]{};
 \node at (12.3,2.3) []{$-$};
 \node (54) at (12,4) [white_vertex]{};
 \node at (12.3,4.3) []{$+$};
 \node (52p) at (16,2) [white_vertex]{};
 \node at (16.3,2.3) []{$+$};
 \node (54p) at (16,4) [white_vertex]{};
 \node at (16.3,4.3) []{$-$};

 \foreach \from/\to in {12/11,
 12/13,
 14/13,
 14/15,
 21/22,
 23/22,
 23/24,
 25/24,
 32/31,
 32/33,
 34/33,
 34/35,
 41/42,
 43/42,
 43/44,
 45/44,
 11/21,
 13/23,
 15/25,
 22/12,
 22/32,
 24/14,
 24/34,
 31/21,
 31/41,
 33/23,
 33/43,
 35/25,
 35/45,
 42/32,
 44/34}
 \draw[arrows={-Stealth[scale=1.5]}] (\from)--(\to);
 \foreach \from/\to in {41p/42p,43p/42p,43p/44p,45p/44p}
 \draw[arrows={-Stealth[scale=1.5]}] (\from)--(\to);
 \foreach \from/\to in {54/52,54p/52p}
 \draw[arrows={-Stealth[scale=1.5]}] (\from)--(\to);
 \foreach \from/\to in {42p/52,42p/52p,44p/54,44p/54p}
 \draw[arrows={-Stealth[scale=1.5]}] (\from)--(\to);
 \foreach \from/\to in {52/41p,52/43p,54p/43p,54p/45p}
 \draw[arrows={-Stealth[scale=1.5]}] (\from)--(\to);
 \end{tikzpicture}
\]
if $\level$ is odd,
where we identify the rightmost columns in the left quivers with the center columns in the right quivers.

We define $Q(F_4, \level)$ as
\[
 \begin{tikzpicture}
 [scale=0.70,auto=left,black_vertex/.style={circle,draw,fill,scale=0.75},white_vertex/.style={circle,draw,scale=0.75}]
 \draw [decorate,decoration={brace,amplitude=8pt}]
 (1,1) -- (1,7) node [black,midway,xshift=-0.4cm] {$2\level-1$};

 \node (11) at (2,1) [black_vertex]{};
 \node at (2.3,1.3) []{$-$};
 \node (12) at (2,2) [black_vertex]{};
 \node at (2.3,2.3) []{$+$};
 \node (13) at (2,3) [black_vertex]{};
 \node at (2.3,3.3) []{$-$};
 \node (14) at (2,4) [black_vertex]{};
 \node at (2.3,4.3) []{$+$};
 \node (15) at (2,5) [black_vertex]{};
 \node at (2.3,5.3) []{$-$};
 \node (16) at (2,6) [black_vertex]{};
 \node at (2.3,6.3) []{$+$};
 \node (17) at (2,7) [black_vertex]{};
 \node at (2.3,7.3) []{$-$};
 \node (21) at (4,1) [black_vertex]{};
 \node at (4.3,1.3) []{$+$};
 \node (22) at (4,2) [black_vertex]{};
 \node at (4.3,2.3) []{$-$};
 \node (23) at (4,3) [black_vertex]{};
 \node at (4.3,3.3) []{$+$};
 \node (24) at (4,4) [black_vertex]{};
 \node at (4.3,4.3) []{$-$};
 \node (25) at (4,5) [black_vertex]{};
 \node at (4.3,5.3) []{$+$};
 \node (26) at (4,6) [black_vertex]{};
 \node at (4.3,6.3) []{$-$};
 \node (27) at (4,7) [black_vertex]{};
 \node at (4.3,7.3) []{$+$};

 \draw [decorate,decoration={brace,amplitude=8pt}]
 (8.2,2) -- (8.2,6) node [black,midway,xshift=-0.4cm] {$\level-1$};

 \node (41p) at (13,1) [black_vertex]{};
 \node at (13.3,1.3) []{$+$};
 \node (42p) at (13,2) [black_vertex]{};
 \node at (13.3,2.3) []{$-$};
 \node (43p) at (13,3) [black_vertex]{};
 \node at (13.3,3.3) []{$+$};
 \node (44p) at (13,4) [black_vertex]{};
 \node at (13.3,4.3) []{$-$};
 \node (45p) at (13,5) [black_vertex]{};
 \node at (13.3,5.3) []{$+$};
 \node (46p) at (13,6) [black_vertex]{};
 \node at (13.3,6.3) []{$-$};
 \node (47p) at (13,7) [black_vertex]{};
 \node at (13.3,7.3) []{$+$};

 \node (52) at (11,2) [white_vertex]{};
 \node at (11.3,2.3) []{$-$};
 \node (54) at (11,4) [white_vertex]{};
 \node at (11.3,4.3) []{$+$};
 \node (56) at (11,6) [white_vertex]{};
 \node at (11.3,6.3) []{$-$};
 \node (52p) at (15,2) [white_vertex]{};
 \node at (15.3,2.3) []{$+$};
 \node (54p) at (15,4) [white_vertex]{};
 \node at (15.3,4.3) []{$-$};
 \node (56p) at (15,6) [white_vertex]{};
 \node at (15.3,6.3) []{$+$};

 \node (62) at (9,2) [white_vertex]{};
 \node at (9.3,2.3) []{$+$};
 \node (64) at (9,4) [white_vertex]{};
 \node at (9.3,4.3) []{$-$};
 \node (66) at (9,6) [white_vertex]{};
 \node at (9.3,6.3) []{$+$};
 \node (62p) at (17,2) [white_vertex]{};
 \node at (17.3,2.3) []{$-$};
 \node (64p) at (17,4) [white_vertex]{};
 \node at (17.3,4.3) []{$+$};
 \node (66p) at (17,6) [white_vertex]{};
 \node at (17.3,6.3) []{$-$};

 \foreach \from/\to in {12/11,
 12/13,
 14/13,
 14/15,
 16/15,
 16/17,
 21/22,
 23/22,
 23/24,
 25/24,
 25/26,
 27/26,
 11/21,
 13/23,
 15/25,
 17/27,
 22/12,
 24/14,
 26/16}
 \draw[arrows={-Stealth[scale=1.5]}] (\from)--(\to);
 \foreach \from/\to in {41p/42p,43p/42p,43p/44p,45p/44p,45p/46p,47p/46p}
 \draw[arrows={-Stealth[scale=1.5]}] (\from)--(\to);
 \foreach \from/\to in {54/52,54/56,54p/52p,54p/56p}
 \draw[arrows={-Stealth[scale=1.5]}] (\from)--(\to);
 \foreach \from/\to in {62/64,66/64,62p/64p,66p/64p}
 \draw[arrows={-Stealth[scale=1.5]}] (\from)--(\to);
 \foreach \from/\to in {52/62,64/54,56/66,62p/52p,54p/64p,66p/56p}
 \draw[arrows={-Stealth[scale=1.5]}] (\from)--(\to);
 \foreach \from/\to in {42p/52,42p/52p,44p/54,44p/54p,46p/56,46p/56p}
 \draw[arrows={-Stealth[scale=1.5]}] (\from)--(\to);
 \foreach \from/\to in {52/41p,52/43p,54p/43p,54p/45p,56/45p,56/47p}
 \draw[arrows={-Stealth[scale=1.5]}] (\from)--(\to);
 \end{tikzpicture}
\]
if $\level$ is even, and
\[
 \begin{tikzpicture}
 [scale=0.70,auto=left,black_vertex/.style={circle,draw,fill,scale=0.75},white_vertex/.style={circle,draw,scale=0.75}]
 \draw [decorate,decoration={brace,amplitude=8pt}]
 (1,1) -- (1,5) node [black,midway,xshift=-0.4cm] {$2\level-1$};

 \node (11) at (2,1) [black_vertex]{};
 \node at (2.3,1.3) []{$-$};
 \node (12) at (2,2) [black_vertex]{};
 \node at (2.3,2.3) []{$+$};
 \node (13) at (2,3) [black_vertex]{};
 \node at (2.3,3.3) []{$-$};
 \node (14) at (2,4) [black_vertex]{};
 \node at (2.3,4.3) []{$+$};
 \node (15) at (2,5) [black_vertex]{};
 \node at (2.3,5.3) []{$-$};

 \node (21) at (4,1) [black_vertex]{};
 \node at (4.3,1.3) []{$+$};
 \node (22) at (4,2) [black_vertex]{};
 \node at (4.3,2.3) []{$-$};
 \node (23) at (4,3) [black_vertex]{};
 \node at (4.3,3.3) []{$+$};
 \node (24) at (4,4) [black_vertex]{};
 \node at (4.3,4.3) []{$-$};
 \node (25) at (4,5) [black_vertex]{};
 \node at (4.3,5.3) []{$+$};

 \draw [decorate,decoration={brace,amplitude=8pt}]
 (8.2,2) -- (8.2,4) node [black,midway,xshift=-0.4cm] {$\level-1$};

 \node (41p) at (13,1) [black_vertex]{};
 \node at (13.3,1.3) []{$+$};
 \node (42p) at (13,2) [black_vertex]{};
 \node at (13.3,2.3) []{$-$};
 \node (43p) at (13,3) [black_vertex]{};
 \node at (13.3,3.3) []{$+$};
 \node (44p) at (13,4) [black_vertex]{};
 \node at (13.3,4.3) []{$-$};
 \node (45p) at (13,5) [black_vertex]{};
 \node at (13.3,5.3) []{$+$};

 \node (52) at (11,2) [white_vertex]{};
 \node at (11.3,2.3) []{$-$};
 \node (54) at (11,4) [white_vertex]{};
 \node at (11.3,4.3) []{$+$};
 \node (52p) at (15,2) [white_vertex]{};
 \node at (15.3,2.3) []{$+$};
 \node (54p) at (15,4) [white_vertex]{};
 \node at (15.3,4.3) []{$-$};

 \node (62) at (9,2) [white_vertex]{};
 \node at (9.3,2.3) []{$+$};
 \node (64) at (9,4) [white_vertex]{};
 \node at (9.3,4.3) []{$-$};
 \node (62p) at (17,2) [white_vertex]{};
 \node at (17.3,2.3) []{$-$};
 \node (64p) at (17,4) [white_vertex]{};
 \node at (17.3,4.3) []{$+$};

 \foreach \from/\to in {12/11,
 12/13,
 14/13,
 14/15,
 21/22,
 23/22,
 23/24,
 25/24,
 11/21,
 13/23,
 15/25,
 22/12,
 24/14}
 \draw[arrows={-Stealth[scale=1.5]}] (\from)--(\to);
 \foreach \from/\to in {41p/42p,43p/42p,43p/44p,45p/44p}
 \draw[arrows={-Stealth[scale=1.5]}] (\from)--(\to);
 \foreach \from/\to in {54/52,54p/52p}
 \draw[arrows={-Stealth[scale=1.5]}] (\from)--(\to);
 \foreach \from/\to in {62/64,62p/64p}
 \draw[arrows={-Stealth[scale=1.5]}] (\from)--(\to);
 \foreach \from/\to in {52/62,64/54,62p/52p,54p/64p}
 \draw[arrows={-Stealth[scale=1.5]}] (\from)--(\to);
 \foreach \from/\to in {42p/52,42p/52p,44p/54,44p/54p}
 \draw[arrows={-Stealth[scale=1.5]}] (\from)--(\to);
 \foreach \from/\to in {52/41p,52/43p,54p/43p,54p/45p}
 \draw[arrows={-Stealth[scale=1.5]}] (\from)--(\to);
 \end{tikzpicture}
\]
if $\level$ is odd,
where we identify the rightmost columns in the left quivers with the center columns in the right quivers.

\subsubsection*{Type $\boldsymbol{G}$}
Finally, we assume that $X_r = G_2$. We define $Q(G_2, \level)$ as
\[
 \begin{tikzpicture}
 [scale=0.70,auto=left,black_vertex/.style={circle,draw,fill,scale=0.75},white_vertex/.style={circle,draw,scale=0.75}]
 \draw [decorate,decoration={brace,amplitude=8pt}]
 (1,3) -- (1,9) node [black,midway,xshift=-0.4cm] {$\level-1$};


 \node (11) at (2,3) [white_vertex]{};
 \node at (1.6,3.4) []{$\mathrm{IV}$};
 \node (12) at (2,6) [white_vertex]{};
 \node at (1.6,6.4) []{$\mathrm{I}$};
 \node (13) at (2,9) [white_vertex]{};
 \node at (1.6,9.4) []{$\mathrm{IV}$};
 \node (21) at (4,1) [black_vertex]{};
 \node at (4.3,1.3) []{$+$};
 \node (22) at (4,2) [black_vertex]{};
 \node at (4.3,2.3) []{$-$};
 \node (23) at (4,3) [black_vertex]{};
 \node at (4.3,3.3) []{$+$};
 \node (24) at (4,4) [black_vertex]{};
 \node at (4.3,4.3) []{$-$};
 \node (25) at (4,5) [black_vertex]{};
 \node at (4.3,5.3) []{$+$};
 \node (26) at (4,6) [black_vertex]{};
 \node at (4.3,6.3) []{$-$};
 \node (27) at (4,7) [black_vertex]{};
 \node at (4.3,7.3) []{$+$};
 \node (28) at (4,8) [black_vertex]{};
 \node at (4.3,8.3) []{$-$};
 \node (29) at (4,9) [black_vertex]{};
 \node at (4.3,9.3) []{$+$};
 \node (210) at (4,10) [black_vertex]{};
 \node at (4.3,10.3) []{$-$};
 \node (211) at (4,11) [black_vertex]{};
 \node at (4.3,11.3) []{$+$};

 \foreach \from/\to in {12/11,
 12/13,
 21/22,
 21/24,
 23/22,
 23/24,
 23/26,
 25/22,
 25/24,
 25/26,
 25/28,
 27/24,
 27/26,
 27/28,
 27/210,
 29/26,
 29/28,
 29/210,
 211/28,
 211/210,
 11/23,
 13/29,
 26/12}
 \draw[arrows={-Stealth[scale=1.5]}] (\from)--(\to);
 \foreach \from/\to in {11/21,11/25,13/27,13/211,22/11,24/11,28/13,210/13}
 \draw[arrows={-Stealth[scale=1.5]}] (\from)--(\to);

 \node (11p) at (7,3) [white_vertex]{};
 \node at (6.6,3.4) []{$\mathrm{II}$};
 \node (12p) at (7,6) [white_vertex]{};
 \node at (6.6,6.4) []{$\mathrm{V}$};
 \node (13p) at (7,9) [white_vertex]{};
 \node at (6.6,9.4) []{$\mathrm{II}$};
 \node (21p) at (9,1) [black_vertex]{};
 \node at (9.3,1.3) []{$+$};
 \node (22p) at (9,2) [black_vertex]{};
 \node at (9.3,2.3) []{$-$};
 \node (23p) at (9,3) [black_vertex]{};
 \node at (9.3,3.3) []{$+$};
 \node (24p) at (9,4) [black_vertex]{};
 \node at (9.3,4.3) []{$-$};
 \node (25p) at (9,5) [black_vertex]{};
 \node at (9.3,5.3) []{$+$};
 \node (26p) at (9,6) [black_vertex]{};
 \node at (9.3,6.3) []{$-$};
 \node (27p) at (9,7) [black_vertex]{};
 \node at (9.3,7.3) []{$+$};
 \node (28p) at (9,8) [black_vertex]{};
 \node at (9.3,8.3) []{$-$};
 \node (29p) at (9,9) [black_vertex]{};
 \node at (9.3,9.3) []{$+$};
 \node (210p) at (9,10) [black_vertex]{};
 \node at (9.3,10.3) []{$-$};
 \node (211p) at (9,11) [black_vertex]{};
 \node at (9.3,11.3) []{$+$};

 \foreach \from/\to in {11p/12p,
 13p/12p,
 21p/22p,
 21p/24p,
 23p/22p,
 23p/24p,
 23p/26p,
 25p/22p,
 25p/24p,
 25p/26p,
 25p/28p,
 27p/24p,
 27p/26p,
 27p/28p,
 27p/210p,
 29p/26p,
 29p/28p,
 29p/210p,
 211p/28p,
 211p/210p,
 11p/23p,
 13p/29p,
 26p/12p}
 \draw[arrows={-Stealth[scale=1.5]}] (\from)--(\to);
 \foreach \from/\to in {12p/25p,12p/27p,22p/11p,24p/11p,28p/13p,210p/13p}
 \draw[arrows={-Stealth[scale=1.5]}] (\from)--(\to);

 \node (11pp) at (12,3) [white_vertex]{};
 \node at (11.6,3.4) []{$\mathrm{VI}$};
 \node (12pp) at (12,6) [white_vertex]{};
 \node at (11.6,6.4) []{$\mathrm{III}$};
 \node (13pp) at (12,9) [white_vertex]{};
 \node at (11.6,9.4) []{$\mathrm{VI}$};
 \node (21pp) at (14,1) [black_vertex]{};
 \node at (14.3,1.3) []{$+$};
 \node (22pp) at (14,2) [black_vertex]{};
 \node at (14.3,2.3) []{$-$};
 \node (23pp) at (14,3) [black_vertex]{};
 \node at (14.3,3.3) []{$+$};
 \node (24pp) at (14,4) [black_vertex]{};
 \node at (14.3,4.3) []{$-$};
 \node (25pp) at (14,5) [black_vertex]{};
 \node at (14.3,5.3) []{$+$};
 \node (26pp) at (14,6) [black_vertex]{};
 \node at (14.3,6.3) []{$-$};
 \node (27pp) at (14,7) [black_vertex]{};
 \node at (14.3,7.3) []{$+$};
 \node (28pp) at (14,8) [black_vertex]{};
 \node at (14.3,8.3) []{$-$};
 \node (29pp) at (14,9) [black_vertex]{};
 \node at (14.3,9.3) []{$+$};
 \node (210pp) at (14,10) [black_vertex]{};
 \node at (14.3,10.3) []{$-$};
 \node (211pp) at (14,11) [black_vertex]{};
 \node at (14.3,11.3) []{$+$};
 \draw [decorate,decoration={brace,amplitude=8pt,mirror}]
 (15,1) -- (15,11) node [black,midway,xshift=1.4cm] {$3\level-1$};

 \foreach \from/\to in {12pp/11pp,
 12pp/13pp,
 21pp/22pp,
 21pp/24pp,
 23pp/22pp,
 23pp/24pp,
 23pp/26pp,
 25pp/22pp,
 25pp/24pp,
 25pp/26pp,
 25pp/28pp,
 27pp/24pp,
 27pp/26pp,
 27pp/28pp,
 27pp/210pp,
 29pp/26pp,
 29pp/28pp,
 29pp/210pp,
 211pp/28pp,
 211pp/210pp,
 11pp/23pp,
 13pp/29pp,
 26pp/12pp}
 \draw[arrows={-Stealth[scale=1.5]}] (\from)--(\to);
 \foreach \from/\to in {24pp/12pp,28pp/12pp,12pp/25pp,12pp/27pp}
 \draw[arrows={-Stealth[scale=1.5]}] (\from)--(\to);
 \end{tikzpicture}
\]
if $\level$ is even, and
\[
 \begin{tikzpicture}
 [scale=0.70,auto=left,black_vertex/.style={circle,draw,fill,scale=0.75},white_vertex/.style={circle,draw,scale=0.75}]
 \draw [decorate,decoration={brace,amplitude=8pt}]
 (1,3) -- (1,6) node [black,midway,xshift=-0.4cm] {$\level-1$};


 \node (11) at (2,3) [white_vertex]{};
 \node at (1.6,3.4) []{$\mathrm{I}$};
 \node (12) at (2,6) [white_vertex]{};
 \node at (1.6,6.4) []{$\mathrm{IV}$};
 \node (21) at (4,1) [black_vertex]{};
 \node at (4.3,1.3) []{$+$};
 \node (22) at (4,2) [black_vertex]{};
 \node at (4.3,2.3) []{$-$};
 \node (23) at (4,3) [black_vertex]{};
 \node at (4.3,3.3) []{$+$};
 \node (24) at (4,4) [black_vertex]{};
 \node at (4.3,4.3) []{$-$};
 \node (25) at (4,5) [black_vertex]{};
 \node at (4.3,5.3) []{$+$};
 \node (26) at (4,6) [black_vertex]{};
 \node at (4.3,6.3) []{$-$};
 \node (27) at (4,7) [black_vertex]{};
 \node at (4.3,7.3) []{$+$};
 \node (28) at (4,8) [black_vertex]{};
 \node at (4.3,8.3) []{$-$};

 \foreach \from/\to in {12/11,
 21/22,
 21/24,
 23/22,
 23/24,
 23/26,
 25/22,
 25/24,
 25/26,
 25/28,
 27/24,
 27/26,
 27/28,
 11/23}
 \draw[arrows={-Stealth[scale=1.5]}] (\from)--(\to);
 \foreach \from/\to in {11/21,11/25,22/11,24/11,26/12}
 \draw[arrows={-Stealth[scale=1.5]}] (\from)--(\to);

 \node (11p) at (7,3) [white_vertex]{};
 \node at (6.6,3.4) []{$\mathrm{V}$};
 \node (12p) at (7,6) [white_vertex]{};
 \node at (6.6,6.4) []{$\mathrm{II}$};
 \node (21p) at (9,1) [black_vertex]{};
 \node at (9.3,1.3) []{$+$};
 \node (22p) at (9,2) [black_vertex]{};
 \node at (9.3,2.3) []{$-$};
 \node (23p) at (9,3) [black_vertex]{};
 \node at (9.3,3.3) []{$+$};
 \node (24p) at (9,4) [black_vertex]{};
 \node at (9.3,4.3) []{$-$};
 \node (25p) at (9,5) [black_vertex]{};
 \node at (9.3,5.3) []{$+$};
 \node (26p) at (9,6) [black_vertex]{};
 \node at (9.3,6.3) []{$-$};
 \node (27p) at (9,7) [black_vertex]{};
 \node at (9.3,7.3) []{$+$};
 \node (28p) at (9,8) [black_vertex]{};
 \node at (9.3,8.3) []{$-$};

 \foreach \from/\to in {11p/12p,
 21p/22p,
 21p/24p,
 23p/22p,
 23p/24p,
 23p/26p,
 25p/22p,
 25p/24p,
 25p/26p,
 25p/28p,
 27p/24p,
 27p/26p,
 27p/28p,
 11p/23p,
 26p/12p}
 \draw[arrows={-Stealth[scale=1.5]}] (\from)--(\to);
 \foreach \from/\to in {12p/25p,12p/27p,22p/11p,24p/11p}
 \draw[arrows={-Stealth[scale=1.5]}] (\from)--(\to);

 \node (11pp) at (12,3) [white_vertex]{};
 \node at (11.6,3.4) []{$\mathrm{III}$};
 \node (12pp) at (12,6) [white_vertex]{};
 \node at (11.6,6.4) []{$\mathrm{VI}$};
 \node (21pp) at (14,1) [black_vertex]{};
 \node at (14.3,1.3) []{$+$};
 \node (22pp) at (14,2) [black_vertex]{};
 \node at (14.3,2.3) []{$-$};
 \node (23pp) at (14,3) [black_vertex]{};
 \node at (14.3,3.3) []{$+$};
 \node (24pp) at (14,4) [black_vertex]{};
 \node at (14.3,4.3) []{$-$};
 \node (25pp) at (14,5) [black_vertex]{};
 \node at (14.3,5.3) []{$+$};
 \node (26pp) at (14,6) [black_vertex]{};
 \node at (14.3,6.3) []{$-$};
 \node (27pp) at (14,7) [black_vertex]{};
 \node at (14.3,7.3) []{$+$};
 \node (28pp) at (14,8) [black_vertex]{};
 \node at (14.3,8.3) []{$-$};
 \draw [decorate,decoration={brace,amplitude=8pt,mirror}]
 (15,1) -- (15,8) node [black,midway,xshift=1.4cm] {$3\level-1$};

 \foreach \from/\to in {21pp/22pp,
 21pp/24pp,
 23pp/22pp,
 23pp/24pp,
 23pp/26pp,
 25pp/22pp,
 25pp/24pp,
 25pp/26pp,
 25pp/28pp,
 27pp/24pp,
 27pp/26pp,
 27pp/28pp,
 11pp/23pp,
 26pp/12pp}
 \draw[arrows={-Stealth[scale=1.5]}] (\from)--(\to);
 \foreach \from/\to in {24pp/12pp,28pp/12pp,12pp/25pp,12pp/27pp,12pp/11pp}
 \draw[arrows={-Stealth[scale=1.5]}] (\from)--(\to);
 \end{tikzpicture}
\]
if $\level$ is odd,
where we identify the three columns consisting of filled vertices.

\subsection[Mutation loop on $Q(X_r,\ell)$]{Mutation loop on $\boldsymbol{Q(X_r,\ell)}$}\label{section: mutation loop Xr l}
In this section, we define a mutation loop $\gamma(X_r, \level)$ on the quiver $Q(X_r , \level)$.

\subsubsection*{Type $\boldsymbol{ADE}$}
First, we assume that $X_r$ is simply laced, that is, $X = A, D$ or $E$.
Let $\mathbf{I}$ be the set of the vertices in $Q(X_r , \level)$, and $\mathbf{I}_+$ and $\mathbf{I}_-$ be the subsets of $\mathbf{I}$ consisting of the vertices labeled with $+$ and $-$, respectively.
We define the following two compositions of mutations:
\begin{gather*}
 \mu_{+} = \prod_{m \in \mathbf{I}_+} \mu_m, \qquad
 \mu_{-} = \prod_{m \in \mathbf{I}_-} \mu_m.
\end{gather*}

We can easily verify the following lemma.
\begin{Lemma}
 Let $Q=Q(X_r , \level)$.
 Consider the following transitions of quivers:
 \begin{gather*}
 Q \xrightarrow{\mu_+} Q' \xrightarrow{\mu_-} Q''.
 \end{gather*}
 Then $Q'$ and $Q''$ do not depend on orders of the mutations in $\mu_+$ and $\mu_-$.
 Furthermore, we have $Q=Q''$.
\end{Lemma}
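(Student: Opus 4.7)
The proof exploits the bipartiteness of $Q = Q(X_r,\level)$ with respect to the $\pm$-labeling. Inspecting the piecewise formula defining $B$, one sees that $B_{\mathbf{i}\mathbf{j}} = 0$ whenever $\mathbf{i}$ and $\mathbf{j}$ share the same $\pm$-label, so every arrow of $Q$ runs between $\mathbf{I}_+$ and $\mathbf{I}_-$. In particular, any two vertices of $\mathbf{I}_+$ are non-adjacent, and since mutations at non-adjacent vertices commute, $Q'$ is independent of the order of mutations within $\mu_+$.

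The key claim is that $Q'$ coincides with the opposite quiver of $Q$ (every arrow reversed). Mutation at a single $\mathbf{k} \in \mathbf{I}_+$ reverses all arrows incident to $\mathbf{k}$, adds new arrows between the $-$-vertex neighbors of $\mathbf{k}$, and does not affect arrows at any other $+$-vertex (since $+$-vertices are pairwise non-adjacent). Hence, after all of $\mu_+$, the $\mathbf{I}_+ \times \mathbf{I}_-$ block of $B$ is negated and the cumulative change in the $\mathbf{I}_- \times \mathbf{I}_-$ block is
\begin{gather*}
 \Delta B_{\mathbf{i}\mathbf{j}} = \sum_{\mathbf{k} \in \mathbf{I}_+} \operatorname{sgn}(B_{\mathbf{i}\mathbf{k}}) \, [B_{\mathbf{i}\mathbf{k}} B_{\mathbf{k}\mathbf{j}}]_+,
\end{gather*}
with every $B$-entry on the right taken from the original matrix. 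It therefore suffices to show $\Delta B_{\mathbf{i}\mathbf{j}}=0$ for all $\mathbf{i},\mathbf{j} \in \mathbf{I}_-$.

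I verify this by casework on the types $(+-),(-+)$ of $\mathbf{i}$ and $\mathbf{j}$. If $\mathbf{i}$ and $\mathbf{j}$ share a type, then every common $+$-neighbor $\mathbf{k}$ makes $B_{\mathbf{i}\mathbf{k}}$ and $B_{\mathbf{k}\mathbf{j}}$ carry opposite signs, so each summand vanishes. If $\mathbf{i} \colon (+-)$ and $\mathbf{j} \colon (-+)$ (the symmetric case being identical), a common $+$-neighbor exists only when $i$ is adjacent to $j$ in $X_r$ and $i'$ is adjacent to $j'$ in $A_{\level-1}$, and in that case there are exactly two such neighbors, $(j,i')$ of type $(--)$ and $(i,j')$ of type $(++)$; substituting into the formula for $B$ shows they contribute $+1$ and $-1$ respectively to $\Delta B_{\mathbf{i}\mathbf{j}}$, so their sum vanishes.

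Once $Q'$ is identified with the opposite of $Q$, the quiver $Q'$ is again bipartite with the same $\pm$-partition, so $\mu_-$ is order-independent by the same reasoning as for $\mu_+$. Running the cancellation with the roles of $+$ and $-$ interchanged then produces $\mu_-(Q') = Q$, giving $Q'' = Q$. The only non-routine step is the two-term cancellation in the mixed $(+-)/(-+)$ case, which is the square-product phenomenon underlying Zamolodchikov periodicity; mechanically it is a short direct sign comparison using the Cartan matrix entries and is not a serious obstacle.
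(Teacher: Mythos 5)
Your proof is correct. The paper states this lemma with only the remark ``We can easily verify the following lemma'' and supplies no argument, so your write-up is exactly the omitted verification: bipartiteness of $Q(X_r,\level)$ with respect to the $\pm$-labels gives order-independence, and the two-term cancellation between the common neighbours $(i,j')$ of type $(++)$ and $(j,i')$ of type $(--)$ (which I checked against the mutation rule and the explicit $B$-matrix, e.g.\ on $Q(A_4,4)$) correctly shows $\mu_+(Q)=Q^{\mathrm{op}}$ and hence $Q''=Q$.
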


Thus
\begin{gather*}
 \gamma(X_r ,\level) := ( (\mathbf{I}_+ , \mathbf{I}_-) , \id)
\end{gather*}
is a mutation loop, where $\mathbf{I}_+$ and $\mathbf{I}_-$ in this equation mean any sequence in which all the elements in $\mathbf{I}_+$ and $\mathbf{I}_-$, respectively, appear exactly once.

\subsubsection*{Type $\boldsymbol{BCF}$}
Next, we assume that $X = B, C$ or $F$.
Let $\mathbf{I}$ be the set of the vertices in $Q(X_r , \level)$, and $\mathbf{I}_{+}^{\fillvertex}$, $\mathbf{I}_{-}^{\fillvertex}$, $\mathbf{I}_{+}^{\empvertex}$ and $\mathbf{I}_{-}^{\empvertex}$ be the subsets of $\mathbf{I}$ consisting of the vertices labeled with $(+, \fillvertex) , (-,\fillvertex) , (+,\empvertex)$ and $(-,\empvertex)$, respectively.
We define the following two compositions of mutations:
\begin{gather*}
\mu_{+}^{\fillvertex}\mu_{+}^{\empvertex}= \prod_{m \in \mathbf{I}_{+}^{\fillvertex} \sqcup \mathbf{I}_{+}^{\empvertex}} \mu_m, \qquad
\mu_{-}^{\fillvertex} = \prod_{m \in \mathbf{I}_{-}^{\fillvertex}} \mu_m.
\end{gather*}

Let $\nu$ be the permutation of vertices in $Q(X_r ,\level)$ that is identity on the filled vertices, and left-right reflection on the unfilled vertices.

\begin{Lemma}[\cite{IIKKNa,IIKKNb}]
 Let $Q=Q(X_r , \level)$.
 Consider the following transitions of quivers:
 \begin{gather*}
 Q \xrightarrow{\mu_{+}^{\fillvertex} \mu_{+}^{\empvertex}} Q' \xrightarrow{\mu_{-}^{\fillvertex}} Q''.
 \end{gather*}
 Then $Q'$ and $Q''$ do not depend on orders of the mutations in $\mu_{+}^{\fillvertex}\mu_{+}^{\empvertex}$ and $\mu_{-}^{\fillvertex}$.
 Furthermore, we have $Q= \nu(Q'')$.
\end{Lemma}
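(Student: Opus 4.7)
The plan is to split the proof into (i) showing order-independence of the two mutation batches and (ii) establishing the combinatorial identity $Q = \nu(Q'')$. For both parts, the cleanest route is to exploit the folding relationship between $Q(X_r,\level)$ for $X=B,C,F$ and a quiver of simply-laced type, reducing the periodicity to the already stated $ADE$ lemma.

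For order-independence of $\mu_+^\fillvertex \mu_+^\empvertex$: by inspection of the pictures of $Q(B_r,\level)$, $Q(C_r,\level)$ and $Q(F_4,\level)$ given in Section~\ref{section: quiver Xr l}, every arrow of $Q$ connects a vertex labeled $+$ to a vertex labeled $-$, so the $\pm$ labels give a bipartition of the underlying graph of $Q$. Hence $\mathbf{I}_+^\fillvertex \sqcup \mathbf{I}_+^\empvertex$ is an independent set, and the standard fact that mutations at pairwise non-adjacent vertices commute gives the well-definedness of $\mu_+^\fillvertex \mu_+^\empvertex$. After this batch of mutations, one checks that $\mathbf{I}_-^\fillvertex$ remains an independent set in $Q'$: the only way a mutation can create an edge between two filled $-$ vertices is through a length-two path $(-,\fillvertex)\to(+,\bullet)\to(-,\fillvertex)$ in $Q$, and by a direct inspection of the middle filled column in each picture no such path exists.

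For $Q = \nu(Q'')$, I would use the folding identifications
\[
C_r = A_{2r-1}/(\Z/2),\qquad B_r = D_{r+1}/(\Z/2),\qquad F_4 = E_6/(\Z/2),
\]
which extend to corresponding foldings of the level-$\level$ quivers: the filled vertices in $Q(X_r,\level)$ correspond to fixed points of the folding symmetry on the simply-laced side, while each pair of unfilled vertices in a left--right symmetric position corresponds to a single two-element orbit. Under this correspondence, a single mutation at a filled vertex lifts to a pair of commuting mutations at a fixed point, whereas a mutation at an unfilled vertex lifts to a pair of commuting mutations at the two vertices of a size-two orbit. The sequence $\mu_+^\fillvertex\mu_+^\empvertex$ therefore lifts to the full $\mu_+$ on the simply-laced quiver, while $\mu_-^\fillvertex$ lifts to mutation at only one representative of each size-two $-$ orbit. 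By the previous ($ADE$) lemma, performing the \emph{full} $\mu_+\mu_-$ upstairs returns the starting quiver; performing only half of the $-$ mutations instead differs from the full sequence exactly by the involution that swaps the two vertices in each size-two orbit, which is precisely the left--right reflection $\nu$ on the unfilled vertices.

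The main obstacle is making this folding correspondence precise enough to transport both the mutation rule and the parity-dependent shape of $Q(X_r,\level)$ between the two pictures for $\level$ even and $\level$ odd; in particular the identifications of columns described in the $C_r$ and $F_4$ pictures must be tracked carefully so that $\nu$ is read off correctly. An alternative, entirely elementary route --- which I believe is the one implicit in~\cite{IIKKNa,IIKKNb} --- is to verify the lemma by direct case analysis: write the skew-symmetric matrix of $Q(X_r,\level)$ block by block along the columns, apply the explicit mutation formula for $\widetilde B_{ij}$ to obtain the matrices of $Q'$ and $Q''$, and compare with $\nu(Q)$ term by term. This is tedious but routine once the bipartition observation above reduces the possible sign patterns in the mutation rule.
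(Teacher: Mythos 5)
The paper itself offers no proof of this lemma --- it is imported verbatim from~\cite{IIKKNa,IIKKNb}, where it is established by direct inspection of the quivers --- so the only part of your proposal that matches the actual source is your closing ``alternative'' of brute-force verification of the mutation rule block by block. Your main route has two genuine problems. First, the opening claim that ``every arrow of $Q$ connects a vertex labeled $+$ to a vertex labeled $-$'' is false for $X=B,C,F$: in the displayed $Q(B_r,\level)$ there are arrows from the $-$-labelled filled vertices of the middle column horizontally to $-$-labelled unfilled vertices of the neighbouring columns (and likewise in the $C_r$ and $F_4$ pictures), so the $\pm$ labels do \emph{not} bipartition the underlying graph. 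What you actually need --- that the $+$-vertices form an independent set, and that $\mathbf{I}_{-}^{\fillvertex}$ remains independent in $Q'$ --- happens to be true, but it must be checked case by case anyway, so the ``bipartition'' shortcut does not exist.

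Second, and more seriously, the folding reduction to the $ADE$ lemma cannot be set up as you describe, because the vertex counts rule it out: if $Q(B_r,\level)$ were the $\Z/2$-quotient of $Q(A_{2r-1},\level)$ with the filled column as fixed locus, that column would have $\level-1$ vertices, whereas it has $2\level-1$; and the unfilled columns are not collapsed orbits but appear in two separate left/right copies \emph{inside} $Q(B_r,\level)$ itself, with $\nu$ being precisely the swap of those copies. There is no simply-laced quiver of the form $Q(Y_{r'},\level)$ whose folding reproduces this shape. Even granting some equivariant cover, the key step ``performing only half of the $-$ mutations differs from the full sequence exactly by the involution'' is not a valid mutation identity: if $Q$ is $\sigma$-symmetric and $j=\sigma(i)$ with $i,j$ non-adjacent, then $\mu_i\mu_j(Q)$ is again $\sigma$-symmetric, so $\sigma(\mu_i\mu_j(Q))=\mu_i\mu_j(Q)$, which differs from $\mu_i(Q)$ by the whole mutation at $j$. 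So the reduction collapses, and the lemma has to be verified directly, as in~\cite{IIKKNa,IIKKNb}.
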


Thus
\begin{gather*}
\gamma(X_r ,\level) := \big( \big(\mathbf{I}_{+}^{\fillvertex} \sqcup \mathbf{I}_{+}^{\empvertex} , \mathbf{I}_{-}^{\fillvertex}\big) , \nu\big)
\end{gather*}
is a mutation loop, where $\mathbf{I}_{+}^{\fillvertex} \sqcup \mathbf{I}_{+}^{\empvertex}$ and $\mathbf{I}_{-}^{\fillvertex}$ in this equation mean any sequence in which all the elements in $\mathbf{I}_{+}^{\fillvertex} \sqcup \mathbf{I}_{+}^{\empvertex}$ and $\mathbf{I}_{-}^{\fillvertex}$, respectively, appear exactly once.

\subsubsection*{Type $\boldsymbol{G}$}\label{section: type G mutation loop}
Next, we assume that $X=G$.
Let $\mathbf{I}$ be the set of the vertices in $Q(G_2 , \level)$.
Let $\mathbf{I}_{+}^{\fillvertex}$, $\mathbf{I}_{-}^{\fillvertex}$ be the subsets of $\mathbf{I}$ consisting of the vertices labeled with $(+, \fillvertex)$ and $(-,\fillvertex)$ respectively,
and let $\mathbf{I}_{\mathrm{I}}^{\empvertex}, \dots , \mathbf{I}_{\mathrm{VI}}^{\empvertex}$ be the subsets of $\mathbf{I}$ consisting of the vertices labeled with $(\mathrm{I},\empvertex) ,\dots , (\mathrm{VI},\empvertex)$, respectively.
We define the following two compositions of mutations:
\begin{gather*}
\mu_{+}^{\fillvertex} \mu_{\mathrm{I}}^{\empvertex}= \prod_{m \in \mathbf{I}_{+}^{\fillvertex} \sqcup \mathbf{I}_{\mathrm{I}}^{\empvertex}} \mu_m, \qquad
\mu_{-}^{\fillvertex} \mu_{\mathrm{II}}^{\empvertex} = \prod_{m \in \mathbf{I}_{-}^{\fillvertex} \sqcup \mathbf{I}_{\mathrm{II}}^{\empvertex}} \mu_m.
\end{gather*}

We choose indices of vertices in $Q(G_2, \level)$ to be
\begin{gather*}
 \{ (a, i) \,|\, a=1,2, 3, i = 1, \dots , \level-1 \} \cup
 \{ (4, i) \,|\, i = 1, \dots , 3\level -1\} ,
\end{gather*}
so that $(a,i)$ is the $i$-th row (from the bottom) vertex in the $a$-th quiver (from the left).
Let $\nu$ be the permutation of the vertices in $Q(G_2,\level)$ that is the identity on the filled vertices, that is, $\nu(4,i)=(4,i)$ for all $i$, and acts on the unfilled vertices as
\begin{gather*}
 \nu(1,i) = (2,i), \\
 \nu(2,i) = (3,i), \\
 \nu(3,i) = (1,i),
\end{gather*}
for all $i$.

\begin{Lemma}[\cite{IIKKNb}]
 Let $Q=Q(G_2 , \level)$.
 Consider the following transitions of quivers:
 \begin{gather*}
 Q \xrightarrow{\mu_{+}^{\fillvertex} \mu_{\mathrm{I}}^{\empvertex}} Q' \xrightarrow{\mu_{-}^{\fillvertex} \mu_{\mathrm{II}}^{\empvertex}} Q''.
 \end{gather*}
 Then $Q'$ and $Q''$ do not depend on orders of the mutations in $\mu_{+}^{\fillvertex} \mu_{\mathrm{I}}^{\empvertex}$ and $\mu_{-}^{\fillvertex} \mu_{\mathrm{II}}^{\empvertex}$.
 Furthermore, we have $Q= \nu(Q'')$.
\end{Lemma}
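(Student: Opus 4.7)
The plan is to verify both assertions by a direct combinatorial analysis of the quiver $Q(G_2, \ell)$, handling the even-$\ell$ and odd-$\ell$ pictures separately as drawn in Section~\ref{section: quiver Xr l}.

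For the order-independence of $\mu_{+}^{\fillvertex} \mu_{\mathrm{I}}^{\empvertex}$ applied to $Q$, I would invoke the standard fact that mutations at pairwise non-adjacent vertices commute, and reduce the claim to showing that $\mathbf{I}_{+}^{\fillvertex} \sqcup \mathbf{I}_{\mathrm{I}}^{\empvertex}$ is an antichain in $Q$. The $+$-labeled filled vertices all lie in the single filled column, where the labels $+$ and $-$ alternate, so they are pairwise non-adjacent. The $\mathrm{I}$-labeled vertices lie in pairwise distinct unfilled columns that are not joined by any direct arrow, so they too are pairwise non-adjacent. The remaining check is that no $\mathrm{I}$-labeled vertex shares an arrow with a $+$-labeled filled vertex; this is verified by reading off the arrow pattern between each unfilled column and the filled column in each parity of $\ell$. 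The same argument, applied to the transformed quiver $Q'$ after one tracks how the labels and adjacencies are transported by the first composition, yields the order-independence of $\mu_{-}^{\fillvertex} \mu_{\mathrm{II}}^{\empvertex}$.

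For the periodicity $Q = \nu(Q'')$, I would compute $Q''$ from $Q$ by applying the skew-symmetric matrix mutation formula from Section~\ref{sec:quiver mutation and y-seed mutation} block by block. The permutation $\nu$ cyclically permutes the three unfilled columns while fixing the filled column pointwise, so the claim reduces to showing that the two-step mutation composition transports each arrow of $Q$ to its image under this cyclic shift. This is a finite case-check by arrow type: arrows within the filled column, arrows between the filled column and an unfilled column, and arrows within an unfilled column.

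The principal obstacle is the combinatorial volume of the case analysis induced by the six auxiliary labels $\mathrm{I}$--$\mathrm{VI}$ on the unfilled vertices, compounded by the parity split in $\ell$. A conceptually cleaner route is to identify $Q(G_2, \ell)$ as the $\mathbb{Z}/3$-quotient of the corresponding $D_4$-type quiver by the triality automorphism, reducing the two-step periodicity up to $\nu$ to the two-step periodicity already established for the simply-laced case, with $\nu$ arising as the residual symmetry of the quotient. This folding argument is the one underlying the proof in~\cite{IIKKNb}, to which the statement of the lemma attributes the result.
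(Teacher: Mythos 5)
The paper does not prove this lemma at all: it is stated with the citation to \cite{IIKKNb} and imported as known, so there is no internal proof to compare your argument against. Judged on its own terms, your primary strategy --- order-independence from commutativity of mutations at pairwise non-adjacent vertices, then a finite verification of $Q=\nu(Q'')$ via the skew-symmetric matrix mutation rule --- is the right one and is essentially what a direct proof looks like. But as written this is a proof plan, not a proof: the entire content of the lemma is the explicit computation of $Q'$ and $Q''$, the check that $\mathbf{I}_{-}^{\fillvertex}\sqcup\mathbf{I}_{\mathrm{II}}^{\empvertex}$ is still an antichain \emph{in $Q'$} (not in $Q$), and the verification that $Q''$ is the $\nu$-translate of $Q$; you describe each of these steps but carry out none of them. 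There is also a factual slip in the one step you do argue: the $\mathrm{I}$-labeled vertices do \emph{not} lie in pairwise distinct unfilled columns --- they all sit in the first unfilled column, interleaved with the $\mathrm{IV}$-labeled vertices --- and their pairwise non-adjacency holds for that reason instead (arrows inside that column only join $\mathrm{I}$- to $\mathrm{IV}$-vertices).

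The proposed ``cleaner route'' is also not available as stated. Identifying $Q(G_2,\ell)$ with the $\Z/3$-quotient of a $D_4$-type quiver by triality fails already on vertex counts: the filled column of $Q(G_2,\ell)$ has $3\ell-1$ vertices, three times as long as the columns of the corresponding simply-laced quiver, while the three unfilled columns (of length $\ell-1$ each) form a single $\nu$-orbit; the quotient that is actually relevant in this paper is $\mathbf{I}/\nu\cong\mathbf{J}$, taken with respect to $\nu$ itself, not a triality quotient of $Q(D_4,\ell)$. So the claimed reduction to the simply-laced periodicity does not go through without substantial extra input, and one should not lean on it as ``the proof underlying \cite{IIKKNb}''; the quiver-level two-step periodicity there is established by direct inspection of the mutation sequence. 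In short: keep the direct combinatorial verification, fix the description of where the $\mathrm{I}$-vertices live, and actually perform (or at least exhibit in one parity case) the computation of $Q'$ and $Q''$.
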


Thus
\begin{gather*}
\gamma(G_2 ,\level) := \big( \big(\mathbf{I}_{+}^{\fillvertex} \sqcup \mathbf{I}_{\mathrm{I}}^{\empvertex} , \mathbf{I}_{-}^{\fillvertex} \sqcup \mathbf{I}_{\mathrm{II}}^{\empvertex}\big) , \nu\big)
\end{gather*}
is a mutation loop, where $\mathbf{I}_{+}^{\fillvertex} \sqcup \mathbf{I}_{\mathrm{I}}^{\empvertex}$ and $ \mathbf{I}_{-}^{\fillvertex} \sqcup \mathbf{I}_{\mathrm{II}}^{\empvertex}$ in this equation mean any sequence in which all the elements in $\mathbf{I}_{+}^{\fillvertex} \sqcup \mathbf{I}_{\mathrm{I}}^{\empvertex}$ and $ \mathbf{I}_{-}^{\fillvertex} \sqcup \mathbf{I}_{\mathrm{II}}^{\empvertex}$, respectively, appear exactly once.

\subsection{Periodicity}
As above, we have defined quiver $Q(X_r, \level)$ and the mutation loop $\gamma(X_r, \level)$ for any pair $(X_r ,\level)$.
The following lemma follows from these definitions.
\begin{Lemma}\label{lemma:gamma is regular}
 The mutation loop $\gamma(X_r, \level)$ is regular.
\end{Lemma}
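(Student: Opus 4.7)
The plan is to verify the two conditions of Definition \ref{def:regular} directly. Since both conditions concern only the $\nu$-orbit structure on $\mathbf{I}$ and the placement of $m_1, \dots, m_T$ inside it, I would read off this structure from the quiver pictures in Section \ref{section: quiver Xr l} and compare it against the mutation sequences fixed in Section \ref{section: mutation loop Xr l}. The argument is a pure combinatorial bookkeeping done case by case in $X_r$.

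In the simply laced case $\nu$ is the identity, so each $\nu$-orbit is a singleton and the mutation sequence visits each vertex of $\mathbf{I} = \mathbf{I}_+ \sqcup \mathbf{I}_-$ exactly once; both conditions are immediate and require no further work.

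For types $B$, $C$, $F$, the permutation $\nu$ fixes each filled vertex and reflects the unfilled wings of $Q(X_r, \level)$ left-right. The first step is to observe from the pictures that this reflection swaps the labels $+$ and $-$ on the unfilled vertices, which would imply that $\nu$ restricts to a bijection $\mathbf{I}_+^{\empvertex} \xrightarrow{\sim} \mathbf{I}_-^{\empvertex}$ and that the $\nu$-orbits are exactly the singletons on filled vertices together with size-two orbits pairing $\mathbf{I}_+^{\empvertex}$ with $\mathbf{I}_-^{\empvertex}$. Since the mutation sequence contains $\mathbf{I}_+^{\fillvertex}$, $\mathbf{I}_-^{\fillvertex}$, and $\mathbf{I}_+^{\empvertex}$, its $\nu$-saturation then covers $\mathbf{I}_-^{\empvertex}$ as well and exhausts $\mathbf{I}$, giving condition~(1). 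For condition~(2), no two entries lie in the same orbit because filled entries have pairwise distinct singleton orbits, any two entries of $\mathbf{I}_+^{\empvertex}$ lie in distinct size-two orbits, and a filled and an unfilled entry lie in orbits of different sizes.

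For $G_2$ I would proceed analogously, with $\nu$ cycling the three unfilled columns by $(1\,2\,3)$ and fixing the filled ones. The pictorial check now is that the labels $\mathrm{I}$--$\mathrm{VI}$ are assigned so that every $\nu$-orbit of unfilled vertices has one of the two forms $\{\mathrm{I}, \mathrm{V}, \mathrm{III}\}$ or $\{\mathrm{IV}, \mathrm{II}, \mathrm{VI}\}$, containing exactly one representative of $\mathbf{I}_{\mathrm{I}}^{\empvertex}$ or of $\mathbf{I}_{\mathrm{II}}^{\empvertex}$ respectively. Granted this, both conditions follow by the same case analysis as in the $BCF$ case. There is no serious obstacle; the only step that requires real attention is the pictorial verification of the label placements, which amounts to tracking the $\pm$ and roman-numeral labels carefully through the large diagrams of Section \ref{section: quiver Xr l}.
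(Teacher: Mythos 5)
Your proposal is correct and matches the paper's approach: the paper gives no written argument beyond the remark that the lemma ``follows from these definitions,'' and what you describe is precisely the case-by-case verification that is implicitly intended, including the two key pictorial facts (that the left--right reflection in types $B$, $C$, $F$ interchanges the $+$ and $-$ labels on unfilled vertices, and that in type $G_2$ each size-three unfilled orbit contains exactly one vertex labeled $\mathrm{I}$ or $\mathrm{II}$). Nothing further is needed.
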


A remarkable fact, as stated in the beginning of Section \ref{section: quiver Xr l}, is that these mutation loops have periodicity.
\begin{Theorem}[\cite{IIKKNa,IIKKNb, Keller}]\label{theorem: periodicity}
 Let $\gamma = \gamma (X_r ,\level)$. Then the following holds:
 \begin{gather*}
 \underbrace{\mu_\gamma \circ \mu_\gamma \circ \dots \circ \mu_\gamma}_{t(\level + \dcn)} (y) = y ,
 \end{gather*}
where $\dcn$ is the dual Coxeter number of $X_r$ $($see \eqref{eq: dual coxeter} for the list of dual Coxeter numbers$)$, and~$t$ is given by \eqref{eq: t list}.
\end{Theorem}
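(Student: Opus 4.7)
The plan is to prove Theorem~\ref{theorem: periodicity} case by case according to the type of $X_r$, using the cluster-categorical perspective for the simply-laced cases and a folding argument to reduce the non-simply-laced cases to them.

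First I would handle the $ADE$ case ($t=1$). The quiver $Q(X_r,\level)$ is the square product of the bipartite orientations of $X_r$ and $A_{\level-1}$, and the composite $\mu_\gamma = \mu_-\circ \mu_+$ admits a clean categorical description: via the Keller--Yang / Amiot cluster category framework, $\mu_\gamma$ should act on $Y$-seeds as the inverse Auslander--Reiten translation $\tau^{-1}$ on the $2$-Calabi--Yau cluster category $\mathcal{C}(X_r \boxtimes A_{\level-1})$ attached to the square-product quiver. Periodicity of the $y$-variables then reduces to periodicity of $\tau$ on this category, and a standard Coxeter-type computation identifies the order of $\tau$ as exactly $\level + \dcn$, matching the desired period.

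For types $BCF$ (with $t=2$) and $G_2$ (with $t=3$) the strategy is folding. Each non-simply-laced pair $(X_r,\level)$ can be realized as the $\sigma$-fixed datum of a simply-laced pair $(X'_{r'},\level)$ under a Dynkin automorphism $\sigma$ of order $t$ (the foldings $A_{2r-1}\to B_r$, $D_{r+1}\to C_r$, $E_6\to F_4$, and $D_4\to G_2$). I would lift the mutation loop $\gamma(X_r,\level)$ to a $\sigma$-equivariant mutation loop $\tilde\gamma$ on the simply-laced cover; the permutation $\nu$ appearing in $\gamma$ (identity on filled vertices, a left-right reflection or a $\Z/3$-rotation on the unfilled ones) precisely encodes the $\sigma$-twist that is applied after each downstairs pass. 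One full turn downstairs then corresponds to composing the upstairs $Y$-seed transformation with $\sigma$, so $t$ turns downstairs equal one $\sigma$-free iteration upstairs. Applying the $ADE$ periodicity on the cover and descending, one recovers the period $t(\level + \dcn)$ after matching the dual Coxeter numbers through the folding.

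The main obstacle is the categorical identification in the $ADE$ step: translating the explicit composition of many individual mutations in $\mu_\gamma$ into the abstract operation $\tau^{-1}$ on the cluster category. This rests on the order-independence lemmas stated in Section~\ref{section: mutation loop Xr l}, the correct interpretation of the square-product quiver in terms of the tensor product of Dynkin diagrams, and an explicit matching of the bipartite decomposition $\mathbf{I} = \mathbf{I}_+ \sqcup \mathbf{I}_-$ with the combinatorics of the Auslander--Reiten quiver of $\mathcal{C}(X_r \boxtimes A_{\level-1})$. Once this dictionary is established, the Coxeter computation giving the order of $\tau$ and the $\sigma$-equivariant descent for the folded cases become routine verifications, and periodicity follows uniformly across all nine families.
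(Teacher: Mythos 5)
First, note that the paper does not prove Theorem~\ref{theorem: periodicity} at all: it is quoted from \cite{IIKKNa,IIKKNb,Keller}, so there is no internal argument to compare against. Your $ADE$ sketch is broadly in the spirit of Keller's proof, but be careful with the attribution of the period: it does not come from ``the order of $\tau$'' on a single cluster category, but from an isomorphism of functors on the (generalized) cluster category of the tensor product $kX_r\otimes kA_{\level-1}$ involving \emph{both} Coxeter numbers; the period $h(X_r)+h(A_{\level-1})=\dcn+\level$ only coincidentally looks like a statement about one diagram because $h(A_{\level-1})=\level$ and $h=\dcn$ in the simply laced case.

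The non-simply-laced part of your proposal has a genuine gap: the folding reduction cannot give the right period. If $t$ turns of $\gamma(X_r,\level)$ really amounted to one turn of $\gamma(Y_{r'},\level)$ on the simply laced cover, the period of $\gamma(X_r,\level)$ would divide $t\bigl(\level+h(Y_{r'})\bigr)$, which is $2(\level+2r)$ for $B_r$, $2(\level+2r)$ for $C_r$, $2(\level+12)$ for $F_4$ and $3(\level+6)$ for $G_2$; none of these equals the asserted period $t\bigl(\level+\dcn(X_r)\bigr)$, namely $2(\level+2r-1)$, $2(\level+r+1)$, $2(\level+9)$, $3(\level+4)$. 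If both numbers were periods their difference would force the order to divide a small constant, which is absurd. The underlying reason is that $Q(X_r,\level)$ is \emph{not} the $\sigma$-fixed quiver of $Q(Y_{r'},\level)$: the columns attached to a short simple root $\alpha_a$ have height $t_a\level-1$, so the level direction is rescaled on the short-root part rather than merely folded, and the permutation $\nu$ exchanges the mirror-image wings of $Q(X_r,\level)$ itself rather than implementing the Dynkin automorphism of the cover. The cited proofs in \cite{IIKKNa,IIKKNb} do not fold; they establish periodicity of the associated tropical $Y$-system by direct inspection for each type and then upgrade tropical periodicity to actual periodicity via the categorification by quivers with potential. To repair your argument for types $B$, $C$, $F$, $G$ you would need to replace the folding step by such a direct mechanism, or at least explain how the level rescaling on short roots is absorbed, which your sketch does not address.
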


\subsection[Exponents of $Q(X_r, \ell)$]{Exponents of $\boldsymbol{Q(X_r,\ell)}$}\label{sec:def of exponents}

First, we see that the rational function $\mu_\gamma$ for the mutation loop $\gamma = \gamma(X_r , \level)$ satisfies the following property.
\begin{Proposition}\label{prop:unique positive real eta}
 The fixed point equation $\mu_\gamma (y) =y$ has a unique positive real solution.
\end{Proposition}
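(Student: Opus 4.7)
The plan is to prove existence and uniqueness separately. For existence, I would pass to logarithmic coordinates $x_i = \log y_i$: since $\mu_\gamma$ is subtraction-free, the map $F := \log \circ \mu_\gamma \circ \exp$ is a smooth diffeomorphism of $\R^{\mathbf{I}}$, and by Theorem \ref{theorem: periodicity} it satisfies $F^N = \id$ with $N = t(\level + \dcn)$. Any smooth finite-order diffeomorphism of $\R^n$ has a fixed point — for instance, by averaging a Riemannian metric along the finite orbit to obtain an $F$-invariant metric and invoking a Lefschetz or Smith-theoretic argument, or more concretely by applying Brouwer's theorem to an $F$-invariant closed ball large enough to contain the $\Z/N$-orbit of some chosen point. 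This yields an $\eta \in (\R_{>0})^{\mathbf{I}}$ with $\mu_\gamma(\eta) = \eta$.

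For uniqueness I would reduce the fixed-point equation to the \emph{constant $Y$-system} and then apply a variational argument. Running \eqref{eq:Y transition} along $\gamma(X_r,\level)$ expresses each component of $\mu_\gamma(y)$ as a subtraction-free rational expression in the $y_j$, $1+y_j$, and $1+y_j^{-1}$ whose exponents are read off from $Q(X_r,\level)$, so the equation $\mu_\gamma(\eta) = \eta$ becomes a finite polynomial system of the form
\begin{gather*}
\eta_i = \prod_{j \in \mathbf{I}} (1+\eta_j)^{P_{ij}} \bigl(1+\eta_j^{-1}\bigr)^{-N_{ij}}, \qquad P_{ij},\, N_{ij} \in \Z_{\geq 0}.
\end{gather*}
In the logarithmic coordinates $x_i = \log \eta_i$ these are precisely the Euler--Lagrange equations of a functional of the shape $\Phi(x) = \tfrac{1}{2}\langle Mx,\, x\rangle - \sum_i L\bigl(1/(1+{\rm e}^{-x_i})\bigr)$, where $L$ denotes the Rogers dilogarithm (up to the usual sign and normalization conventions) and $M$ is a symmetric matrix built from the combinatorics of $Q(X_r,\level)$. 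Strict convexity together with coercivity of $\Phi$ — which together force a unique critical point — reduce to positive-definiteness of $M$. This matrix turns out to be essentially a tensor product of Cartan matrices of $X_r$ and $A_{\level-1}$ (with appropriate folding in the $BCFG$ cases), and is therefore positive definite since both Cartan matrices are of finite type.

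The main obstacle I anticipate is the case-by-case verification of positive-definiteness of $M$ when $X_r$ is non-simply-laced: one has to match the combinatorial data of $Q(X_r,\level)$ and of the permutation $\nu$ from Section \ref{section: mutation loop Xr l} against the folding data $t_a$ of the root system in order to identify $M$ cleanly. Once this identification is in hand, strict convexity of $\Phi$ gives both existence and uniqueness uniformly, subsuming the topological existence argument above. As a consistency check, the conjectural explicit formula for $\eta$ in terms of the $q$-dimensions of Kirillov--Reshetikhin modules — recalled in the Remark after Theorem \ref{intro theorem: A1 level} and proven for types $A$, $D$, $E_6$, and all classical types — furnishes an explicit positive solution of the constant $Y$-system in those cases, matching the $\eta$ produced by the variational argument.
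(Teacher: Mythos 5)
Your final plan --- drop the topological argument and get existence and uniqueness simultaneously from strict convexity of a dilogarithm functional governed by a positive definite symmetric matrix --- is essentially the route the paper takes. The paper's proof (Lemma~\ref{lemma:unique fixed point} combined with Corollary~\ref{corollary: K = Ap inv Am}) converts the fixed-point equation $\mu_\gamma(y)=y$ into the constant $Y$-system $\zeta_s=\prod_t(1-\zeta_t)^{(A_+^{-1}A_-)_{st}}$ in the variables $\zeta_t=z_+(\eta;t)\in(0,1)$ via a bijection from \cite{Mizuno20}, identifies $A_+^{-1}A_-$ with the matrix $K$ of Corollary~\ref{corollary: K = Ap inv Am} (whose positive definiteness is the classical TBA fact you allude to, quoted from \cite{IIKKNa}), and then invokes \cite[Lemma~2.1]{VlasenkoZwegers}, which supplies exactly the existence-and-uniqueness statement your convexity-plus-coercivity argument is meant to establish.

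Two caveats. First, the step you present as bookkeeping is the actual mathematical content: that the reduced system is a gradient system at all --- i.e., that the exponent matrix $M$ is symmetric --- is not visible from your displayed equations $\eta_i=\prod_j(1+\eta_j)^{P_{ij}}(1+\eta_j^{-1})^{-N_{ij}}$, whose exponent matrices have no a priori symmetry. In the paper this is Lemma~\ref{lemma: NZ symplectic property} ($A_+A_-^{\mathsf{T}}=A_-A_+^{\mathsf{T}}$, imported from \cite{Mizuno20}), and it only holds after reducing from the index set $\mathbf{I}$ to $\mathbf{J}$ (these differ for $BCFG$, where not every vertex is mutated and $\nu\neq\id$); the reduction itself is \cite[Corollary~3.9]{Mizuno20}, and the explicit identification of $M$ with $K$ is Proposition~\ref{prop: A plus minus}. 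Without establishing symmetry, the variational framework does not get off the ground. Second, your standalone existence argument is flawed as stated: a closed ball containing the $\Z/N$-orbit of a point is not $F$-invariant, so Brouwer does not apply directly, and ``every finite-order diffeomorphism of $\R^n$ has a fixed point'' is a nontrivial Smith-theoretic assertion (routine only for prime-power order). Since you discard that argument in favour of the convexity one, this does no damage to the final proof, but it should not be offered as a proof of existence.
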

This lemma will be proved in Section~\ref{sec:asymptotics of Z} (see Corollary~\ref{corollary: K = Ap inv Am}).
This positive real solution is denoted by $\eta \in (\R_{>0})^{\mathbf{I} }$.

Let $J(y)$ be the Jacobian matrix of the rational function $\mu_\gamma$:
\begin{gather*}
 J_\gamma(y) := \left( \frac{\partial \mu_{i}}{\partial y_j} (y) \right)_{i,j \in \mathbf{I} },
\end{gather*}
where $\mu_i$ is the $i$-th components of $\mu_\gamma$.
Then Theorem~\ref{theorem: periodicity} implies that the $t\big(\level + \dcn\big)$-th power of $J_\gamma(\eta)$ is the identity matrix:
\begin{gather*}
 J_\gamma(\eta)^{t (\level + \dcn)} = I.
\end{gather*}
Thus all the eigenvalues of $J_\gamma(\eta)$ are $t\big(\level + \dcn\big)$-th root of unities.
These eigenvalues can be written as
\begin{gather*}
 {\rm e}^{ \frac{2 \pi{\rm i} m_1}{t (\level + \dcn)} } , {\rm e}^{ \frac{2 \pi{\rm i} m_2}{t (\level + \dcn)} } , \dots ,{\rm e}^{ \frac{2 \pi{\rm i} m_{\lvert \mathbf{I} \rvert}}{t (\level + \dcn)} } ,
\end{gather*}
using a sequence of integers $0 \leq m_1 \leq m_2 \leq \dots \leq m_{\lvert \mathbf{I} \rvert} < t\big(\level + \dcn\big)$.
We say that this sequence $m_1 , \dots , m_{\lvert \mathbf{I} \rvert}$ is the \emph{exponents} of $Q(X_r , \level)$.

\begin{Example}
 \begin{figure}[t]
 \centering
 \begin{tikzpicture}
 [scale=0.75,auto=left,black_vertex/.style={circle,draw,fill,scale=0.75},white_vertex/.style={circle,draw,scale=0.75}]
 \node at (3.5,4.6) []{$\mu_{+}$};
 \node at (3.5,4) [scale=1.75]{$\longrightarrow$};

 \node (11) at (2,2) [black_vertex][label=left:{$1$}]{};
 \node at (2.3,2.3) []{$-$};
 \node (12) at (2,4) [black_vertex][label=left:{$2$}]{};
 \node at (2.3,4.3) []{$+$};
 \node (13) at (2,6) [black_vertex][label=left:{$3$}]{};
 \node at (2.3,6.3) []{$-$};

 \foreach \from/\to in {12/11,
 12/13}
 \draw[arrows={-Stealth[scale=1.5]}] (\from)--(\to);
 \end{tikzpicture}
 \begin{tikzpicture}
 [scale=0.75,auto=left,black_vertex/.style={circle,draw,fill,scale=0.75},white_vertex/.style={circle,draw,scale=0.75}]
 \node at (3.5,4.6) []{$\mu_{-}$};
 \node at (3.5,4) [scale=1.75]{$\longrightarrow$};

 \node (11) at (2,2) [black_vertex][label=left:{$1$}]{};
 \node at (2.3,2.3) []{$-$};
 \node (12) at (2,4) [black_vertex][label=left:{$2$}]{};
 \node at (2.3,4.3) []{$+$};
 \node (13) at (2,6) [black_vertex][label=left:{$3$}]{};
 \node at (2.3,6.3) []{$-$};

 \foreach \from/\to in {11/12,
 13/12}
 \draw[arrows={-Stealth[scale=1.5]}] (\from)--(\to);
 \end{tikzpicture}
 \begin{tikzpicture}
 [scale=0.75,auto=left,black_vertex/.style={circle,draw,fill,scale=0.75},white_vertex/.style={circle,draw,scale=0.75}]

 \node (11) at (2,2) [black_vertex][label=left:{$1$}]{};
 \node at (2.3,2.3) []{$-$};
 \node (12) at (2,4) [black_vertex][label=left:{$2$}]{};
 \node at (2.3,4.3) []{$+$};
 \node (13) at (2,6) [black_vertex][label=left:{$3$}]{};
 \node at (2.3,6.3) []{$-$};

 \foreach \from/\to in {12/11,
 12/13}
 \draw[arrows={-Stealth[scale=1.5]}] (\from)--(\to);
 \end{tikzpicture}
 \caption{The mutation loop $\gamma(A_1, 4)$.}
 \label{fig:A1 mutation loop}
 \end{figure}
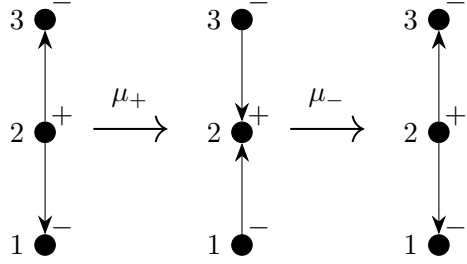
 Let $(X_r,\level) = (A_1,4)$.
 In this case, the index set of the quiver is given by $\mathbf{I} = I \times I' = \{ (1,1),(1,2),(1,3) \}$.
 We choose bipartite decompositions of $I = \{1\}$ and $I'=\{1,2,3\}$
 as $I = I_+ \sqcup I_- = \{\; \} \sqcup \{1\}$ and
 $I' = I'_+ \sqcup I'_- = \{1,3 \} \sqcup \{2 \}$, respectively.
 Then we obtain the decomposition $\mathbf{I} = \mathbf{I}_+ \sqcup \mathbf{I}_- = \{ (1,2) \} \sqcup \{ (1,1),(1,3) \}$,
 and the mutation loop $\gamma(A_1,4)$ is given in Fig.~\ref{fig:A1 mutation loop}, where we simply write $(1,i')$ as $i'$ in the figure.

 The cluster transformation is now given by
 \begin{gather*}
 \mu_1= y_{1}^{-1} y_{2}^{-1} (y_{2} + 1),\qquad \mu_2=
 y_{1} y_{2} y_{3} (y_{1} y_{2} + y_{2} + 1)^{-1} (y_{2} y_{3} + y_{2} + 1)^{-1} ,\\
 \mu_3=
 y_{2}^{-1} y_{3}^{-1} (y_{2} + 1) ,
 \end{gather*}
 where $\mu_i$ is the $i$-th component of $\mu_\gamma(y_1,y_2,y_3)$.
 We can directly verify that $\eta = (2,1/3,2)$ is the positive real solution of $\mu_i = y_i$ ($i=1,2,3$).

 Although the matrix $J_\gamma(y)$ is a little complicated to write down directly, we can see that the matrix $L(y)$ defined by $L(y) = \diag(\mu_1,\mu_2,\mu_3)^{-1}J_\gamma(y)\diag(y_1,y_2,y_3)$ can be written as a~product of two simple matrices as follows:
 \begin{gather*}
 L(y) =
 \begin{pmatrix}
 -1 & 0 & 0 \\
 \dfrac{y_{2} + 1}{y_{1} y_{2} + y_{2} + 1} & 1 & \dfrac{y_{2} + 1}{y_{2} y_{3} + y_{2} + 1} \\
 0 & 0 & -1
 \end{pmatrix}
 \begin{pmatrix}
 1 & \dfrac{1}{y_{2} + 1} & 0 \\
 0 & -1 & 0 \\
 0 & \dfrac{1}{y_{2} + 1} & 1
 \end{pmatrix}.
 \end{gather*}
 By substituting $\eta$, we obtain
 \begin{gather*}
 L(\eta) =
 \begin{pmatrix}
 -1 & 0 & 0 \\
 \frac{2}{3} & 1 & \frac{2}{3} \\
 0 & 0 & -1
 \end{pmatrix}
 \begin{pmatrix}
 1 & \frac{3}{4} & 0 \\
 0 & -1 & 0 \\
 0 & \frac{3}{4} & 1
 \end{pmatrix}
 =
 \begin{pmatrix}
 -1 & -\frac{3}{4} & 0 \\
 \frac{2}{3} & 0 & \frac{2}{3} \\
 0 & -\frac{3}{4} & -1
 \end{pmatrix},
 \end{gather*}
 and
 \begin{align*}
 \det(xI_3 - J_\gamma(\eta)) &= \det(xI_3 - L(\eta)) =x^3 + 2x^2 + 2x +1 \\
 &=
 \big(x-{\rm e}^{\frac{2 \pi{\rm i} \cdot 2}{6} }\big)\big(x-{\rm e}^{\frac{2 \pi{\rm i} \cdot 3}{6} }\big)\big(x-{\rm e}^{\frac{2 \pi{\rm i} \cdot 4}{6} }\big).
 \end{align*}
 Thus the exponents of $Q(A_1,4)$ are $2$, $3$, $4$.
\end{Example}

\subsection{Conjecture on exponents}
We give a conjectural formula on exponents in terms of root systems.
We define two polyno\-mials~$N_{X_r,\level}(x)$ and $D_{X_r, \level}(x)$ by
\begin{gather*}
 N_{X_r, \level}(x) = \prod_{a=1}^{r} \frac{ x^{t(\level + \dcn)} -1 }{ x^{t/t_a} -1 }, \qquad
 D_{X_r, \level}(x)=D_{X_r, \level}^{{\rm long}} (x) D_{X_r, \level}^{{\rm short}} (x),
\end{gather*}
where the polynomials $D_{X_r, \level}^{{\rm long}}(x)$ and $D_{X_r, \level}^{{\rm short}}(x)$ are defined by
\begin{gather*}
 D_{X_r, \level}^{{\rm long}}(x)=\prod_{\alpha \in \Delta^{{\rm long}} } { \big( x^t - {\rm e}^{ \frac{2 \pi{\rm i} \innerproduct{\rho}{\alpha}}{\level + \dcn} } \big) } ,
 \qquad
 D_{X_r, \level}^{{\rm short}} (x) = \prod_{\alpha \in \Delta^{{\rm short}}} { \big( x - {\rm e}^{\frac{2 \pi{\rm i} \innerproduct{\rho}{\alpha}}{\level + \dcn} } \big) }.
\end{gather*}

When $X=A ,D$ or $E$, the polynomials~$N_{X_r, \level}(x)$ and~$D_{X_r, \level}(x)$ can be written more simply:
\begin{gather*}
 N_{X_r, \level}(x) = \left( \frac{x^{\level + \dcn} -1 }{x -1} \right)^r, \qquad
 D_{X_r, \level}(x) = \prod_{\alpha \in \Delta} { \big( x - {\rm e}^{ \frac{2 \pi{\rm i} \innerproduct{\rho}{\alpha}}{\level + \dcn} } \big) } .
\end{gather*}

\begin{Conjecture}\label{conj: char poly}
 Let $X_r$ be a finite type Dynkin diagram, and $\level$ be a positive integer such that $\level \geq 2$.
 Let $\gamma=\gamma(X_r , \level)$ be the mutation loop on the quiver $Q(X_r , \level )$ defined in Section~{\rm \ref{section: mutation loop Xr l}}.
 Then the following identity holds for the characteristic polynomial of $J_{\gamma}(\eta)$:
 \begin{align}\label{eq:conj char poly}
 \det (x I - J_{\gamma} (\eta)) = \frac{N_{X_r, \level}(x)}{D_{X_r, \level}(x)}.
 \end{align}
\end{Conjecture}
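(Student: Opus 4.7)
The plan is to lift the strategy used for $(A_1, \level)$ and $(A_r, 2)$ to general type by factorizing $J_{\gamma}(\eta)$ into elementary mutation Jacobians and identifying the resulting product with a specific root-theoretic operator. Set $L_{\gamma}(y) := \diag(\mu_{\gamma}(y))^{-1} J_{\gamma}(y) \diag(y)$, the Jacobian in logarithmic coordinates; at the fixed point $\mu_{\gamma}(\eta) = \eta$ this is a diagonal conjugation of $J_{\gamma}(\eta)$ and hence has the same characteristic polynomial. Each elementary $Y$-seed mutation $\mu_k$ contributes a simple log-Jacobian: apart from $-1$ in the $(k,k)$ slot and $1$'s on the remaining diagonal entries, all non-zero entries lie in the $k$-th column and equal $Q_{ki}/(1 + y_k)$ or $Q_{ik}\, y_k/(1+y_k)$, exactly as in the $(A_1,4)$ example. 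Thus $L_{\gamma}(\eta)$ is a concrete product of $|\mathbf{I}|$ such sparse matrices evaluated along the trajectory $Y(0), \dots, Y(T)$ starting at $\eta$.

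The second step is to substitute the conjectural explicit formula for $\eta$ in terms of $q$-dimensions of Kirillov--Reshetikhin modules at $q = {\rm e}^{\pi {\rm i}/(\level + \dcn)}$ referenced in the Remark, so that each ratio $y_k/(1+y_k)$ along the trajectory becomes a product of sines of rational multiples of $\pi/(\level + \dcn)$, matching the pattern that drives Section~\ref{sec:proofs for type A}. For type $ADE$, the quiver $Q(X_r, \level)$ factorizes as a product $X_r \times A_{\level - 1}$ of Dynkin diagrams, and the bipartite mutation loop $\gamma(X_r, \level) = (\mathbf{I}_+, \mathbf{I}_-)$ respects this product; I would look for eigenvectors of $L_{\gamma}(\eta)$ whose components are products of a Dynkin-node factor and a level-index factor. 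The level factor should produce eigenvalues of the form $\zeta^j$ with $\zeta$ a primitive $(\level + \dcn)$-th root of unity and $j$ an exponent of $A_{\level-1}$, while the Dynkin factor should contribute the additional angular twist $\innerproduct{\rho}{\alpha}$ through the action of the Coxeter element of $X_r$ on the weight lattice, so that the pairwise products recover exactly the prescribed eigenvalues ${\rm e}^{2\pi{\rm i}\innerproduct{\rho}{\alpha}/(\level + \dcn)}$ for $\alpha \in \Delta$ together with the cyclotomic remainder predicted by $N_{X_r,\level}/D_{X_r,\level}$.

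For non-simply laced types, $Q(X_r,\level)$ is obtained from an $ADE$ situation by a folding, as reflected in its mixture of filled and unfilled vertices and in the non-trivial permutation $\nu$ of $\gamma(X_r, \level)$. My proposed approach is to lift to an ambient simply laced type equipped with the corresponding diagram automorphism and carry the $ADE$ factorization through equivariantly, so that $L_{\gamma}(\eta)$ for $X_r$ is obtained as the $\nu$-invariant part of an unfolded $L_{\gamma}(\eta)$. The long/short splitting of $D_{X_r,\level}$ and the exponent $t/t_a$ in $N_{X_r,\level}$ should then fall out as the decomposition into $\nu$-isotypic components.

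The main obstacle, in my view, is the non-simply laced case, especially $G_2$: its mutation sequence involves six auxiliary labels and a cyclic $\nu$ of order three, so the equivariant analysis is considerably more intricate than the order-two $BCF$ folding and may require a direct computation. A preliminary but essential step, on which the definition of $\eta$ itself rests, is the proof of Proposition~\ref{prop:unique positive real eta}; the Neumann--Zagier identification in Corollary~\ref{corollary: K = Ap inv Am} supplies that, and also provides the algebraic framework in which the eigenvector ansatz above can be made rigorous.
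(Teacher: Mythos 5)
You should first be aware that the statement you are proving is Conjecture~\ref{conj: char poly}, the main \emph{conjecture} of the paper: the author proves it only for $(A_1,\level)$ and $(A_r,2)$ (Theorem~\ref{theorem: A1 level}, Section~\ref{sec:proofs for type A}), and the general case is left open. What you have written is therefore not comparable to a proof in the paper but to its open problem, and as it stands it is a research program rather than a proof: the decisive steps are phrased as ``I would look for'' and ``should produce'' and are never carried out. Your first step (passing to $L_\gamma(y)=\diag(\mu_\gamma(y))^{-1}J_\gamma(y)\diag(y)$ and factoring it into sparse elementary log-Jacobians) is sound and is exactly how the paper begins in Section~\ref{section: A1 level} (Lemma~\ref{lemma:L eta = Lm Lp}). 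Everything after that contains genuine gaps.

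Concretely: (i) you substitute ``the conjectural explicit formula for $\eta$'' from the Remark, which is itself unproven for $E_7$, $E_8$, $F_4$ and $G_2$, so even a complete execution of your plan would prove the conjecture only conditionally in those types. (ii) The separated eigenvector ansatz (Dynkin-node factor times level-index factor) is not justified and is not what the paper does even in its two solved cases: for $(A_1,\level)$ the eigenvectors are built from solutions $\phi_m^{(a)}=P_m^{(a)}(\zeta)$ of a second-order difference equation with boundary conditions, given by a $2\times 2$ determinant of trigonometric expressions \eqref{eq:solution phi}, not by a product; and $(A_r,2)$ is handled by level--rank duality reducing to $(A_1,r+1)$, not by a tensor factorization. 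The coupling of the two directions through the fixed point $\eta$ is precisely what obstructs a naive product structure. (iii) Your eigenvalue bookkeeping is off: the roots ${\rm e}^{2\pi{\rm i}\innerproduct{\rho}{\alpha}/(\level+\dcn)}$ for $\alpha\in\Delta$ are roots of the \emph{denominator} $D_{X_r,\level}$, i.e., they index cancellations inside $N_{X_r,\level}/D_{X_r,\level}$, not eigenvalues of $J_\gamma(\eta)$; producing them ``together with the cyclotomic remainder'' would yield $\deg N_{X_r,\level}$ eigenvalues for a matrix of size only $\deg N_{X_r,\level}-\deg D_{X_r,\level}=\lvert\mathbf{I}\rvert$. (iv) The folding argument for $BCFG$ is asserted, not constructed; the quivers $Q(B_r,\level)$, $Q(C_r,\level)$, $Q(F_4,\level)$, $Q(G_2,\level)$ are not literally $\nu$-quotients of the $ADE$ ones in a way that makes $L_\gamma(\eta)$ an invariant block of an unfolded Jacobian, and you acknowledge this may ``require a direct computation'' --- which is to concede the point. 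Your final observation that Proposition~\ref{prop:unique positive real eta} underpins the definition of $\eta$ and is supplied by Corollary~\ref{corollary: K = Ap inv Am} is correct.
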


\begin{Theorem}\label{theorem: A1 level}
 Conjecture~{\rm \ref{conj: char poly}} is true in the following cases:
 \begin{enumerate}\itemsep=0pt
 \item[$1)$] $(A_1, \level)$ for all $\level \geq 2$,
 \item[$2)$] $(A_r , 2)$ for all $r\geq 1$.
 \end{enumerate}
\end{Theorem}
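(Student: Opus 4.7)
The plan is to construct explicit eigenvectors of $J_\gamma(\eta)$ and verify their eigenvalues by a direct trigonometric computation, following the pattern already visible in the $(A_1,4)$ example. First I would start from an explicit formula for $\eta$. For $(A_1,\ell)$ the index set is $\mathbf{I}=\{1,\dots,\ell-1\}$ and the positive real solution of $\mu_\gamma(y)=y$ should be
\[
 \eta_i = \frac{\sin(\pi(i-1)/(\ell+\dcn))\sin(\pi(i+1)/(\ell+\dcn))}{\sin^2(\pi/(\ell+\dcn))},\qquad i=1,\dots,\ell-1,
\]
with the boundary convention $\eta_0=\eta_\ell=0$; this is the specialization of the Kirillov $q$-dimension formula recalled in the Remark following the main conjecture, and for $(A_1,4)$ it reproduces $\eta=(2,1/3,2)$. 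An analogous sine-product formula indexed by $\mathbf{I}=\{1,\dots,r\}$ is available for $(A_r,2)$.

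Next I would factor the Jacobian using the block structure of $\gamma$. Because $\mathbf{I}_+$ and $\mathbf{I}_-$ are each independent sets in $Q(X_r,\ell)$, the partial mutations within $\mu_+$ and within $\mu_-$ commute, so $\mu_\gamma=\mu_-\circ\mu_+$ and the chain rule gives $J_\gamma(y)=J_{\mu_-}(\mu_+(y))\,J_{\mu_+}(y)$. Each factor has a closed form coming directly from the $Y$-seed mutation rule; after conjugating by $\diag(y)$ as in the $(A_1,4)$ example, it becomes a signed banded matrix whose only nontrivial entries are $\pm(1+Y^{\pm 1})^{-1}$ on bipartite neighbours. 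Evaluation at $y=\eta$ collapses these entries, via the identity expressing $1+\eta_i$ as a ratio of $\sin^2$'s, into simple trigonometric quantities, and $\mu_+(\eta)$ admits an analogous sine expression.

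The heart of the argument is then to exhibit the eigenvectors. For $(A_1,\ell)$ I expect the vectors
\[
 v^{(k)} = \bigl(\sin(k\pi j/(\ell+\dcn))\bigr)_{j=1,\dots,\ell-1},\qquad k=2,3,\dots,\ell,
\]
after an appropriate diagonal rescaling depending on $\eta$, to be eigenvectors of $J_\gamma(\eta)$ with eigenvalues ${\rm e}^{2\pi{\rm i} k/(\ell+\dcn)}$; the verification reduces to the product-to-sum identity $2\sin A\sin B=\cos(A-B)-\cos(A+B)$ applied row by row to $J_{\mu_-}(\mu_+(\eta))\,J_{\mu_+}(\eta)$. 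For $(A_r,2)$ the same strategy applies with sine vectors $\sin(k\pi j/(r+3))$ over $k=2,\dots,r+1$, using the bipartite orientation of the $A_r$ Dynkin diagram. A final cross-check computes $N_{X_r,\ell}(x)/D_{X_r,\ell}(x)$ using $t=t_a=1$ and the description of $\innerproduct{\rho}{\alpha}$ as heights of positive roots for $A_r$, confirming that it reduces to $\prod_k(x-{\rm e}^{2\pi{\rm i} k/(\ell+\dcn)})$ over exactly the same range of $k$.

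The main obstacle will be the third step: showing cleanly that the product $J_{\mu_-}(\mu_+(\eta))\,J_{\mu_+}(\eta)$ acts as a scalar on each sine vector, without getting buried under the trigonometric bookkeeping. The delicate point is that $\mu_+(\eta)$ is itself a sine ratio distinct from $\eta$, so the two factor matrices differ and must conspire — via Chebyshev-type identities for the second-difference operator on the bipartite path — to produce a single complex rotation eigenvalue rather than two real ones. This is precisely where the explicit Kirillov form of $\eta$, and not merely its existence guaranteed by Proposition~\ref{prop:unique positive real eta}, is essential.
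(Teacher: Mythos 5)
Your skeleton does match the paper's: pass to the conjugate $L(\eta)=\diag(\mu_1,\dots,\mu_{\ell-1})^{-1}J_\gamma(y)\diag(y_1,\dots,y_{\ell-1})\big|_{y=\eta}$, factor it through the two half-mutations as a product $L_-L_+$ of banded matrices with entries $1/(1+Y_k)$ evaluated at the appropriate intermediate seed, and then exhibit explicit eigenvectors. The gap is in the eigenvector ansatz itself. At the fixed point the relevant entries are $z_m=\sin\frac{\pi m}{\ell+2}\sin\frac{\pi(m+2)}{\ell+2}\big/\sin^2\frac{\pi(m+1)}{\ell+2}$, so the eigenvalue equation reduces to the three-term recurrence $\phi_{m-1}+\phi_{m+1}=z_m^{-1}\big(\lambda+\lambda^{-1}\big)\phi_m$ with \emph{non-constant} coefficients; pure sine vectors solve only the constant-coefficient version, and no diagonal rescaling independent of $k$ can repair this. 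Concretely, for $(A_1,4)$ the eigenvector for the eigenvalue ${\rm e}^{2\pi{\rm i}\cdot 2/6}$ is built (up to the extra $\lambda$-twist on the $\mathbf{I}_+$ components) from $\phi=(2,3,2)$, whereas $\big(\sin\tfrac{2\pi j}{6}\big)_{j=1,2,3}=\big(\tfrac{\sqrt3}{2},\tfrac{\sqrt3}{2},0\big)$ has a vanishing third entry, so the two cannot differ by a diagonal factor; the same mismatch occurs for the right eigenvectors of $L(\eta)$. The correct solutions are the $2\times2$ determinants $\phi_m^{(a)}=\alpha_0^{(a)}\beta_m^{(a)}-\alpha_m^{(a)}\beta_0^{(a)}$ mixing $\cos\frac{\pi a(m+1)}{\ell+2}$ with $\sin\frac{\pi(a-1)(m+1)}{\ell+2}\big/\sin\frac{\pi(m+1)}{\ell+2}$, and verifying the recurrence, the boundary conditions $\phi_0=\phi_\ell=0$, and the non-vanishing $\phi_1^{(a)}>0$ requires the Laurent-polynomial identities for $\alpha^{(a)}$, $\beta^{(a)}$, $P_m^{(a)}$ rather than a single product-to-sum formula. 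Also, the $\lambda$ versus $1$ weighting of the components on $\mathbf{I}_+$ versus $\mathbf{I}_-$ is essential to turn the real recurrence into a complex rotation eigenvalue $\lambda^2$, and it is absent from your ansatz.

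Two further corrections. Your displayed formula for $\eta$ is false as written: it gives $\eta_1=0$, while $\eta_1=2$ for $(A_1,4)$. The uniform sine formula holds not for $\eta$ itself but for the mixed tuple $\tilde\eta$ whose $m$-th entry is $\eta_m$ for $m\in\mathbf{I}_+$ and the value after the half-mutation $\mu_+$ for $m\in\mathbf{I}_-$; both are needed and both are captured by $z_m=1/(1+\tilde\eta_m)$. Finally, the paper does not attack $(A_r,2)$ directly: it observes that $\gamma(A_r,2)$ is $\gamma(A_1,r+1)$ with all arrows reversed, so the cluster transformations satisfy $\mu_{\gamma'}=\iota\circ\mu_\gamma\circ\iota$ with $\iota(y)=\big(y_1^{-1},\dots,y_r^{-1}\big)$ and the exponents coincide; only the level--rank identity $N_{A_r,2}(x)/D_{A_r,2}(x)=\prod_{a=2}^{r+1}\big(x-{\rm e}^{2\pi{\rm i}a/(r+3)}\big)$ then needs a separate counting argument on $\innerproduct{\rho}{\alpha}$. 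A direct sine-vector computation for $(A_r,2)$ would run into exactly the same variable-coefficient obstruction as above.
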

We will prove Theorem \ref{theorem: A1 level} in Section \ref{section: A1 level}.

\subsection{Examples}\label{sec:example of exponents}
In this section, we give examples of calculations on the right-hand side in the conjectural formula~\eqref{eq:conj char poly}.

\begin{figure}[t] \centering
 \begin{tikzpicture}
 [scale=1.0,auto=left]
 \foreach \x in {1, 2, ..., 6}
 \foreach \y in {1, 2, ..., 3}
 \draw [black] (\x,\y) circle (5pt);
 \foreach \x / \y in {1/1, 1/2, 1/3, 2/1,2/2,3/1}
 \draw [black,fill] (\x,\y) circle (3pt);
 \foreach \x / \y in {6/1,6/2,6/3,5/3,5/2,4/3}
 \draw (\x,\y) node[minimum size =2pt,inner sep=2pt,diamond,draw,fill]{};
 \foreach \x in {1, 2, ..., 6}
 \node(\x) at (\x, 0.3) {$\x$};
 \end{tikzpicture}
 \caption{The underlying white circles represent the exponents of $N_{A_3, 3} (x)$, and the black marks represent the exponents of $D_{A_3, 3} (x)$.
 The number of vertices in a same vertical line is a multiplicity.
 Furthermore, the black circles and diamonds represent the exponents that come from the positive roots and the negative roots, respectively.
 The unmarked white circles represent the exponents of $N_{A_3, 3} (x)/D_{A_3, 3} (x)$.}
 \label{fig:exponents A3 3}
\end{figure}
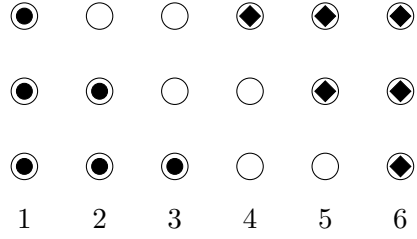

Let $F(x)$ be a polynomial whose roots are all $N$-th roots of unity.
The roots of $F(x)$ can be written as
\begin{gather*}
 {\rm e}^{2 \pi{\rm i} m_1 / N} ,{\rm e}^{2 \pi{\rm i} m_2/N} ,\dots, {\rm e}^{2 \pi{\rm i} m_n/N},
\end{gather*}
where $0 \leq m_1 \leq m_2 \leq \dots \leq m_n < N$ is a sequence of integers.
We call this sequence of integers the exponents of $F(x)$.
The exponents of $F(x)$ is denoted by $\mathcal{E}(F(x))$.

\begin{Example}
 Let $(X_r, \level) = (A_3, 3)$.
 The dual Coxeter number is given by $\dcn = 4 $, so $\level + \dcn = 7$.
 Therefore, the exponents of $N_{A_3, 3} (x)$ is given by
 \begin{gather*}
 \mathcal{E}( N_{A_3, 3} (x)) = ( 1,1,1,2,2,2,3,3,3,4,4,4,5,5,5,6,6,6 ).
 \end{gather*}
 On the other hand, by calculation on the root system of type $A_3$,
 we obtain
 \begin{gather*}
 \mathcal{E}( D_{A_3, 3} (x)) = (1,1,1,2,2,3,4,5,5,6,6,6).
 \end{gather*}
 As the result, the exponents of $N_{A_3, 3} (x)/D_{A_3,3} (x)$ are given by
 \begin{gather*}
 \mathcal{E} \left( \frac{N_{A_3, 3} (x)}{D_{A_3, 3} (x)} \right)
 = (2,3,3,4,4,5).
 \end{gather*}
 Fig.~\ref{fig:exponents A3 3} illustrates these exponents.
\end{Example}

\begin{Example}Let $(X_r, \level) = (B_3, 2)$. The dual Coxeter number is given by $\dcn = 5 $, so $t\big(\level + \dcn\big) = 14$.
 Therefore, the exponents of $N_{B_3, 2} (x)$ is given by
 \begin{gather*}
 \mathcal{E}( N_{B_3, 2} (x)) = ( 1,1,1,2,2,2,3,3,3,4,4,4,5,5,5,6,6,6,7,8,8,8,9,9,9,\\
 \hphantom{\mathcal{E}( N_{B_3, 2} (x)) = (}{}10,10,10,11,11,11,12,12,12,13,13,13 ).
 \end{gather*}
 On the other hand, by calculation on the root system of type $B_3$,
 we obtain
 \begin{gather*}
 \mathcal{E}\big( D_{B_3, 2}^{{\rm long}} (x)\big) = (1,1,2,2,3,3,4,4,5,5,6,6,8,8,9,9,10,10,11,11,12,12,13,13),\\
 \mathcal{E}\big( D_{B_3, 2}^{{\rm short}} (x)\big) = (1,3,5,9,11,13).
 \end{gather*}
 As the result, the exponents of $N_{B_3, 2} (x)/D_{B_3,2} (x)$ are given by
 \begin{align*}
 \mathcal{E} \left( \frac{N_{B_3, 2} (x)}{D_{B_3, 2} (x)} \right)
 = (2,4,6,7,8,10,12).
 \end{align*}
 Fig.~\ref{fig:exponents B3 2} illustrates these exponents.

 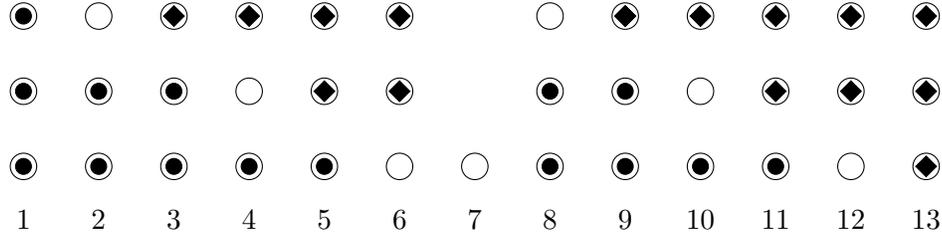
\begin{figure}[t]
 \centering
 \begin{tikzpicture}
 [scale=1.0,auto=left]
 \foreach \x in {1, 2, ..., 6}
 \foreach \y in {1, 2, ..., 3}
 \draw [black] (\x,\y) circle (5pt);
 \foreach \x in {8, 9, ..., 13}
 \foreach \y in {1, 2, ..., 3}
 \draw [black] (\x,\y) circle (5pt);
 \draw [black] (7,1) circle (5pt);
 \foreach \x / \y in {3/3,4/3,5/2,5/3,6/2,6/3, 9/3,10/3,11/2,12/2,12/3,13/1,13/2}
 \draw (\x,\y) node[minimum size =2pt,inner sep=2pt,diamond,draw,fill]{};
 \foreach \x / \y in {1/1, 1/2, 2/1, 2/2,3/1,4/1,8/1, 8/2, 9/1, 9/2,10/1,11/1}
 \draw [black,fill] (\x,\y) circle (3pt);
 \foreach \x / \y in {1/3, 3/2,5/1}
 \draw [black,fill] (\x,\y) circle (3pt);
 \foreach \x / \y in {13/3,11/3,9/3}
 \draw (\x,\y) node[minimum size =2pt,inner sep=2pt,diamond,draw,fill]{};
 \foreach \x in {1, 2, ..., 13}
 \node(\x) at (\x, 0.3) {$\x$};
 \end{tikzpicture}
 \caption{The exponents of $N_{B_3, 2}(x)$ and $D_{B_3, 2}(x)$. The meanings of the symbols are the same as in Fig.~\ref{fig:exponents A3 3}.}
 \label{fig:exponents B3 2}
 \end{figure}
\end{Example}

\section{Proofs of Theorem \ref{theorem: A1 level}}\label{sec:proofs for type A}
\subsection[$(A_1, \ell)$ case]{$\boldsymbol{(A_1, \ell)}$ case}\label{section: A1 level}
Let $\level$ be an integer with $\level \geq 2$. In this section we consider the mutation loop $\gamma = \gamma(A_1 ,\level)$.

Let $\mathbf{I} = \{ 1, \dots , \level -1 \}$ denotes the vertices in $Q(A_1, \level)$.
Let us fix a bipartite decomposition $\mathbf{I}=\mathbf{I}_+ \sqcup \mathbf{I}_-$.
For example, Fig.~\ref{fig:A1 mutation loop} shows the mutation loop $\gamma(A_1, 4)$.

\begin{Lemma} The right-hand side of \eqref{eq:conj char poly} is given by
 \begin{gather*}
 \frac{N_{A_1, \level}(x)}{D_{A_1, \level}(x)} =
 \prod_{a=2}^{\level} \big(x - {\rm e}^{\frac{2 \pi{\rm i} a}{\level + 2}}\big).
 \end{gather*}
\end{Lemma}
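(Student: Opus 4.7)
The plan is to simply unpack the definitions of $N_{X_r,\level}(x)$ and $D_{X_r,\level}(x)$ under the specific data of type $A_1$, and observe a clean cancellation of two cyclotomic factors.

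First I would record the relevant invariants for $X_r = A_1$: we have $r=1$, the dual Coxeter number is $\dcn = 2$, and since $A_1$ is simply laced we have $t=1$ and $t_1 = 1$. Moreover, writing $\alpha_1$ for the (unique positive) simple root, we have $\Delta^{{\rm long}} = \{\alpha_1, -\alpha_1\}$ and $\Delta^{{\rm short}} = \varnothing$, and $\rho = \alpha_1/2$ so that $\innerproduct{\rho}{\alpha_1} = 1$ and $\innerproduct{\rho}{-\alpha_1} = -1$.

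Next I would substitute these into the defining formulas. The numerator collapses to
\begin{gather*}
 N_{A_1,\level}(x) = \frac{x^{\level+2}-1}{x-1} = \prod_{a=1}^{\level+1}\bigl(x - {\rm e}^{\frac{2\pi{\rm i} a}{\level+2}}\bigr),
\end{gather*}
while the denominator reduces to
\begin{gather*}
 D_{A_1,\level}(x) = D_{A_1,\level}^{{\rm long}}(x) = \bigl(x - {\rm e}^{\frac{2\pi{\rm i}}{\level+2}}\bigr)\bigl(x - {\rm e}^{-\frac{2\pi{\rm i}}{\level+2}}\bigr).
\end{gather*}
The key observation is then that ${\rm e}^{-2\pi{\rm i}/(\level+2)} = {\rm e}^{2\pi{\rm i}(\level+1)/(\level+2)}$, so $D_{A_1,\level}(x)$ is exactly the product of the $a=1$ and $a=\level+1$ factors of $N_{A_1,\level}(x)$. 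Cancelling these two factors from the explicit product for $N_{A_1,\level}(x)$ yields
\begin{gather*}
 \frac{N_{A_1,\level}(x)}{D_{A_1,\level}(x)} = \prod_{a=2}^{\level}\bigl(x - {\rm e}^{\frac{2\pi{\rm i} a}{\level+2}}\bigr),
\end{gather*}
which is the claimed identity.

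There is no real obstacle here: the lemma is a purely formal calculation once the root-system data of $A_1$ are substituted, and the only step that requires any care is rewriting the negative-root contribution ${\rm e}^{-2\pi{\rm i}/(\level+2)}$ as a positive power of the primitive $(\level+2)$-th root of unity in order to match it with a factor appearing in $N_{A_1,\level}(x)$.
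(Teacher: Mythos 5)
Your proposal is correct and follows essentially the same route as the paper: compute $\dcn=2$ for $A_1$, identify $N_{A_1,\level}(x)=(x^{\level+2}-1)/(x-1)$ and $D_{A_1,\level}(x)=\big(x-{\rm e}^{\frac{2\pi{\rm i}}{\level+2}}\big)\big(x-{\rm e}^{\frac{-2\pi{\rm i}}{\level+2}}\big)$, rewrite the negative exponent as ${\rm e}^{\frac{2\pi{\rm i}(\level+1)}{\level+2}}$, and cancel the corresponding factors. The only difference is that you spell out the intermediate factorization of $N_{A_1,\level}(x)$ into linear factors, which the paper leaves implicit.
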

\begin{proof}First note that the dual Coxeter number of $A_1$ is given by $\dcn = 2$.
 Then, the lemma follows from
 \begin{gather*}
 N_{A_1, \level}(x) = \frac{x^{\level+2} -1 }{x-1},
 \end{gather*}
 and
 \begin{gather*}
 D_{A_1, \level}(x) = \big(x-{\rm e}^{\frac{2 \pi{\rm i}}{\level + 2}} \big) \big(x-{\rm e}^{\frac{-2 \pi{\rm i} }{\level + 2}} \big)
 =\big(x-{\rm e}^{\frac{2 \pi{\rm i}}{\level + 2}} \big) \big(x-{\rm e}^{\frac{ 2 \pi{\rm i} (\level+1)}{\level + 2}} \big).\tag*{\qed}
 \end{gather*}\renewcommand{\qed}{}
\end{proof}

Therefore, what we have to show is that the exponents of $Q(A_1, \level)$ are $2, 3, \dots, \level$.
To prove this, we define the matrix $L(y)$ by
\begin{align*}
 L(y) =
 \begin{pmatrix}
 \mu_1(y) & & \\
 & \ddots & \\
 & & \mu_{\level-1}(y)
 \end{pmatrix}^{-1}
 J_\gamma(y)
 \begin{pmatrix}
 y_1 & & \\
 & \ddots & \\
 & & y_{\level-1}
 \end{pmatrix},
\end{align*}
where $\mu_i$ is the $i$-th component of $\mu_\gamma$.
The matrices $J_\gamma(\eta)$ and $L(\eta)$ have the same characteristic polynomial because $\mu(\eta) = \eta$.
We will find the eigenvalues of $L(\eta)$ instead of $J_\gamma(\eta)$.

Let $\zeta={\rm e}^{\pi{\rm i} /(\level + 2)}$.
For $m=1, 2,\dots , \level-1$, we define non-zero real numbers
\begin{gather}\label{eq:def of zm}
z_m =
\frac{\sin \frac{\pi m }{\level + 2}\sin \frac{\pi (m+2) }{\level + 2}}{\sin^2 \frac{\pi (m+1)}{\level + 2}} =\frac{\big(\zeta^m-\zeta^{-m}\big)\big(\zeta^{m+2}-\zeta^{-m-2}\big)}{\big(\zeta^{m+1} - \zeta^{-m-1}\big)^2}.
\end{gather}
We define the two matrices $L_+ = \big( L_+^{mk} \big)_{m,k=1 , \dots , \level-1} $ and $L_- = \big( L_-^{mk} \big)_{m,k=1 , \dots , \level-1}$ by
\begin{gather*}
 L_\pm^{mk} =
 \begin{cases}
 \delta_{mk} & \text{if $k \in \mathbf{I}_{\mp}$,}\\
 -\delta_{mk} + z_k (\delta_{m,k-1} + \delta_{m,k+1}) & \text{if $k \in \mathbf{I}_{\pm}$,}
 \end{cases}
\end{gather*}
where $\delta$ is the Kronecker delta.
\begin{Lemma}\label{lemma:L eta = Lm Lp}
 The matrix $L(\eta)$ is expressed as
 \begin{gather*}
 L(\eta) = L_- L_+.
 \end{gather*}
\end{Lemma}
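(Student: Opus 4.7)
The plan is to compute $L(\eta)$ by factoring the cluster transformation into its two half-steps $\mu_\gamma = \mu_- \circ \mu_+$. Because the vertices of $\mathbf{I}_+$ and of $\mathbf{I}_-$ are each pairwise non-adjacent in $Q(A_1,\ell)$, each composition $\mu_\pm$ is a single simultaneous $Y$-seed mutation with an explicit closed form; for instance,
\begin{gather*}
(\mu_+)_k(y) = y_k^{-1} \quad (k \in \mathbf{I}_+), \qquad
(\mu_+)_m(y) = y_m \prod_{\substack{k \in \mathbf{I}_+ \\ k \sim m}} \frac{y_k}{1 + y_k} \quad (m \in \mathbf{I}_-),
\end{gather*}
with an analogous formula for $\mu_-$ applied to the intermediate quiver. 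Setting $L^{(\pm)}(y) := \diag(\mu_\pm(y))^{-1} J_{\mu_\pm}(y) \diag(y)$, the chain rule yields
\begin{gather*}
L(y) = L^{(-)}\!\big(\mu_+(y)\big) \, L^{(+)}(y).
\end{gather*}

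Differentiating the explicit formulas and simplifying produces
\begin{gather*}
L^{(+)}(y)_{mk} =
\begin{cases}
\delta_{mk} & k \in \mathbf{I}_-, \\
-\delta_{mk} + (\delta_{m,k-1}+\delta_{m,k+1})/(1 + y_k) & k \in \mathbf{I}_+,
\end{cases}
\end{gather*}
and an analogous formula for $L^{(-)}(y)$ with $+$ and $-$ exchanged. Comparing with the target matrices $L_\pm$ in the statement, the equality $L(\eta) = L_- L_+$ reduces to the two scalar identities
\begin{gather*}
\frac{1}{1 + \eta_k} = z_k \ \ (k \in \mathbf{I}_+), \qquad \frac{1}{1 + (\mu_+(\eta))_k} = z_k \ \ (k \in \mathbf{I}_-).
\end{gather*}

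To close the proof, I would verify these by exhibiting $\eta$ explicitly. The natural candidate, forced by the two identities above together with $(\mu_+(\eta))_k = 1/\eta_k$ for $k \in \mathbf{I}_-$ (the $\mathbf{I}_-$ component of the fixed-point equation), is
\begin{gather*}
\eta_k = \frac{1 - z_k}{z_k} \ \ (k \in \mathbf{I}_+), \qquad \eta_k = \frac{z_k}{1 - z_k} \ \ (k \in \mathbf{I}_-).
\end{gather*}
By Proposition~\ref{prop:unique positive real eta}, uniqueness of the positive fixed point reduces the task to checking that this candidate actually satisfies $\mu_\gamma(\eta) = \eta$. After the substitution $\eta_j/(1+\eta_j) = 1 - z_j$ for $j \in \mathbf{I}_+$, this collapses to the single trigonometric identity
\begin{gather*}
(1 - z_k)^2 = z_k^2 (1 - z_{k-1})(1 - z_{k+1}) \qquad (1 \leq k \leq \ell - 1,\ z_0 = z_\ell := 0),
\end{gather*}
which in turn follows at once from the compact factorization $1 - z_m = (\zeta - \zeta^{-1})^2/(\zeta^{m+1} - \zeta^{-m-1})^2$, where $\zeta = e^{\pi i/(\ell+2)}$: both sides reduce to $(\zeta - \zeta^{-1})^4/(\zeta^{k+1} - \zeta^{-k-1})^4$. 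The main technical content is this single trigonometric identity; the surrounding work is careful bookkeeping of the bipartite case split and the boundary vertices $k = 1$ and $k = \ell - 1$.
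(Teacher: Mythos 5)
Your proposal is correct, and its first half coincides with the paper's argument: both factor $L(y)=L_-(y')L_+(y)$ by the chain rule along the two half-steps, compute the half-step matrices explicitly (your $L^{(\pm)}$ are exactly the paper's $L_\pm(Y)$), and reduce the lemma to the two scalar identities $z_k=1/(1+\eta_k)$ for $k\in\mathbf{I}_+$ and $z_k=1/\big(1+y'_k|_{y=\eta}\big)$ for $k\in\mathbf{I}_-$. Where you diverge is in how these identities are established. The paper simply quotes the known closed form $\tilde{\eta}_m=\sin^2\frac{\pi}{\ell+2}\big/\big(\sin\frac{\pi m}{\ell+2}\sin\frac{\pi (m+2)}{\ell+2}\big)$ for the fixed point from \cite[Example~5.3]{KNS2011} and checks $z_m=1/(1+\tilde{\eta}_m)$ via $\sin^2 A-\sin^2 B=\sin(A+B)\sin(A-B)$. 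You instead reverse-engineer the candidate $\eta$ from the required identities and verify $\mu_\gamma(\eta)=\eta$ directly; after your (correct) observation that the mutated vertices are sources at each half-step, so both half-mutations have the uniform product form, this collapses to the single identity $(1-z_k)^2=z_k^2(1-z_{k-1})(1-z_{k+1})$, and your factorization $1-z_m=(\zeta-\zeta^{-1})^2/(\zeta^{m+1}-\zeta^{-m-1})^2$ does prove it, boundary cases included via $\zeta^{\ell+1}=-\zeta^{-1}$. Your route is self-contained where the paper's relies on a citation, at the cost of a somewhat longer verification; note that your candidate is in fact the same fixed point (one has $\eta_k=\tilde{\eta}_k$ for $k\in\mathbf{I}_+$ and $\eta_k=1/\tilde{\eta}_k$ for $k\in\mathbf{I}_-$), so the two arguments ultimately rest on the same trigonometric content.
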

\begin{proof}
 Let
 \begin{gather*}
 (Q,y) \xrightarrow{\mu_+} (Q' ,y') \xrightarrow{\mu_-} (Q'',y'')
 \end{gather*}
 be the $Y$-seed transitions associated with $\gamma$.
 Then the chain rule for Jacobian matrices implies that
 \begin{gather*}
 L(y) = L_- (y') L_+(y),
 \end{gather*}
 where $L_\pm (Y)$ for $Y \in (\univsf)^{\level-1}$ is the $(\level-1) \times (\level -1)$ matrices defined by
 \begin{gather*}
 L_\pm^{mk} (Y) =
 \begin{cases}
 \delta_{mk} & \text{if $k \in \mathbf{I}_{\mp}$,} \\
 -\delta_{mk} + \dfrac{1}{1 + Y_k} (\delta_{m,k-1} + \delta_{m,k+1}) & \text{if $k \in \mathbf{I}_{\pm}$.}
 \end{cases}
 \end{gather*}

 Let $\tilde{\eta}$ be the $(\level-1)$-tuple of real numbers
 such that its $m$-th entry is $\eta$ if $m \in \mathbf{I}_+$ and is $y'|_{y=\eta}$ if $m \in \mathbf{I}_-$.
 The following explicit expression of $\tilde{\eta}$ is known (see \cite[Example 5.3]{KNS2011} for example):
 \begin{gather*}
 \tilde{\eta}_m =
 \frac{\sin^2 \frac{\pi}{\level + 2}}{\sin \frac{\pi m}{\level + 2}\sin \frac{\pi (m+2)}{\level + 2}} .
 \end{gather*}
 We can easily check that $z_m = 1/ (1+\tilde{\eta}_m)$, and this complete the proof.
\end{proof}

For any non-zero complex number $\lambda$, we consider the following difference equation of the numbers $( \phi_m)_{0 \leq m \leq \level}$:
\begin{gather}\label{eq:phi rec}
\phi_{m-1} + \phi_{m+1} = \phi_m z_m^{-1} \big(\lambda + \lambda^{-1}\big)
\end{gather}
with the boundary conditions given by
\begin{gather}\label{eq:phi boundary}
\phi_0=\phi_{\level} =0.
\end{gather}

\begin{Lemma}Let $(\phi_m)_{0 \leq m \leq \level}$ be a non-zero solution of the difference equation~\eqref{eq:phi rec} satisfying the boundary conditions~\eqref{eq:phi boundary}.
 Then the vector
 \begin{gather*}
 \psi=
 \begin{pmatrix}
 \psi_1 \\
 \vdots \\
 \psi_{\level-1}
 \end{pmatrix}
 \end{gather*}
 defined by
 \begin{gather*}
 \psi_m =
 \begin{cases}
 \lambda \phi_m & \text{if $m \in \mathbf{I}_+$,}\\
 \phi_m & \text{if $m \in \mathbf{I}_-$,}
 \end{cases}
 \end{gather*}
 is an eigenvector of the transpose matrix of $L(\eta)$ with an eigenvalue $\lambda^2$, that is,
 \begin{gather*}
 L(\eta)^{\mathsf{T}} \psi = \lambda^2 \psi .
 \end{gather*}
\end{Lemma}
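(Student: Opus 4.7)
The plan is to factor $L(\eta)^{\mathsf{T}} = L_+^{\mathsf{T}} L_-^{\mathsf{T}}$ via Lemma~\ref{lemma:L eta = Lm Lp} and verify the eigenvector equation $L_+^{\mathsf{T}} L_-^{\mathsf{T}} \psi = \lambda^2 \psi$ by peeling off the two factors in turn. The key structural observation is that in $L_\pm$ the $k$-th column is $e_k$ whenever $k$ lies in the opposite sign class, so $L_-^{\mathsf{T}}$ acts as the identity on coordinates labeled by $\mathbf{I}_+$ and as the discrete operator $\psi_m \mapsto -\psi_m + z_m(\psi_{m-1} + \psi_{m+1})$ on coordinates labeled by $\mathbf{I}_-$; the matrix $L_+^{\mathsf{T}}$ is symmetric with the roles of $\mathbf{I}_+$ and $\mathbf{I}_-$ interchanged.

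First I would compute the intermediate vector $\psi' := L_-^{\mathsf{T}} \psi$. For $m \in \mathbf{I}_+$ the output is simply $\psi_m = \lambda \phi_m$. For $m \in \mathbf{I}_-$, both neighbors $m\pm 1$ lie in $\mathbf{I}_+$, so the nontrivial row contributes $-\phi_m + \lambda\, z_m(\phi_{m-1} + \phi_{m+1})$, and the recurrence~\eqref{eq:phi rec} collapses this to $\lambda^2 \phi_m$. Next I would apply $L_+^{\mathsf{T}}$ to $\psi'$. On $\mathbf{I}_-$ the result is $\psi'_m = \lambda^2 \phi_m = \lambda^2 \psi_m$, and on $\mathbf{I}_+$ a second application of~\eqref{eq:phi rec} to the expression $-\lambda \phi_m + \lambda^2\, z_m(\phi_{m-1} + \phi_{m+1})$ yields $\lambda^3 \phi_m = \lambda^2 \psi_m$, as required.

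The boundary conditions~\eqref{eq:phi boundary} come in exactly at $m = 1$ and $m = \level - 1$, where the off-diagonal entries of $L_\pm$ formally reference the missing indices $0$ and $\level$; the vanishing $\phi_0 = \phi_\level = 0$ makes those phantom terms harmless, consistent with the fact that those rows of $L_\pm$ really have only one off-diagonal neighbor. I do not anticipate a substantive obstacle: the proof reduces to two parallel uses of the three-term recurrence~\eqref{eq:phi rec}, and the conceptual point is that the normalization $\psi_m = \lambda \phi_m$ on $\mathbf{I}_+$ is engineered so that each of the two passes contributes one factor of $\lambda$ in the right coordinates, producing the total eigenvalue $\lambda^2$ rather than $\lambda + \lambda^{-1}$ or $\lambda$.
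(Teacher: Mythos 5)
Your proposal is correct and follows essentially the same route as the paper: factor $L(\eta)^{\mathsf{T}}=L_+^{\mathsf{T}}L_-^{\mathsf{T}}$ via Lemma~\ref{lemma:L eta = Lm Lp}, apply $L_-^{\mathsf{T}}$ to get $\lambda^2\phi_m$ on the $\mathbf{I}_-$ coordinates, then apply $L_+^{\mathsf{T}}$ and use the recurrence~\eqref{eq:phi rec} a second time on the $\mathbf{I}_+$ coordinates, with the boundary conditions~\eqref{eq:phi boundary} absorbing the phantom indices $0$ and $\level$. The two uses of the three-term recurrence and the role of the normalization $\psi_m=\lambda\phi_m$ on $\mathbf{I}_+$ are exactly as in the paper's argument.
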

\begin{proof}Let $\psi' = L_-^{\mathsf{T}} \psi $ and $\psi''= L_+^{\mathsf{T}} \psi'$ $\big({=} L(\eta)^{\mathsf{T}} \psi\big)$. In the following equations, we assume that $\psi_0 = \psi_\level = \psi'_0 = \psi'_\level = 0$.
 Then we obtain
 \begin{gather*}
 \psi'_m =
 \begin{cases}
 \psi_m &\text{if $m\in \mathbf{I}_+$,} \\
 -\psi_m + z_m (\psi_{m-1} + \psi_{m+1})&\text{if $m\in \mathbf{I}_-$,}
 \end{cases}
 \end{gather*}
 and
 \begin{align*}
 \psi''_m =
 \begin{cases}
 -\psi'_m + z_m (\psi'_{m-1} + \psi'_{m+1})&\text{if $m\in \mathbf{I}_+$,}\\
 \psi'_m &\text{if $m\in \mathbf{I}_-$.}
 \end{cases}
 \end{align*}
 For any $m \in \mathbf{I}_-$, we compute
 \begin{align*}
 \psi''_m - \lambda^2 \psi_{m}
 &=\psi'_m- \lambda^2 \psi_m =-\psi_m + z_m (\psi_{m-1} + \psi_{m+1}) -\lambda^2 \psi_m \\
 &=-\phi_m + z_m \lambda (\phi_{m-1}+\phi_{m+1}) -\lambda^2 \phi_m \\
 &=z_m \lambda \big( {-}z_m^{-1}\big(\lambda + \lambda^{-1}\big)\phi_m + \phi_{m-1}+\phi_{m+1}\big)=0.
 \end{align*}
 In particular, we obtain $\psi'_{m} = \lambda^2 \psi_m$ for $m \in \mathbf{I}_-$. Thus, for any $m \in \mathbf{I}_+$, we find that
 \begin{align*}
 \psi''_m - \lambda^2 \psi_{m}
 &=-\psi'_m + z_m (\psi'_{m-1} + \psi'_{m+1}) -\lambda^2 \psi_m
 =-\psi_m + z_m \lambda^2( \psi_{m-1} + \psi_{m+1}) -\lambda^2 \psi_m \\
 &=-\lambda \phi_m + z_m \lambda^2( \phi_{m-1} + \phi_{m+1}) -\lambda^3 \phi_m \\
 &=z_m \lambda^2 \big( {-}z_m^{-1}\big(\lambda + \lambda^{-1}\big) \phi_m + \phi_{m-1} + \phi_{m+1}\big)
 =0,
 \end{align*}
 and this complete the proof.
\end{proof}

Now we focus on solving the difference equation \eqref{eq:phi rec} satisfying the boundary conditions~\eqref{eq:phi boundary}. We will show that $\big(\phi_m^{(a)}\big)_{m=0, \dots, \level}$ for
\begin{gather}\label{eq:solution phi}
\phi_m^{(a)} =\det \begin{pmatrix}
2\cos \dfrac{\pi a}{\level+2} & 2 \cos \dfrac{\pi a(m+1)}{\level+2}\\
{\sin \dfrac{\pi (a-1)}{\level+2} }/{\sin \dfrac{\pi}{\level+2}} & {\sin \dfrac{\pi (a-1)(m+1) }{\level+2} }/{\sin \dfrac{\pi (m+1)}{\level+2}}
\end{pmatrix}
\end{gather}
is a non-zero solution of \eqref{eq:phi rec} and \eqref{eq:phi boundary} for $\lambda = \zeta^a$ if $a = 2, \dots, \level$ (Theorem~\ref{theorem:solution phi}).

To prove this, we introduce the following Laurent polynomials:
\begin{gather*}
\alpha^{(a)}(x) = x^a + x^{-a}, \qquad
\beta^{(a)}(x) = \frac{x^{a-1} - x^{-a+1}}{x - x^{-1}}.
\end{gather*}
We write $\alpha^{(a)}\big(x^{m+1}\big)$ and $\beta^{(a)}\big(x^{m+1}\big)$ as $\al$ and $\be$.
We also define a Laurent polyno\-mial~$P_m^{(a)} (x) $ by
\begin{gather*}
P_m^{(a)} (x) =\det
 \left(
 \begin{matrix}
 \alpha_0^{(a)} (x) & \alpha_m^{(a)} (x) \\
 \beta_0^{(a)} (x) & \beta_m^{(a)} (x)
 \end{matrix}
 \right).
\end{gather*}
Note that $\phi_m^{(a)}$ in \eqref{eq:solution phi} can be written as $P_m^{(a)} (\zeta)$.

Let us examine difference equations for $\al$, $\be$ and $\PP$.

\begin{Lemma} The Laurent polynomials $\al$ satisfy the difference equation
\begin{gather}\label{eq:alpha rec}
\alpha_{m-1}^{(a)}(x) + \alpha_{m+1}^{(a)}(x) = \alpha_0^{(a)}(x) \al[m].
\end{gather}
\end{Lemma}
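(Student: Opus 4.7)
The plan is to unfold the definition and reduce to the elementary identity
\[
\big(y + y^{-1}\big)\big(z + z^{-1}\big) = yz + y^{-1}z^{-1} + yz^{-1} + y^{-1}z.
\]
By definition $\alpha^{(a)}(y) = y^a + y^{-a}$, and $\alpha_m^{(a)}(x) = \alpha^{(a)}(x^{m+1}) = x^{a(m+1)} + x^{-a(m+1)}$. So I would first rewrite the right-hand side of \eqref{eq:alpha rec} as
\[
\alpha_0^{(a)}(x)\,\alpha_m^{(a)}(x) = \big(x^a + x^{-a}\big)\big(x^{a(m+1)} + x^{-a(m+1)}\big),
\]
and expand it into four monomials
\[
x^{a(m+2)} + x^{am} + x^{-am} + x^{-a(m+2)}.
\]

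Then I would observe that this is exactly $\alpha_{m+1}^{(a)}(x) + \alpha_{m-1}^{(a)}(x)$, since $\alpha_{m+1}^{(a)}(x) = x^{a(m+2)} + x^{-a(m+2)}$ and $\alpha_{m-1}^{(a)}(x) = x^{am} + x^{-am}$. This concludes the proof; there is essentially no obstacle because the identity is the Chebyshev-type three-term recurrence for $y \mapsto y^a + y^{-a}$ written multiplicatively. No special properties of the parameter $a$ or of $m$ (beyond $0 \le m-1$ so that $\alpha_{m-1}^{(a)}$ is defined via the same formula as a Laurent polynomial) are needed.
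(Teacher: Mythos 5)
Your proposal is correct and is essentially the same computation as the paper's proof: the paper starts from $\alpha_{m-1}^{(a)}(x)+\alpha_{m+1}^{(a)}(x)$ and factors the four monomials into $\big(x^a+x^{-a}\big)\big(x^{a(m+1)}+x^{-a(m+1)}\big)$, while you expand the product on the right-hand side into the same four monomials. The two directions are identical in content.
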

\begin{proof} We compute
 \begin{align*}
\alpha_{m-1}^{(a)} + \alpha_{m+1}^{(a)}(x)& = \big(x^{am} + x^{-am} \big) + \big(x^{a(m+2)} + x^{-a(m+2)} \big) \\
 &= \big(x^a + x^{-a}\big) \big(x^{a(m+1)} + x^{-a(m+1)}\big) =\alpha_0^{(a)}(x) \al[m],
 \end{align*}
 and this proves the lemma.
\end{proof}

\begin{Lemma} The Laurent polynomials $\be$ satisfy the difference equation
\begin{gather}
 \big(\be[m-1] + \be[m+1] \big) (x^{m} - x^{-m}) \big(x^{m+2} - x^{-m-2}\big) \nonumber\\
\qquad{}= \al[0] \be[m] \big(x^{m+1} - x^{-m-1}\big)^2 - \al[m] \be[0] \big(x-x^{-1}\big)^2.\label{eq:beta rec}
\end{gather}
\end{Lemma}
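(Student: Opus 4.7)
The plan is a direct symbolic verification using product-to-sum identities for Laurent polynomials. Introduce the shorthands $[n] := x^n - x^{-n}$ and $\{n\} := x^n + x^{-n}$. By direct expansion one has
\begin{gather*}
[p]\,[q] = \{p+q\} - \{p-q\}, \qquad \{p\}\,\{q\} = \{p+q\} + \{p-q\},
\end{gather*}
together with the symmetry $\{-n\} = \{n\}$. In this notation $\al = \{a(m+1)\}$ and $\be \cdot [m+1] = [(a-1)(m+1)]$, and the previous lemma's identity \eqref{eq:alpha rec} is nothing but an instance of the first product-to-sum identity.

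First I would clear denominators in $\be[m-1] + \be[m+1]$ by multiplying through by $[m]\,[m+2]$, which turns the left-hand side of the claimed identity into the Laurent polynomial
\begin{gather*}
[(a-1)m]\,[m+2] + [(a-1)(m+2)]\,[m].
\end{gather*}
Since $\al[0] = \{a\}$ and $\be[0]\cdot[1] = [a-1]$, the right-hand side is already a Laurent polynomial, namely
\begin{gather*}
\{a\}\,[(a-1)(m+1)]\,[m+1] - \{a(m+1)\}\,[a-1]\,[1].
\end{gather*}

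The second step is to expand each $[\cdot]\,[\cdot]$ product on both sides using the first identity, and then apply the second identity to combine the $\{a\}\{\cdot\}$ and $\{a(m+1)\}\{\cdot\}$ factors that appear on the right. After the symmetry $\{-n\}=\{n\}$ is used to align indices, both sides collapse to the same four-term sum
\begin{gather*}
\{am+2\} + \{am+2a-2\} - \{(a-2)m-2\} - \{(a-2)m+2a-2\}.
\end{gather*}

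The only obstacle is book-keeping: after expanding the right-hand side, four extraneous terms $\pm\{a(m+2)\}$ and $\pm\{am\}$ appear from the two $\{\cdot\}\{\cdot\}$ expansions, and one must verify that they cancel in pairs. No deeper structural idea beyond the two product-to-sum identities is required, so the proof is essentially a one-page explicit computation.
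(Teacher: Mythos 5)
Your proposal is correct and is essentially the same argument as the paper's: both are direct symbolic verifications obtained by clearing denominators and expanding into Laurent monomials — the paper brings $\alpha_0^{(a)}(x)\,\beta_m^{(a)}(x)\big(x^{m+1}-x^{-m-1}\big)^2$ to the left and factors the resulting monomial sum as $-\alpha_m^{(a)}(x)\,\beta_0^{(a)}(x)\big(x-x^{-1}\big)^2$, whereas you reduce both sides to the common normal form $\{am+2\}+\{am+2a-2\}-\{(a-2)m-2\}-\{(a-2)m+2a-2\}$ via the product-to-sum identities. I checked the reduction and the cancellation of the extraneous $\pm\{a(m+2)\}$ and $\pm\{am\}$ terms, and everything works; the only nit is that the recurrence \eqref{eq:alpha rec} is an instance of your \emph{second} (the $\{p\}\{q\}$) identity, not the first.
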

\begin{proof} We compute
 \begin{gather*}
 \bigl(\be[m-1] + \be[m+1] \bigr) \big(x^{m} - x^{-m}) (x^{m+2} - x^{-m-2}\big)
 - \al[0] \be[m] \big(x^{m+1} - x^{-m-1}\big)^2 \\
 \qquad{} = \big(x^{(a-1)m} - x^{-(a-1)m}\big)\big(x^{m+2} - x^{-m-2}\big)\\
 \qquad\quad{} + \big(x^{(a-1)(m+2)} - x^{-(a-1)(m+2)}\big) \big(x^{m} - x^{-m}\big) \\
 \qquad\quad{} - \al[0] \big(x^{(a-1)m} - x^{-(a-1)m}\big) \big(x^{m+1} - x^{-m-1}\big) \\
 \qquad{} = \big(x^{am +2} + x^{-am-2} - x^{am -2m-2} - x^{-am +2m+2} \big) \\
\qquad\quad{} +\big( x^{a(m+2)-2} + x^{-a(m+2)+2} - x^{a(m+2) -2m-2} - x^{-a(m+2) +2m +2} \big)\\
\qquad\quad{} -\big(x^{a(m+2)} + x^{am} - x^{-am+2m+2} - x^{-a(m+2)+2m+2}\\
\qquad\quad{} -x^{a(m+2)-2m-2}-x^{am-2m-2} + x^{-am} + x^{-a(m+2)} \big)\\
\qquad{}=x^{am +2} + x^{-am-2} +x^{a(m+2)-2} + x^{-a(m+2)+2} -x^{am} - x^{-am} - x^{a(m+2)} - x^{-a(m+2)} \\
\qquad {}=\big(x-x^{-1}\big) \big(x^{am+1} - x^{-am-1} +x^{a(m+2)-1} - x^{-a(m+2)+1} \big) \\
\qquad {}=- \big(x-x^{-1}\big)\big(x^{a(m+1)} + x^{-a(m+1)}\big)\big(x^{a-1} - x^{-a+1}\big) \\
\qquad {}=- \al[m] \be[0] \big(x-x^{-1}\big)^2,
 \end{gather*}
 and this proves the lemma.
\end{proof}

\begin{Lemma}\label{lemma: P rec}
 The Laurent polynomials $\PP$ satisfy the difference equation
 \begin{gather*}
 \bigl(\PP[m-1] + \PP[m+1] \bigr) \big(x^{m} - x^{-m}\big) \big(x^{m+2} - x^{-m-2}\big)
 = \PP \al[0] \big( x^{m+1} -x^{-m-1}\big)^2 .
 \end{gather*}
\end{Lemma}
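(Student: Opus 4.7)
The plan is to compute the left-hand side by expanding $P_m^{(a)} = \alpha_0^{(a)} \beta_m^{(a)} - \alpha_m^{(a)} \beta_0^{(a)}$ via its definition as a determinant, so that the recursion for $P_m^{(a)}$ splits into an $\alpha$-part and a $\beta$-part, each of which can be simplified using the two preceding lemmas (equations \eqref{eq:alpha rec} and \eqref{eq:beta rec}).

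Concretely, I would first write
\begin{gather*}
 \bigl(P_{m-1}^{(a)} + P_{m+1}^{(a)}\bigr)\sinn \big(x^{m+2}-x^{-m-2}\big) \\
 \qquad{}= \al[0] \bigl(\be[m-1]+\be[m+1]\bigr)\sinn \big(x^{m+2}-x^{-m-2}\big) \\
 \qquad\quad{}- \be[0] \bigl(\al[m-1]+\al[m+1]\bigr)\sinn \big(x^{m+2}-x^{-m-2}\big).
\end{gather*}
The first summand is rewritten by \eqref{eq:beta rec}, and the second by multiplying \eqref{eq:alpha rec} through by $\sinn \big(x^{m+2}-x^{-m-2}\big)$. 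After substitution, the right-hand side becomes
\begin{gather*}
 \al[0]^2 \be[m] \big(x^{m+1}-x^{-m-1}\big)^2
 - \al[0] \al[m] \be[0]\bigl[ \big(x-x^{-1}\big)^2 + \sinn \big(x^{m+2}-x^{-m-2}\big) \bigr].
\end{gather*}

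The main step is then the elementary Laurent-polynomial identity
\begin{gather*}
 \big(x-x^{-1}\big)^2 + \sinn \big(x^{m+2}-x^{-m-2}\big) = \big(x^{m+1}-x^{-m-1}\big)^2,
\end{gather*}
which is a direct expansion: both sides equal $x^{2m+2}-2+x^{-2m-2}$. Applying this identity reduces the above expression to
\begin{gather*}
 \al[0]\big(x^{m+1}-x^{-m-1}\big)^2 \bigl( \al[0]\be[m] - \al[m]\be[0] \bigr)
 = \al[0]\big(x^{m+1}-x^{-m-1}\big)^2 \PP,
\end{gather*}
which is exactly the required identity.

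There is no real obstacle here: once the determinantal form of $P_m^{(a)}$ is unfolded, the computation is driven entirely by the two previous lemmas together with the elementary identity above. The only mildly tricky point is to notice that the extra term $\al[m]\be[0]\big(x-x^{-1}\big)^2$ appearing on the right-hand side of \eqref{eq:beta rec} combines cleanly with the $\be[0] \al[0]\al[m]\sinn \big(x^{m+2}-x^{-m-2}\big)$ coming from the $\alpha$-recursion to reconstitute the factor $\big(x^{m+1}-x^{-m-1}\big)^2$; this is exactly what the elementary identity guarantees.
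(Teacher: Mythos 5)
Your proof is correct and is essentially the paper's own argument run in reverse: the paper starts from $\PP\,\al[0]\big(x^{m+1}-x^{-m-1}\big)^2$, expands the determinant, substitutes \eqref{eq:beta rec} and then \eqref{eq:alpha rec}, and invokes the same elementary identity $\big(x^{m+1}-x^{-m-1}\big)^2-\big(x-x^{-1}\big)^2=\big(x^m-x^{-m}\big)\big(x^{m+2}-x^{-m-2}\big)$ to land on $\big(\PP[m-1]+\PP[m+1]\big)\big(x^m-x^{-m}\big)\big(x^{m+2}-x^{-m-2}\big)$. All of your intermediate expressions check out, so no changes are needed.
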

\begin{proof} By using \eqref{eq:alpha rec}, \eqref{eq:beta rec} and the relation
 \begin{gather*}
 \big(x^{m+1} - x^{-m+1}\big)^2 - \big(x - x^{-1}\big)^2 = \big(x^{m}-x^{-m}\big) \big(x^{m+2}-x^{-m-2}\big),
 \end{gather*}
 we obtain
\begin{gather*}
 \PP \al[0] \big( x^{m+1} -x^{-m-1}\big)^2 \\
 \qquad{}
 = \big( \al[0] \be[m] - \al[m] \be[0] \big) \al[0]\big( x^{m+1} -x^{-m-1}\big)^2 \\
 \qquad{}= \al[0] \big( \big(\be[m-1] + \be[m+1] \big) \big(x^{m} - x^{-m}\big) \big(x^{m+2} - x^{-m-2}\big) \\
 \qquad \quad{} + \al \be[0] \big(x-x^{-1}\big)^2\big) -\al[0] \al[m] \be[0] \big( x^{m+1} -x^{-m-1}\big)^2 \\
\qquad{} = \al[0] \big(\be[m-1] + \be[m+1] \big) \big(x^{m} - x^{-m}\big) \big(x^{m+2} - x^{-m-2}\big) \\
\qquad \quad{} - \al[0] \al[m] \be[0] \big( \big(x^{m+1} - x^{-m+1}\big)^2 - \big(x - x^{-1}\big)^2 \big) \\
\qquad{} = \big( \al[0] \big(\be[m-1] + \be[m+1] \big) - \al[0] \al[m] \be[0]\big)\\
\qquad \quad{}\times
 \big(x^{m} - x^{-m}\big) \big(x^{m+2} - x^{-m-2}\big) \\
\qquad{} =\big( \al[0] \big(\be[m-1] + \be[m+1] \big) - \big(\al[m-1] + \al[m+1] \big) \be[0] \big) \\
\qquad \quad{}\times \big(x^{m} - x^{-m}\big) \big(x^{m+2} - x^{-m-2}\big) \\
\qquad{} = \big(\PP[m-1] + \PP[m+1] \big)\big(x^{m} - x^{-m}\big) \big(x^{m+2} - x^{-m-2}\big),
\end{gather*}
completing the proof.
\end{proof}

\begin{figure}[t] \centering
 \includegraphics[width=7.5cm]{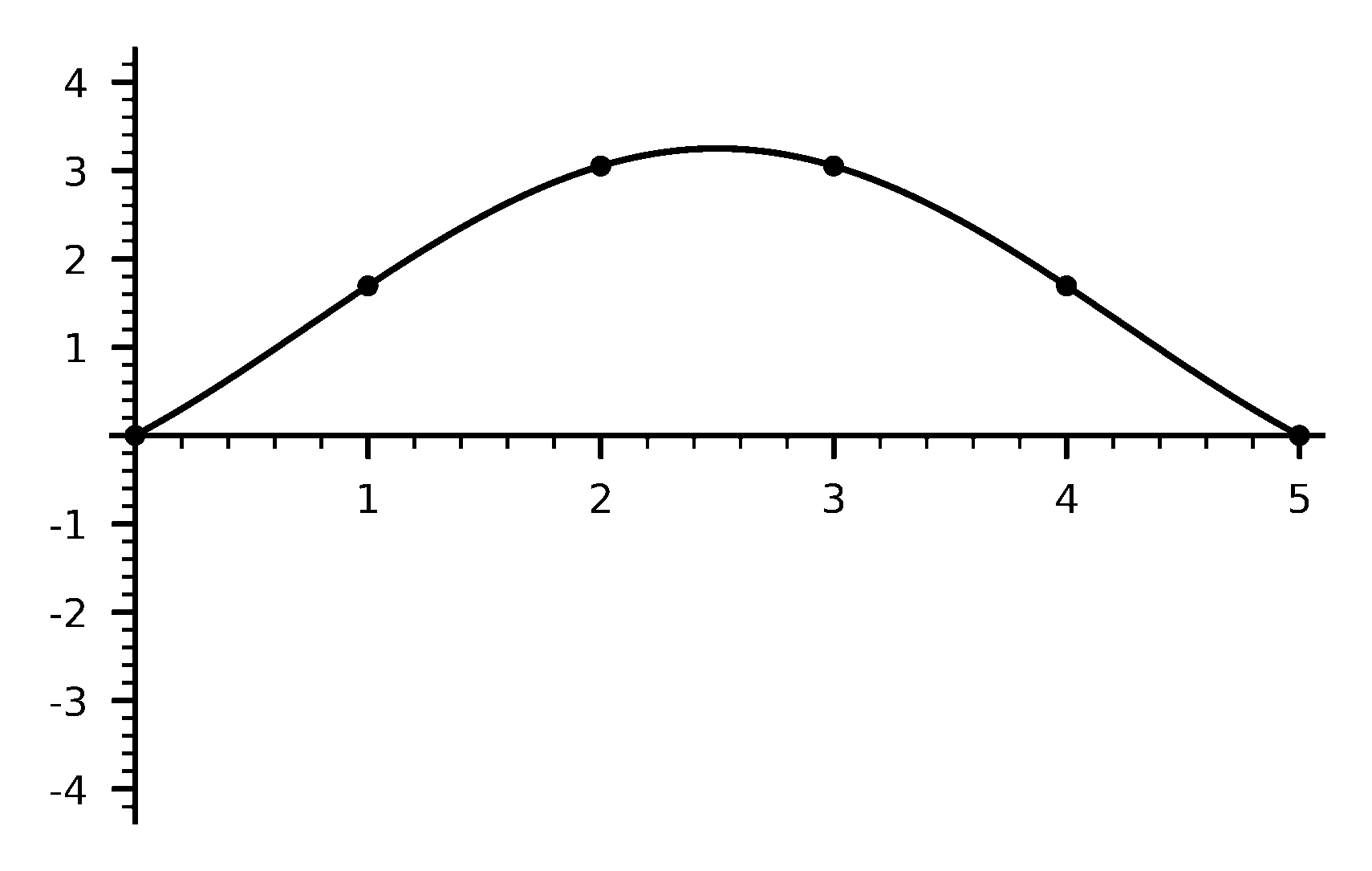}
 \includegraphics[width=7.5cm]{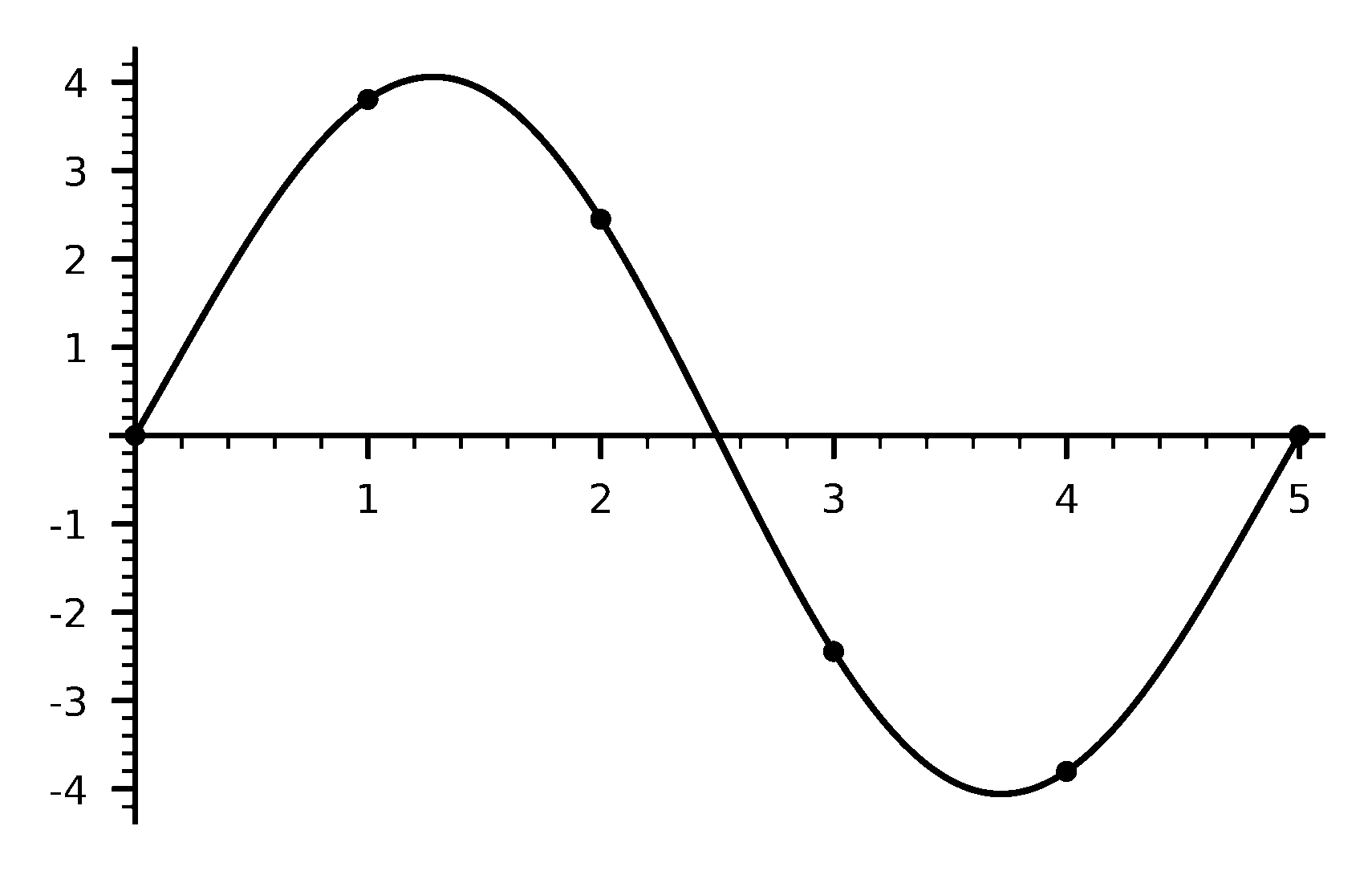}
 \caption{The values of $\big(\phi_m^{(2)}\big)_{m=0 \dots, \level}$ and $\big(\phi_m^{(3)}\big)_{m=0 \dots, \level}$ for $\level=5$.} \label{fig:phi a 2 and 3}
\end{figure}

Using Lemma \ref{lemma: P rec}, we can construct non-zero solutions of
the difference equation \eqref{eq:phi rec} satisfying the boundary conditions \eqref{eq:phi boundary}.
\begin{Theorem}\label{theorem:solution phi}
 For any $a = 2, 3,\dots , \level$ and $m=0, 1,\dots, \level$,
 let $\phi_m^{(a)} = P_{m}^{(a)} (\zeta)$.
 Then the following properties hold:
 \begin{enumerate}\itemsep=0pt
 \item[$1.$] The numbers $\big\{ \phi_m^{(a)} \,|\, 0 \leq m \leq \level \big\}$ satisfy the difference equation \eqref{eq:phi rec} for $\lambda = \zeta^a$.
 \item[$2.$] The boundary conditions \eqref{eq:phi boundary} hold, i.e., $\phi_0^{(a)} = \phi_{\level}^{(a)}=0$.
 \item[$3.$] There exists $m$ such that $\phi_m^{(a)} \neq 0$.
 \end{enumerate}
\end{Theorem}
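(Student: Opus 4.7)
The three parts of the theorem are of quite different character, so the plan is to treat them one by one, in roughly increasing order of effort.

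For part 1, the plan is to simply specialize Lemma~\ref{lemma: P rec} at $x = \zeta$. Writing $\lambda = \zeta^a$, observe that $\alpha_0^{(a)}(\zeta) = \zeta^{a} + \zeta^{-a} = \lambda + \lambda^{-1}$, and that the ratio $(x^{m}-x^{-m})(x^{m+2}-x^{-m-2})/(x^{m+1}-x^{-m-1})^2$ evaluated at $x=\zeta$ is exactly $z_m$ by its definition \eqref{eq:def of zm}. The denominators involve $\zeta^m - \zeta^{-m}$ and $\zeta^{m+2}-\zeta^{-m-2}$, which are non-zero for $1 \le m \le \ell-1$ since $2(\ell+2)$ is the order of $\zeta$. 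Rearranging the identity of Lemma~\ref{lemma: P rec} then yields $\phi_{m-1}^{(a)} + \phi_{m+1}^{(a)} = \phi_m^{(a)} z_m^{-1}(\lambda + \lambda^{-1})$, which is \eqref{eq:phi rec}.

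For part 2, the vanishing $\phi_0^{(a)} = 0$ is immediate because the two columns of $P_0^{(a)}(\zeta)$ coincide. For $\phi_\ell^{(a)} = P_\ell^{(a)}(\zeta)$, the key observation is $\zeta^{\ell+1} = e^{\pi\mathrm{i}(\ell+1)/(\ell+2)} = -\zeta^{-1}$, equivalently $\zeta^{\ell+2} = -1$. From this I would check that $(\zeta^{\ell+1})^{\pm a} = (-1)^a \zeta^{\mp a}$ and $\zeta^{\ell+1} - \zeta^{-(\ell+1)} = \zeta - \zeta^{-1}$, which gives $\alpha_\ell^{(a)}(\zeta) = (-1)^a \alpha_0^{(a)}(\zeta)$ and $\beta_\ell^{(a)}(\zeta) = (-1)^a \beta_0^{(a)}(\zeta)$. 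Thus the second column of $P_\ell^{(a)}(\zeta)$ is $(-1)^a$ times the first, and the determinant vanishes.

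For part 3, since \eqref{eq:phi rec} is a second-order linear recurrence with the initial value $\phi_0^{(a)} = 0$, it suffices to show $\phi_1^{(a)} \neq 0$; otherwise the sequence would be identically zero. Set $\theta = \pi/(\ell+2)$. I would expand
\begin{gather*}
\phi_1^{(a)} = 2\cos(a\theta)\,\frac{\sin(2(a-1)\theta)}{\sin(2\theta)} - 2\cos(2a\theta)\,\frac{\sin((a-1)\theta)}{\sin\theta},
\end{gather*}
factor out $\sin((a-1)\theta)/(\sin\theta\cos\theta)$, and simplify the bracket using the product-to-sum identities $\cos(a\theta)\cos((a-1)\theta) = \tfrac{1}{2}[\cos\theta + \cos((2a-1)\theta)]$ and $\cos\theta\cos(2a\theta) = \tfrac{1}{2}[\cos((2a-1)\theta) + \cos((2a+1)\theta)]$, together with $\cos\theta - \cos((2a+1)\theta) = 2\sin(a\theta)\sin((a+1)\theta)$. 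The result is
\begin{gather*}
\phi_1^{(a)} = \frac{2\sin((a-1)\theta)\sin(a\theta)\sin((a+1)\theta)}{\sin\theta\cos\theta}.
\end{gather*}
For $2 \le a \le \ell$, each of $(a-1)\theta$, $a\theta$, $(a+1)\theta$ lies strictly between $0$ and $\pi$, so all three sines are non-zero, and $\cos\theta \neq 0$ since $\theta < \pi/2$. Hence $\phi_1^{(a)} \neq 0$, proving non-triviality. The only mildly delicate step is the trigonometric simplification in part 3; the rest is symbol pushing once the normalizations are set up.
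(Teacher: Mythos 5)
Your proposal is correct and follows essentially the same route as the paper: part 1 is the specialization of Lemma~\ref{lemma: P rec} at $x=\zeta$, part 2 rests on $\zeta^{\ell+1}=-\zeta^{-1}$ together with the parity of $\alpha^{(a)}$ and $\beta^{(a)}$ (the paper packages this as symmetries of a two-variable polynomial $\widetilde{P}^{(a)}(x,y)$, while you apply the same identities directly to the columns of the determinant), and part 3 is the same computation of $\phi_1^{(a)}$, your factored form $2\sin((a-1)\theta)\sin(a\theta)\sin((a+1)\theta)/(\sin\theta\cos\theta)$ agreeing with the paper's expression after applying the double-angle and sum-to-product identities. No gaps.
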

\begin{proof}The property (1) follows from Lemma~\ref{lemma: P rec} and the definition of~$z_m$~\eqref{eq:def of zm}.
Now we prove the property~(2).
The definition of $\PP$ immediately implies that $\PP[0]=0 $, hence $\phi_0^{(a)} = 0$.
To prove $ \phi_{\level}^{(a)}=0$,
we define a Laurent polynomial $\widetilde{P}^{(a)} (x,y) $ of two variables by
\begin{gather*}
 \widetilde{P}^{(a)} (x,y) = \det
 \left(
 \begin{matrix}
 \alpha^{(a)} (x) & \alpha^{(a)} (y) \\
 \beta^{(a)} (x) & \beta^{(a)} (y)
 \end{matrix}
 \right).
\end{gather*}
It is easy to see that the polynomial $\widetilde{P}^{(a)} (x,y)$ satisfies
\begin{gather*}
\widetilde{P}^{(a)} (x,x^{m+1}) = P_m^{(a)} (x), \\
\widetilde{P}^{(a)} (x,y^{-1}) = \widetilde{P}^{(a)} (x,y), \\
\widetilde{P}^{(a)} (x,-y) = (-1)^a \widetilde{P}^{(a)} (x,y).
\end{gather*}
Thus we have
\begin{align*}
\phi_\level^{(a)} &= P_\level^{(a)} (\zeta)
= \widetilde{P}^{(a)} \big(\zeta,\zeta^{\level+1}\big)
=\widetilde{P}^{(a)} \big(\zeta,- \zeta^{-1} \big) \\
&=(-1)^a \widetilde{P}^{(a)} \big(\zeta,\zeta^{-1} \big)
=(-1)^a \widetilde{P}^{(a)} (\zeta,\zeta)=0.
\end{align*}
Now we prove the property (3).
We will show that $\phi_1^{(a)} > 0$.
The number $\phi_1^{(a)}$ can be written as
\begin{align*}
\phi_1^{(a)}
&= \zeta^a + \zeta^{-a} \cdot \frac{\zeta^{2a-2} - \zeta^{-2a+2}}{\zeta^2-\zeta^{-2}}
-\zeta^{2a} + \zeta^{-2a} \cdot \frac{\zeta^{a-1} - \zeta^{-a+1}}{\zeta-\zeta^{-1}} \\
&=\frac{\zeta^{a-1} - \zeta^{-a+1}}{\zeta^2-\zeta^{-2}}
\bigl( \big(\zeta^a + \zeta^{-a}\big) \big( \zeta^{a-1} + \zeta^{-a+1}\big)
- \big(\zeta^{2a} + \zeta^{-2a} \big) \big(\zeta + \zeta^{-1}\big) \bigr) \\
&= \frac{\zeta^{a-1} - \zeta^{-a+1}}{\zeta^2-\zeta^{-2}}
\bigl( \big( \zeta + \zeta^{-1} \big) - \big(\zeta^{2a+1}+\zeta^{-2a-1} \big) \bigr) \\
&={\sin \frac{(a-1) \pi }{\level +2}} \left( \sin \frac{2 \pi }{\level +2} \right)^{-1}
\left( 2\cos \frac{\pi}{\level +2} - 2\cos \frac{(2a +1 )\pi}{\level +2} \right).
\end{align*}
This shows that $\phi_1^{(a)} > 0$ for $a=2, 3, \dots , \level$.
\end{proof}

\begin{figure}[t] \centering
 \includegraphics[width=7.5cm]{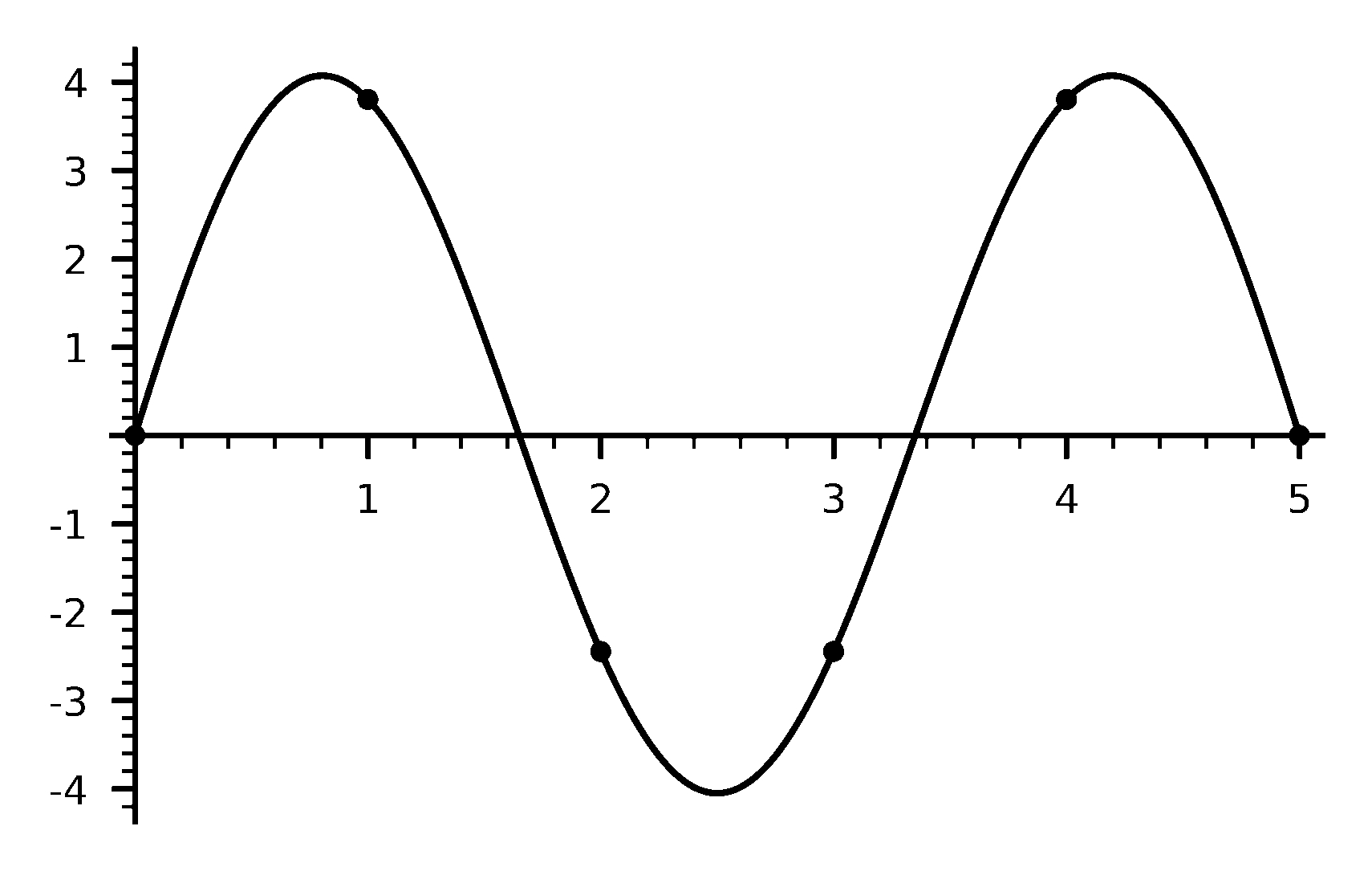}
 \includegraphics[width=7.5cm]{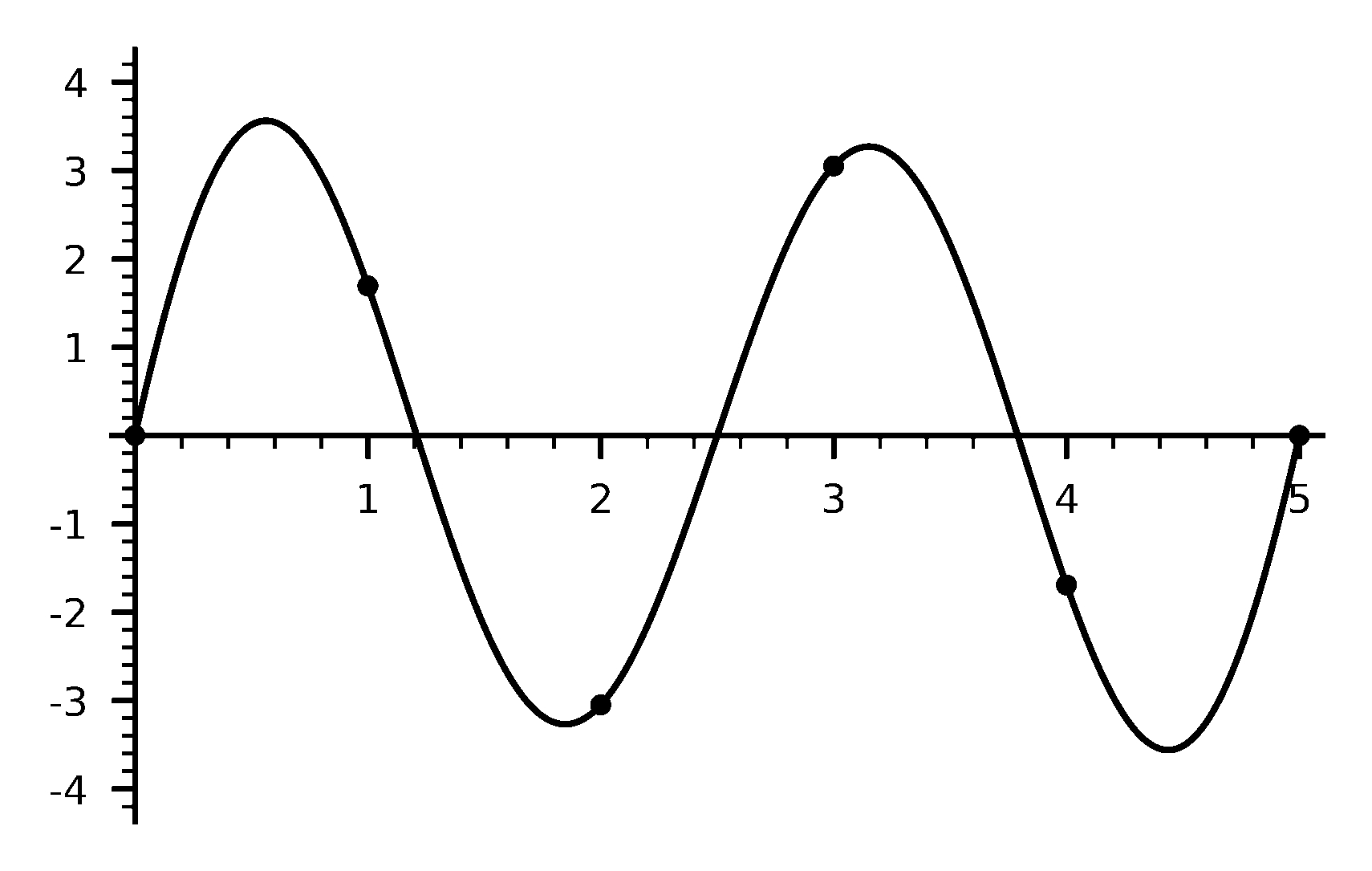}
 \caption{The values of $\big(\phi_m^{(4)}\big)_{m=0 \dots, \level}$ and $\big(\phi_m^{(5)}\big)_{m=0 \dots, \level}$ for $\level=5$.} \label{fig:phi a 4 and 5}
\end{figure}

We plot the values of $\big(\phi_m^{(a)}\big)_{m=0 \dots, \level}$ for $\level=5$ and $a = 2,3,4,5$ in Figs.~\ref{fig:phi a 2 and 3} and~\ref{fig:phi a 4 and 5}.
The underlying graphs are plots of the function
\begin{gather*}
\det
\begin{pmatrix}
2\cos \dfrac{\pi a}{\level+2} & 2 \cos \dfrac{\pi a(u+1)}{\level+2}\vspace{1mm}\\
{\sin \dfrac{\pi (a-1)}{\level+2} }\big/{\sin \dfrac{\pi}{\level+2}} & {\sin \dfrac{\pi (a-1)(u+1) }{\level+2} }\big/{\sin \dfrac{\pi (u+1)}{\level+2}}
\end{pmatrix}
\end{gather*}
in an interval $0 \leq u \leq \level$, and the points on these graphs represent the values of $\big(\phi_m^{(a)}\big)_{m=0 \dots, \level}$.

\begin{Corollary}\label{corollary:exponents A1 ell}
 The exponents of $Q(A_1, \level)$ are $2,3,\dots, \level$, that is,
 \begin{gather*}
 \det (xI - J_\gamma(\eta)) =
 \prod_{a=2}^{\level} \big(x - {\rm e}^{\frac{2 \pi{\rm i} a}{\level + 2}}\big).
 \end{gather*}
 In particular, Conjecture~{\rm \ref{conj: char poly}} is true in this case.
\end{Corollary}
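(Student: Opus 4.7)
The plan is to assemble the three ingredients already established: Lemma~\ref{lemma:L eta = Lm Lp} (which lets us work with $L(\eta) = L_- L_+$ in place of $J_\gamma(\eta)$), the eigenvector construction lemma preceding Theorem~\ref{theorem:solution phi} (which turns any non-zero solution of the difference equation \eqref{eq:phi rec}--\eqref{eq:phi boundary} into an eigenvector of $L(\eta)^{\mathsf{T}}$ with eigenvalue $\lambda^2$), and Theorem~\ref{theorem:solution phi} (which actually produces such a non-zero solution for $\lambda = \zeta^a$ whenever $a \in \{2, 3, \dots, \ell\}$).

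First I would run Theorem~\ref{theorem:solution phi} for each $a = 2, 3, \dots, \ell$ to obtain a non-zero tuple $\bigl(\phi_m^{(a)}\bigr)_{0 \le m \le \ell}$ solving \eqref{eq:phi rec} with $\lambda = \zeta^a$ and satisfying the boundary conditions \eqref{eq:phi boundary}. Then I would apply the eigenvector lemma to get, for each such $a$, a non-zero vector $\psi^{(a)} \in \C^{\ell-1}$ with
\begin{gather*}
L(\eta)^{\mathsf{T}} \psi^{(a)} = \zeta^{2a} \psi^{(a)} = {\rm e}^{\frac{2\pi{\rm i} a}{\ell+2}} \psi^{(a)}.
\end{gather*}

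Next I would verify that the $\ell - 1$ eigenvalues $\bigl\{ {\rm e}^{2\pi{\rm i} a/(\ell+2)} \,\big|\, a = 2, \dots, \ell \bigr\}$ are pairwise distinct; this is immediate since they are distinct $(\ell+2)$-th roots of unity. Because $L(\eta)$ is an $(\ell - 1) \times (\ell - 1)$ matrix and its transpose admits $\ell - 1$ eigenvectors with distinct eigenvalues, these exhaust the spectrum of $L(\eta)$, and hence of $J_\gamma(\eta)$ (which has the same characteristic polynomial since $\mu_\gamma(\eta) = \eta$). Therefore
\begin{gather*}
\det(xI - J_\gamma(\eta)) = \prod_{a=2}^{\ell} \bigl(x - {\rm e}^{\frac{2\pi{\rm i} a}{\ell+2}}\bigr),
\end{gather*}
which by the first lemma of Section~\ref{section: A1 level} equals $N_{A_1, \ell}(x)/D_{A_1, \ell}(x)$, confirming Conjecture~\ref{conj: char poly} in this case.

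There is essentially no remaining obstacle: all the non-trivial calculations (the Jacobian factorisation, the explicit form of $\tilde{\eta}$, the construction of the Laurent-polynomial solutions via $P_m^{(a)}(\zeta)$, and the verification that the boundary values vanish while $\phi_1^{(a)} > 0$) have been discharged in the preceding lemmas and in Theorem~\ref{theorem:solution phi}. The only thing to emphasise is the counting argument: we produce exactly $\dim L(\eta) = \ell - 1$ linearly independent eigenvectors associated with distinct eigenvalues, so no further multiplicities or generalised eigenvectors can appear, and the characteristic polynomial is forced into the stated product form.
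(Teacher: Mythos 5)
Your proposal is correct and follows essentially the same route as the paper's own proof: invoke Theorem~\ref{theorem:solution phi} to produce non-zero solutions of the difference equation for $\lambda = \zeta^a$, $a = 2,\dots,\level$, feed them into the eigenvector lemma to obtain eigenvectors of $L(\eta)^{\mathsf{T}}$ with the $\level-1$ distinct eigenvalues ${\rm e}^{2\pi{\rm i} a/(\level+2)}$, and conclude by the counting argument that these exhaust the spectrum of the $(\level-1)\times(\level-1)$ matrix. The final comparison with the first lemma of Section~\ref{section: A1 level} is exactly how the paper deduces Conjecture~\ref{conj: char poly} in this case.
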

\begin{proof}
 From Theorem \ref{theorem:solution phi},
 we find that ${\rm e}^{\frac{2 \pi{\rm i} a}{\level + 2}}$ for $a=2,3, \dots, \level$ are eigenvalues of $J_\gamma(\eta)$.
 These are all the eigenvalues and their multiplicities are one, since the size of $J_\gamma(\eta)$ is $\level-1$.
\end{proof}

\subsection[$(A_r, 2)$ case]{$\boldsymbol{(A_r, 2)}$ case}

First, we will see that the right-hand side of the conjectural formula~\eqref{eq:conj char poly} for $(A_r, 2)$ is the same as that for $(A_1, \level)$ if we change the parameter as $r \leftrightarrow \level-1$.
Such a phenomenon is known as the \emph{level-rank duality}.
\begin{Lemma}\label{lemma:Ar 2 N/D}
 The right-hand side in \eqref{eq:conj char poly} is given by
 \begin{gather*}
 \frac{N_{A_r, 2}(x)}{D_{A_r, 2}(x)} =
 \prod_{a=2}^{r+1} \big(x - {\rm e}^{\frac{2 \pi{\rm i} a}{r + 3}}\big).
 \end{gather*}
\end{Lemma}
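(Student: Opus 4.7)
\medskip

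The plan is to compute both sides by direct cyclotomic bookkeeping. For type $A_r$ one has $t=1$, $t_a=1$ for every $a$, and dual Coxeter number $\dcn = r+1$, so the exponent $\level + \dcn$ appearing in the formulas equals $r+3$. In particular the numerator simplifies under the cyclotomic factorization
\begin{gather*}
N_{A_r,2}(x) = \left( \frac{x^{r+3}-1}{x-1} \right)^{r} = \prod_{k=1}^{r+2} \big(x - \zeta^{k}\big)^{r},
\end{gather*}
where $\zeta = {\rm e}^{2\pi{\rm i}/(r+3)}$. Thus every primitive factor $(x-\zeta^{k})$ for $k=1,\dots,r+2$ occurs with multiplicity exactly $r$ in the numerator.

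Next I would analyze the denominator. Since $A_r$ is simply laced, $D_{A_r,2}(x) = \prod_{\alpha \in \Delta}\big(x - {\rm e}^{2\pi{\rm i}\innerproduct{\rho}{\alpha}/(r+3)}\big)$. Realizing the positive roots of $A_r$ as $\alpha_{ij}=e_i - e_j$ with $1\le i<j\le r+1$, a short computation gives $\rho = \tfrac{1}{2}\sum_{i=1}^{r+1}(r+2-2i)e_i$ and hence $\innerproduct{\rho}{e_i-e_j}=j-i$. The value $j-i=k$ is attained by exactly $r+1-k$ positive roots (for $1\le k\le r$), and each negative root $-\alpha_{ij}$ contributes the conjugate angle $-k$, i.e.\ the factor $(x-\zeta^{r+3-k})$ with the same multiplicity.

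Collecting contributions, the multiplicity of $(x-\zeta^{m})$ in $D_{A_r,2}(x)$ is
\begin{gather*}
\mathrm{mult}_{D}(\zeta^{m}) =
\begin{cases}
r & \text{if } m=1 \text{ or } m=r+2, \\
r-1 & \text{if } 2\le m \le r+1,
\end{cases}
\end{gather*}
obtained by adding the contribution $r+1-m$ from positive roots (when $m\le r$) to the contribution $m-2$ from negative roots (when $m\ge 3$). Dividing into the numerator, where each factor has multiplicity $r$, yields multiplicity $0$ at $m=1$ and $m=r+2$, and multiplicity $1$ for $m=2,3,\dots,r+1$. This gives exactly
\begin{gather*}
\frac{N_{A_r,2}(x)}{D_{A_r,2}(x)} = \prod_{a=2}^{r+1}\big(x - {\rm e}^{2\pi{\rm i} a/(r+3)}\big),
\end{gather*}
as claimed.

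The computation is routine, so there is no genuine obstacle; the only thing to be careful about is correctly merging the positive and negative root contributions in the middle range $3\le m\le r$, where both summands are nonzero and happen to add up to the uniform value $r-1$. The resulting identity makes level-rank duality with the previous $(A_1,\level)$ computation manifest: after the substitution $r \leftrightarrow \level-1$ one recovers the formula of the preceding subsection.
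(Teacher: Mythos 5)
Your proof is correct and follows essentially the same route as the paper: factor $N_{A_r,2}(x)=\big((x^{r+3}-1)/(x-1)\big)^r$, count positive roots with $\innerproduct{\rho}{\alpha}=k$ (getting $r+1-k$), fold in the negative roots modulo $r+3$, and compare multiplicities. The only difference is that you spell out the concrete realization $\alpha=e_i-e_j$ and the formula for $\rho$, which the paper leaves implicit.
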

\begin{proof}
 Because $\dcn = r+1$, we obtain
 \begin{gather*}
 N_{A_r ,2} (x) = \left( \frac{x^{r+3} - 1}{x -1} \right)^r,
 \end{gather*}
 and
 \begin{gather*}
 D_{A_r, 2} (x) = \prod_{\alpha \in \Delta}
 \big(x - {\rm e}^{\frac{2\pi{\rm i} \innerproduct{\rho}{\alpha}}{r+3}}\big).
 \end{gather*}

 We can easily see that, say using a concrete realization of the root system of type~$A_r$, the following holds:
 \begin{gather*}
 \# \{ \alpha \in \Delta_+ \,|\, \innerproduct{\rho}{\alpha} =a \}
 =
 \begin{cases}
 r+1-a & \text{if $1\leq a \leq r+1$},\\
 0 & \text{if $a=r+2$}.
 \end{cases}
 \end{gather*}
 Thus for any $1 \leq a \leq r+2$, we obtain
 \begin{gather*}
 \#\{ \alpha \in \Delta_+ \,|\, \innerproduct{\rho}{\alpha} =a \mod r+3 \} \\
 \qquad{} = \#\{ \alpha \in \Delta_+ \,|\, \innerproduct{\rho}{\alpha} =a \} + \#\{ \alpha \in \Delta_+ \,|\, \innerproduct{\rho}{-\alpha} =a -r-3 \}\\
 \qquad{} =
 \begin{cases}
 r & \text{if $a=1$,}\\
 r-1 & \text{if $ 2\leq a \leq r+1$,}\\
 r & \text{if $a=r+2$,}
 \end{cases}
 \end{gather*}
 and this implies that
 \begin{gather*}
 D_{A_r, 2} (x) \prod_{a=2}^{r+1} \big(x - {\rm e}^{\frac{2 \pi{\rm i} a}{r + 3}}\big) =N_{A_r, 2} (x),
 \end{gather*}
 completing the proof.
 Fig.~\ref{fig:exponents Ar 2} illustrates these calculations.
\end{proof}

\begin{figure} \centering
 \begin{tikzpicture}
 [scale=1.0,auto=left]
 \foreach \x in {1, 2, ..., 8}
 \foreach \y in {1, 2, ..., 6}
 \draw [black] (\x,\y) circle (5pt);
 \foreach \x / \y in {1/1, 1/2, 1/3,1/4,1/5,1/6, 2/1,2/2,2/3,2/4,2/5,3/1,3/2,3/3,3/4,4/1,4/2,4/3,5/1,5/2,6/1}
 \draw [black,fill] (\x,\y) circle (3pt);
 \foreach \x / \y in {8/6,8/5,8/4,8/3,8/2,8/1,7/6,7/5,7/4,7/3,7/2,6/6,6/5,6/4,6/3,5/6,5/5,5/4,4/6,4/5,3/6}
 \draw (\x,\y) node[minimum size =2pt,inner sep=2pt,diamond,draw,fill]{};
 \foreach \x in {1, 2, ..., 8}
 \node(\x) at (\x, 0.3) {$\x$};
 \end{tikzpicture}
 \caption{The exponents of $N_{A_r, 2}$ and $D_{A_r, 2}$ for $r=6$. The meanings of the symbols are the same as in Fig.~\ref{fig:exponents A3 3}.}
 \label{fig:exponents Ar 2}
\end{figure}

We now complete the proof of Theorem \ref{theorem: A1 level}.

\begin{Corollary} Conjecture {\rm \ref{conj: char poly}} is true for $(A_r,2)$.
\end{Corollary}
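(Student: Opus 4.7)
The plan is to deduce this case from the already-proved $(A_1, \level)$ case via level-rank duality. Lemma \ref{lemma:Ar 2 N/D} gives
\begin{gather*}
\frac{N_{A_r, 2}(x)}{D_{A_r, 2}(x)} = \prod_{a=2}^{r+1} \big(x - {\rm e}^{\frac{2 \pi {\rm i} a}{r + 3}}\big),
\end{gather*}
which is exactly the characteristic polynomial of $J_{\gamma(A_1, r+1)}(\eta)$ computed in Corollary \ref{corollary:exponents A1 ell} (with $\level$ replaced by $r+1$). Hence it suffices to show that $J_{\gamma(A_r, 2)}(\eta)$ and $J_{\gamma(A_1, r+1)}(\eta)$ have the same characteristic polynomial.

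First, I would unpack the constructions in Sections \ref{section: quiver Xr l} and \ref{section: mutation loop Xr l} for the two cases. For $(A_r, 2)$ the auxiliary factor $A_{\level - 1}$ is $A_1$, so $|I'| = 1$ and only the entries $B_{\mathbf{i}\mathbf{j}} = \pm C_{ij}\delta_{i'j'}$ survive in the $B$-matrix; for $(A_1, r+1)$ we have $|I| = 1$ and only $B_{\mathbf{i}\mathbf{j}} = \pm\delta_{ij}C_{i'j'}$ survive. Comparing signs, one finds that under the natural identification $(i,1) \leftrightarrow (1,i)$ of vertex sets with matching bipartite decompositions $I_{\pm} \leftrightarrow I'_{\pm}$, the quiver $Q(A_r, 2)$ is precisely the opposite of $Q(A_1, r+1)$, while the $\pm$ labels on corresponding vertices agree. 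Consequently the two mutation loops consist of the same mutation sequence at the same set of vertices (both equal to $((\mathbf{I}_+, \mathbf{I}_-), \id)$ under the identification).

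Next, I would use the elementary fact that $Y$-seed mutation on a quiver $Q$ and on its opposite $Q^{op}$ are intertwined by the coordinatewise inversion $\iota(y_1, \dots, y_n) = \big(y_1^{-1}, \dots, y_n^{-1}\big)$, that is, $\mu_k^{Q^{op}} \circ \iota = \iota \circ \mu_k^Q$ for every vertex $k$. This is a direct case-by-case check of the $Y$-seed mutation formula: inverting all $y_i$'s swaps the roles of the conditions $Q_{ki} \geq 0$ and $Q_{ik} \geq 0$ and turns each of the three defining formulas into the corresponding one on $Q^{op}$. Iterating over the entire mutation sequence yields
\begin{gather*}
\mu_{\gamma(A_r, 2)} = \iota \circ \mu_{\gamma(A_1, r+1)} \circ \iota.
\end{gather*}
Let $\eta'$ denote the positive real fixed point of $\mu_{\gamma(A_1, r+1)}$; by the uniqueness in Proposition \ref{prop:unique positive real eta}, $\eta := \iota(\eta')$ is the positive real fixed point of $\mu_{\gamma(A_r, 2)}$. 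The Jacobian of $\iota$ at any positive tuple is an invertible diagonal matrix, so the chain rule gives
\begin{gather*}
J_{\gamma(A_r, 2)}(\eta) = D \cdot J_{\gamma(A_1, r+1)}(\eta') \cdot D^{-1}
\end{gather*}
for some invertible diagonal matrix $D$. In particular the two Jacobians are similar and so have the same characteristic polynomial, and Corollary \ref{corollary:exponents A1 ell} completes the proof.

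The step that requires real attention is the quiver-opposite identification $Q(A_r, 2) \cong Q(A_1, r+1)^{op}$ together with the matching of mutation loops; once that setup is verified from the definitions, the intertwining by $\iota$ is a routine case check and the reduction to the already-established $(A_1, \level)$ case is immediate.
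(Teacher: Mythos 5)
Your proposal is correct and follows essentially the same route as the paper: reduce to the $(A_1,r+1)$ case by identifying $Q(A_r,2)$ with the opposite quiver of $Q(A_1,r+1)$, intertwine the cluster transformations via the coordinatewise inversion $\iota$, and conclude from Corollary~\ref{corollary:exponents A1 ell} and Lemma~\ref{lemma:Ar 2 N/D}. You merely spell out the steps the paper leaves implicit (the uniqueness of the positive fixed point giving $\eta=\iota(\eta')$ and the chain-rule similarity of the Jacobians via a diagonal matrix).
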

\begin{proof}
 When we choose suitable bipartite decompositions,
 the mutation loop $\gamma = \gamma(A_r,2)$
 is obtained from
 the mutation loop $\gamma' = \gamma(A_1,r+1)$ by reversing all arrows in quivers.
 This implies that their cluster transformations
 are related by $\mu_{\gamma'} = \iota \circ \mu_{\gamma} \circ \iota$,
 where $\iota$ is the map $(y_1,\dots,y_r) \mapsto \big(y_1^{-1},\dots,y_r^{-1}\big)$.
 From this relation,
 we can easily verify that the exponents for~$(A_r,2)$ are the same as
 those for $(A_1,r+1)$.
 Thus the corollary follows from Corollary~\ref{corollary:exponents A1 ell} and
 Lemma~\ref{lemma:Ar 2 N/D}.
\end{proof}

\section[Partition $q$-series]{Partition $\boldsymbol{q}$-series}\label{sec:partition q series}

\subsection{Mutation networks}\label{sec:mutation network}
For a mutation loop $\gamma = (Q,m,\nu)$, we define a combinatorial object called a \emph{mutation network}, which we will denote by $\network_\gamma$. This was introduced in~\cite{TerashimaYamazaki}.
Roughly speaking, a mutation network is a graph obtained by extracting only the parts that change in the transitions of the quivers. We will use mutation networks to define partition $q$-series.

Consider the sets defined by
\begin{gather*}
\overline{E}(t) = \{ (i,t) \,|\, 1\leq i \leq n \}, \qquad
\overline{E} = \bigcup_{0\leq t \leq T }\overline{E}(t) .
\end{gather*}
We define an equivalence relation $\sim$ on $\overline{E} $ as follows:
\begin{enumerate}\itemsep=0pt
 \item For all $t=1 ,\dots ,T$, $(i,t-1) \sim (i,t)$ if $i \neq m_t$.
 \item For all $i = 1 \dots, n$, $(\nu(i),T) \sim (i,0)$.
\end{enumerate}
Let $E = \overline{E} / \sim$ be the corresponding quotient set.

First, we define graphs $\overline{\network}_{\gamma}(t)$ for all $t=1 ,\dots, T$.
These consist of two types of vertices: black vertices and a single square vertex.
The black vertices are labeled with elements in $\overline{E}(t-1) \cup \overline{E}(t)$, and represented as solid circles $\edgevertex$, while the square vertex is labeled with $t$, and represented as a hollow square $\tetvertex$.
There are three types of edges joining these vertices:
broken edges, arrows from the square vertex to a black vertex, and arrows from a black vertex to the square vertex.
These edges are added as follows.
First, we add broken edges between the square vertex and black vertices labeled with $(m_t,t-1)$ or $(m_t,t)$.
Then, for all $i=1,\dots, n$, we add $Q_{m_t,i}(t-1)$ arrows from the square vertex to the black vertex labeled with $(i,t-1)$ if $Q_{m_t,i}(t-1)>0$.
Finally, for all $i=1,\dots, n$, we add $Q_{i,m_t}(t-1)$ arrows from the black vertex labeled with $(i,t-1)$ to the square vertex if $Q_{i,m_t}(t-1)>0$.
These rules can be summarized as follows:
\begin{enumerate}\itemsep=0pt
 \item[1)] $\defnp$ if $(i,s) = (m_t,t-1)$ or $(m_t, t)$,
 \item[2)] $\defn$ if $a=Q_{m_t,i}(t-1)>0$ and $s = t-1$, for all $i$,
 \item[3)] $\defnpp$ if $a=Q_{i,m_t}(t-1)>0$ and $s = t-1$, for all $i$.
\end{enumerate}

Let $ \overline{\network}_\gamma = \cup_{1 \leq t \leq T} \overline{\network}_\gamma (t)$.
Then the mutation network $\network_{\gamma}$ of the mutation sequence $\gamma$ is defined to be the quotient of $\overline{\network}_\gamma$ by~$\sim$:
\begin{gather*}
\network_{\gamma} = \overline{\network}_\gamma / \sim.
\end{gather*}
Here, the quotient carried out such that the vertices are labeled with the elements of~$E$, and all edges are included (i.e., we do not cancel the arrows pointing in the opposite direction). We denote the set of square vertices of $\network_\gamma$ by $\Delta$, that is, $\Delta = \{ 1,\dots , T \}$.

For a mutation network $\network_\gamma$, we define a map $\varphi\colon \Delta \to E$ from the set of the black vertices to the set of the square vertices by
\begin{gather*}
\varphi (t) = [ (m_t , t-1 ) ],
\end{gather*}
where $[(m_t,t-1)]$ is an equivalence class that contains $(m_t,t-1)$.
Let $\mathbf{I} / \nu$ is the set of $\nu$-orbits in $\mathbf{I}$:
\begin{gather*}
\mathbf{I} / \nu = \big\{ \big\{ \nu^k(i) \,|\, k \in \Z \big\} \,|\, i \in \mathbf{I} \big\}.
\end{gather*}
We also define a map $\psi\colon \Delta \to \mathbf{I} / \nu$ by
\begin{gather*}
\psi(t) = [m_t],
\end{gather*}
where $[m_t]$ is the $\nu$-orbit of $m_t$.
\begin{Lemma}\label{lemma: regular mutation loop}
 If $\gamma$ is regular, both the map $\varphi$ and $\psi$ defined above are bijections.
\end{Lemma}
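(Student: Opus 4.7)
The plan is to dispatch $\psi$ directly from the two regularity conditions, and to handle $\varphi$ by first giving a structural description of the equivalence classes in $E$ and then matching cardinalities.

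For $\psi\colon \Delta \to \mathbf{I}/\nu$, $t \mapsto [m_t]$: condition~(2) of Definition~\ref{def:regular} says that $m_1, \dots, m_T$ lie in distinct $\nu$-orbits, giving injectivity, while condition~(1) says that every vertex of $\mathbf{I}$ lies in the $\nu$-orbit of some $m_t$, so every orbit is hit. Hence $\psi$ is a bijection, which also records the identity $|\mathbf{I}/\nu| = T$.

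For $\varphi\colon \Delta \to E$, $t \mapsto [(m_t, t-1)]$, I would think of $\sim$ as the connectivity relation of the graph on $\overline{E}$ with two kinds of edges: the ``horizontal'' edges joining $(i,t-1)$ to $(i,t)$ whenever $i \ne m_t$, and the ``wrap-around'' edges joining $(\nu(i), T)$ to $(i, 0)$ for every $i \in \mathbf{I}$. The wrap-around edges glue the strands indexed by elements of a single $\nu$-orbit $O$ of size $k$ into one \emph{super strand}: ignoring breaks, this is a cycle on $kT$ vertices. The missing horizontal edges (breaks) along the super strand are in bijection with the $t$'s satisfying $m_t \in O$, and by regularity there is exactly one such $t$. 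So each super strand, cut at its unique break, is a single path and thus a single equivalence class. This yields $|E| = |\mathbf{I}/\nu| = T = |\Delta|$.

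To conclude, it suffices to show $\varphi$ is injective. Within any equivalence class, the first coordinates form a single $\nu$-orbit, because horizontal edges preserve the first coordinate while wrap-around edges change it by $\nu^{\pm 1}$. If $\varphi(t) = \varphi(t')$, then $m_t$ and $m_{t'}$ lie in the same $\nu$-orbit, and condition~(2) of regularity forces $t = t'$. Combined with the cardinality equality, $\varphi$ is then a bijection. I do not anticipate a serious obstacle here: the only real care needed is setting up the super strand picture cleanly and verifying that each super strand is a single connected component, which reduces to counting breaks per orbit.
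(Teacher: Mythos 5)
Your argument is correct. The handling of $\psi$ matches the intended proof (injectivity from condition (2), surjectivity from condition (1) of Definition \ref{def:regular}; note that the published proof attributes these the other way around, which appears to be a typo that your version silently corrects). For $\varphi$ you take a genuinely different route. The paper proves injectivity of $\varphi$ directly from the definitions: since the horizontal relation $(i,t-1)\sim(i,t)$ is suppressed exactly when $i=m_t$, each $(m_t,t-1)$ is the forward endpoint of the arc forming its class, and an arc has only one such endpoint, so injectivity holds even without regularity; surjectivity then comes from condition (1) by following a strand forward until it meets a break. You instead establish the structural fact that the classes of $\sim$ are precisely the ``super strands'' (one per $\nu$-orbit, each carrying exactly one break by regularity), deduce $\lvert E\rvert=\lvert\mathbf{I}/\nu\rvert=T=\lvert\Delta\rvert$, and prove injectivity from the observation that the first coordinates within a class stay inside one $\nu$-orbit combined with condition (2); surjectivity is then free from the cardinality count. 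Both routes are sound; yours yields more information (an explicit description of $E$ and the equality $\lvert E\rvert=T$, which the paper only records as a consequence), at the cost of invoking condition (2) for an injectivity statement that actually holds unconditionally. Two cosmetic points: the break-free super strand attached to an orbit of size $k$ is a cycle on $k(T+1)$ vertices (each strand $(j,0),\dots,(j,T)$ has $T+1$ vertices), not $kT$; and ``the first coordinates form a single $\nu$-orbit'' should strictly read ``are contained in a single $\nu$-orbit'' until you know the class exhausts the super strand. Neither affects the argument.
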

\begin{proof} The injectivity of $\varphi$ follows from the definition of $E$ and $\varphi$, and the surjectivity of $\varphi$ follows from the condition~(1) in Definition~\ref{def:regular}.
 The injectivity of $\psi$ follows from the condition~(1) in Definition~\ref{def:regular}, and the surjectivity of $\psi$ follows from the condition~(2) in Definition~\ref{def:regular}.
\end{proof}

In particular, the number of black vertices and the number of square vertices for a regular mutation loop are the same since $\varphi$ is a bijection.

\begin{Example}\label{example: A2}
 Define a quiver $Q$ by
 \[
 Q=\lower1.0ex\hbox{
 \begin{tikzpicture}
 [scale=0.75]
 \node (1) at (0,0) [] {1};
 \node (2) at (2,0) [] {2.};
 \draw[arrows={-Stealth[scale=1.1]}] (2)--(1);
 \end{tikzpicture}}
 \]
 Let $\gamma = (Q, m, \nu)$, where $m=(1,2)$ and $\nu = \id $.
 It is a regular mutation loop since
 the quiver transitions are given by
 \[
 \begin{tikzpicture}
 [scale=0.75]
 \node (1) at (0,0) [] {\underline{1}};
 \node (2) at (2,0) [] {2};
 \draw[arrows={-Stealth[scale=1.1]}] (2)--(1);
 \node (mu1) at (3,-0.1) [] {$\longrightarrow$};
 \node (mu1') at (3,0.2) [] {$\mu_1$};
 \node (1') at (4,0) [] {1};
 \node (2') at (6,0) [] {\underline{2}};
 \draw[arrows={-Stealth[scale=1.1]}] (1')--(2');
 \node (mu1) at (7,-0.1) [] {$\longrightarrow$};
 \node (mu1') at (7,0.2) [] {$\mu_2$};
 \node (1'') at (8,0) [] {1};
 \node (2'') at (10,0) [] {{2,}};
 \draw[arrows={-Stealth[scale=1.1]}] (2'')--(1'');
 \end{tikzpicture}
 \]
 where the underlines indicate the mutated vertices.
 The graphs $\overline{\network}_\gamma (t)$ can then be given as
 \[
 \overline{\network}_{\gamma}(1)=
 \begin{tikzpicture}
 [baseline={([yshift=-.5ex]current bounding box.center)},scale=0.9,auto=left,black_vertex/.style={circle,draw,fill,scale=0.75},square_vertex/.style={rectangle,scale=0.8,draw}]
 \node (a) at (0,0) [black_vertex][label=above:${(1,0)}$]{};
 \node (t1) at (0,-1) [square_vertex][label=right:$1$]{};
 \node (ap) at (0,-2) [black_vertex][label=below:${(1,1)}$]{};
 \node (bp) at (1,-2) [black_vertex][label=below:${(2,1)}$]{};
 \node (b) at (1,0) [black_vertex][label=above:${(2,0)}$]{};
 \draw [dashed] (a)--(t1);
 \draw [dashed] (t1)--(ap);
 \draw[arrows={-Stealth[scale=1.2]}] (b)--(t1);
 \end{tikzpicture}, \qquad
 \overline{\network}_{\gamma}(2) =
 \begin{tikzpicture}
 [baseline={([yshift=-.5ex]current bounding box.center)},scale=0.9,auto=left,black_vertex/.style={circle,draw,fill,scale=0.75},square_vertex/.style={rectangle,scale=0.8,draw}]
 \node (ap) at (-1,0) [black_vertex][label=above:${(1,1)}$]{};
 \node (t3) at (0,-1) [square_vertex][label=right:$2$]{};
 \node (b) at (0,0) [black_vertex][label=above:${(2,1)}$]{};
 \node (a) at (-1,-2) [black_vertex][label=below:${(1,2)}$]{};
 \node (bp) at (0,-2) [black_vertex][label=below:${(2,2)}$]{};
 \draw [dashed] (b)--(t3);
 \draw [dashed] (t3)--(bp);
 \draw[arrows={-Stealth[scale=1.2]}] (ap)--(t3);
 \end{tikzpicture}.
 \]
 Let $E = \{ e_1 , e_2 \} $
 be the set of black vertices in $\network_\gamma$,
 where
 \begin{gather*}
 e_1 = \{ (1,0), (1,1) ,(1,2) \}, \qquad
 e_2 = \{ (2,0), (2,1) ,(2,2) \}.
 \end{gather*}
 The mutation network of $\gamma$ is then given by
 \begin{align}\label{eq: A2 network}
 \network_{\gamma} =
 \;
 \begin{tikzpicture}
 [baseline={([yshift=-.5ex]current bounding box.center)},scale=1.5,auto=left,black_vertex/.style={circle,draw,fill,scale=0.75},square_vertex/.style={rectangle,scale=0.8,draw}]
 \node (a) at (0,0) [black_vertex][label=right:$e_1$]{};
 \node (b) at (1,0) [black_vertex][label=right:$e_2$]{};
 \node (t1) at (0,-1) [square_vertex][label=right:$1$]{};
 \node (t3) at (1,-1) [square_vertex][label=right:$2$]{};
 \draw [dashed] (a)edge [bend left=15](t1);
 \draw [dashed] (t1)edge [bend left=15](a);
 \draw [dashed] (b)edge [bend left=15](t3);
 \draw [dashed] (t3)edge [bend left=15](b);
 \draw[arrows={-Stealth[scale=1.2]}] (b)--(t1);
 \draw[arrows={-Stealth[scale=1.2]}] (a)--(t3);
 \end{tikzpicture}.
 \end{align}
\end{Example}

\begin{Definition}
 Let $\gamma$ be a mutation loop and $\network_\gamma$ be its mutation network.
 For any given pair consisting of a black vertex labeled with $e \in E$ and a square vertex labeled with $t \in \Delta$,
 let~$N_{0}^{et}$ be the number of broken lines between~$e$ and~$t$,
 $N_{+}^{et}$ be the number of arrows from $t$ to $e$,
 and~$N_{-}^{et}$ be the number of arrows from $e$ to $t$:
 \begin{gather*}
 N_{0}^{et} = \# \Big\{ \defNp \Big\}, \qquad
 N_{+}^{et} = \# \Big\{ \defN \Big\}, \qquad
 N_{-}^{et} = \# \Big\{ \defNpp \Big\}.
 \end{gather*}
 Let $N_0$ be the $E \times \Delta$ matrix whose $(e,t)$-entry is $N_{0}^{et} $.
 Define the $E \times \Delta$ matrices $N_+$ and $N_-$ likewise.
 We say that the matrices $N_0$, $N_+$, and $N_-$ are the \emph{adjacency matrices} of the mutation network $\network_\gamma$.
\end{Definition}

\begin{Definition} Let $N_0$, $N_+$, and $N_-$ be the adjacency matrices of the mutation network $\network_\gamma$.
 Define two additional $E \times \Delta$ matrices $A_+ =\big(A_{+}^{et}\big)$ and $A_- =\big(A_{-}^{et}\big)$ by
 \begin{gather*}
 A_+ = N_0 - N_+ , \qquad
 A_- = N_0 - N_- .
 \end{gather*}
 We call the matrices $A_+$ and $A_-$ the \emph{Neumann--Zagier matrices} of the mutation loop~$\gamma$.
\end{Definition}

\begin{Example}
 Let $\gamma$ be the mutation loop defined in Example~\ref{example: A2}.
 It follows from~\eqref{eq: A2 network} that the adjacency matrices of the mutation network $\network_\gamma$ are given by
 \begin{gather*}
 N_0 =
 \begin{pmatrix}
 2 & 0 \\
 0 & 2
 \end{pmatrix},\qquad
 N_+ =
 \begin{pmatrix}
 0 & 0 \\
 0 & 0
 \end{pmatrix},\qquad
 N_- =
 \begin{pmatrix}
 0 & 1 \\
 1 & 0
 \end{pmatrix},
 \end{gather*}
 and the Neumann--Zagier matrices of the mutation loop $\gamma$ are given by
 \begin{gather*}
 A_+ =
 \begin{pmatrix}
 2 & 0 \\
 0 & 2
 \end{pmatrix},\qquad
 A_- =
 \begin{pmatrix}
 2 & -1 \\
 -1 & 2
 \end{pmatrix}.
 \end{gather*}
\end{Example}

The following lemma is proved in~\cite{Mizuno20}.
\begin{Lemma}\label{lemma: NZ symplectic property}
 The matrix $A_+ A_-^{\mathsf{T}}$ is symmetric, where ${}^{\mathsf{T}}$ is a transpose of a matrix.
\end{Lemma}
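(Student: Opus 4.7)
My plan is to first reinterpret the adjacency matrices locally in terms of the exchange matrices $B(t-1)$ of the quivers $Q(t-1)$ appearing along the mutation sequence. By Lemma~\ref{lemma: regular mutation loop}, the regularity of $\gamma$ ensures that every equivalence class $e \in E$ meets each time slice $\overline{E}(t-1)$ in at most one element, giving a partial projection $\pi_{t-1} \colon E \to \{1, \dots, n\}$. Unraveling the definitions of $N_0$, $N_+$ and $N_-$, one obtains for $\pi_{t-1}(e) \neq m_t$ the formulas
\begin{gather*}
N_+^{et} = \maxzero{B_{m_t, \pi_{t-1}(e)}(t-1)}, \qquad N_-^{et} = \maxzero{-B_{m_t, \pi_{t-1}(e)}(t-1)}, \qquad N_0^{et} = 0,
\end{gather*}
while when $\pi_{t-1}(e) = m_t$ or $\pi_t(e) = m_t$ (that is, when $e$ terminates or starts at step $t$) we have $N_\pm^{et} = 0$ and $N_0^{et}$ records the number of such cuts.

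With this dictionary in place, I would expand
\begin{gather*}
\bigl(A_+ A_-^{\mathsf T} - A_- A_+^{\mathsf T}\bigr)_{ee'} = \sum_{t \in \Delta} \Bigl( N_0^{et}\bigl(N_+^{e't} - N_-^{e't}\bigr) - N_0^{e't}\bigl(N_+^{et} - N_-^{et}\bigr) + N_+^{et} N_-^{e't} - N_-^{et} N_+^{e't} \Bigr)
\end{gather*}
and show that each summand can be matched with a telescoping difference. The mutation rule for the exchange matrix can be rewritten, for $i, j \neq m_t$, as the identity
\begin{gather*}
B_{ij}(t) - B_{ij}(t-1) = \maxzero{B_{i m_t}(t-1)}\maxzero{B_{m_t j}(t-1)} - \maxzero{-B_{i m_t}(t-1)}\maxzero{-B_{m_t j}(t-1)},
\end{gather*}
and this, combined with the skew-symmetry of $B(t-1)$, yields the key local equality $N_+^{et} N_-^{e't} - N_-^{et} N_+^{e't} = -\bigl(B_{\pi_{t-1}(e), \pi_{t-1}(e')}(t) - B_{\pi_{t-1}(e), \pi_{t-1}(e')}(t-1)\bigr)$ whenever neither projection is cut at step $t$. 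The broken-edge terms $N_0^{et}(N_+^{e't} - N_-^{e't})$, appearing precisely at the cut steps, are exactly what is needed to absorb the jumps of the function $s \mapsto B_{\pi_s(e), \pi_s(e')}(s)$ induced by the change of projection from $\pi_{s-1}$ to $\pi_s$. Assembled together, the total sum becomes a telescope for this function, and the identification $(\nu(i), T) \sim (i, 0)$ together with $Q(0) = \nu(Q(T))$ closes the telescope to zero.

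The principal technical obstacle lies in the careful case analysis at cut steps, where $\pi_s$ changes discontinuously: in particular, when both $e$ and $e'$ are cut at the same step $t$, forcing $e = e' = \varphi(t)$ via the bijection of Lemma~\ref{lemma: regular mutation loop}, the combination $N_0^{et}(N_+^{e't} - N_-^{e't}) - N_0^{e't}(N_+^{et} - N_-^{et})$ must be shown to contribute zero by a separate direct check. A secondary subtlety is ensuring that, when the telescope wraps around under the $\nu$-twist, the permutation $\nu$ relabels the entries of $B$ consistently with the identification $Q(0) = \nu(Q(T))$. Once these case-by-case checks are in place, the desired symmetry $A_+ A_-^{\mathsf T} = A_- A_+^{\mathsf T}$ follows.
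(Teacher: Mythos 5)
The paper disposes of this lemma with a citation to Proposition 3.11 of [Mizuno20], so you are supplying an argument the paper outsources. Your overall strategy — expanding $(A_+A_-^{\mathsf T}-A_-A_+^{\mathsf T})_{ee'}$ as a sum over $t\in\Delta$ and telescoping the increments of the exchange matrix along the mutation sequence — is a sensible one, and the pieces of algebra you state are correct where they apply: the rewriting of the mutation rule as $B_{ij}(t)-B_{ij}(t-1)=\maxzero{B_{im_t}(t-1)}\maxzero{B_{m_tj}(t-1)}-\maxzero{-B_{im_t}(t-1)}\maxzero{-B_{m_tj}(t-1)}$ is right, and so is the resulting identification of $N_+^{et}N_-^{e't}-N_-^{et}N_+^{e't}$ with minus that increment in the single-strand situation.

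There is, however, a genuine gap at the foundation of your dictionary. You assert that regularity forces each class $e\in E$ to meet each time slice $\overline{E}(t-1)$ in at most one element, citing Lemma~\ref{lemma: regular mutation loop}; that lemma only says $\varphi$ and $\psi$ are bijections, and the assertion itself is false whenever $\nu\neq\id$. For $\gamma(B_3,2)$, for instance, the unfilled vertices $(1,1)$ and $(5,1)$ form a $\nu$-orbit in which only $(1,1)$ is mutated, say at step $t_0$: the strand starting at $((1,1),t_0)$ runs to time $T$, is glued by $(\nu(i),T)\sim(i,0)$ to $((5,1),0)$, traverses the whole loop again through the never-mutated vertex $(5,1)$, and only then returns to $((1,1),0)$ — so this single class contains two elements of every time slice (and three in type $G_2$). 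Consequently $N_{\pm}^{et}$ is the sum of $\maxzero{\pm B_{m_t,i}(t-1)}$ over all $(i,t-1)\in e$ rather than a single term, your key local equality has to be replaced by a statement about $\sum_{(i,s)\in e,\,(j,s)\in e'}B_{ij}(s)$, and the bookkeeping of which lifts enter and leave each class at cut steps and under the $\nu$-twist — precisely where the content of the proof lies — is substantially heavier than what you describe. This affects exactly the non-simply-laced cases. Two smaller points: the lemma is stated for an arbitrary mutation loop, while your argument assumes regularity from the outset; and two distinct classes can both be ``cut'' at the same step $t$ (one containing $(m_t,t-1)$, the other $(m_t,t)$), so the reduction of that case to $e=e'=\varphi(t)$ does not follow from the bijectivity of $\varphi$.
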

\begin{proof}
 This follows from Proposition 3.11 in~\cite{Mizuno20}.
\end{proof}

In other words, Neumann--Zagier matrices satisfy the following relation:
\begin{gather*}
A_+ A_-^{\mathsf{T}} =A_- A_+^{\mathsf{T}} .
\end{gather*}

\subsection[Definition of partition $q$-series]{Definition of partition $\boldsymbol{q}$-series}
Let $\gamma$ be a mutation loop. Let $E$ and $\Delta$ be the set of the black vertices and of the square vertices of the mutation network $\network_\gamma$, respectively.

We define two abelian groups $Z_\gamma$ and $B_\gamma$ by
\begin{gather*}
 Z_\gamma =
\big\{ (u,v)
 \,|\,
 u \in \Z^\Delta, \, v \in \Q^\Delta , \, A_- u = A_+ v
\big\},\qquad
 B_\gamma =
 \big\{ \big( A_+^{\mathsf{T}} w , A_-^{\mathsf{T}} w\big) \,|\, w \in \Z^E
 \big\}.
\end{gather*}
It follows from Lemma \ref{lemma: NZ symplectic property} that $B_\gamma$ is a subgroup of~$Z_\gamma$.
Let $H_\gamma$ be the quotient group of~$Z_\gamma$ by~$B_\gamma$:
\begin{gather*}
 H_\gamma = Z_\gamma / B_\gamma.
\end{gather*}
For any elements $\sigma \in H_\gamma$, we define $\sigma_{\geq 0} \subset \sigma$ as follows:
\begin{gather*}
 \sigma_{\geq 0} = \{ (u,v) \in \sigma \,|\, \text{$u_i \geq 0$ for all $i=1,\dots, N$} \}.
\end{gather*}

For vectors $u \in (\Z_{\geq 0})^\Delta$ and $v \in \Q^\Delta$, we define a $q$-series $W(x,y)$ by
\begin{gather*}
W( u,v ) = \frac{q^{ \frac{1}{2} u \cdot v }}{ (q)_{u_1} \cdots (q)_{u_T} } ,
\end{gather*}
where $u \cdot v$ is the usual dot product, that is, $u \cdot v := \sum\limits_{i=1}^T u_i v_i$, and $(q)_n$ for a non-negative integer~$n$ is defined by
\begin{gather*}
 (q)_n = \prod_{k=1}^n \big(1-q^k\big).
\end{gather*}

A mutation loop is called \emph{nondegenerate} if $A_+$ is a non-singular matrix.
In addition, a~nondegenerate mutation loop is called \emph{positive definite} if $A_+^{-1} A_-$ is a positive definite symmetric matrix.
Note that $A_+^{-1} A_-$ is always symmetric (if the mutation is nondegenerate) since $A_+^{-1} A_- = A_+^{-1} A_- A_+^{\mathsf{T}} \big(A_+^{-1}\big)^{\mathsf{T}}$, and $A_- A_+^{\mathsf{T}}$ is symmetric.

\begin{Definition}Let $\gamma$ be a nondegenerate positive definite mutation loop, and $\sigma$ be an element of $H_\gamma$.
 The \emph{partition $q$-series} of $\gamma$ associated with $\sigma \in H_\gamma$ is defined by
 \begin{gather*}
 \mathcal{Z}_\gamma^\sigma (q) =
 \sum_{ (u,v) \in \sigma_{\geq 0}} W(u, v).
 \end{gather*}
 We also define the \emph{total partition $q$-series} of $\gamma$ as the sum of the partition $q$-series over~$H_\gamma$:
 \begin{gather*}
 \mathcal{Z}_\gamma (q) = \sum_{\sigma \in H_\gamma} \mathcal{Z}_\gamma^\sigma (q) .
 \end{gather*}
\end{Definition}

\begin{Lemma}\label{lemma:total Z formula}
 Suppose that $\gamma$ is a nondegenerate positive definite mutation loop. Then the total partition $q$-series is given by
 \begin{gather*}
 \mathcal{Z}_\gamma (q) = \sum_{u \in (\Z_{\geq 0})^\Delta }
 \frac{q^{ \frac{1}{2} u^{\mathsf{T}} A_+^{-1} A_- u }}{ (q)_{u_1} \cdots (q)_{u_T} }.
 \end{gather*}
\end{Lemma}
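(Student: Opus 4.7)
The proof is essentially a matter of unwinding the definitions and using nondegeneracy. The plan is as follows.

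First I would write out the total partition $q$-series according to its definition:
\begin{gather*}
\mathcal{Z}_\gamma(q) \;=\; \sum_{\sigma \in H_\gamma} \mathcal{Z}_\gamma^\sigma(q) \;=\; \sum_{\sigma \in H_\gamma} \sum_{(u,v) \in \sigma_{\geq 0}} W(u,v).
\end{gather*}
Since $H_\gamma = Z_\gamma / B_\gamma$ is a quotient of abelian groups, its elements form a partition of $Z_\gamma$ into cosets. Intersecting each coset with the half-space $\{u \geq 0\}$ gives a partition of the set $\{(u,v) \in Z_\gamma : u_i \geq 0 \text{ for all } i\}$. Therefore the double sum collapses to a single sum over nonnegative $u$-components:
\begin{gather*}
\mathcal{Z}_\gamma(q) \;=\; \sum_{\substack{(u,v) \in Z_\gamma \\ u \in (\Z_{\geq 0})^\Delta}} W(u,v).
\end{gather*}

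Next I would use the assumption that $\gamma$ is nondegenerate, i.e.\ $A_+$ is invertible. This means the defining relation $A_- u = A_+ v$ of $Z_\gamma$ determines $v$ uniquely from $u$ by $v = A_+^{-1} A_- u \in \Q^\Delta$. Consequently, for every $u \in (\Z_{\geq 0})^\Delta$ there is a unique corresponding $v$ making $(u,v)$ lie in $Z_\gamma$, and the sum reduces to an unconstrained sum over $u \in (\Z_{\geq 0})^\Delta$. Substituting $v = A_+^{-1} A_- u$ into $W(u,v) = q^{\frac{1}{2} u \cdot v}/\bigl((q)_{u_1} \cdots (q)_{u_T}\bigr)$ yields the claimed formula.

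There is no real obstacle here: the only things to verify carefully are that cosets of $B_\gamma$ in $Z_\gamma$ partition $Z_\gamma$ (true by definition of a quotient group), that restricting to $u \geq 0$ respects this partition (clear since the condition $u \geq 0$ depends only on $u$, not on which coset representative is chosen), and that nondegeneracy gives the parametrization $u \leftrightarrow (u, A_+^{-1} A_- u)$. The symmetry of $A_+^{-1} A_-$ noted in the remark following the definition is not needed for the lemma itself; it merely ensures that the quadratic form $u^{\mathsf{T}} A_+^{-1} A_- u$ in the exponent is the standard symmetric quadratic form $K$ appearing in the introduction. With these observations in place, the proof is a few lines.
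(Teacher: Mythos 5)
Your argument is correct and follows essentially the same route as the paper: nondegeneracy gives the parametrization $Z_\gamma = \{(u, A_+^{-1}A_-u) \mid u \in \Z^\Delta\}$, and the cosets of $B_\gamma$ partition $Z_\gamma$, so the double sum collapses to a single sum over $u \in (\Z_{\geq 0})^\Delta$. The only point worth adding explicitly is the one the paper closes with, namely that positive definiteness of $A_+^{-1}A_-$ is what makes the resulting infinite sum well defined as a formal power series.
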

\begin{proof} Because $\gamma$ is nondegenerate, the abelian group $Z_\gamma$ can be written as
 \begin{gather}\label{eq: non-singular Z}
 Z_\gamma = \big\{ \big(u, A_+^{-1} A_- u\big) \,|\, u \in \Z^\Delta \big\}.
 \end{gather}
 This implies that
 \begin{gather*}
 \mathcal{Z}_\gamma (q) = \sum_{u \in (\Z_{\geq 0})^\Delta } W\big(u , A_+^{-1} A_- u \big)
 = \sum_{u \in (\Z_{\geq 0})^\Delta }
 \frac{q^{ \frac{1}{2} u^{\mathsf{T}} A_+^{-1} A_- u }}{ (q)_{u_1} \cdots (q)_{u_T} },
 \end{gather*}
 and the sum is well-defined since $A_+^{-1} A_-$ is positive definite.
\end{proof}

\begin{Corollary}Suppose that $\gamma$ is a nondegenerate positive definite mutation loop.
 Then partition $q$-series of $\gamma$ associated with any $\sigma \in H_\gamma$ and the total partition $q$-series are well-defined as a formal power series with integer coefficients in some rational power of~$q$.
\end{Corollary}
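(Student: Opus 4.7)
The plan is to reduce the statement to the explicit formula of Lemma \ref{lemma:total Z formula} (and the analogous coset version for each $\sigma \in H_\gamma$), then verify three routine properties: integrality of the coefficients, rationality of all exponents, and local finiteness of each coefficient. The only substantive input is the positive definiteness of $A_+^{-1}A_-$; everything else is bookkeeping.

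First I would fix the notation $M := A_+^{-1}A_-$ and $e_u := \tfrac{1}{2} u^{\mathsf{T}} M u$. By the identification $(u,v)\mapsto u$ from \eqref{eq: non-singular Z}, $Z_\gamma \cong \Z^\Delta$, the subgroup $B_\gamma$ maps to a sublattice $L \subset \Z^\Delta$, and each coset $\sigma \in H_\gamma$ corresponds to a coset $\tilde{\sigma}$ of $L$ in $\Z^\Delta$; under this identification $\sigma_{\geq 0}$ becomes $\tilde{\sigma} \cap (\Z_{\geq 0})^\Delta$. Hence both partition series take the form
\begin{gather*}
\sum_{u} \frac{q^{e_u}}{(q)_{u_1}\cdots (q)_{u_T}},
\end{gather*}
summed over the appropriate subset of $(\Z_{\geq 0})^\Delta$. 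Since $(q)_n \in \Z[q]$ has constant term $1$, its reciprocal lies in $\Z[[q]]$, so each summand is a formal series with integer coefficients, supported in exponents $\geq e_u$. Since $A_+$ and $A_-$ are integer matrices, $M$ is rational; choosing $N$ to be twice a common denominator of the entries of $M$ gives $e_u \in \tfrac{1}{N}\Z$ for every $u$, so all exponents appearing lie in $\tfrac{1}{N}\Z_{\geq 0}$.

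The key and only nontrivial step is local finiteness, and this is where positive definiteness enters. Because $M$ is positive definite, there exists $c > 0$ with $e_u \geq c\|u\|^2$ for all $u \in \R^\Delta$; in particular, for any bound $K$, only finitely many $u \in (\Z_{\geq 0})^\Delta$ satisfy $e_u \leq K$. Since the $u$-th summand contributes only to exponents $\geq e_u$, only finitely many values of $u$ contribute to the coefficient of any fixed power of $q$, and that coefficient is therefore a finite $\Z$-linear combination of integers. Restricting the summation to a coset $\tilde{\sigma}$ preserves the inequality $e_u \geq c\|u\|^2$ verbatim, so the identical argument applies to each $\mathcal{Z}_\gamma^\sigma(q)$. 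The main (minor) obstacle is simply to keep the identifications $Z_\gamma \cong \Z^\Delta$ and $B_\gamma \cong L$ straight when passing from cosets in $H_\gamma$ to index sets in $(\Z_{\geq 0})^\Delta$; beyond that the conclusion is immediate from positive definiteness together with $1/(q)_n \in \Z[[q]]$.
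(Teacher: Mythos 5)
Your argument is correct and is essentially the route the paper takes: the Corollary is presented as an immediate consequence of the explicit sum formula in Lemma~\ref{lemma:total Z formula}, whose proof already invokes positive definiteness of $A_+^{-1}A_-$ to guarantee that only finitely many $u$ contribute to each power of $q$, with integrality and rationality of exponents being routine. Your write-up just makes explicit the bookkeeping (the coset identification, $1/(q)_n\in\Z[[q]]$, and the common denominator of the entries of $A_+^{-1}A_-$) that the paper leaves implicit.
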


\subsection[Asymptotics of partition $q$-series]{Asymptotics of partition $\boldsymbol{q}$-series}\label{sec:asymptotics of Z}

In this section, we investigate the asymptotics of total partition $q$-series of nondegenerate positive definite mutation loops. The asymptotics will be described by using the Rogers dilogarithm function and the Jacobian matrix associated with the mutation loop.

Let $\gamma$ be a nondegenerate positive definite mutation loop.
For any $t=1,\dots , T$, we define rational functions $z_+(y;t)$ and $z_-(y;t)$ by
\begin{gather*}
z_+(y; t) = \frac{Y_{m_t}(t-1) }{1 + Y_{m_t}(t-1)},\qquad
z_-(y; t) = \frac{1}{1 + Y_{m_t}(t-1)},
\end{gather*}
where $Y_{m_t}(t-1)$ is the rational function in $y$ defined by~\eqref{eq:Y transition}.

\begin{Lemma}\label{lemma:unique fixed point}
 There is $\eta \in (\R_{>0})^n$ such that $\mu_\gamma(\eta) = \eta$, and such $\eta$ is unique.
\end{Lemma}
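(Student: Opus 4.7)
The plan is to reformulate the fixed-point equation $\mu_\gamma(\eta)=\eta$ as a critical-point problem for an explicit strictly convex potential on $(0,1)^T$ whose quadratic part is governed by the positive-definite symmetric matrix $K:=A_+^{-1}A_-$, and to extract existence and uniqueness simultaneously from this convexity.

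For $y\in(\R_{>0})^n$, introduce the coordinates $z_t:=z_+(y;t)\in(0,1)$ along the $Y$-seed orbit, so that $1-z_t=z_-(y;t)$ and $Y_{m_t}(t-1)=z_t/(1-z_t)$. Iterating the $Y$-seed mutation rule expresses each $Y_i(t)$ as a Laurent monomial in $(y_1,\dots,y_n)$ (the tropical $c$-vector data) times a Laurent monomial in the $z_s$'s and $(1-z_s)$'s for $s\le t$. A careful matching of the accumulated exponents against the three types of edges of $\network_\gamma(t)$---broken lines counted by $N_0$, incoming arrows by $N_+$, outgoing arrows by $N_-$---together with cancellation of the tropical part using that $\gamma$ is a mutation loop, transforms the closing condition $\nu(Y(T))=y$ into the multiplicative system
\begin{gather*}
 \prod_{t=1}^{T} z_t^{A_+^{et}}(1-z_t)^{-A_-^{et}}=1, \qquad e\in E.
\end{gather*}
Non-degeneracy forces $|E|=T$, so this is a square system on $(0,1)^T$, and $y\mapsto(z_+(y;t))_{t=1}^T$ restricts to a bijection between the positive real fixed points of $\mu_\gamma$ and the solutions of the system.

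Passing to logarithmic coordinates $x_t:=\log z_t$ and applying $A_+^{-1}$, the system is equivalent to $x=K\log(1-e^x)$ on $(-\infty,0)^T$, which is exactly the Euler--Lagrange equation for the smooth potential
\begin{gather*}
 \Phi(x)=\tfrac{1}{2}\,x^{\mathsf{T}}K^{-1}x+\sum_{t=1}^{T}\mathrm{Li}_2\!\bigl(e^{x_t}\bigr),
\end{gather*}
since $\partial_{x_t}\mathrm{Li}_2(e^{x_t})=-\log(1-e^{x_t})$. By Lemma~\ref{lemma: NZ symplectic property} the matrix $K$ is symmetric, and by hypothesis it is positive definite, so $K^{-1}$ is positive definite as well; the Hessian of $\Phi$ is then $K^{-1}+\mathrm{diag}\!\bigl(e^{x_t}/(1-e^{x_t})\bigr)$, a sum of two positive-definite matrices, hence strictly positive definite. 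As $\lVert x\rVert\to\infty$ inside $(-\infty,0)^T$ the quadratic term forces $\Phi\to+\infty$, and as $x_t\uparrow 0$ one has $\partial_{x_t}\Phi=(K^{-1}x)_t-\log(1-e^{x_t})\to+\infty$, so the gradient points strictly into the interior on the upper boundary; strict convexity therefore gives a unique interior minimizer, and pulling it back by the bijection above gives the unique positive real $\eta$ with $\mu_\gamma(\eta)=\eta$.

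The main obstacle is the combinatorial identification in the first step. One must track how the three types of edges of $\network_\gamma$ enter the accumulated exponents of $Y_i(T)/Y_{\nu^{-1}(i)}(0)$ and show that the tropical Laurent-monomial part cancels exactly because the underlying quiver closes up under $\nu$. This is where the definitions $A_\pm=N_0-N_\pm$ acquire their meaning, and where the symmetry of $A_+A_-^{\mathsf{T}}$ established in Lemma~\ref{lemma: NZ symplectic property} is essential (it is what makes $K$ symmetric and thus $\Phi$ well-defined as a potential). Once that identification is in place, the convex-analytic argument is routine, and the same computation linearized at $\eta$ yields $K=A_+^{-1}A_-$ as the log-derivative of $\mu_\gamma$ at $\eta$, which is the forthcoming Corollary~\ref{corollary: K = Ap inv Am}.
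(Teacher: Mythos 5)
Your proposal follows essentially the same route as the paper: reduce the fixed-point equation to the algebraic system $\zeta_s=\prod_t(1-\zeta_t)^{(A_+^{-1}A_-)_{st}}$ on $(0,1)^T$ via the mutation-network bijection (which the paper delegates to \cite[Corollary~3.9]{Mizuno20}, and which you correctly sketch but do not fully prove), and then deduce existence and uniqueness from positive definiteness of $A_+^{-1}A_-$. The only difference is that where the paper cites \cite[Lemma~2.1]{VlasenkoZwegers} for the last step, you reprove it via the strictly convex dilogarithm potential $\tfrac12 x^{\mathsf{T}}K^{-1}x+\sum_t\mathrm{Li}_2\bigl(e^{x_t}\bigr)$ with the boundary-gradient argument; that computation is correct and is in fact the standard proof of the cited lemma.
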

\begin{proof} We define the following two sets:
 \begin{gather*}
 \mathfrak{F}_\gamma^{>0} = \big\{ \eta \in (\R_{>0})^n \,|\, f_\gamma(\eta) = \eta \big\}, \\
 \mathfrak{D}_\gamma^{(0,1)} = \left\{ \zeta \in (0,1)^T \,|\, \prod_{t=1}^T \zeta_t^{-A_+^{et}} (1-\zeta_t)^{A_-^{et}} =1 \text{ for all $e \in E$} \right\}.
 \end{gather*}
 Using \cite[Corollary 3.9]{Mizuno20}, we find the map $\Phi\colon \mathfrak{F}_\gamma^{>0} \to \mathfrak{D}_\gamma^{(0,1)}$ defined by
 \begin{gather*}
 \Phi (\eta) = z_+ (\eta; t)
 \end{gather*}
 is a bijection.

 Since the matrix $A_+$ is invertible, the set $\mathfrak{D}_\gamma^{(0,1)}$ can be expressed as
 \begin{gather}\label{eq:another expression of D}
 \mathfrak{D}_\gamma^{(0,1)} = \left\{ \zeta \in (0,1)^T \,|\, \zeta_s = \prod_{t=1}^{T} (1-\zeta_t)^{(A_+^{-1} A_-)_{st}} \text{ for all $s=1,\dots , T$} \right\}.
 \end{gather}
 Using the fact that $A_+^{-1} A_-$ is positive definite and \cite[Lemma 2.1]{VlasenkoZwegers}, we find that the right-hand side of~\eqref{eq:another expression of D} consists of one element.
 Thus $\mathfrak{F}_\gamma^{>0}$ also consists of one element.
\end{proof}

The \emph{Rogers dilogarithm function} $L(x)$ is defined by
\begin{gather*}
L(x)= -\frac{1}{2} \int_{0}^{x} \left\{ \frac{\log(1-y)}{y} + \frac{\log y}{1-y} \right\} ,\qquad 0 \leq x \leq 1.
\end{gather*}
Using the Rogers dilogarithm function, we define a positive real number $a$ by
\begin{gather*}
a = \sum_{t=1}^T L( z_+(\eta; t) ) ,
\end{gather*}
where $\eta$ is the unique solution of the fixed point equation $\mu_\gamma (y)=y$ as in Lemma~\ref{lemma:unique fixed point}.

Let $J(y)$ be the Jacobian matrix of the rational function~$\mu_\gamma$:
\begin{gather*}
J_\gamma(y) := \left( \frac{\partial \mu_{i}}{\partial y_j} (y) \right)_{i,j \in \mathbf{I} },
\end{gather*}
where $\mu_i$ is the $i$-th components of $\mu_\gamma$.

Let $\varepsilon$ be a positive real number, and $\varepsilon \to 0$ denote the limit in the positive real axis.

\begin{Theorem}\label{thm:asymptotics of Z} Let $\gamma$ be a nondegenerate positive definite mutation loop. Then
 \begin{align}\label{eq:asymptotics of Z}
 \lim_{\varepsilon \to 0}\mathcal{Z}_\gamma \big({\rm e}^{-\varepsilon}\big) {\rm e}^{-\frac{a}{\varepsilon}} =
 \sqrt{\frac{\det A_+ }{\det(I - J_\gamma (\eta))}}.
 \end{align}
\end{Theorem}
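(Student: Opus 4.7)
The plan is to evaluate the Nahm-type sum
\begin{gather*}
\mathcal{Z}_\gamma\big({\rm e}^{-\varepsilon}\big) = \sum_{u \in (\Z_{\geq 0})^\Delta}\frac{{\rm e}^{-\frac{\varepsilon}{2}u^{\mathsf{T}}A_+^{-1}A_- u}}{\big({\rm e}^{-\varepsilon}\big)_{u_1}\cdots \big({\rm e}^{-\varepsilon}\big)_{u_T}}
\end{gather*}
supplied by Lemma~\ref{lemma:total Z formula} by Laplace's method. First I substitute $u_t = \xi_t/\varepsilon$ and approximate the sum by a Riemann integral with lattice spacing $\varepsilon$. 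Combined with the classical asymptotic
\begin{gather*}
\log \big({\rm e}^{-\varepsilon};{\rm e}^{-\varepsilon}\big)_n = \tfrac{1}{\varepsilon}\big(\mathrm{Li}_2\big({\rm e}^{-\xi}\big) - \tfrac{\pi^2}{6}\big) + \tfrac{1}{2}\log \tfrac{2\pi(1-{\rm e}^{-\xi})}{\varepsilon} + O(\varepsilon),\qquad n\varepsilon = \xi,
\end{gather*}
obtainable via the modular behaviour of the Dedekind eta function, this recasts the summand as $(2\pi)^{-T/2}\varepsilon^{T/2}\prod_t(1-{\rm e}^{-\xi_t})^{-1/2}\,{\rm e}^{G(\xi)/\varepsilon}$ with
\begin{gather*}
G(\xi) = -\tfrac{1}{2}\xi^{\mathsf{T}}A_+^{-1}A_-\xi - \sum_t\mathrm{Li}_2\big({\rm e}^{-\xi_t}\big) + \tfrac{T\pi^2}{6}.
\end{gather*}

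Writing $\zeta_t = 1-{\rm e}^{-\xi_t}$, the critical-point equation $\nabla G = 0$ becomes $\log \zeta_s = \sum_t (A_+^{-1}A_-)_{st}\log(1-\zeta_t)$, which is precisely the equation characterizing $\mathfrak{D}_\gamma^{(0,1)}$ in the proof of Lemma~\ref{lemma:unique fixed point}. Hence the unique saddle $\xi^\ast$ corresponds to $\zeta_t = z_+(\eta;t)$. The identities $L(x) = \mathrm{Li}_2(x) + \tfrac{1}{2}\log x \log(1-x)$ and Euler's reflection $\mathrm{Li}_2(x) + \mathrm{Li}_2(1-x) = \tfrac{\pi^2}{6} - \log x \log(1-x)$, combined with the saddle identity $(\xi^\ast)^{\mathsf{T}} A_+^{-1}A_-\xi^\ast = \sum_t \log \zeta_t\log(1-\zeta_t)$, yield $G(\xi^\ast) = \sum_t L(z_+(\eta;t)) = a$. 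A Gaussian expansion around $\xi^\ast$ produces Hessian $-H = A_+^{-1}A_- + \diag((1-\zeta_t)/\zeta_t)$, positive definite because $A_+^{-1}A_-$ is positive definite and $\zeta_t \in (0,1)$. Collecting the Gaussian prefactor $(2\pi\varepsilon)^{T/2}/\sqrt{\det(-H)}$ together with the other powers of $\varepsilon$ and $2\pi$, which cancel exactly, I arrive at
\begin{gather*}
\lim_{\varepsilon\to 0}\mathcal{Z}_\gamma\big({\rm e}^{-\varepsilon}\big){\rm e}^{-a/\varepsilon} = \frac{1}{\sqrt{\det A_+^{-1}\cdot \det\big(A_-\diag(z_+(\eta;t)) + A_+\diag(z_-(\eta;t))\big)}}.
\end{gather*}

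Comparing with the claimed right-hand side $\sqrt{\det A_+/\det(I - J_\gamma(\eta))}$ reduces the theorem to the determinantal identity
\begin{gather*}
\det(I - J_\gamma(\eta)) = \det\big(A_-\diag(z_+(\eta;t)) + A_+\diag(z_-(\eta;t))\big),
\end{gather*}
and this is the principal obstacle. To prove it, the plan is to decompose $J_\gamma(\eta)$ via the chain rule into an ordered product of Jacobian factors, one per individual mutation $\mu_{m_t}$; each factor is a nearly identity matrix whose nontrivial entries involve only the quiver data at step $t$ and the numbers $z_\pm(\eta;t)$. Re-expressing this product telescopically through the incidence data encoded in the mutation network $\network_\gamma$, and using Lemma~\ref{lemma: NZ symplectic property} (the symplectic relation $A_+ A_-^{\mathsf{T}} = A_- A_+^{\mathsf{T}}$) to recombine the contributions at each edge, should produce the desired identity; it is the cluster-algebraic avatar of the Neumann--Zagier determinant formula from hyperbolic $3$-manifold theory. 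Once established, Corollary~\ref{corollary: K = Ap inv Am}, invoked in the proof of Proposition~\ref{prop:unique positive real eta}, will follow as a byproduct of the positive-definite Gaussian saddle analysis above, since the existence and uniqueness of $\eta$ is visible from the existence and uniqueness of the real saddle $\xi^\ast$.
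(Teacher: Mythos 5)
Your overall architecture coincides with the paper's: the paper also reduces the theorem to (i) the saddle-point asymptotics of the Nahm-type sum and (ii) the determinant identity $\det(I-J_\gamma(\eta))=\det(A_+Z_-+A_-Z_+)$ where $Z_\pm=\diag(z_\pm(\eta;t))$. For (i) the paper simply cites Theorem~2.3 of Vlasenko--Zwegers, whereas you redo the Laplace analysis by hand; your computation is correct in outline (the critical-point equation matches \eqref{eq:another expression of D}, the evaluation $G(\xi^\ast)=\sum_t L(z_+(\eta;t))=a$ via Euler reflection and $L(x)=\mathrm{Li}_2(x)+\tfrac12\log x\log(1-x)$ is right, and your Hessian $K+\diag(z_-/z_+)$ and prefactor reproduce exactly the quantity $b'$ appearing in the paper's proof). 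What you do not address are the standard but nontrivial justifications of that method for these sums --- uniform tail estimates, the boundary contribution near $u_t=0$ where $\prod_t(1-{\rm e}^{-\xi_t})^{-1/2}$ degenerates, and the passage from lattice sum to integral --- which is precisely the content of the cited reference; as written this part is a plausibility argument rather than a proof.

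The genuine gap is (ii). You correctly identify $\det(I-J_\gamma(\eta))=\det\big(A_-Z_++A_+Z_-\big)$ as ``the principal obstacle,'' but you only sketch a plan (chain-rule factorization of $J_\gamma$, telescoping through the mutation network, recombination via Lemma~\ref{lemma: NZ symplectic property}) and conclude that this ``should produce the desired identity.'' Nothing is actually verified: the per-mutation Jacobian factors are not written down, and the recombination step --- which is where all the work lies --- is not carried out. This identity is exactly Theorem~4.2 of~\cite{Mizuno20}, which the paper invokes at this point; your sketched strategy is indeed the one used there, but as it stands the crucial step of your proof is an unproven assertion. A secondary inaccuracy: your closing claim that the existence and uniqueness of $\eta$ ``follows as a byproduct'' of the saddle analysis elides the bijection $\Phi$ between fixed points of $\mu_\gamma$ and solutions of the $\zeta$-equations (Lemma~\ref{lemma:unique fixed point}), which is itself a nontrivial statement requiring Corollary~3.9 of~\cite{Mizuno20}; the uniqueness of the real saddle $\xi^\ast$ alone does not give you a fixed point of the cluster transformation.
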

\begin{proof}
 Using \cite[Theorem 2.3]{VlasenkoZwegers}, we find that
 \begin{gather*}
 \lim_{\varepsilon \to 0}\mathcal{Z}_\gamma \big({\rm e}^{-\varepsilon}\big) {\rm e}^{-\frac{a'}{\varepsilon}} = b',
 \end{gather*}
 where $a'$ and $b'$ are the real numbers given by
 \begin{gather*}
 a' = \sum_{t=1}^T \big( L(1) - L(z_-(\eta;t)) \big),\\
 b'= \det \big(A_+^{-1} A_- + \diag( \xi_1, \dots, \xi_T ) \big)^{-\frac{1}{2}} \prod_{t=1}^{T} z_+(\eta; t)^{-\frac{1}{2}},
 \end{gather*}
 and $\xi_t = z_-(\eta;t) / z_+(\eta;t)$.
 Since Rogers dilogarithm satisfies $L(x) + L(1-x) = L(1)$, we find that $a'=a$.

 Let $Z_\pm = \diag (z_\pm(\eta; 1) ,\dots, z_\pm(\eta; T) )$.
 Then we compute the number $b$ as follows:
 \begin{align*}
 b' &= \det \big(A_+^{-1} A_- + \diag( \xi_1, \dots, \xi_T ) \big)^{-\frac{1}{2}} \prod_{t=1}^{T} z_+(\eta; t)^{-\frac{1}{2}}\\
 &= \det \big(A_+^{-1} A_- + \diag( \xi_1, \dots, \xi_T ) \big)^{-\frac{1}{2}} (\det Z_+)^{-\frac{1}{2}}\\
 &=\det \big(A_+^{-1} A_- Z_+ + Z_- \big)^{-\frac{1}{2}}
 = (\det A_+)^{\frac{1}{2}} \det (A_+ Z_- + A_- Z_+)^{-\frac{1}{2}}.
 \end{align*}
 Using \cite[Theorem 4.2]{Mizuno20}, we find that
 \begin{gather*}
 \det (A_+ Z_- + A_- Z_+) = \det (I - J_\gamma(\eta)),
 \end{gather*}
 completing the proof.
\end{proof}

\begin{Remark}In~\cite{VlasenkoZwegers} (as well as~\cite{Ter,Zagier}), they not only gave the limit of $q$-series, but also gave a formula on the asymptotic expansion in the powers of $\varepsilon$. Although it might be interesting to investigate higher order terms of the asymptotic expansion, it is not dealt with in this paper.
\end{Remark}

\subsection[Partition $q$-series of the mutation loop on $Q(X_r, \ell)$]{Partition $\boldsymbol{q}$-series of the mutation loop on $\boldsymbol{Q(X_r, \ell)}$}\label{sec:partition q of gamma Xr level}

Let $X_r$ be a finite type Dynkin diagram, and $\level$ be a positive integer such that $\level \geq 2$.
Let $\gamma = \gamma(X_r, \level)$ be the mutation loop defined in Section~\ref{section: mutation loop Xr l}. In this section, we give an explicit expression of the partition $q$-series of $\gamma(X_r, \level)$.

For a Dynkin diagram of type $X_r$,
we define another Dynkin diagram of type $Y_{r'}$ as follows:
\begin{gather*}
 Y_{r'}=
 \begin{cases}
 X_r &\text{if $X=A,D$ or $E$},\\
 A_{2r-1} &\text{if $X_r=B_r$},\\
 D_{r+1} &\text{if $X_r=C_r$},\\
 E_6 &\text{if $X_r=F_4$},\\
 D_4 &\text{if $X_r=G_2$}.
 \end{cases}
\end{gather*}

\begin{figure}[t] \centering
 \begin{tikzpicture}
 [scale=1.2,auto=left,black_vertex/.style={circle,draw,fill,scale=0.75}]
 \node (a1) at (0,3) [black_vertex][label=above:$1$]{};
 \node (a2) at (1,3) [black_vertex][label=above:$2$]{};
 \node (ar-1) at (3,3) [black_vertex][label=above:$r-1$]{};
 \node (ar) at (4,2.5) [black_vertex][label=above:$r$]{};
 \node (ar+1) at (3,2) [black_vertex][label=above:$r+1$]{};
 \node (a2r-2) at (1,2) [black_vertex][label=above:$2r-2$]{};
 \node (a2r-1) at (0,2) [black_vertex][label=above:$2r-1$]{};
 \node (adots1) at (2,3) []{$\cdots$};
 \node (adots2) at (2,2) []{$\cdots$};
 \node (aname) at (-1,2.5) {$A_{2r-1}:$};
 \node (bname) at (-1,0) {$B_r:$};
 \node (1) at (0,0) [black_vertex][label=below:$1$]{};
 \node (2) at (1,0) [black_vertex][label=below:$2$]{};
 \node (dots) at (2,0) []{$\cdots$};
 \node (r-1) at (3,0) [black_vertex][label=below:$r-1$]{};
 \node (r) at (4,0) [black_vertex][label=below:$r$]{};
 \draw [] (1)--(2);
 \draw [] (2)--(dots);
 \draw [] (dots)--(r-1);
 \draw [] (3,0.08)--(4,0.08);
 \draw [] (3,-0.08)--(4,-0.08);
 \draw
 (3.5,0) --++ (120:.3)
 (3.5,0) --++ (-120:.3);
 \draw [] (a1)--(a2);
 \draw [] (a2)--(adots1);
 \draw [] (ar-1)--(ar);
 \draw [] (ar)--(ar+1);
 \draw [] (adots2)--(a2r-2);
 \draw [] (a2r-2)--(a2r-1);
 \draw [dashed,arrows={-Stealth[scale=1.2]}] (a1) edge[bend right=25](1);
 \draw [dashed,arrows={-Stealth[scale=1.2]}] (a2r-1) edge[bend left=15](1);
 \draw [dashed,arrows={-Stealth[scale=1.2]}] (a2) edge[bend right=25](2);
 \draw [dashed,arrows={-Stealth[scale=1.2]}] (a2r-2) edge[bend left=15](2);
 \draw [dashed,arrows={-Stealth[scale=1.2]}] (ar-1) edge[bend right=25](r-1);
 \draw [dashed,arrows={-Stealth[scale=1.2]}] (ar+1) edge[bend left=15](r-1);
 \draw [dashed,arrows={-Stealth[scale=1.2]}] (ar) edge[bend right=0](r);
 \end{tikzpicture}

 \begin{tikzpicture}
 [scale=1.2,auto=left,black_vertex/.style={circle,draw,fill,scale=0.75}]
 \node (d1) at (0,2.5) [black_vertex][label=above:$1$]{};
 \node (d2) at (1,2.5) [black_vertex][label=above:$2$]{};
 \node (dr-1) at (3,2.5) [black_vertex][label=above:$r-1$]{};
 \node (dr) at (4,3) [black_vertex][label=above:$r$]{};
 \node (dr+1) at (4,2) [black_vertex][label=above:$r+1$]{};
 \node (ddots1) at (2,2.5) []{$\cdots$};
 \node (dname) at (-1,2.5) {$D_{r+1}:$};
 \node (cname) at (-1,0) {$C_r:$};
 \node (1) at (0,0) [black_vertex][label=below:$1$]{};
 \node (2) at (1,0) [black_vertex][label=below:$2$]{};
 \node (dots) at (2,0) []{$\cdots$};
 \node (r-1) at (3,0) [black_vertex][label=below:$r-1$]{};
 \node (r) at (4,0) [black_vertex][label=below:$r$]{};
 \draw [] (1)--(2);
 \draw [] (2)--(dots);
 \draw [] (dots)--(r-1);
 \draw [] (3,0.08)--(4,0.08);
 \draw [] (3,-0.08)--(4,-0.08);
 \draw
 (3.5,0) --++ (60:.3)
 (3.5,0) --++ (-60:.3);
 \draw [] (d1)--(d2);
 \draw [] (d2)--(ddots1);
 \draw [] (dr-1)--(dr);
 \draw [] (dr-1)--(dr+1);
 \draw [dashed,arrows={-Stealth[scale=1.2]}] (d1) edge[bend right=0](1);
 \draw [dashed,arrows={-Stealth[scale=1.2]}] (d2) edge[bend right=0](2);
 \draw [dashed,arrows={-Stealth[scale=1.2]}] (dr-1) edge[bend right=0](r-1);
 \draw [dashed,arrows={-Stealth[scale=1.2]}] (dr+1) edge[bend left=15](r);
 \draw [dashed,arrows={-Stealth[scale=1.2]}] (dr) edge[bend right=25](r);
 \end{tikzpicture}
 \caption{The map $\tau$ for $X_r =B_r, C_r$.} \label{fig:tau B and C}
\end{figure}
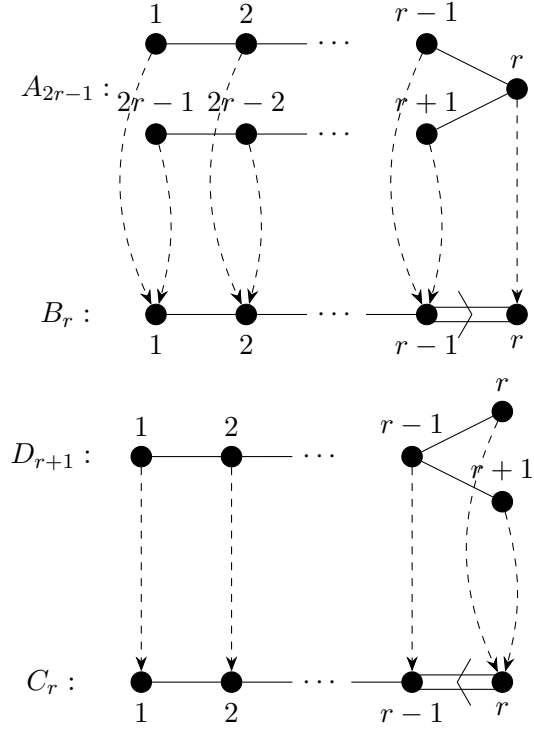

We define a map
\begin{gather*}
 \tau \colon \ \{ 1, \dots , r' \} \to \{ 1, \dots , r \}
\end{gather*}
between the nodes of these two Dynkin diagrams by
\begin{gather*}
 \tau(a) = a
\end{gather*}
if $X= A ,D$ or $E$,
\begin{gather*}
 \tau(a) =
 \begin{cases}
 a &\text{if $1 \leq a \leq r$},\\
 2r-a &\text{if $r+1 \leq a \leq 2r-1$}
 \end{cases}
\end{gather*}
if $X_r = B_r$,
\begin{gather*}
\tau(a) =
\begin{cases}
a &\text{if $1 \leq a \leq r$,}\\
r &\text{if $a=r+1$}
\end{cases}
\end{gather*}
if $X_r = C_r$,
\begin{gather*}
\tau(a) =
\begin{cases}
a &\text{if $1 \leq a \leq 4$,}\\
2 &\text{if $a=5$,}\\
1 &\text{if $a=6$}
\end{cases}
\end{gather*}
if $X_r = F_4$, and
\begin{gather*}
\tau(a) =
\begin{cases}
1 &\text{if $a=1,3$ or $4$,}\\
2 &\text{if $a=2$}
\end{cases}
\end{gather*}
if $X_r = G_2$. These definitions are indicated in Figs.~\ref{fig:tau B and C} and~\ref{fig:tau F and G}.

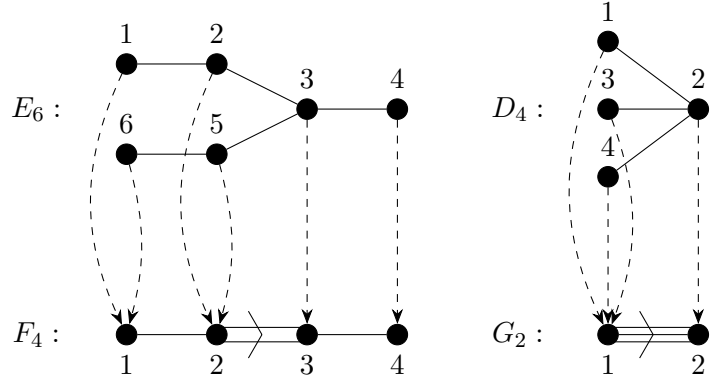
\begin{figure}[t] \centering
 \begin{tikzpicture}
 [scale=1.2,auto=left,black_vertex/.style={circle,draw,fill,scale=0.75}]
 \node (e1) at (0,3) [black_vertex][label=above:$1$]{};
 \node (e2) at (1,3) [black_vertex][label=above:$2$]{};
 \node (e3) at (2,2.5) [black_vertex][label=above:$3$]{};
 \node (e4) at (3,2.5) [black_vertex][label=above:$4$]{};
 \node (e5) at (1,2) [black_vertex][label=above:$5$]{};
 \node (e6) at (0,2) [black_vertex][label=above:$6$]{};
 \node (ename) at (-1,2.5) {$E_6:$};
 \node (fname) at (-1,0) {$F_4:$};
 \node (1) at (0,0) [black_vertex][label=below:$1$]{};
 \node (2) at (1,0) [black_vertex][label=below:$2$]{};
 \node (3) at (2,0) [black_vertex][label=below:$3$]{};
 \node (4) at (3,0) [black_vertex][label=below:$4$]{};
 \draw [] (1)--(2);
 \draw [] (1,0.08)--(2,0.08);
 \draw [] (1,-0.08)--(2,-0.08);
 \draw [] (3)--(4);
 \draw
 (1.5,0) --++ (120:.3)
 (1.5,0) --++ (-120:.3);
 \draw [] (e1)--(e2);
 \draw [] (e2)--(e3);
 \draw [] (e3)--(e4);
 \draw [] (e3)--(e5);
 \draw [] (e5)--(e6);
 \draw [dashed,arrows={-Stealth[scale=1.2]}] (e1) edge[bend right=25](1);
 \draw [dashed,arrows={-Stealth[scale=1.2]}] (e2) edge[bend right=25](2);
 \draw [dashed,arrows={-Stealth[scale=1.2]}] (e3) edge[bend right=0](3);
 \draw [dashed,arrows={-Stealth[scale=1.2]}] (e4) edge[bend left=0](4);
 \draw [dashed,arrows={-Stealth[scale=1.2]}] (e5) edge[bend left=15](2);
 \draw [dashed,arrows={-Stealth[scale=1.2]}] (e6) edge[bend left=15](1);
 \end{tikzpicture}
 \qquad
 \begin{tikzpicture}
 [scale=1.2,auto=left,black_vertex/.style={circle,draw,fill,scale=0.75}]
 \node (d1) at (0,3.25) [black_vertex][label=above:$1$]{};
 \node (d2) at (1,2.5) [black_vertex][label=above:$2$]{};
 \node (d3) at (0,2.5) [black_vertex][label=above:$3$]{};
 \node (d4) at (0,1.75) [black_vertex][label=above:$4$]{};
 \node (dname) at (-1,2.5) {$D_4:$};
 \node (gname) at (-1,0) {$G_2:$};
 \node (1) at (0,0) [black_vertex][label=below:$1$]{};
 \node (2) at (1,0) [black_vertex][label=below:$2$]{};
 \draw [] (1)--(2);
 \draw [] (0,0.08)--(1,0.08);
 \draw [] (0,-0.08)--(1,-0.08);
 \draw
 (0.5,0) --++ (120:.3)
 (0.5,0) --++ (-120:.3);
 \draw [] (d1)--(d2);
 \draw [] (d2)--(d3);
 \draw [] (d2)--(d4);
 \draw [dashed,arrows={-Stealth[scale=1.2]}] (d1) edge[bend right=25](1);
 \draw [dashed,arrows={-Stealth[scale=1.2]}] (d2) edge[bend right=0](2);
 \draw [dashed,arrows={-Stealth[scale=1.2]}] (d3) edge[bend left=20](1);
 \draw [dashed,arrows={-Stealth[scale=1.2]}] (d4) edge[bend left=0](1);
 \end{tikzpicture}
 \caption{The map $\tau$ for $X_r =F_4, G_2$.} \label{fig:tau F and G}
\end{figure}
We define integers $\kappa_a$ $(a=1 ,\dots , r')$ by
\begin{gather*}
 \kappa_a = \# \{ b \,|\, 1 \leq b \leq r' ,\, \tau( a ) = \tau(b) \}.
\end{gather*}
Then the vertices in the quiver $Q(X_r , \level)$ can be parameterized by the elements of the set
\begin{gather*}
 \mathbf{I} = \{ (a,m) \,|\, 1 \leq a \leq r' ,\, 1 \leq m \leq \kappa_a \level -1 \}.
\end{gather*}
Here we use the letter $\mathbf{I}$ so that we identify the set of the vertices in the quiver with this set.\footnote{This parametrization is slightly different from that in Section~\ref{section: type G mutation loop} that we used to define a mutation loop on~$Q(G_2,\level)$.}

Let $\mathbf{J}$ be the set defined by
\begin{gather*}
 \mathbf{J} = \{ (a,m) \,|\, 1\leq a\leq r , \, 1\leq m\leq t_a \level -1 \}.
\end{gather*}
\begin{Lemma}\label{lemma: tau prime} The map
 \begin{gather*}
 \tau' \colon \ \mathbf{I} \longrightarrow \mathbf{J}
 \end{gather*}
 defined by $\tau' (a,m) = (\tau(a),m)$ is well-defined and surjective.
\end{Lemma}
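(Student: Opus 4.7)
My plan is to reduce both claims to the single identity $\kappa_a = t_{\tau(a)}$ for every $a \in \{1,\ldots,r'\}$; this identity can be verified one Dynkin type at a time from the explicit descriptions of $\tau$ given in Figures~\ref{fig:tau B and C}--\ref{fig:tau F and G} and from the root-length data encoded in $t_b$.

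Granting the identity, the lemma follows immediately. For well-definedness, if $(a,m) \in \mathbf{I}$ then $1 \leq m \leq \kappa_a \ell - 1 = t_{\tau(a)}\ell - 1$, which is exactly the condition for $(\tau(a),m) \in \mathbf{J}$. For surjectivity, given any $(b,m) \in \mathbf{J}$, I would pick any $a \in \tau^{-1}(b)$; then $\kappa_a = t_b$ forces $m \leq t_b \ell - 1 = \kappa_a \ell - 1$, so $(a,m) \in \mathbf{I}$ and $\tau'(a,m) = (b,m)$ by construction.

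What remains is to verify the identity $\kappa_a = t_{\tau(a)}$ case by case. In types $ADE$, $\tau$ is the identity and both sides equal $1$. In types $B_r$, $C_r$, and $F_4$, the map $\tau$ either fixes a node or identifies exactly two nodes of $Y_{r'}$ with a single node of $X_r$, so $\kappa_a \in \{1,2\}$; the task is to check that the doubled fibres sit over exactly those nodes of $X_r$ with $t_b = 2$, and the singleton fibres over those with $t_b = 1$. In type $G_2$, three nodes of $D_4$ collapse onto one node of $X_r = G_2$ while the remaining node has a single preimage, matching the fact that $\{t_1, t_2\} = \{1, 3\}$. Each of these verifications is an inspection of the corresponding figure, not a calculation.

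The only real obstacle is bookkeeping: one must carefully align the paper's labelling of long versus short simple roots (encoded in $t_b$) with the fibre structure of $\tau$ read off from Figures~\ref{fig:tau B and C}--\ref{fig:tau F and G}. Once that alignment is confirmed type by type, the identity $\kappa_a = t_{\tau(a)}$ holds by direct inspection, and both assertions of the lemma drop out as explained above.
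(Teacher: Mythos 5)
Your overall route is the paper's: its entire proof of this lemma is the one--sentence assertion that $\kappa_a=t_b$ whenever $\tau(a)=b$, and your derivation of well-definedness and surjectivity from that identity (using that $\tau$ is onto, which is clear from Figs.~\ref{fig:tau B and C} and~\ref{fig:tau F and G}) is correct and usefully spells out what the paper leaves implicit.

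The gap is in how you propose to verify the identity $\kappa_a=t_{\tau(a)}$. You read $\kappa_a$ as the cardinality of the $\tau$-fibre through $a$ and assert that the inspection shows the doubled fibres sit over the nodes of $X_r$ with $t_b=2$ and the singletons over those with $t_b=1$. The inspection shows the opposite. For $B_r$ the size-two fibres $\{b,2r-b\}$ lie over the long simple roots $\alpha_1,\dots,\alpha_{r-1}$, which have $t_b=1$, while the unique singleton fibre $\{r\}$ lies over the short root $\alpha_r$, which has $t_r=2$; the same inversion occurs for $C_r$ (the doubled fibre $\{r,r+1\}$ sits over the long root $\alpha_r$ with $t_r=1$), for $F_4$, and for $G_2$ (the size-three fibre $\{1,3,4\}$ sits over the long node $1$ with $t_1=1$). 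So the fibre cardinality is \emph{not} $t_{\tau(a)}$, and the case check you describe refutes rather than confirms the identity you need. The quantity that does satisfy $\kappa_a=t_{\tau(a)}$ is the one forced by the parametrization of $\mathbf{I}$, namely the integer $\kappa_a$ for which the $a$-th column of $Q(X_r,\level)$ has $\kappa_a\level-1$ vertices: see Example~\ref{example: B3 level 2}, where the column over the short node of $B_3$ has $2\level-1=3$ vertices even though its $\tau$-fibre is a singleton, and the other four columns have $\level-1=1$ vertex each although their fibres have two elements. To close the gap you must check $\kappa_a=t_{\tau(a)}$ against the quivers of Section~\ref{section: quiver Xr l} (equivalently, note that the displayed fibre-counting formula for $\kappa_a$ is inconsistent with the vertex count of $Q(X_r,\level)$ and cannot be the quantity used in the lemma), rather than against the fibre sizes of $\tau$.
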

\begin{proof} The lemma follows from the fact that $\kappa_a = t_b$ if $\tau(a)=b$.
\end{proof}
\begin{Lemma}\label{lemma: tau prime tilde}
 The induced map
 \begin{gather*}
 \tilde{\tau}' \colon \ \mathbf{I} / \nu \longrightarrow \mathbf{J}
 \end{gather*}
 defined by $\tilde{\tau}' ([(a,m)]) = \tau'(a,m)$ is well-defined and bijective.
\end{Lemma}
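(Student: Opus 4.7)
My strategy is to show that the $\nu$-orbits on $\mathbf{I}$ coincide exactly with the fibers of $\tau'$; from this, well-definedness and injectivity of $\tilde{\tau}'$ are both immediate, and surjectivity is inherited from the surjectivity of $\tau'$ established in Lemma~\ref{lemma: tau prime}.

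First, I would observe that in the parametrization $\mathbf{I}=\{(a,m)\}$ adopted in Section~\ref{sec:partition q of gamma Xr level}, the label $a\in\{1,\dots,r'\}$ indexes the ``columns'' of the quiver $Q(X_r,\level)$ of Section~\ref{section: quiver Xr l} (with filled and unfilled columns counted according to their height $\kappa_a\level-1$), while $m$ indexes the row position within that column. The permutation $\nu$ defined in Section~\ref{section: mutation loop Xr l} is described as the identity on the filled vertices and as either a left-right reflection of the unfilled columns (type $B,C,F$) or a cyclic permutation of three copies of the unfilled columns (type $G$); in either case $\nu$ preserves the row index. Hence there is a permutation $\sigma$ of $\{1,\dots,r'\}$ such that $\nu(a,m)=(\sigma(a),m)$ for every $(a,m)\in\mathbf{I}$, and the problem reduces to the combinatorial claim that the $\sigma$-orbits on $\{1,\dots,r'\}$ equal the fibers of $\tau\colon\{1,\dots,r'\}\to\{1,\dots,r\}$.

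The remaining verification is a case check. For $X=A,D,E$ both $\nu$ and $\tau$ are the identity, so the claim is trivial. For $X=B,C,F$, the map $\tau$ folds $Y_{r'}$ by its nontrivial order-two diagram automorphism, whose orbits are exactly the two-element symmetric pairs together with the singleton fixed nodes; this matches the cycle decomposition of the involution $\sigma$ induced by the left-right reflection on the unfilled columns (the filled column corresponds to the singleton fiber). For $X_r=G_2$, the map $\tau$ folds $D_4$ by its order-three triality, whose orbits are the three leaves $\{1,3,4\}$ and the singleton $\{2\}$ at the trivalent node; this matches the three-cycle $\sigma$ on the three unfilled columns and the trivial action on the single filled column. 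In every case the orbits of $\sigma$ are exactly the fibers of $\tau$, so the fibers of $\tau'$ are exactly the $\nu$-orbits, completing the proof.

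The main obstacle is not any single computation but the careful bookkeeping required to match the abstract labels $(a,m)$ of the parametrization in Lemma~\ref{lemma: tau prime} with the concrete quiver pictures and the action of $\nu$ in Section~\ref{section: mutation loop Xr l}; once this matching is fixed, the orbit/fiber identification falls out immediately in each Dynkin type.
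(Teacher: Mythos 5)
Your proposal is correct and follows essentially the same route as the paper: the paper's proof likewise rests on the single observation that $(a,m)$ and $(b,k)$ lie in the same $\nu$-orbit if and only if $\tau'(a,m)=\tau'(b,k)$ (giving well-definedness and injectivity at once), with surjectivity inherited from Lemma~\ref{lemma: tau prime}. You merely spell out the type-by-type verification of the orbit--fiber identification that the paper leaves implicit in the phrase ``from the definitions of $\nu$ and $\tau'$''.
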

\begin{proof}From the definitions of $\nu$ and $\tau'$, we see that $(a,m)$ and $(b,k)$ belong to the same $\nu$-orbit if and only if $\tau'(a,m) = \tau'(b,k)$. Thus $\tilde{\tau}'$ is well-defined and injective.
 It is also surjective from Lemma~\ref{lemma: tau prime}.
\end{proof}

Using Lemmas~\ref{lemma:gamma is regular}, \ref{lemma: regular mutation loop} and~\ref{lemma: tau prime tilde}, we identify $E$ and $\Delta$ with $\mathbf{J}$.
In particular, we think that the Neumann--Zagier matrices $A_+$ and $A_-$ are $\mathbf{J} \times \mathbf{J}$ matrices.

Let $A_{\pm}^{ab,mk}$ be the $((a,m),(b,k))$-entries of the Neumann--Zagier matrices $A_{\pm}$.
Let $K$ be the $J_\level \times J_\level$ matrix defined by
\begin{gather*}
K_{ab,mk} = \left( \min (t_b m, t_a k) - \frac{mk}{\level} \right) (\alpha_a \,|\, \alpha_b),
\end{gather*}
where $K_{ab,mk}$ is the $((a,m),(b,k))$-entry of $K$.
Let $\bar{C}^a =\big(\bar{C}_{mk}^a\big)_{m,k=1 ,\dots ,t_a \level -1}$ be the Cartan matrix of type $A_{t_a \level -1}$:
\begin{gather*}
 \bar{C}_{mk}^a = 2 \delta_{mk} - \delta_{m,k-1} - \delta_{m,k+1}.
\end{gather*}

\begin{Proposition}\label{prop: A plus minus}
 The Neumann--Zagier matrices of $\gamma(X_r , \level)$ are given by
 \begin{gather*}
 A_{+}^{ab,mk} = \delta_{ab} \bar{C}_{mk}^a, \\
 A_{-}^{ab,mk} = \sum_{n=1}^{t_a \level -1} \bar{C}_{mn}^a K_{ab,nk} .
 \end{gather*}
\end{Proposition}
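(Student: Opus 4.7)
The plan is to compute the adjacency matrices $N_0$, $N_+$, $N_-$ of $\network_{\gamma(X_r,\level)}$ directly from the explicit descriptions of $Q(X_r,\level)$ and $\gamma(X_r,\level)$ given in Sections~\ref{section: quiver Xr l} and~\ref{section: mutation loop Xr l}, using the identifications $E\cong\Delta\cong\mathbf{J}$ provided by Lemmas~\ref{lemma: regular mutation loop} and~\ref{lemma: tau prime tilde}.  As a preliminary general fact I would record that for any regular mutation loop one has $N_0 = 2I$ under $\varphi$: at each step $t$, the two broken lines at the square vertex~$t$ hit $(m_t, t-1)$ and $(m_t, t)$, and since $m_1, \dots, m_T$ are pairwise distinct (condition~(2) of Definition~\ref{def:regular}), tracing the equivalence relation (the first rule, then the $\nu$-rule) places both representatives in the single class~$\varphi(t)$.

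For $A_+ = N_0 - N_+$, the key observation is that the wave-wise design of $\gamma(X_r, \level)$ ensures that whenever a vertex $(a, m)$ is mutated, the outgoing arrows from $(a, m)$ in the current quiver $Q(t-1)$ lie only among the ``same-row'' neighbors $(a, m\pm 1)$.  In the simply-laced case this reduces to a short calculation: a $+$-labeled vertex in $Q(0)$ has outgoing arrows only to same-row neighbors and incoming arrows only from cross-row neighbors (by inspecting the four cases defining $B$); the 2-cycles produced by the cross-row arrows during $\mu_+$ cancel, so the $-$-labeled vertices inherit the ``outgoing same-row, incoming cross-row'' property in $Q(1)$.  This pins down $N_+$ to have entries only at nearest-neighbor same-row positions, giving $A_+^{ab,mk} = \delta_{ab}\bar{C}^a_{mk}$.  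In types $B$, $C$, $F$, $G$ an analogous verification is carried out, with extra book-keeping for the filled/unfilled distinction and the parity of~$\level$.

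For $A_- = N_0 - N_-$, the incoming arrows at the square vertex attached to $(a, m)$ come from the cross-row neighbors of $(a, m)$ in $Q(t-1)$, and the technical point is to identify the equivalence class in $E$ of each such source black vertex.  Once this is tracked (using regularity and the explicit form of $\nu$), one obtains a formula for $A_-$ that can be matched against $A_+ K$.  A more economical route is to exploit the symplectic relation $A_+ A_-^{\mathsf{T}} = A_- A_+^{\mathsf{T}}$ of Lemma~\ref{lemma: NZ symplectic property}: knowing $A_+$ from the previous step forces $M := A_+^{-1} A_-$ to be symmetric, and a direct check using the classical identity $(\bar{C}^a)^{-1}_{mk} = \min(m, k) - mk/(t_a \level)$ shows that the claimed $K$ is symmetric and coincides with $M$ on the diagonal blocks.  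It then suffices to compute the off-diagonal entries of $A_-$ for representative pairs of adjacent rows, which is a finite and short check in each type, reducing to the identity $(\alpha_a\,|\,\alpha_b)$ contribution already seen from the cross-row arrows in~$Q(0)$.

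The main obstacle is the combinatorial book-keeping for the non-simply-laced types, in which the quivers of Section~\ref{section: quiver Xr l} split into sub-quivers of different widths, the parity of~$\level$ affects the definitions, and (for $G_2$) the permutation~$\nu$ cycles three sub-quivers so that a source black vertex $(b,k)$ may be identified with an element of $\mathbf{J}$ only after following several consecutive $\nu$-applications.  A helpful organizational device is the folding correspondence between $X_r$ and $Y_{r'}$ introduced in Section~\ref{sec:partition q of gamma Xr level}, which in several places allows one to transfer the relevant counts from the simply-laced unfolding to the non-simply-laced case under consideration.
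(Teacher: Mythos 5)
Your proposal follows essentially the same route as the paper: $A_+$ is read off from the vertical (same-row) arrows, which form the $A_{t_a\level-1}$ sub-quivers and hence give the block-diagonal Cartan matrices $\bar{C}^a$, while $A_-$ is computed from the horizontal (cross-row) arrows and matched against $\sum_n\bar{C}^a_{mn}K_{ab,nk}$ by direct calculation (the paper records an intermediate explicit case formula for $A_-^{ab,mk}$ in terms of $C_{ab}$ and the ratio $t_b/t_a$ before doing this matching). Your alternative ``economical'' route via Lemma~\ref{lemma: NZ symplectic property} does not materially change this, since the off-diagonal blocks of $A_-$ still have to be computed directly from the quiver; it only halves that check by symmetry.
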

\begin{proof} First, we note that the components of $A_+$ and $A_-$ come from the arrows of the vertical and horizontal directions, respectively, in the figures of quivers in Section~\ref{section: quiver Xr l}.
 Then, the formula for $A_+$ that we want follows from the definition of $\gamma(X_r, \level)$ because the components of $A_+$ come from arrows in the vertical $A_{t_a \level -1}$ quivers.
 We can also verify from the definition of $\gamma(X_r, \level)$ that the following formula for $A_-$ holds:
 \begin{gather*}
 A_{-}^{ab,mk} =
 \begin{cases}
 -C_{ab} (\delta_{2m,k-1} + 2 \delta_{2m,k} + \delta_{2m,k+1} ) & \text{if $t_b / t_a =2$},\\
 -C_{ab} (\delta_{3m,k-2} + 2 \delta_{3m,k-1} + 3 \delta_{3m,k}
 + 2 \delta_{3m,k+1} + \delta_{3m,k+2} ) & \text{if $t_b / t_a =3$},\\
 -C_{ba} \delta_{t_b m, t_a k} & \text{otherwise},
 \end{cases}
 \end{gather*}
 where $C_{ab}$ is the $(a,b)$-entry of the Cartan matrix of type $X_r$:
 \begin{gather*}
 C_{ab} = \frac{2\innerproduct{\alpha_a}{\alpha_b}}{\innerproduct{\alpha_a}{\alpha_a}}.
 \end{gather*}
 By concrete calculation, we can see that the right-hand side of this equation coincides with
 \begin{gather*}
 \sum_{n=1}^{t_a \level -1} \bar{C}_{mn}^a K_{ab,nk} ,
 \end{gather*}
 and this complete the proof.
\end{proof}

\begin{Corollary}\label{corollary: K = Ap inv Am}
 The following relation holds:
 \begin{gather*}
 K = A_{+}^{-1} A_-.
 \end{gather*}
 Moreover, $\gamma = \gamma(X_r,\level)$ is nondegenerate and positive definite,
 and $\mu_{\gamma}(y)=y$ has a unique positive real solution.
\end{Corollary}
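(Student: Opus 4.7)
The plan is to extract everything from the explicit formulas in Proposition~\ref{prop: A plus minus}. Since $A_+$ is block diagonal with blocks $\bar{C}^a$ --- the Cartan matrix of type $A_{t_a \level - 1}$, which is positive definite and hence invertible --- the matrix $A_+$ is invertible, giving nondegeneracy of $\gamma$. The block-diagonal inverse then yields
\begin{gather*}
(A_+^{-1} A_-)_{(a,m),(b,k)}
= \sum_{p,n} (\bar{C}^a)^{-1}_{mp} \bar{C}^a_{pn} K_{ab,nk}
= K_{ab,mk},
\end{gather*}
which is the identity $K = A_+^{-1} A_-$.

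For positive definiteness of $K$, the plan is to exhibit a tensor-product decomposition. Setting $n = t\level$, I would use the classical formula $(C_{A_{n-1}}^{-1})_{MN} = \min(M,N) - MN/n$ and introduce the $r \times r$ matrix $c_{ab} = (t_a t_b / t)\innerproduct{\alpha_a}{\alpha_b}$. This matrix is positive definite: indeed, $c = D^{\mathsf{T}} G D$ with $G_{ab} = \innerproduct{\alpha_a}{\alpha_b}$ the positive definite symmetrized bilinear form of the finite type root system and $D = \diag(t_a/\sqrt{t})$ a positive diagonal matrix. The enlarged matrix
\begin{gather*}
\tilde{K}_{ab,MN} = c_{ab} \bigl( \min(M,N) - MN/n \bigr), \qquad 1 \le a,b \le r,\ 1 \le M,N \le n-1,
\end{gather*}
is then the Kronecker product $c \otimes C_{A_{n-1}}^{-1}$ of two positive definite matrices, hence positive definite. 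A direct calculation, using the integrality of $t/t_a$ and the identity $\min((t/t_a)m,(t/t_b)k) = (t/(t_a t_b))\min(t_b m, t_a k)$, shows that $K$ coincides with the principal submatrix of $\tilde{K}$ obtained by restricting to the indices $(a,(t/t_a)m)$ with $(a,m) \in \mathbf{J}$. Since principal submatrices inherit positive definiteness, $K$ is positive definite; this establishes positive definiteness of $\gamma$, and Lemma~\ref{lemma:unique fixed point} then supplies the unique positive real fixed point of $\mu_\gamma$.

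The main obstacle is recognising the tensor-product structure hidden in the formula for $K$; once that is in place, positive definiteness reduces to two standard facts (positive definiteness of the symmetrized Cartan form of a finite type root system, and of the inverse $A$-type Cartan matrix). Everything else is routine bookkeeping from Proposition~\ref{prop: A plus minus}.
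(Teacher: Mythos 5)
Your proposal is correct and follows the paper's own route: the identity $K=A_+^{-1}A_-$ and the nondegeneracy of $\gamma$ are read off from the block-diagonal form of $A_+$ in Proposition~\ref{prop: A plus minus} exactly as the paper does, and the existence and uniqueness of the positive real fixed point is delegated to Lemma~\ref{lemma:unique fixed point} in both cases. The one place you diverge is the positive definiteness of $K$, which the paper disposes of by citing the proof of Proposition~1.8 in~\cite{IIKKNa}, whereas you reprove it by realizing $K$ as the principal submatrix of $c\otimes C_{A_{t\level-1}}^{-1}$ on the indices $\big(a,(t/t_a)m\big)$; your ingredients all check out --- $t/t_a\in\{1,t\}$ is an integer, $\min\big((t/t_a)m,(t/t_b)k\big)=\big(t/(t_a t_b)\big)\min(t_b m,t_a k)$, and $c=D^{\mathsf{T}}GD$ with $G$ the Gram matrix of the simple roots --- so your argument is complete and is essentially the standard proof underlying the citation.
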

\begin{proof}
 The equation follows from Proposition~\ref{prop: A plus minus}.
 The positive definiteness of~$K$ is well-known (e.g., see the proof of Proposition~1.8 in~\cite{IIKKNa}).
 The last argument follows from Lemma~\ref{lemma:unique fixed point}.
\end{proof}

Let $Q$ be the free abelian group defined by
\begin{gather*}
Q= \bigoplus_{a=1}^r \Z \alpha_a.
\end{gather*}
The free abelian group $Q$ is called a root lattice.
Let $M$ be the free abelian subgroup of $Q$ defined by
\begin{gather*}
M=\bigoplus_{a=1}^r \Z t_a \alpha_a .
\end{gather*}

For any element $u \in \Z^{\mathbf{J}} $, let $u_m^{(a)}$ denote the $(a,m)$-entry of $u$.

\begin{Lemma}\label{lemma: isom H between Q/lM}
 The group homomorphism
 \begin{gather*}
 \bar{F}\colon \ Z_\gamma \longrightarrow Q
 \end{gather*}
 defined by
 \begin{align*}
 \bar{F}( u,v ) =
 \sum_{(a,m) \in \mathbf{J}}
 m u_m^{(a)} \alpha_a
 \end{align*}
 is surjective, and the kernel of $\iota \circ \bar{F}$ is equal to $B_\gamma$, where $\iota\colon Q \to Q/\level M$ is the projection.
 Thus we obtain the group isomorphism
 \begin{gather}\label{eq: isom H between Q/lM}
 F\colon \ H_\gamma \longrightarrow Q / \level M .
 \end{gather}
\end{Lemma}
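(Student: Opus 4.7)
The plan is to use the explicit formulas for the Neumann--Zagier matrices from Proposition \ref{prop: A plus minus} and Corollary \ref{corollary: K = Ap inv Am}, together with the standard description of the cokernel of the type $A$ Cartan matrix. Since $\gamma(X_r,\level)$ is nondegenerate by Corollary \ref{corollary: K = Ap inv Am}, the description \eqref{eq: non-singular Z} gives $Z_\gamma \cong \Z^{\mathbf{J}}$ via $u \mapsto (u, Ku)$. Under this identification, $\bar{F}$ becomes the map sending $u$ to $\sum_{(a,m) \in \mathbf{J}} m u_m^{(a)} \alpha_a$. Surjectivity onto $Q$ is then immediate: for each simple root $\alpha_a$, the element $u$ supported at $(a,1)$ with $u_1^{(a)}=1$ satisfies $\bar{F}(u, Ku) = \alpha_a$.

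For the kernel, I first identify $B_\gamma$ inside $Z_\gamma$. Since $K = A_+^{-1} A_-$ is symmetric (it is positive definite by Corollary \ref{corollary: K = Ap inv Am}), one has $A_-^{\mathsf{T}} = K A_+^{\mathsf{T}}$, so $B_\gamma = \{ (A_+^{\mathsf{T}} w, K A_+^{\mathsf{T}} w) \mid w \in \Z^{\mathbf{J}} \}$. Under $Z_\gamma \cong \Z^{\mathbf{J}}$, the subgroup $B_\gamma$ therefore corresponds to $A_+^{\mathsf{T}} \Z^{\mathbf{J}}$. By Proposition \ref{prop: A plus minus}, $A_+$ is symmetric and block diagonal with blocks $\bar{C}^a$, the Cartan matrix of type $A_{t_a \level - 1}$; hence $A_+ \Z^{\mathbf{J}} = \bigoplus_{a=1}^r \bar{C}^a \Z^{t_a \level -1}$.

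The main (but routine) computational step is the classical cokernel description for type $A$: for the Cartan matrix $\bar{C}$ of type $A_n$ and any $v \in \Z^n$ (extended by $v_0 = v_{n+1}=0$), a short telescoping calculation gives $\sum_{m=1}^n m (\bar{C} v)_m = (n+1)\, v_n$. Hence the linear form $\phi \colon u \mapsto \sum_m m u_m$ descends to a homomorphism $\Z^n / \bar{C}\Z^n \to \Z/(n+1)$; since $\phi$ is surjective and $|\det \bar{C}| = n+1$, this descended map is an isomorphism. Applying this block by block with $n = t_a \level - 1$, and using that $\level M = \bigoplus_{a=1}^r \Z\, \level t_a\, \alpha_a$, the condition $\iota \circ \bar{F}(u, Ku) = 0$ translates to $\sum_m m u_m^{(a)} \equiv 0 \pmod{t_a \level}$ for every $a$, i.e., to $u^{(a)} \in \bar{C}^a \Z^{t_a \level -1}$ for every $a$, i.e., to $u \in A_+ \Z^{\mathbf{J}}$, i.e., to $(u,Ku) \in B_\gamma$.

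This gives $\ker(\iota \circ \bar{F}) = B_\gamma$; combined with the surjectivity of $\iota \circ \bar{F}$ (immediate from surjectivity of $\bar{F}$ and $\iota$), the induced map $F \colon H_\gamma \to Q/\level M$ is an isomorphism. There is no real obstacle here: everything reduces to the block-diagonal structure of $A_+$ from Proposition \ref{prop: A plus minus} and the one-line telescoping identity for the $A_n$ Cartan matrix.
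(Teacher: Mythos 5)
Your proposal is correct and follows essentially the same route as the paper: the same identification $Z_\gamma\cong\Z^{\mathbf J}$ via $u\mapsto(u,Ku)$, the same surjectivity argument, and the same telescoping computation (your identity $\sum_m m(\bar C v)_m=(n+1)v_n$ is exactly the paper's calculation of $\bar F\big(A_+^{\mathsf T}w,A_-^{\mathsf T}w\big)=\sum_a t_a\level\, w^{(a)}_{t_a\level-1}\alpha_a$). The one point where you go beyond the paper's write-up is that you explicitly close the reverse inclusion $\ker(\iota\circ\bar F)\subseteq B_\gamma$ by the block-by-block order count $\lvert\det\bar C^a\rvert=t_a\level$, a step the paper leaves implicit after establishing only $B_\gamma\subseteq\ker(\iota\circ\bar F)$.
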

\begin{proof} Since $A_+$ is non-singular, the abelian group $Z_\gamma$ is given by~\eqref{eq: non-singular Z}. Thus the map $\bar{F}$ is surjective since
 \begin{gather*}
 \bar{F}(u,v) = \sum_{a=1}^r u_1^{(a)} \alpha_a
 \end{gather*}
 if $u_m^{(a)} =0$ for all $m>1$.

 For any element $w \in \Z^{\mathbf{J}}$, we compute
 \begin{align*}
 \bar{F}\big( A_+^{\mathsf{T}} w,A_-^{\mathsf{T}} w\big)
 &= \sum_{(a,m) \in \mathbf{J}}
 m \left( \sum_{(b,k) \in \mathbf{J}} \delta_{ab} \bar{C}_{mk}^a w_k^{(b)} \right) \alpha_a \\
 &= \sum_{a=1}^r
 \left(\sum_{m=1}^{t_a \level -1} 2 m w_m^{(a)}
 -\sum_{m=1}^{t_a \level -2} (m+1) w_{m}^{(a)}
 -\sum_{m=2}^{t_a \level -1} (m-1) w_{m}^{(a)} \right)
 \alpha_a \\
 &= \sum_{a=1}^r t_a \level w_{t_a \level -1}^{(a)} \alpha_a,
 \end{align*}
 and this shows that $B_\gamma = \ker \big(\iota \circ \bar{F} \big)$ as desired.
\end{proof}

\begin{Theorem}\label{theorem: partition q-series Xr l}
 The partition $q$-series of $\gamma = \gamma(X_r, \level)$ associated with $\sigma \in H_\gamma$ is given by
 \begin{gather*}
 \mathcal{Z}_{\gamma}^{\sigma} (q) = \sum_{u \in (\Z_{\geq 0})^{\mathbf{J}} ,(\Diamond) }
 \frac{q^{\frac{1}{2} u^{\mathsf{T}} K u} }{ \prod\limits_{(a,m) \in \mathbf{J}} (q)_{u_m^{(a)}} },
 \end{gather*}
 where the sum runs over $u \in (\Z_{\geq 0})^{\mathbf{J}}$ under the condition
 \begin{gather}
 \tag{$\Diamond$}
 \sum_{(a,m) \in \mathbf{J}} m u_m^{(a)} \alpha_a \equiv \lambda \mod \level M,
 \end{gather}
 where $\lambda$ is a representative of $F(\sigma) $.
\end{Theorem}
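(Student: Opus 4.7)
The plan is to read off the formula directly from the definitions by combining Lemma~\ref{lemma:total Z formula}, Corollary~\ref{corollary: K = Ap inv Am}, and the explicit isomorphism $F\colon H_\gamma \to Q/\level M$ of Lemma~\ref{lemma: isom H between Q/lM}. The statement is not asserting a hidden identity; rather it rewrites the abstract sum $\sum_{(u,v)\in\sigma_{\geq 0}} W(u,v)$ into a concrete sum over nonnegative integer vectors $u$ satisfying a congruence in the root lattice. Essentially all the work has already been done in setting up the identifications of $E$ and $\Delta$ with $\mathbf{J}$ (via Lemmas~\ref{lemma: regular mutation loop} and~\ref{lemma: tau prime tilde}), in computing $A_\pm$ and verifying nondegeneracy and positive definiteness (Proposition~\ref{prop: A plus minus} and Corollary~\ref{corollary: K = Ap inv Am}), and in describing $H_\gamma$ (Lemma~\ref{lemma: isom H between Q/lM}).

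First I would observe that because $\gamma = \gamma(X_r,\level)$ is nondegenerate by Corollary~\ref{corollary: K = Ap inv Am}, the expression \eqref{eq: non-singular Z} applies, so the map $\Z^{\mathbf{J}} \to Z_\gamma$ sending $u \mapsto (u, A_+^{-1}A_- u) = (u, Ku)$ is a group isomorphism. Under this identification, for $u \in (\Z_{\geq 0})^{\mathbf{J}}$ we have
\begin{gather*}
W(u, Ku) = \frac{q^{\frac{1}{2} u^{\mathsf{T}} K u}}{\prod_{(a,m)\in\mathbf{J}} (q)_{u_m^{(a)}}},
\end{gather*}
which is exactly the summand appearing on the right-hand side of the theorem.

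Next, by definition $\sigma_{\geq 0}$ is the set of $(u,v) \in \sigma$ with $u \in (\Z_{\geq 0})^{\mathbf{J}}$. Via the identification above, $(u, Ku)$ belongs to a given coset $\sigma \in H_\gamma = Z_\gamma/B_\gamma$ if and only if the image of $(u,Ku)$ under the projection $Z_\gamma \to H_\gamma$ equals $\sigma$. Applying the isomorphism $F$ of Lemma~\ref{lemma: isom H between Q/lM}, this condition becomes
\begin{gather*}
 \bar{F}(u,Ku) = \sum_{(a,m)\in\mathbf{J}} m\, u_m^{(a)} \alpha_a \equiv \lambda \pmod{\level M},
\end{gather*}
where $\lambda \in Q$ is any representative of $F(\sigma)$. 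This is precisely the condition $(\Diamond)$.

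Combining these two steps, the sum defining $\mathcal{Z}_\gamma^\sigma(q)$ becomes the sum over all $u \in (\Z_{\geq 0})^{\mathbf{J}}$ satisfying $(\Diamond)$ of the summand $q^{\frac{1}{2} u^{\mathsf{T}} K u}/\prod (q)_{u_m^{(a)}}$, which is the claimed formula. There is no real obstacle: the only point that demands a little care is the verification that $\bar{F}(u,Ku)$ depends only on $u$ in the way asserted, but this is immediate from the definition of $\bar{F}$ in Lemma~\ref{lemma: isom H between Q/lM}, since that definition does not involve $v$ at all. As a consistency check one can sum over all $\sigma \in H_\gamma$ and recover Lemma~\ref{lemma:total Z formula} with $A_+^{-1}A_-$ replaced by $K$, in accordance with Corollary~\ref{corollary: K = Ap inv Am}.
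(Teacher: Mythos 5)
Your proposal is correct and follows exactly the route the paper takes: the paper's proof is the one-line remark that the theorem follows from Corollary~\ref{corollary: K = Ap inv Am} and Lemma~\ref{lemma: isom H between Q/lM}, and your write-up simply makes explicit the two identifications (the parametrization $u\mapsto(u,Ku)$ of $Z_\gamma$ coming from nondegeneracy, and the translation of the coset condition through $F$) that those results provide. No gaps.
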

\begin{proof} The theorem follows from Corollary \ref{corollary: K = Ap inv Am} and Lemma~\ref{lemma: isom H between Q/lM}.
\end{proof}

\begin{Corollary} The total partition $q$-series of $\gamma=\gamma(X_r , \level)$ is given by
 \begin{gather*}
 \mathcal{Z}_\gamma (q) =
 \sum_{u \in (\Z_{\geq 0})^\mathbf{J} }
 \frac{q^{\frac{1}{2} u^{\mathsf{T}} K u } }{ \prod\limits_{(a,m) \in \mathbf{J}} (q)_{u_m^{(a)}} }.
 \end{gather*}
\end{Corollary}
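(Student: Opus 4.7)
The plan is to derive the formula directly from the two immediately preceding results. By Corollary~\ref{corollary: K = Ap inv Am}, the mutation loop $\gamma=\gamma(X_r,\level)$ is nondegenerate and positive definite and satisfies $K=A_+^{-1}A_-$, so the hypotheses of Lemma~\ref{lemma:total Z formula} are met. Applying that lemma and substituting $A_+^{-1}A_-=K$ yields
\begin{gather*}
\mathcal{Z}_\gamma(q)=\sum_{u\in(\Z_{\geq 0})^{\Delta}}\frac{q^{\frac{1}{2}u^{\mathsf{T}}Ku}}{\prod_{t}(q)_{u_t}}.
\end{gather*}
Using the identifications $\Delta\cong\mathbf{I}/\nu\cong\mathbf{J}$ supplied by Lemmas~\ref{lemma: regular mutation loop} and~\ref{lemma: tau prime tilde} (already invoked in Section~\ref{sec:partition q of gamma Xr level} to regard $A_\pm$ as $\mathbf{J}\times\mathbf{J}$ matrices), the right-hand side is precisely the stated expression.

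A second, equivalent route is to sum Theorem~\ref{theorem: partition q-series Xr l} over $\sigma\in H_\gamma$. Under the isomorphism $F\colon H_\gamma\to Q/\level M$ of Lemma~\ref{lemma: isom H between Q/lM}, the representatives $\lambda$ traverse a complete set of coset representatives of $Q/\level M$, and each $u\in(\Z_{\geq 0})^{\mathbf{J}}$ satisfies condition $(\Diamond)$ for exactly one such $\lambda$, namely the residue of $\sum_{(a,m)\in\mathbf{J}} m\,u_m^{(a)}\alpha_a$ modulo $\level M$. Hence the disjoint union of the index sets on the right-hand sides of Theorem~\ref{theorem: partition q-series Xr l} is all of $(\Z_{\geq 0})^{\mathbf{J}}$, while the summands agree termwise, giving the claimed formula.

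There is no genuine obstacle at this step: the corollary is essentially bookkeeping once the preceding results are in place. The substantive content sits upstream, in the explicit computation of the Neumann--Zagier matrices $A_\pm$ in Proposition~\ref{prop: A plus minus} (from which $K=A_+^{-1}A_-$ and positive definiteness follow) and in the description of $H_\gamma$ as $Q/\level M$ in Lemma~\ref{lemma: isom H between Q/lM}; both have already been established.
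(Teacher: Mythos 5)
Your proposal is correct and follows essentially the same route as the paper, which states this corollary without a separate proof precisely because it is immediate from Lemma~\ref{lemma:total Z formula} together with Corollary~\ref{corollary: K = Ap inv Am} (equivalently, from summing Theorem~\ref{theorem: partition q-series Xr l} over $\sigma\in H_\gamma$), using the identification of $\Delta$ with $\mathbf{J}$ already fixed in that section. Both of your routes are valid and the bookkeeping is accurate.
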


\begin{Example}\label{example: A3 level3}
 Let $(X_r, \level) = (A_3,3)$.
 The quiver $Q(A_3, 3)$ is given by
 \[
 \begin{tikzpicture}
 [scale=1.0,auto=left,black_vertex/.style={circle,draw,fill,scale=0.75},white_vertex/.style={circle,draw,scale=0.75}]
 \node (11) at (2,2) [black_vertex][label=below right:{$(1,1)$}]{};
 \node at (2.3,2.3) []{$-$};
 \node (12) at (2,4) [black_vertex][label=below right:{$(1,2)$}]{};
 \node at (2.3,4.3) []{$+$};
 \node (21) at (4,2) [black_vertex][label=below right:{$(2,1)$}]{};
 \node at (4.3,2.3) []{$+$};
 \node (22) at (4,4) [black_vertex][label=below right:{$(2,2)$}]{};
 \node at (4.3,4.3) []{$-$};
 \node (31) at (6,2) [black_vertex][label=below right:{$(3,1)$}]{};
 \node at (6.3,2.3) []{$-$};
 \node (32) at (6,4) [black_vertex][label=below right:{$(3,2)$}]{};
 \node at (6.3,4.3) []{$+$};
 \foreach \from/\to in {12/11,
 21/22,
 32/31,
 11/21,
 22/12,
 22/32,
 31/21}
 \draw[arrows={-Stealth[scale=1.5]}] (\from)--(\to);
 \end{tikzpicture}
 \]
 The set of indices of $Q(A_3, 3)$ is
 \begin{gather*}
 \mathbf{I} = \{ (1,1),(1,2),(2,1),(2,2),(3,1),(3,2) \} .
 \end{gather*}

 The mutation loop $\gamma = \gamma(A_3, 3)$ is given by
 \begin{align*}
 &\begin{tikzpicture}
 [scale=0.75,auto=left,black_vertex/.style={circle,draw,fill,scale=0.75},white_vertex/.style={circle,draw,scale=0.75}]
 \node (11) at (2,2) [black_vertex]{};
 \node at (2.3,2.3) []{$-$};
 \node (12) at (2,4) [black_vertex]{};
 \node at (2.3,4.3) []{$+$};
 \node (21) at (4,2) [black_vertex]{};
 \node at (4.3,2.3) []{$+$};
 \node (22) at (4,4) [black_vertex]{};
 \node at (4.3,4.3) []{$-$};
 \node (31) at (6,2) [black_vertex]{};
 \node at (6.3,2.3) []{$-$};
 \node (32) at (6,4) [black_vertex]{};
 \node at (6.3,4.3) []{$+$};
 \foreach \from/\to in {12/11,
 21/22,
 32/31,
 11/21,
 22/12,
 22/32,
 31/21}
 \draw[arrows={-Stealth[scale=1.5]}] (\from)--(\to);
 \end{tikzpicture}\\
 \raisebox{8mm}{$\, \xrightarrow{\mu_{+}}\,$} \,\,\,
 &\begin{tikzpicture}
 [scale=0.75,auto=left,black_vertex/.style={circle,draw,fill,scale=0.75},white_vertex/.style={circle,draw,scale=0.75}]
 \node (11) at (2,2) [black_vertex]{};
 \node at (2.3,2.3) []{$-$};
 \node (12) at (2,4) [black_vertex]{};
 \node at (2.3,4.3) []{$+$};
 \node (21) at (4,2) [black_vertex]{};
 \node at (4.3,2.3) []{$+$};
 \node (22) at (4,4) [black_vertex]{};
 \node at (4.3,4.3) []{$-$};
 \node (31) at (6,2) [black_vertex]{};
 \node at (6.3,2.3) []{$-$};
 \node (32) at (6,4) [black_vertex]{};
 \node at (6.3,4.3) []{$+$};
 \foreach \from/\to in {11/12,
 22/21,
 31/32,
 21/11,
 12/22,
 32/22,
 21/31}
 \draw[arrows={-Stealth[scale=1.5]}] (\from)--(\to);
 \end{tikzpicture}\\
 \raisebox{8mm}{$\, \xrightarrow{\mu_{-}}\,$} \,\,\,
 &\begin{tikzpicture}
 [scale=0.75,auto=left,black_vertex/.style={circle,draw,fill,scale=0.75},white_vertex/.style={circle,draw,scale=0.75}]
 \node (11) at (2,2) [black_vertex]{};
 \node at (2.3,2.3) []{$-$};
 \node (12) at (2,4) [black_vertex]{};
 \node at (2.3,4.3) []{$+$};
 \node (21) at (4,2) [black_vertex]{};
 \node at (4.3,2.3) []{$+$};
 \node (22) at (4,4) [black_vertex]{};
 \node at (4.3,4.3) []{$-$};
 \node (31) at (6,2) [black_vertex]{};
 \node at (6.3,2.3) []{$-$};
 \node (32) at (6,4) [black_vertex]{};
 \node at (6.3,4.3) []{$+$};
 \foreach \from/\to in {12/11,
 21/22,
 32/31,
 11/21,
 22/12,
 22/32,
 31/21}
 \draw[arrows={-Stealth[scale=1.5]}] (\from)--(\to);
 \end{tikzpicture}
 \end{align*}

 The mutation network $\network_\gamma$ is given by
 \[
 \begin{tikzpicture}
 [scale=1.7,auto=left,black_vertex/.style={circle,draw,fill,scale=0.75},square_vertex/.style={rectangle,scale=0.8,draw}]
 \node (a1) at (0,0) [black_vertex][label=above:{$(1,1)$}]{};
 \node (a2) at (0-0.45,0+0.9) [black_vertex][label=above:{$(1,2)$}]{};
 \node (b1) at (1,0) [black_vertex][label=above:{$(2,1)$}]{};
 \node (b2) at (1-0.45,0+0.9) [black_vertex][label=above:{$(2,2)$}]{};
 \node (c1) at (2,0) [black_vertex][label=above:{$(3,1)$}]{};
 \node (c2) at (2-0.45,0+0.9) [black_vertex][label=above:{$(3,2)$}]{};
 \node (ta1) at (0,0-1.0) [square_vertex][label=below:{$(1,1)$}]{};
 \node (ta2) at (0-0.45,0+0.9-1.0) [square_vertex][label=below:{$(1,2)$}]{};
 \node (tb1) at (1,0-1.0) [square_vertex][label=below:{$(2,1)$}]{};
 \node (tb2) at (1-0.45,0+0.9-1.0) [square_vertex][label=below:{$(2,2)$}]{};
 \node (tc1) at (2,0-1.0) [square_vertex][label=below:{$(3,1)$}]{};
 \node (tc2) at (2-0.45,0+0.9-1.0) [square_vertex][label=below:{$(3,2)$}]{};
 \draw [dashed] (a1)edge [bend left=15](ta1);
 \draw [dashed] (ta1)edge [bend left=15](a1);
 \draw [dashed] (a2)edge [bend left=15](ta2);
 \draw [dashed] (ta2)edge [bend left=15](a2);
 \draw [dashed] (b1)edge [bend left=15](tb1);
 \draw [dashed] (tb1)edge [bend left=15](b1);
 \draw [dashed] (b2)edge [bend left=15](tb2);
 \draw [dashed] (tb2)edge [bend left=15](b2);
 \draw [dashed] (c1)edge [bend left=15](tc1);
 \draw [dashed] (tc1)edge [bend left=15](c1);
 \draw [dashed] (c2)edge [bend left=15](tc2);
 \draw [dashed] (tc2)edge [bend left=15](c2);
 \draw[arrows={-Stealth[scale=1.2]}] (a1)--(tb1);
 \draw[arrows={-Stealth[scale=1.2]}] (b1)--(ta1);
 \draw[arrows={-Stealth[scale=1.2]}] (b1)--(tc1);
 \draw[arrows={-Stealth[scale=1.2]}] (c1)--(tb1);
 \draw[arrows={-Stealth[scale=1.2]}] (a2)--(tb2);
 \draw[arrows={-Stealth[scale=1.2]}] (b2)--(ta2);
 \draw[arrows={-Stealth[scale=1.2]}] (b2)--(tc2);
 \draw[arrows={-Stealth[scale=1.2]}] (c2)--(tb2);
 \draw[arrows={-Stealth[scale=1.2]}] (ta1)--(a2);
 \draw[arrows={-Stealth[scale=1.2]}] (tb1)--(b2);
 \draw[arrows={-Stealth[scale=1.2]}] (tc1)--(c2);
 \draw[arrows={-Stealth[scale=1.2]}] (ta2)--(a1);
 \draw[arrows={-Stealth[scale=1.2]}] (tb2)--(b1);
 \draw[arrows={-Stealth[scale=1.2]}] (tc2)--(c1);
 \end{tikzpicture}
 \]
 The set of indices of $\network_\gamma$ is
 \begin{gather}\label{eq:J A3 3}
 \mathbf{J} = \{ (1,1),(1,2),(2,1),(2,2),(3,1),(3,2) \} .
 \end{gather}
 The adjacency matrices of $\network_\gamma$ are given by $N_0 = 2 I$ and
 \begin{gather*}
 N_+=
 \begin{pmatrix}
 0 & 1 & 0 & 0 & 0 & 0\\
 1 & 0 & 0 & 0 & 0 & 0\\
 0 & 0 & 0 & 1 & 0 & 0\\
 0 & 0 & 1 & 0 & 0 & 0\\
 0 & 0 & 0 & 0 & 0 & 1\\
 0 & 0 & 0 & 0 & 1 & 0
 \end{pmatrix} ,\qquad
 N_-=
 \begin{pmatrix}
 0 & 0 & 1 & 0 & 0 & 0\\
 0 & 0 & 0 & 1 & 0 & 0\\
 1 & 0 & 0 & 0 & 1 & 0\\
 0 & 1 & 0 & 0 & 0 & 1\\
 0 & 0 & 1 & 0 & 0 & 0\\
 0 & 0 & 0 & 1 & 0 & 0
 \end{pmatrix} ,
 \end{gather*}
 so the Neumann--Zagier matrices of $\gamma$ are given by
 \begin{gather*}
 A_+ =
 \begin{pmatrix}
 2 & -1 & 0 & 0 & 0 & 0\\
 -1 & 2 & 0 & 0 & 0 & 0\\
 0 & 0 & 2 & -1 & 0 & 0\\
 0 & 0 & -1 & 2 & 0 & 0\\
 0 & 0 & 0 & 0 & 2 & -1\\
 0 & 0 & 0 & 0 & -1 & 2
 \end{pmatrix},\qquad
 A_-=
 \begin{pmatrix}
 2 & 0 & -1 & 0 & 0 & 0\\
 0 & 2 & 0 & -1 & 0 & 0\\
 -1 & 0 & 2 & 0 & -1 & 0\\
 0 & -1 & 0 & 2 & 0 & -1\\
 0 & 0 & -1 & 0 & 2 & 0\\
 0 & 0 & 0 & -1 & 0 & 2
 \end{pmatrix} ,
 \end{gather*}
 where we choose an order of $\mathbf{J}$ as in \eqref{eq:J A3 3}.

 Lemma \ref{lemma: NZ symplectic property} implies that
 \begin{gather*}
 A_+ A_-^{\mathsf{T}} = A_- A_+^{\mathsf{T}} =
 \begin{pmatrix}
 4 & -2 & -2 & 1 & 0 & 0 \\
 -2 & 4 & 1 & -2 & 0 & 0 \\
 -2 & 1 & 4 & -2 & -2 & 1 \\
 1 & -2 & -2 & 4 & 1 & -2 \\
 0 & 0 & -2 & 1 & 4 & -2 \\
 0 & 0 & 1 & -2 & -2 & 4
 \end{pmatrix}
 \end{gather*}
 is a symmetric matrix.
 When $X=A,D$ or $E$, the Neumann--Zagier matrices $A_+$ and $A_-$ themselves are symmetric, so we find that they are commuting, that is, $A_+ A_- = A_- A_+$.
 Furthermore, they are (possibly decomposable) symmetric Cartan matrices.
 We remark that such a pair of commuting Cartan matrices appeared in the study of $W$-graphs by Stembridge~\cite{Stembridge}, and also in the classification of periodic mutations by Galashin and Pylyavskyy~\cite{GalashinPylyavskyy}.

 The positive definite matrix $A_+^{-1} A_-$ is given by
 \begin{gather*}
 A_+^{-1} A_- = \frac{1}{3}
 \begin{pmatrix}
 4 & 2 & -2 & -1 & 0 & 0 \\
 2 & 4 & -1 & -2 & 0 & 0 \\
 -2 & -1 & 4 & 2 & -2 & -1 \\
 -1 & -2 & 2 & 4 & -1 & -2 \\
 0 & 0 & -2 & -1 & 4 & 2 \\
 0 & 0 & -1 & -2 & 2 & 4
 \end{pmatrix}.
 \end{gather*}

 The partition $q$-series are parametrized by elements of the following set:
 \begin{gather*}
 H_\gamma = \{ (a,0,b,0,c,0) \,|\, 0 \leq a,b,c, \leq 2 \},
 \end{gather*}
where $(u,v) + Z_\gamma \in H_\gamma$ is denoted by $\big(u_1^{(1)},u_1^{(2)},u_1^{(3)},u_2^{(3)},u_3^{(3)}\big)$. Let us write $(a,0,b,0,c,0)$ more simply as $abc$. Then we have
 \begin{gather*}\begin{split}
 &H_\gamma = \{ 000,100,200,010,110,210,020,120,220,
 001,101,201,011,111,211,021,121,221, \\
 &\hphantom{H_\gamma = \{}{} 002,102,202,012,112,212,022,122,222 \}.
 \end{split}
 \end{gather*}

 We exhibit several low order terms of the partition $q$-series:
 \begin{gather*}
 \mathcal{Z}_\gamma^\sigma (q) =
 1 + 6q^{2} + 20q^{3} + 54q^{4} + 144q^{5} + 360q^{6} + 804q^{7} + O\big(q^{8}\big)
 \end{gather*}
 if $\sigma = 000$,
 \begin{gather*}
 q^{\frac{2}{3}} + 3q^{\frac{5}{3}} + 13q^{\frac{8}{3}} + 38q^{\frac{11}{3}} + 108q^{\frac{14}{3}} + 264q^{\frac{17}{3}} + 622q^{\frac{20}{3}} + 1364q^{\frac{23}{3}} +O\big(q^{\frac{26}{3}}\big)
 \end{gather*}
 if $\sigma = 100,200,010,110,020,220,001,011,111,002,022,222$,
 \begin{gather*}
 q + 6q^{2} + 18q^{3} + 56q^{4} + 144q^{5} + 357q^{6} + 808q^{7} + 1767q^{8} + O\big(q^{9}\big)
 \end{gather*}
 if $\sigma = 210,120,211,021,221,012,112,122$, and
 \begin{gather*}
 2q^{\frac{4}{3}} + 8q^{\frac{7}{3}} + 28q^{\frac{10}{3}} + 76q^{\frac{13}{3}} + 199q^{\frac{16}{3}} + 468q^{\frac{19}{3}} + 1060q^{\frac{22}{3}} + 2256q^{\frac{25}{3}} + O\big(q^{\frac{28}{3}}\big)
 \end{gather*}
 if $\sigma = 101,201,121,102,202,212$.
\end{Example}

\begin{Example}\label{example: B3 level 2}
 Let $(X_r, \level) = (B_3,2)$.
 The quiver $Q(B_3, 2)$ is given by
 \[
 \begin{tikzpicture}
 [scale=1.0,auto=left,black_vertex/.style={circle,draw,fill,scale=0.75},white_vertex/.style={circle,draw,scale=0.75}]
 \node (a) at (4,2) [white_vertex][label=below:{$(1,1)$}]{};
 \node at (4.3,2.3) [] {$+$};
 \node (b) at (12,2) [white_vertex][label=below:{$(5,1)$}]{};
 \node at (12.3,2.3) [] {$-$};
 \node (31) at (6,2) [white_vertex][label=below:{$(2,1)$}]{};
 \node at (6.3,2.3) []{$-$};
 \node (41) at (8,1) [black_vertex][label=below right:{$(3,1)$}]{};
 \node at (8.3,1.3) []{$+$};
 \node (42) at (8,2) [black_vertex][label=below right:{$(3,2)$}]{};
 \node at (8.3,2.3) []{$-$};
 \node (43) at (8,3) [black_vertex][label=below right:{$(3,3)$}]{};
 \node at (8.3,3.3) []{$+$};
 \node (51) at (10,2) [white_vertex][label=below:{$(4,1)$}]{};
 \node at (10.3,2.3) []{$+$};
 \foreach \from/\to in {41/42,43/42,
 31/41,
 31/43,
 42/51}
 \draw[arrows={-Stealth[scale=1.5]}] (\from)--(\to);
 \foreach \from/\to in {42/31,31/a,b/51}
 \draw[arrows={-Stealth[scale=1.5]}] (\from)--(\to);
 \end{tikzpicture}
 \]
 The set of indices of $Q(B_3, 2)$ is
 \begin{align*}
 \mathbf{I} = \{ (1,1),(2,1),(3,1),(3,2),(3,3),(4,1),(5,1) \} .
 \end{align*}

 The mutation loop $\gamma = \gamma(B_3, 2)$ is given by
 \begin{align*}
 &\begin{tikzpicture}
 [scale=0.75,auto=left,black_vertex/.style={circle,draw,fill,scale=0.75},white_vertex/.style={circle,draw,scale=0.75}]
 \node (a) at (4,2) [white_vertex]{};
 \node at (4.3,2.3) [] {$+$};
 \node (b) at (12,2) [white_vertex]{};
 \node at (12.3,2.3) [] {$-$};
 \node (31) at (6,2) [white_vertex]{};
 \node at (6.3,2.3) []{$-$};
 \node (41) at (8,1) [black_vertex]{};
 \node at (8.3,1.3) []{$+$};
 \node (42) at (8,2) [black_vertex]{};
 \node at (8.3,2.3) []{$-$};
 \node (43) at (8,3) [black_vertex]{};
 \node at (8.3,3.3) []{$+$};
 \node (51) at (10,2) [white_vertex]{};
 \node at (10.3,2.3) []{$+$};
 \foreach \from/\to in {41/42,43/42,
 31/41,
 31/43,
 42/51}
 \draw[arrows={-Stealth[scale=1.5]}] (\from)--(\to);
 \foreach \from/\to in {42/31,31/a,b/51}
 \draw[arrows={-Stealth[scale=1.5]}] (\from)--(\to);
 \end{tikzpicture}\\
 \raisebox{8mm}{$\, \xrightarrow{\mu_{+}^{\fillvertex}\mu_{+}^{\empvertex}}\,$} \,\,\,
 &\begin{tikzpicture}
 [scale=0.75,auto=left,black_vertex/.style={circle,draw,fill,scale=0.75},white_vertex/.style={circle,draw,scale=0.75}]
 \node (a) at (4,2) [white_vertex]{};
 \node at (4.3,2.3) [] {$+$};
 \node (b) at (12,2) [white_vertex]{};
 \node at (12.3,2.3) [] {$-$};
 \node (31) at (6,2) [white_vertex]{};
 \node at (6.3,2.3) []{$-$};
 \node (41) at (8,1) [black_vertex]{};
 \node at (8.3,1.3) []{$+$};
 \node (42) at (8,2) [black_vertex]{};
 \node at (8.3,2.3) []{$-$};
 \node (43) at (8,3) [black_vertex]{};
 \node at (8.3,3.3) []{$+$};
 \node (51) at (10,2) [white_vertex]{};
 \node at (10.3,2.3) []{$+$};
 \foreach \from/\to in {42/41,42/43,
 41/31,
 43/31,
 31/42}
 \draw[arrows={-Stealth[scale=1.5]}] (\from)--(\to);
 \foreach \from/\to in {51/42,a/31,51/b}
 \draw[arrows={-Stealth[scale=1.5]}] (\from)--(\to);
 \end{tikzpicture} \\
 \raisebox{8mm}{$\xrightarrow{\mu_{-}^{\fillvertex} }$}\,\,\,
 &\begin{tikzpicture}
 [scale=0.75,auto=left,black_vertex/.style={circle,draw,fill,scale=0.75},white_vertex/.style={circle,draw,scale=0.75}]
 \node (a) at (4,2) [white_vertex]{};
 \node at (4.3,2.3) [] {$+$};
 \node (b) at (12,2) [white_vertex]{};
 \node at (12.3,2.3) [] {$-$};
 \node (31) at (6,2) [white_vertex]{};
 \node at (6.3,2.3) []{$-$};
 \node (41) at (8,1) [black_vertex]{};
 \node at (8.3,1.3) []{$+$};
 \node (42) at (8,2) [black_vertex]{};
 \node at (8.3,2.3) []{$-$};
 \node (43) at (8,3) [black_vertex]{};
 \node at (8.3,3.3) []{$+$};
 \node (51) at (10,2) [white_vertex]{};
 \node at (10.3,2.3) []{$+$};
 \foreach \from/\to in {41/42,43/42,
 51/41,
 51/43,
 42/51}
 \draw[arrows={-Stealth[scale=1.5]}] (\from)--(\to);
 \foreach \from/\to in {42/31,a/31,51/b}
 \draw[arrows={-Stealth[scale=1.5]}] (\from)--(\to);
 \end{tikzpicture}
 \end{align*}
 where the first quiver and the last quiver are identified by the left-right reflection $\nu$.

 The mutation network $\network_\gamma$ is given by
 \begin{align*}
 \begin{tikzpicture}
 [scale=1.7,auto=left,black_vertex/.style={circle,draw,fill,scale=0.75},square_vertex/.style={rectangle,scale=0.8,draw}]
 \node (a) at (0,0) [black_vertex][label=above:{$(1,1)$}]{};
 \node (b) at (1,0) [black_vertex][label=above:{$(2,1)$}]{};
 \node (c1) at (2-0.45,0+0.9) [black_vertex][label=above:{$(3,3)$}]{};
 \node (c2) at (2,0) [black_vertex][label=above:{$(3,2)$}]{};
 \node (c3) at (2+0.45,0-0.9) [black_vertex][label=above:{$(3,1)$}]{};
 \node (ta) at (0,-1) [square_vertex][label=below:{$(1,1)$}]{};
 \node (tb) at (1,-1) [square_vertex][label=below:{$(2,1)$}]{};
 \node (tc1) at (2-0.45,-1+0.9) [square_vertex][label=below:{$(3,3)$}]{};
 \node (tc2) at (2,-1) [square_vertex][label=below:{$(3,2)$}]{};
 \node (tc3) at (2+0.45,-1-0.9) [square_vertex][label=below:{$(3,1)$}]{};
 \draw [dashed] (a)edge [bend left=15](ta);
 \draw [dashed] (ta)edge [bend left=15](a);
 \draw [dashed] (b)edge [bend left=15](tb);
 \draw [dashed] (tb)edge [bend left=15](b);
 \draw [dashed] (c1)edge [bend left=15](tc1);
 \draw [dashed] (tc1)edge [bend left=15](c1);
 \draw [dashed] (c2)edge [bend left=15](tc2);
 \draw [dashed] (tc2)edge [bend left=15](c2);
 \draw [dashed] (c3)edge [bend left=15](tc3);
 \draw [dashed] (tc3)edge [bend left=15](c3);
 \draw[arrows={-Stealth[scale=1.2]}] (b)--(ta);
 \draw[arrows={-Stealth[scale=1.2]}] (a)--(tb);
 \draw[arrows={-Stealth[scale=1.2]}] (c2)--(tb);
 \draw[arrows={-Stealth[scale=1.2]}] (tc2)--(c1);
 \draw[arrows={-Stealth[scale=1.2]}] (tc2)--(c3);
 \draw[arrows={-Stealth[scale=1.2]}] (tc1)--(c2);
 \draw[arrows={-Stealth[scale=1.2]}] (tc3)--(c2);
 \draw[arrows={-Stealth[scale=1.2]}] (b)--(tc1);
 \draw[arrows={-Stealth[scale=1.2] Stealth[scale=1.2] }] (b)--(tc2);
 \draw[arrows={-Stealth[scale=1.2]}] (b)--(tc3);
 \end{tikzpicture}
 \end{align*}
 The set of indices of $\network_\gamma$ is
 \begin{gather}\label{eq:J B3 2}
 \mathbf{J} = \{ (1,1) ,(2,1),(3,1),(3,2) ,(3,3) \}.
 \end{gather}
 The adjacency matrices of $\network_\gamma$ are given by $N_0 = 2 I$ and
 \begin{gather*}
 N_+=
 \begin{pmatrix}
 0 & 0 & 0 & 0 & 0 \\
 0 & 0 & 0 & 0 & 0 \\
 0 & 0 & 0 & 1 & 0 \\
 0 & 0 & 1 & 0 & 1 \\
 0 & 0 & 0 & 1 & 0
 \end{pmatrix} ,\qquad
 N_-=
 \begin{pmatrix}
 0 & 1 & 0 & 0 & 0 \\
 1 & 0 & 1 & 2 & 1 \\
 0 & 0 & 0 & 0 & 0 \\
 0 & 1 & 0 & 0 & 0 \\
 0 & 0 & 0 & 0 & 0
 \end{pmatrix} ,
 \end{gather*}
 so the Neumann--Zagier matrices of $\gamma$ are given by
 \begin{gather*}
 A_+ =
 \begin{pmatrix}
 2 & 0 & 0 & 0 & 0 \\
 0 & 2 & 0 & 0 & 0 \\
 0 & 0 & 2 & -1 & 0 \\
 0 & 0 & -1 & 2 & -1 \\
 0 & 0 & 0 & -1 & 2
 \end{pmatrix},\qquad
 A_-=
 \begin{pmatrix}
 2 & -1 & 0 & 0 & 0 \\
 -1 & 2 & -1 & -2 & -1 \\
 0 & 0 & 2 & 0 & 0 \\
 0 & -1 & 0 & 2 & 0 \\
 0 & 0 & 0 & 0 & 2
 \end{pmatrix} ,
 \end{gather*}
 where we choose an order of $\mathbf{J}$ as in~\eqref{eq:J B3 2}.

 The matrix $A_-$ is not symmetric, but
 \begin{gather*}
 A_+ A_-^{\mathsf{T}} = A_- A_+^{\mathsf{T}} =
 \begin{pmatrix}
 4 & -2 & 0 & 0 & 0 \\
 -2 & 4 & 0 & -2 & 0 \\
 0 & 0 & 4 & -2 & 0 \\
 0 & -2 & -2 & 4 & -2 \\
 0 & 0 & 0 & -2 & 4
 \end{pmatrix}
 \end{gather*}
 is symmetric as promised by Lemma \ref{lemma: NZ symplectic property}.

 The positive definite matrix $A_+^{-1} A_-$ is given by
 \begin{align*}
 A_+^{-1} A_- =
 \frac{1}{2}\begin{pmatrix}
 2 & -1 & 0 & 0 & 0 \\
 -1 & 2 & -1 & -2 & -1 \\
 0 & -1 & 3 & 2 & 1 \\
 0 & -2 & 2 & 4 & 2 \\
 0 & -1 & 1 & 2 & 3
 \end{pmatrix}.
 \end{align*}

 The partition $q$-series are parametrized by elements of the following set:
 \begin{align*}
 H_\gamma = \{ (a,b,c,0,0) \,|\, 0 \leq a,b \leq 1, \, 0 \leq c \leq 3 \},
 \end{align*}
 where $(u,v) + Z_\gamma \in H_\gamma$ is denoted by $\big(u_1^{(1)},u_1^{(2)},u_1^{(3)},u_2^{(3)},u_3^{(3)}\big)$. Let us write $(a,b,c,0,0)$ more simply as $abc$. Then we have
 \begin{gather*}
 H_\gamma = \{ 000,100,010,110, 001,101,011,111, 002,102,012,112, 003,103,013,113 \}.
 \end{gather*}

 We exhibit several low order terms of the partition $q$-series:
 \begin{gather*}
 \mathcal{Z}_\gamma^\sigma (q) = 1 + 9q^{2} + 21q^{3} + 66q^{4} + 144q^{5} + 349q^{6} + 723q^{7} + O\big(q^8\big)
 \end{gather*}
 if $\sigma = 000$,
 \begin{gather*}
 q^{\frac{1}{2}} + 4q^{\frac{3}{2}} + 13q^{\frac{5}{2}} + 38q^{\frac{7}{2}} + 97q^{\frac{9}{2}} + 228q^{\frac{11}{2}} + 504q^{\frac{13}{2}} + 1057q^{\frac{15}{2}} + O\big(q^{\frac{17}{2}}\big)
 \end{gather*}
 if $\sigma = 100,010,110,102,012,112$,
 \begin{gather*}
 q^{\frac{3}{4}} + 5q^{\frac{7}{4}} + 17q^{\frac{11}{4}} + 48q^{\frac{15}{4}} + 120q^{\frac{19}{4}} + 279q^{\frac{23}{4}} + 608q^{\frac{27}{4}} + 1261q^{\frac{31}{4}} + O\big(q^{\frac{35}{4}}\big)
 \end{gather*}
 if $\sigma = 001,011,111,003,013,113$,
 \begin{gather*}
 3 q^{\frac{5}{4}} + 9q^{\frac{9}{4}} + 30q^{\frac{13}{4}} + 75q^{\frac{17}{4}} + 187q^{\frac{21}{4}} + 411q^{\frac{25}{4}} + 885q^{\frac{29}{4}} + 1783q^{\frac{33}{4}} + O\big(q^{\frac{37}{4}}\big)
 \end{gather*}
 if $\sigma = 101,103$, and
 \begin{gather*}
 3 q + 6q^2 + 25q^{3} + 57q^{4} + 156q^{5} + 334q^{6} + 744q^{7} + 1491q^{8} + O\big(q^{9}\big)
 \end{gather*}
 if $\sigma = 002$.
\end{Example}

\section{Relationships with affine Lie algebras}\label{sec:relationships with affine Lie alg}
\subsection{Affine Lie algebras}
In this section, we review basic concepts of affine Lie algebras and their integrable highest weight modules. See~\cite{Kac} for more detail. Let $X_r$ ($=A_r$, $B_r$, $C_r$, $D_r$, $E_{6,7,8}$, $F_4$ or $G_2$) be a finite type Dynkin diagram, and $\mathfrak{g}$ be the finite dimensional simple Lie algebra of type $X_r$ over $\C$. Let $\mathfrak{h}$ be a Cartan subalgebra of~$\mathfrak{g}$, and $\Delta$ be the set of roots. We also use the notations on root systems that we used in Section~\ref{sec:root system}.
We extend the inner product $\innerproduct{\cdot}{\cdot}$ to the nondegenerate symmetric bilinear form on~$\mathfrak{h}^*$. We define the following two free abelian groups:
\begin{gather*}
 Q= \bigoplus_{a=1}^r \Z \alpha_a, \qquad
 M= \bigoplus_{a=1}^r \Z t_a \alpha_a.
\end{gather*}
We also define the following sets:
\begin{gather*}
 P = \left\{ \Lambda \in \mathfrak{h}^* \,\bigg|\, \frac{2 \innerproduct{\Lambda}{\alpha_a}}{ \innerproduct{\alpha_a}{\alpha_a}} \in \Z \text{ for all $a=1 , \dots, r$}\right\}, \\
 P_+ = \left\{ \Lambda \in P \,\bigg |\, \frac{2 \innerproduct{\Lambda}{\alpha_a}}{\innerproduct{\alpha_a}{\alpha_a}} \geq 0 \text{ for all $a=1 , \dots, r$} \right\}.
\end{gather*}

Let
\begin{gather*}
 \hat{\mathfrak{g}} = \mathfrak{g}\otimes \C \big[t, t^{-1}\big] \oplus \C K \oplus \C d
\end{gather*}
be the affine Lie algebra associated with $\mathfrak{g}$. The Lie bracket on $\hat{\mathfrak{g}}$ is defined as
\begin{gather*}
\big[ X \otimes t^m ,Y \otimes t^n \big] = [X,Y] \otimes t^{m+n} + m \delta_{m+n,0} \cdot \innerproduct{X}{Y} K, \\
[K,\hat{\mathfrak{g}}] = \{ 0 \}, \\
\big[d,X \otimes t^n\big] = n X \otimes t^n.
\end{gather*}

Fix a non-negative integer $\level$. Let us define the following set:
\begin{align*}
 P_+^{\level} = \{ \Lambda \in P_+ \,|\, \innerproduct{\Lambda}{\theta} \leq \level \},
\end{align*}
where $\theta$ is the highest root in $\Delta$. For any $\Lambda \in P_+^{\level}$, there is the unique level $\level$ integrable highest weight $\hat{\mathfrak{g}}$-module such that the classical part of its highest weight is $\Lambda$, which is denoted by $L(\Lambda)$.

We define the following rational numbers associated with $L(\Lambda)$:
\begin{gather*}
 c(\level) = \frac{\level \dim \mathfrak{g}}{\level + \dcn},\qquad
 h_\Lambda = \frac{\innerproduct{\Lambda}{\Lambda + 2 \rho}}{2 (\level + \dcn)}.
\end{gather*}

Let $q={\rm e}^{2 \pi{\rm i} \tau}$. For any diagonalizable linear map $\alpha \colon V \to V$ with eigenvalues $\lambda_1,\lambda_2,\dots$ with finite multiplicities $m_1,m_2,\dots$, we define the trace $\mathrm{tr}_V q^{\alpha}$ by $\mathrm{tr}_V q^{\alpha} = \sum_i m_i q^{\lambda_i}$.
We define the following two functions as in~\cite{KacWac}:
\begin{gather}
\chi_{\Lambda} (\tau) = q^{-\frac{c(\level)}{24}} \tr_{L(\Lambda)} q^{h_\Lambda - d},\qquad
 \label{eq:branching}
b_{\lambda}^{\Lambda}(\tau) = q^{-\frac{c(\level) - r}{24}}
 \tr_{U(\Lambda, \lambda)} q^{h_\Lambda - \frac{\innerproduct{\lambda}{\lambda}}{2 \level} -d },
\end{gather}
where $\lambda \in \mathfrak{h}^{*}$ and
\begin{gather*}
 U(\Lambda, \lambda) = \big\{ v \in L(\Lambda) \,|\, \big(h \otimes t^n\big) v = \delta_{n,0} \lambda (h) v \text{ for all $h \in \mathfrak{h}, n \geq 0$} \big\}.
\end{gather*}
These series converge to holomorphic functions on the upper half-plane $\mathbb{H} = \{ \tau \in \C \,|\, \image \tau >0 \}$.
The function $\chi_\Lambda (\tau)$ is called the (specialized) \emph{normalized character}\footnote{More precisely, $\chi_\Lambda(\tau)$ is the normalized character in~\cite{KacWac} at $z=t=0$.} of $L(\Lambda)$, and $b_{\lambda}^{\Lambda}(\tau)$ is called the \emph{branching function} for the pair $(\mathfrak{g},\mathfrak{h})$. We will simply call $b_{\lambda}^{\Lambda}(\tau)$ as the branching function.
Note that dividing this branching function by the $r$-th power of the Dedekind eta function yields the \emph{string function} in~\cite{KacPet}.

The asymptotics of the normalized character and the branching function were studied in detailed in~\cite{KacPet,KacWac}. Let $\tau \downarrow 0$ denote the limit in the positive imaginary axis.
\begin{Theorem}[\cite{KacPet,KacWac}] Suppose that $\lambda \in \Lambda + Q$. Then
 \begin{gather} \label{eq:asym of character}
 \lim_{\tau\downarrow 0} \chi_{\Lambda} (\tau) {\rm e}^{- \frac{\pi{\rm i} c(\level)}{12 \tau}} = a(\Lambda),\\
 \label{eq:asym of branching}
 \lim_{\tau\downarrow 0} b_{\lambda}^{\Lambda} (\tau) {\rm e}^{- \frac{\pi{\rm i} (c(\level) - r)}{12 \tau}} = \lvert P / Q \rvert^{\frac{1}{2}} \lvert Q / \level M \rvert^{-\frac{1}{2}} a(\Lambda),
 \end{gather}
 where $a(\Lambda)$ is the real number defined by
 \begin{gather*}
 a(\Lambda) = \big\lvert P / \big(\level + \dcn\big) M \big\rvert^{-\frac{1}{2}}
 \prod_{\alpha \in \Delta_+} 2 \sin \frac{\pi \innerproduct{\Lambda + \rho}{\alpha}}{\level + \dcn}.
 \end{gather*}
\end{Theorem}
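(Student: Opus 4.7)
The plan is to deduce both asymptotic formulas from the modular transformation properties of normalized characters and branching functions under $\tau \mapsto -1/\tau$. This is the standard Kac--Peterson/Kac--Wakimoto approach: the behavior of $\chi_\Lambda$ and $b^\Lambda_\lambda$ near the cusp $\tau \downarrow 0$ is extracted from their behavior near $i \infty$ via modular inversion, at which point only the leading ground-state term of each character on the right-hand side survives.

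First I would invoke the Kac--Peterson modular transformation law
\begin{gather*}
\chi_\Lambda(-1/\tau) = \sum_{\Lambda' \in P_+^{\level}} S_{\Lambda,\Lambda'}\, \chi_{\Lambda'}(\tau),
\end{gather*}
where the $S$-matrix has the explicit form
\begin{gather*}
S_{\Lambda,\Lambda'} = \lvert P/(\level+\dcn)M \rvert^{-\frac{1}{2}}\, i^{|\Delta_+|} \sum_{w \in W} \epsilon(w)\, {\rm e}^{-\frac{2\pi{\rm i}\innerproduct{w(\Lambda+\rho)}{\Lambda'+\rho}}{\level+\dcn}},
\end{gather*}
with $W$ the Weyl group of $\mathfrak{g}$. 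This identity is derived by substituting the Weyl--Kac character formula for $\chi_\Lambda$ and applying Jacobi's $SL_2(\Z)$ transformation to the theta series indexed by $\Lambda+\rho$. Substituting $\tau \mapsto -1/\tau$ in the above and noting that $\chi_{\Lambda'}(-1/\tau) = q'^{-c(\level)/24}(1 + O(q'))$ with $q' = {\rm e}^{-2\pi{\rm i}/\tau} \to 0$ as $\tau \downarrow 0$, and that $q'^{-c(\level)/24} = {\rm e}^{\pi{\rm i} c(\level)/(12\tau)}$, I would conclude
\begin{gather*}
\lim_{\tau \downarrow 0} \chi_\Lambda(\tau)\, {\rm e}^{-\frac{\pi{\rm i} c(\level)}{12 \tau}} = \sum_{\Lambda' \in P_+^{\level}} S_{\Lambda,\Lambda'}.
\end{gather*}
The identification of this sum with $a(\Lambda)$ is then a Weyl-group combinatorial calculation: the sum over $\Lambda' \in P_+^\level$ combined with the alternating sum over $w$ telescopes via the Weyl denominator identity (applied to the twisted character sum) into the product $\prod_{\alpha \in \Delta_+} 2\sin\bigl(\pi\innerproduct{\Lambda+\rho}{\alpha}/(\level+\dcn)\bigr)$.

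For the branching function \eqref{eq:branching}, I would use the decomposition of the character under the action of the Heisenberg subalgebra associated with $\mathfrak{h} \otimes \C[t,t^{-1}] \oplus \C K$,
\begin{gather*}
\chi_\Lambda(\tau) = \eta(\tau)^{-r} \sum_{\lambda \in (\Lambda + Q)/\level M} b^\Lambda_\lambda(\tau)\, \Theta_{\lambda,\level}(\tau),
\end{gather*}
where $\Theta_{\lambda,\level}$ is the classical theta series attached to the coset $\lambda + \level M$. Orthogonality of the theta series under the metaplectic action yields a modular transformation law for $b^\Lambda_\lambda$ of the same shape as that for $\chi_\Lambda$, but with an $S$-matrix rescaled by the Jacobian of the theta transformation, which produces the factor $\lvert \level M / \sqrt{\det}\rvert$ entering as $\lvert Q/\level M \rvert^{-1/2}$, and by the index $\lvert P/Q \rvert^{1/2}$ measuring the fact that the sum ranges over $\Lambda + Q$ rather than $P$. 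Applying the same limit argument, together with the known cusp asymptotics $\eta(\tau)^{-r} \sim {\rm e}^{\pi{\rm i} r/(12 \tau)}$ to the leading order, gives \eqref{eq:asym of branching}.

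The main obstacle is the combinatorial identification in the last step of the character computation: converting $\sum_{\Lambda'} S_{\Lambda,\Lambda'}$ into a $\sin$-product over positive roots. This is carried out by recognizing $\sum_{\Lambda' \in P_+^\level}$ together with the antisymmetrization over $W$ as a sum over a fundamental domain of the affine Weyl group acting on $P/(\level+\dcn)M$, and then applying the Weyl denominator formula with the substitution $z \mapsto 2\pi{\rm i}(\Lambda+\rho)/(\level+\dcn)$. For the branching-function statement the additional difficulty is bookkeeping the normalizations picked up from the theta transformation; this is where the ratio $\lvert P/Q \rvert^{1/2} \lvert Q/\level M \rvert^{-1/2}$ enters and must be verified to match the geometric indices.
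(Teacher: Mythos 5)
The paper does not actually prove this statement---it is quoted verbatim from Kac--Peterson and Kac--Wakimoto---so the only meaningful comparison is with the standard argument in those references, which is indeed the modular-inversion strategy you outline. Your plan is the right one, but there is a concrete error in the execution of the limit that would produce the wrong answer. After writing $\chi_\Lambda(\tau)=\sum_{\Lambda'\in P_+^{\ell}}S_{\Lambda,\Lambda'}\,\chi_{\Lambda'}(-1/\tau)$, you assert that $\chi_{\Lambda'}(-1/\tau)=q'^{-c(\ell)/24}(1+O(q'))$ for \emph{every} $\Lambda'$ and conclude that the limit equals the row sum $\sum_{\Lambda'}S_{\Lambda,\Lambda'}$. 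In fact $\chi_{\Lambda'}(-1/\tau)=q'^{\,h_{\Lambda'}-c(\ell)/24}\bigl(m_{\Lambda'}+O(q')\bigr)$ with $h_{\Lambda'}=\innerproduct{\Lambda'}{\Lambda'+2\rho}/2\bigl(\ell+\dcn\bigr)>0$ for all nonzero $\Lambda'\in P_+^{\ell}$, so after multiplying by ${\rm e}^{-\pi{\rm i}c(\ell)/12\tau}=q'^{\,c(\ell)/24}$ every term except $\Lambda'=0$ tends to zero. The correct limit is the single entry $S_{\Lambda,0}$, not the row sum (these genuinely differ: for $\widehat{\mathfrak{sl}}_2$ at level $1$ the row sums are $\sqrt2$ and $0$ while $S_{\Lambda,0}=1/\sqrt2$), and it is this one entry that the Weyl denominator identity converts into $\lvert P/(\ell+\dcn)M\rvert^{-1/2}\prod_{\alpha\in\Delta_+}2\sin\frac{\pi\innerproduct{\Lambda+\rho}{\alpha}}{\ell+\dcn}=a(\Lambda)$. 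Correspondingly, your description of the final combinatorial step as a telescoping of the sum over $\Lambda'\in P_+^{\ell}$ against the Weyl-group sum is not what happens: no sum over $\Lambda'$ remains, and the denominator identity is applied to the single alternating sum $\sum_{w}\epsilon(w)\,{\rm e}^{-2\pi{\rm i}\innerproduct{w(\Lambda+\rho)}{\rho}/(\ell+\dcn)}$.

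The branching-function half has the same structure and inherits the same gap (only the vacuum term of the transformed sum survives), plus a loose end you acknowledge but do not close: the powers of $\tau$ from $\eta(\tau)^{-r}\sim(-{\rm i}\tau)^{r/2}{\rm e}^{\pi{\rm i}r/12\tau}$ and from the transformation of $\Theta_{\lambda,\ell}$ must cancel exactly for the stated limit to be finite and nonzero, and the lattice indices $\lvert P/Q\rvert^{1/2}\lvert Q/\ell M\rvert^{-1/2}$ arise precisely from the Gaussian-sum normalization of that theta transformation. These are bookkeeping points rather than conceptual ones, but as written the proposal neither verifies them nor arrives at the correct leading coefficient.
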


The real number $a(\Lambda)$ is called the \emph{asymptotic dimension} of $L(\Lambda)$ for the following reason.
The module $L(\Lambda)$ is infinite dimensional unless $\Lambda$ is trivial, and $\chi_\Lambda (0) = \dim L(\Lambda) = \infty$.
However, \eqref{eq:asym of character} says that by multiplying by the appropriate exponential term and taking the limit, we can get the finite number $a(\Lambda)$.
Therefore, we can think that the real number $a(\Lambda)$ is the ``dimension'' of $L(\Lambda)$.

\subsection{Exponents and asymptotic dimension}
The branching functions with $\Lambda = 0$ are expected to coincide with the partition $q$-series that we studied in Section \ref{sec:partition q of gamma Xr level} via the conjectural formula of the branching functions in~\cite{KNS}.
This was first observed in~\cite{KatoTerashima} for the total partition $q$-series with $X=ADE$.
Let $b_{\lambda}^{\Lambda}(q)$ denote the formal $q$-series defined by the right-hand side of \eqref{eq:branching}.
\begin{Conjecture}[\cite{KNS}] \label{conj:Z=b}
 Suppose that $\level \geq 2$.
 Let $\gamma = \gamma(X_r, \level)$ be the mutation loop defined in Section~{\rm \ref{section: mutation loop Xr l}}. Suppose that $\sigma \in H_\gamma$, and $\lambda \in Q$ is a representative of $F(\sigma)$, where $F$ is defined as~\eqref{eq: isom H between Q/lM}. Then
 \begin{gather}\label{eq:Z=b}
 q^{-\frac{c(\level) - r}{24}} \mathcal{Z}_\gamma^\sigma (q) = b_{\lambda}^{0}(q).
 \end{gather}
\end{Conjecture}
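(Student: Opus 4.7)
The identity \eqref{eq:Z=b} is a Rogers--Ramanujan-type identity equating a ``fermionic'' sum (the partition $q$-series, which has manifestly non-negative coefficients built from the quadratic form $K$ and $q$-Pochhammer denominators) with a ``bosonic'' expression (the branching function, which admits a Weyl--Kac alternating-sign presentation via the numerator/denominator formula). My plan would therefore follow the standard fermionic-vs-bosonic strategy: fix the pair $(X_r,\level)$, write both sides as explicit $q$-series, and attempt to prove equality by comparing them as modular-type objects on $|q|<1$.

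Concretely, I would first establish the three preliminary facts: (i) both sides are holomorphic in $|q|<1$ with matching overall $q$-degree $h_\lambda-\frac{c(\level)-r}{24}$, which is already built into the normalization; (ii) the fermionic side $\mathcal{Z}_\gamma^\sigma(q)$ admits, by Theorem~\ref{theorem: partition q-series Xr l}, the form $\sum_u q^{\frac{1}{2}u^{\mathsf{T}}Ku}/\prod(q)_{u_m^{(a)}}$ with $K$ the symmetric positive-definite matrix of Proposition~\ref{prop: A plus minus}; and (iii) both sides are indexed by the same finite abelian group $Q/\level M$ via the isomorphism $F$ of Lemma~\ref{lemma: isom H between Q/lM} on the left and the standard parametrization of branching functions on the right, and summing over $\sigma\in H_\gamma$ gives a natural total identity to focus on first. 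The first serious step is then to match initial data: compute both sides to sufficiently high order in $q$ for each $\sigma$, using the explicit form of $K$, which reduces the problem of checking finitely many coefficients once one controls the functional class of each side.

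The main step I would attempt is to promote the asymptotic matching of Theorem~\ref{thm:asymptotics of Z} to a full identity. By Theorem~\ref{thm:asymptotics of Z}, the leading $\varepsilon\to 0$ behavior of the total $\mathcal{Z}_\gamma$ is controlled by $\sqrt{\det A_+/\det(I-J_\gamma(\eta))}$ and by the exponential rate $a=\sum_t L(z_+(\eta;t))$; on the other hand, \eqref{eq:asym of branching} gives the $\tau\downarrow 0$ asymptotics of $b_\lambda^{0}(\tau)$ in terms of $a(0)$. Using the Kirillov--Nakanishi--Suzuki dilogarithm identities (the known sum rule $a=\frac{\pi^2 c(\level)}{6}\cdot\frac{1}{\text{const}}$ equating the sum of Rogers dilogarithms on $Y$-system solutions with the central charge) one obtains agreement of exponential rates, and the identification of the prefactors then reduces precisely to the $x=1$ specialization of Conjecture~\ref{conj: char poly}. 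The natural plan is then the following: assume (or prove, at least generically) Conjecture~\ref{conj: char poly}; this matches leading asymptotics to all orders for a single $\sigma$. To upgrade a matching of asymptotics to equality of $q$-series, I would try to exhibit a modular differential equation (or a system of $q$-difference equations coming from the affine Weyl group action / $T$-system) that both sides satisfy, together with a finite-dimensional solution space, so that matching finitely many initial conditions plus matching $\tau\downarrow 0$ behavior pins down the solution.

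The main obstacle, and the reason this is stated as a conjecture, is precisely this last step for general $(X_r,\level)$: while the fermionic side is manifestly modular-like and asymptotically correct, there is no uniform algebraic recursion known that simultaneously controls $\mathcal{Z}_\gamma^\sigma$ and $b_\lambda^{0}$ for all finite types and all levels. In practice one is thrown back on case-by-case proofs: for $(A_1,\level)$ the identity is the Andrews--Gordon identity and can be handled by the Bailey lemma or the Watson transformation; for $(A_r,2)$ one can use level-rank duality and reduce to $(A_1,r+1)$; for type $ADE$ and level $2$ one can appeal to the Feigin--Stoyanovsky principal-subspace constructions. The hardest part of the full conjecture is therefore providing an algebraic mechanism on the right-hand side (a basis of vectors in $L(0)$ indexed by lattice points $u$) that matches the fermionic summation on the left; this is the content of the Kuniba--Nakanishi--Suzuki proposal and remains the main unresolved point of attack.
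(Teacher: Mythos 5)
This statement is an open conjecture in the paper (attributed to Kuniba--Nakanishi--Suzuki); the paper offers no proof of it, only consequences: comparing the asymptotics of Theorem~\ref{thm:asymptotics of Z} with \eqref{eq:asym of branching} under the assumption that Conjecture~\ref{conj:Z=b} holds yields the dilogarithm identity \eqref{eq:dilog identity} and the identity \eqref{eq:jacobian identity}, the latter being Conjecture~\ref{conj: char poly} at $x=1$. Your submission is a strategy outline rather than a proof, and you acknowledge this yourself in the final paragraph; as such it cannot be checked against a proof in the paper, and it does not establish the statement.

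The concrete gaps are these. First, the central step --- upgrading agreement of leading $\tau\downarrow 0$ asymptotics (plus finitely many $q$-coefficients) to equality of the two $q$-series --- is never carried out, and the mechanism you invoke (a modular differential equation or $q$-difference system satisfied by both sides, with finite-dimensional solution space) is precisely what is not known for general $(X_r,\level)$; asserting its existence is equivalent to assuming the hard part of the conjecture. Second, your plan is circular relative to the paper's logic: you propose to \emph{assume} Conjecture~\ref{conj: char poly} in order to match the constant prefactors in the asymptotics, but in the paper the implication runs the other way --- Conjecture~\ref{conj:Z=b} is the input from which the $x=1$ case of Conjecture~\ref{conj: char poly} is derived. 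Using each conjecture to support the other proves neither. Third, even granting both asymptotic matching and coefficient checks to any finite order, a $q$-series is not determined by its behavior at a single cusp together with finitely many coefficients unless one first proves it lies in an identified finite-dimensional space of modular objects; the fermionic side $\mathcal{Z}_\gamma^\sigma$ is only conjecturally modular (this is essentially Nahm's conjecture for the matrix $K=A_+^{-1}A_-$), so this membership cannot be taken for granted. The case-by-case remarks in your last paragraph (Andrews--Gordon for $(A_1,\level)$, level-rank duality for $(A_r,2)$, principal subspaces for level $2$) are reasonable pointers to the literature but are not developed into arguments here.
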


By summing \eqref{eq:Z=b} for all elements in $Q/ \level M$, we obtain the following conjectural identity on the total partition $q$-series:
\begin{gather}\label{eq:b sum= Z total}
 q^{-\frac{c(\level) - r}{24}} \mathcal{Z}_\gamma (q) = \sum_{\lambda \in Q/\level M} b_{\lambda}^{0}(q).
\end{gather}

By comparing the asymptotics \eqref{eq:asymptotics of Z} and \eqref{eq:asym of branching},
we find that~\eqref{eq:b sum= Z total} yields the following identities:
\begin{gather}\label{eq:dilog identity}
\frac{6}{\pi^2} \sum_{t=1}^T L \big(z_+(\eta; t) \big) = c(\level) - r,
\end{gather}
and
\begin{gather}\label{eq:jacobian identity}
\frac{1}{ \sqrt{I - J_\gamma(\eta)} } = \lvert P / Q \rvert^{\frac{1}{2}} a(0) ,
\end{gather}
where we use $\det A_+ = \lvert Q / \level M \rvert$, which follows from Proposition~\ref{prop: A plus minus}.

The identity \eqref{eq:dilog identity} is called the dilogarithm identity in conformal field theories, and proved in~\cite{IIKKNa,IIKKNb, Nak} by using cluster algebras. On the other hand, the identity~\eqref{eq:jacobian identity} is exactly Conjecture~\ref{conj: char poly} at $x=1$ because
\begin{align*}
 \frac{N_{X_r, \level} (1)}{D_{X_r, \level}(1)} &= \prod_{a=1}^r t_a \big(\level + \dcn\big) { \left( { \prod_{\alpha \in \Delta_+} 2 \sin \frac{ \pi \innerproduct{\rho}{\alpha}} {\level + \dcn} } \right)^{-2 }}\\
 & ={\big\lvert Q/ \big(\level + \dcn\big) M \big\rvert } { \left( { \prod_{\alpha \in \Delta_+} 2 \sin \frac{ \pi \innerproduct{\rho}{\alpha}} {\level + \dcn} } \right)^{-2 }}
 = \lvert P / Q \rvert^{-1} a(0)^{-2}.
\end{align*}
Consequently, the conjectural identity~\eqref{eq:jacobian identity} gives us a consistency between our conjecture on exponents (Conjecture~\ref{conj: char poly}) and the known conjecture on $q$-series (Conjecture~\ref{conj:Z=b}), and also gives an interesting connection between the theory of cluster algebras and the representation theory of affine Lie algebras.

\subsection*{Acknowledgements}
The author gratefully acknowledges the help provided by Yuji Terashima.
This work is supported by JSPS KAKENHI Grant Number JP18J22576.

\addcontentsline{toc}{section}{References}
\LastPageEnding

\end{document}